\theoremstyle{plain}
\newtheorem{thm}{Theorem}[section]
\newtheorem{lem}[thm]{Lemma}
\newtheorem{cor}[thm]{Corollary}
\newtheorem{pro}[thm]{Proposition}
\theoremstyle{definition}
\newtheorem{rem}[thm]{Remark}
\newtheorem{dfi}[thm]{Definition}
\newtheorem{exm}[thm]{Example}
\newcommand{\ZZ}{\mathbb{Z}}
\newcommand{\E}{\mathcal{E}}
\newcommand{\NN}{{\mathbb N}}
\newcommand{\QQ}{\mathbb{Q}}
\newcommand{\CC}{\mathbb{C}}
\newcommand{\rD}{\mathrm{D}}
\newcommand{\D}{\mathcal{D}}
\newcommand{\DD}{\mathbb{D}}
\newcommand{\ep}{\varepsilon}
\newcommand{\bw}{\overline{w}}
\newcommand{\w}{\overline{w}^1}
\newcommand{\ww}{\overline{w}^2}
\newcommand{\wN}{\overline{w^N}^1}
\newcommand{\wwN}{\overline{w^N}^2}
\newcommand{\RR}{{\mathbb R}}
\newcommand{\tr}{{\rm tr}}
\newcommand{\la}{\lambda}
\newcommand{\FC}{\mathcal{FC}^{\infty}_b}
\newcommand{\g}{\mathfrak{g}}
\newcommand{\ad}{{\rm ad}}
\newcommand{\Pe}{P_e(G)}
\newcommand{\PeH}{P_e(G)^H}
\newcommand{\Peomega}{P_e(G)^{\Omega}}
\newcommand{\Pea}{P_{e,a}(G)}
\newcommand{\PeaH}{P_{e,a}(G)^H}
\newcommand{\Peaomega}{P_{e,a}(G)^{\Omega}}
\newcommand{\Ent}{\mathrm{Ent}}
\newcommand{\bX}{\mathbf{X}}
\newcommand{\X}{\overline{X}^1}
\newcommand{\XX}{\overline{X}^2}
\newcommand{\tY}{\tilde{Y}}
\newcommand{\bY}{\mathbf{Y}}
\newcommand{\Y}{\overline{Y}^1}
\newcommand{\YY}{\overline{Y}^2}
\newcommand{\tNk}{\tau^N_k}
\newcommand{\tNkN}{\tau^N_{k-1}}
\newcommand{\tNiN}{\tau^N_{i-1}}
\newcommand{\tNi}{\tau^N_i}
\newcommand{\oTkSa}{\overline{T_kS_a}^W}
\newcommand{\hyp}{\mathchar`-}
\newcommand{\su}{\mathfrak{su}}
\newcommand{\LL}{\mathcal{L}}
\newcommand{\DS}{D_{S_a}}
\newcommand{\LS}{L_{\la, S_a}}
\newcommand{\tLS}{\tilde{L}_{\la, S_a}}
\newcommand{\supp}{\mathrm{supp}}
\newcommand{\tw}{\tilde{w}}
\newcommand{\teta}{\tilde{\eta}}
\newcommand{\nn}{\nonumber}
\newcommand{\bce}{\mathbf{\check{e}}}
\newcommand{\be}{\mathbf{e}}
\newcommand{\YD}{Y^{-1}(\D)}
\newcommand{\rIm}{\mathrm{Im}}
\numberwithin{equation}{section}
\begin{document}
\newcounter{aaa}
\newcounter{bbb}
\newcounter{ccc}
\newcounter{ddd}
\newcounter{eee}
\newcounter{xxx}
\newcounter{xvi}
\newcounter{x}
\setcounter{aaa}{1}
\setcounter{bbb}{2}
\setcounter{ccc}{3}
\setcounter{ddd}{4}
\setcounter{eee}{32}
\setcounter{xxx}{10}
\setcounter{xvi}{16}
\setcounter{x}{38}
\title{
Asymptotics of lowlying Dirichlet eigenvalues of\\
Witten Laplacians on domains in pinned path groups
\footnote{\textit{2020 Mathematics Subject Classification.} 
Primary 81Q20; Secondary 60H07, 58J65, 58J50, 60L20.}
\footnote{\textit{Key words and phrases.}
Witten Laplacian, Dirichlet boundary condition, 
Pinned path group, Semiclassical limit, Discrete spectrum, 
Rough path, Malliavin calculus,
Logarithmic Sobolev inequality, 
Morse function.}
}
\author{Shigeki Aida
\thanks{Dedicated to Professor Leonard Gross on the occasion of his 95th birthday.}
\\
Graduate School of Mathematical Sciences\\
The University of Tokyo\\
3-8-1 Komaba Meguro-ku Tokyo, 153-8914, JAPAN\\
e-mail: aida@ms.u-tokyo.ac.jp}
\date{}
\maketitle

\begin{abstract}
 Let $G$ be a compact connected Lie group
and $\Pea=C([0,1]\to G~|~\gamma(0)=e, \gamma(1)=a)$ be
the pinned path space with a pinned Brownian motion measure $\nu_{\la,a}$
defined by the heat kernel $p(\la^{-1}t,x,y)$, 
where $\la$ is a positive parameter.
We consider a Witten Laplacian $-L_{\la,\D}$ 
acting on functions with the Dirichlet boundary condition
on a certain domain $\D\subset \Pea$ 
which includes finitely many geodesics $\{l_1,\ldots,l_N\}$
between $e$ and
$a$. $\nu_{\la,a}$ has the formal path integral expression 
$\nu_{\la,a}(d\gamma)=Z_{\la}^{-1}\exp \left(-\la E(\gamma)\right)d\gamma$,
where $E(\gamma)=\frac{1}{2}\int_0^1|\dot{\gamma}(t)|^2dt$ and
$E$ is a Morse function when $a$ is not a point of the cut-locus
of $e$.
Hence, by the analogy of 
finite dimensional cases, one may expect that the lowlying spectrum of 
$-\la^{-1}L_{\la,\D}$
can be approximated by the spectral sets of Ornstein-Uhlenbeck type
operators which approximate
$-\la^{-1}L_{\la,\D}$ in each small neighborhood of
critical points $\{l_i\}$ when $\la\to\infty$.
However, in contrast to the finite dimensional case, the spectral sets of
the approximate Ornstein-Uhlenbeck type operators contain essential spectrum.
It may be difficult to analyze the behavior of the spectrum of
$-\la^{-1}L_{\la,\D}$ near the set of the essential spectrum.
In this paper, we study the asymptotic behavior of the lowlying 
discrete spectrum of
$-\la^{-1}L_{\la,\D}$ in the complement of the neighborhood of
the set of essential spectrum
of the approximate Ornstein-Uhlenbeck type operators at $\{l_i\}$.
\end{abstract}

\tableofcontents

\section{Introduction}

Let $E$ be a smooth function on $\RR^d$.
Let $\la>0$ and for all sufficiently large $\la$, assume
$e^{-\la E}\in L^1(\RR^d,dx)$ and
let $\nu_{\la}(dx)=Z_{\la}^{-1}e^{-\la E(x)}dx$ be the normalized probability measure
on $\RR^d$.
Consider a Dirichlet form $\E_{\la}(f,f)=\int_{\RR^d}|Df(x)|^2d\nu_{\la}(x)$
in $L^2(\RR^d,\nu_{\la})$ and denote the nonnegative generator by $-L_{\la}$.
Under assumptions on $E$ such that
\begin{itemize}
\item[{\rm (i)}] $E$ has finitely many critical points $\{c_1,\ldots,c_N\}$,
\item[{\rm (ii)}] the Hessians $D^2E(c_i)$ are nondegenerate $(1\le i\le N)$
\end{itemize}
and with technical assumptions at infinity, 
there have been many studies on the asymptotics
of the spectrum of $-\la^{-1}L_{\la}$ from several view points
(\cite{begk}, \cite{helffer-nier}).
The function $E$ which satisfies (i) and (ii) is called a Morse function.
To understand the relation between the spectrum of $-L_{\la}$ of order
$O(\la)$ and the set of critical points $\{c_i\}$ of $E$,
consider a unitary transformation $M_{\la} : L^2(\RR^d,\nu_{\la})\to L^2(\RR^d,dx)$
defined by $M_{\la}f=\rho_{\la}^{1/2}f$, where $\rho_{\la}=Z_{\la}^{-1}e^{-\la E}$.
Then, we have $M_{\la} (-L_{\la}) M_{\la}^{-1}f=-H_{\la}f$, where
$-H_{\la}=-\Delta+\frac{\la^2}{4}|DE|^2-\frac{\la}{2}\Delta E$.
By the assumption on $E$, $|DE(x)|^2$ is strictly positive except on $\{c_i\}$.
This roughly implies that if $f\in L^2(\RR^d)$ does not vanish outside a
neighborhood of $\{c_i\}$ in an appropriate sense, 
then $(-H_{\la}f,f)$ is very large comparing
the order of $O(\la)$.
Hence, it suffices to consider $f\in L^2(\RR^d,dx)$ whose support is included in 
a neighborhood of $\{c_i\}$ to study the asymptotics of order $O(\la)$ of
the spectrum of $-L_{\la}$.
Furthermore, near $c_i$, we can approximate $-H_{\la}$ by
a Schr\"odinger operator with a quadratic potential function
(quantum harmonic oscillator) and the spectrum is completely known and
the spectrum determines the asymptotic behavior
of the spectrum of $-L_{\la}$ of order $O(\la)$.
We refer the readers for the
above rigorous arguments in the case of Schr\"odinger operators 
to \cite{simon2}.
Moreover, if there are local minimum points of $E$ other than global minimum, 
we can expect the small eigenvalues to be of order $e^{-c \la}$ $(c>0)$
(\cite{begk}, \cite{helffer-nier}).

We are interested in these problems in infinite dimensional cases.
In infinite dimensional space $X$ also,
there exist probability measures $\nu_{\la}$
which have formal path integral representations 
$\nu_{\la}(d\gamma)=Z_{\la}^{-1}\exp(-\la E(\gamma))d\gamma$
and in some cases, we can define Dirichlet forms $(\E_{\la},\rD(\E_{\la}))$ in
$L^2(X,\nu_{\la})$.
However, we cannot use the unitary transformation $M_{\la}$ and
it is not trivial to see the relation between the set of critical points
of $E$ and the spectrum of 
$-L_{\la}$ of the order of $O(\la)$.
For this problem, we used
a semiboundedness theorem for Schr\"odinger operators and
large deviation estimate for the measure $\nu_{\la}$
to study asymptotics of the lowest eigenvalues of
$P(\phi)_2$ type Hamiltonians and the 
asymptotics of the spectral gap of the generators of Dirichlet forms
on pinned path spaces over Riemannian manifolds
in \cite{a2003-2},\cite{a2007}, \cite{a2009}, \cite{a2012, a2015, a2016}.
The semiboundedness theorem is called, {\it e.g.},  a NGS bound or
the Federbush semi-boundedness theorem (\cite{simon}, \cite{g3})
and can be derived under the validity of logarithmic Sobolev inequality
or the hyperboundedness of the semigroup.
Also, we mention that 
there are studies on exponentially small eigenvalues on
domains in pinned path spaces (\cite{eberle1}, \cite{eberle2})
and the sharp exponential estimate 
of the spectral gap of stochastic one-dimensional
Allen-Cahn equation in finite volume (\cite{b-digesu}).

In this paper, we consider a pinned path space over a 
compact connected
Lie group $G$,
$\Pea=C([0,1]\to G~|~\gamma(0)=e, \gamma(1)=a)$
with the pinned Brownian motion measure $\nu_{\la,a}$
which is defined by the heat kernel $p(\la^{-1}t,x,y)$ on
$G$. The measure $\nu_{\la,a}$ has a formal representation
$d\nu_{\la,a}(\gamma)=Z_{\la}^{-1}\exp(-\la E(\gamma))d\gamma$,
where $E(\gamma)=\frac{1}{2}\int_0^1|\dot{\gamma}(t)|^2dt$.
We refer the readers for more precise representation formula and a rigorous
explanation to \cite{ad}.
If $a$ is not a conjugate point of $e$ along any geodesics between
$e$ and $a$, then the set of the geodesics
$\{l_i\}_{i=1}^{\infty}$
between $e$ and $a$ are the set of critical points of $E$ and
the Hessian of $E$ at $l_i$ are nondegenerate (\cite{milnor}),
that is a Morse function.
Also, we can define a Dirichlet form in $L^2(\Pea,\nu_{\la,a})$
by using the $H$-derivative $\nabla$ on $\Pea$.
We call the nonnegative generator $-L_{\la,\Pea}$ a Witten Laplacian.
This is a second order differential operator acting on functions.
Note that original Witten Laplacians are defined as operators acting 
on differential forms on Riemannian manifolds (\cite{witten}).
We consider operators acting on functions only in this paper.
We will make small comments on 
Witten Laplacians acting on differential forms on $\Pea$ in
Remark~\ref{remark on main theorem} (4).
Although $\Pea$ is a curved space, by considering 
an appropriate local coordinate chart in a neighborhood 
of the geodesic $l_i$,
we can construct an Ornstein-Uhlenbeck type operator
$L_{\la,T_{l_i},W_0}$
on a (linear) Wiener space $W_0$, which 
is an approximation to $-L_{\la,\Pea}$ 
in this neighborhood.
Note that the spectral set $-\la^{-1}L_{\la,T_i,W_0}$ is independent of
$\la$ and
$-L_{\la,T_{l_i},W_0}$ is unitarily equivalent operator to
the infinite dimensional 
counterpart Schr\"odinger operator with a quadratic potential function
which we explained in the finite dimensional cases in the above.
However, unlike the finite dimensional case, 
there exists nonempty essential spectrum in
the spectral set of
$-L_{\la,T_{l_i},W_0}$.
It may be a difficult problem to study the asymptotic behavior of
the lowlying spectrum of $-\la^{-1}L_{\la,\Pea}$ near the 
essential spectrum of $-\la^{-1}L_{\la,T_{l_i},W_0}$ $(i\ge 1)$.
Also, at the moment, it is not clear for the author whether the appropriate
semi-boundedness theorem of $-L_{\la,\Pea}$ holds or not.
Taking these into account, in this paper, we consider Witten Laplacian 
$L_{\la,\D}$
with the Dirichlet boundary condition on a certain domain $\D\subset\Pea$.
The domains $\D$ which we consider in this paper 
contain finitely many geodesics
$\{l_i\}_{i=1}^N$ only and there are no geodesics in the boundary
$\partial \D$.
Moreover we have a refined version of Gross's log-Sobolev inequality in
\cite{a2008} with a potential
function on $\Pea$ which gives a suitable lower bound of $-L_{\la,\D}$.
We may consider a similar problem in the case of general Riemannian 
manifold.
We already have several log-Sobolev inequalities with potential functions
on pinned path space over compact Riemmanian manifold
(\cite{a1996},\cite{gong-ma}).
However, it is not clear that the inequalities are good enough for our problem.
Also, we mention that log-Sobolev inequality without potential function 
was proved for heat kernel measure on $P_{e,e}(G)$ by \cite{driver3}.
Let us explain our main theorem more precisely.
Let $R>0$.
We consider the set $\Sigma_{R,r}$ which is the intersection of
$[0,R]$ and the complement of $r$-neighborhood of the union of
the sets of essential spectrum of $-L_{\la,T_i,W_0}$ $(1\le i\le N)$.
We can prove that $-\la^{-1}L_{\la,\D}$ has discrete spectrum only
in the set $\Sigma_{R,r}$ for large $\la$ and the asymptotic behavior of them
can be determined by the discrete spectrum of
$-\la^{-1}L_{\la,T_i,W_0}$ $(1\le i\le N)$ similarly to finite dimensional cases.

The organization of this paper is as follows.
In Section 2, we prepare necessary ingredients of our study
and state our main theorem.
In Section 2.1, first, we prepare necessary notations and 
we recall the definition of the pinned Brownian motion measure 
and the $H$-derivative on
$\Pea$.
We next introduce domains $\D\subset \Pea$ and
define Witten Laplacian $L_{\la,\D}$ with the Dirichlet boundary condition on
$\D$.
In Section 2.2, we recall the explicit form of the Hessian of
the energy function $E(\gamma)=\frac{1}{2}\int_0^1|\dot{\gamma}(t)|^2dt$
and the eigenvalues of the Hessian (\cite{a2003}).
We next state our main theorem (Theorem~\ref{main theorem}). 
In our main theorem, we are concerned with the set of discrete spectrum
of approximate Ornstein-Uhlenbeck type operators.
We need to give examples for which 
there are many discrete spectrum of $-L_{\la,T_i,W_0}$.
We give such an example in the case where $G=SU(2)$ 
in Proposition~\ref{example of discrete spectrum}.
The proof will be given in Appendix.
In Section 2.3, we prepare necessary tools from rough path analysis and
Malliavin calculus.
A certain subset
$\Peaomega$ of $\Pea$ is isomorphic to
a submanifold $S_a=\{w\in \Omega~|~Y(1,e,w)=a\}$, where
$Y$ is a solution of rough differential equation,
$dY_t=Y_td \bar{w}_t$ and $\bar{w}$ is a geometric rough path lift of
$w\in \Omega\subset W$.
Here, $W$ is a classical Wiener space and $\Omega$ is the subset for which
the smooth rough path lift $\overline{w^N}$ of the dyadic polygonal approximation
$w^N$ converges in rough path topology.
The isomorphism is given by the mapping 
$Y : S_a\ni w\mapsto Y(\cdot,e,w)\in \Peaomega(\subset\Pea)$.
By using the isomorphism, we can reduce several problems in $\Pea$ to 
the corresponding problem in $S_a$.
The finite measure on $S_a$ associated with the positive generalized Wiener functional
$\delta_a(Y(1))$ in the sense of Watanabe plays important role.
In Section 2.4, we define the operator $L_{\la,T_i,W_0}$ in general settings.
In Section 2.5, we recall the refined version of Gross's log-Sobolev inequality
with a potential function on $\Pea$ and prove necessary semi-boundedness theorem
for $\E_{\la,\D}(f,f)=\int_{\D}|\nabla f|^2d\nu_{\la,\D}$
by using the inequality.

In Section 3, we construct local coordinate system on $S_a$ and prove a
change of variable formula by using 
estimates in rough path analysis. This also provides a 
local coordinate system on
$\Peaomega$ via a isomorphism which is given in Section 2.3.

In Section 4, we introduce cut-off functions in the topology of rough path
and approximate eigenfunctions of $-\la^{-1}L_{\la,\D}$ by using the local 
coordinate given in Section 3.

In Section 2.5, we give a lower bound of $\E_{\la,\D}(f,f)$.
The inequality contains integral of certain exponential functional.
We need to show the integral remains finite under the limit $\la\to\infty$.
In Section 5, we prove such an estimate.

In Section 6, we prove our main theorem.

\section{Preliminary and statement of main results}

\subsection{Witten Laplacians acting on functions on pinned path groups}

Let $G$ be a $d$-dimensional compact connected Lie group.
Denote the unit element by $e$.
We denote the Lie algebra of $G$ by
$\g$ which is identified with 
$T_{e}G$.
Actually, there exists a positive integer
$n$ such that $G$ is isomorphic to a Lie subgroup of
$n$-dimensional unitary group $SU(n)$ and
the Lie algebra is isomorphic to the Lie subalgebra of
${\mathfrak su}(n)$.
In this case, $e$ corresponds to the identity matrix $I$.
By this result, we may assume that $G$ is a matrix group.
That is $G\subset SU(n)\subset M(n,\CC)\cong \CC^{n^2}$
and $\g\subset \mathfrak{su}(n)\subset M(n,\CC)\cong \CC^{n^2}$.
$M(n,\CC)$ denotes the set of all $n\times n$-matrices whose elements
are complex numbers.
 Let $x, y\in G$. 
We may write $xy=L_xy=R_yx$, that is $L_x$ and $R_y$ are 
the left multiplication and the right multiplication respectively.
By identifying $T_yG$ with the set $\{vy~|~v\in \g\}$,
we may write $(R_y)_{\ast}v=vy$.
Also, as usual, we write $Ad(x)y=xyx^{-1}$.
The derivative of the mapping $Ad(x) : G\to G$ at $e$
is also written as $Ad$ which is a linear map on $\mathfrak{g}=T_eG$.
Since we identify $G$  and $\g$ with a subgroup and Lie subalgebra of $M(n,\CC)$,
$Ad(x)v$ $(v\in \g)$ is identified with the matrix which is obtained by the
matrix product $xvx^{-1}$ $(v\in \g)$.
We write $[u,v]=\ad(u)v$.
Note that $[u,v]=uv-vu$ if we view $u,v$ as the elements of $\g$,
where $uv, vu$ is usual matrix product.

For $A,B\in M(n,\CC)$, define an inner product
$(A,B)=\tr AB^{\ast}$.
This defines an inner product on the real vector space $\mathfrak{su}(n)$
which is invariant by the $Ad$ action
and hence, a bi-invariant Riemannian metric on
$G$.
We fix an orthonormal basis of $\g$, say,  $\{\ep_i\}_{i=1}^d$ and we identify
$\mathfrak{g}$ with $\RR^d$ by the mapping $\ep_i\to e_i$ $(1\le i\le d)$,
where $\{e_i\}_{i=1}^d$ denotes the standard basis of $\RR^d$.
Note that $\ep_i\in \su(n)\subset M(n,\CC)$ holds and
$\ep_i$ is a matrix.
We denote by $dx$ the Riemannian volume measure (Haar measure) on $G$
which is bi-invariant by the bi-invariance of the metric.
For $A\in \g$, $e^A, \exp A$ denote the matrix exponential element.
This exponential map coincides with the exponential map 
in Riemannian geometry sense starting from $e$.

We now introduce the Brownian motion and pinned Brownian motion on $G$.
Let $P_{e}(G)$ be a set of continuous paths
$\gamma(t)$~$(0\le t\le 1)$ with values in $G$
with $\gamma(0)=e$.
Let $\la>0$ which is a large parameter and the inverse corresponds to
small semi-classical parameter in quantum physics.
There exists a probability measure which is called the Brownian motion
measure $\nu_{\la}$ on $P_{e}(G)$ such that
\begin{align*}
&\nu_{\la}\left(\gamma_{t_1}\in A_1, \ldots, 
\gamma_{t_N}\in A_N
\right)\\
& =\int_{A_1}dx_1\cdots\int_{A_N}dx_N
\prod_{i=1}^{N}p\left(\la^{-1}(t_{i}-t_{i-1}),x_{i-1},x_i\right),
\end{align*}
where $0=t_0<\cdots <t_N=1,\, x_0=e$ and $N\in \NN$.
Here $p(t,x,y)$ denotes the heat kernel of
$e^{t\Delta/2}$, where $\Delta$ is the Laplace-Bertlami operator defined by
the Riemannian metric. 
Let $\Pea$ be a subset of $P_{e}(G)$
such that $\gamma(1)=a$.
There exists a probability measure $\nu_{\la,a}$
which is called the pinned Brownian motion
measure on $\Pea$ such that
\begin{align*}
&\nu_{\la,a}\left(\gamma_{t_1}\in A_1, \ldots, \gamma_{t_N}\in A_N
\right)\\
&=p(\la^{-1},e, a)^{-1}\int_{A_1}dx_1\cdots\int_{A_N}dx_N
\prod_{i=1}^{N+1}p\left(\la^{-1}(t_{i}-t_{i-1}),x_{i-1},x_i\right),
\end{align*}
where $0=t_0<\cdots<t_{N+1}=1, x_0=e, x_{N+1}=a$.
Let 
\begin{align*}
H&=\left\{
h : [0,1]\to \RR^d~\Big |~
h(0)=0, \|h\|_{H}^2:=\int_0^1|\dot{h}(t)|^2dt<\infty
\right\},\\
H_{0,0}&=
\left\{h\in H~\Big |~ h(1)=0\right\}.
\end{align*}
The notation $H_{0,0}$ may be unusual notation but
we will use the notation $H_0$ to denote more general subspace of $H$
later (see Definition~\ref{W_0}).
Below, we use the notation $X$ to denote 
$\Pea, P_{e}(G)$.

Let $\FC(X)$ be the set of
smooth cylindrical functions on $X$.
That is,
$\FC(X)$ consists of functions $F: X\to\RR$ such that there exists
$f\in C^{\infty}(G^N)$ $(N\in \NN)$ and a time sequence
$\{0<t_1<\cdots<t_N\le 1\}$ satisfying
$F(\gamma)=f(\gamma(t_1),\ldots,\gamma(t_N))$.
We define the $H$-derivative $(\nabla F)(\gamma)$
of $F(\gamma)\in \FC(X)$
by the unique element of 
$H$ (if $X=P_e(G)$)~(or $H_{0,0}$ if $X=\Pea$) satisfying that
\begin{align*}
&(\nabla F)(\gamma)[h]=\left((\nabla F)(\gamma), h\right)_H:=
\lim_{\kappa\to 0}\frac{F\left(e^{\kappa h(\cdot)}\gamma(\cdot)\right)
-F(\gamma)}{\kappa},\\
&\qquad  \mbox{for all $h\in H$ (if $X=P_e(G)$) 
(or $h\in H_{0,0}$ if $X=\Pea$)}.
\end{align*}
Note that we identify $\RR^d$ with $\g$ by using the standard basis 
$\{e_i\}\subset \RR^d$
and the orthonormal system $\{\ep_i\}\subset \g\subset \su(n)$.
That is, $e^{\kappa h(t)}\gamma(t)$
can be defined as the elements in matrix.
Typical examples of derivatives are as follows:
\begin{align}
 \nabla_h \gamma_t=h_t\gamma_t, \quad 
\nabla_h\gamma^{-1}_t=-\gamma_t^{-1}\nabla_h\gamma_t\gamma_t^{-1}
=-\gamma^{-1}_th_t.\label{derivative of gamma}
\end{align}
Explicitly, we have
\begin{align*}
 (\nabla F)(\gamma)_t&=
\begin{cases}
\sum_{i=1}^N(R_{\gamma(t_i)})_{\ast}^{-1}
(\partial f)_i(\gamma)t_i\wedge t\in H,&
\text{if $X=P_e(G)$},\\
 \sum_{i=1}^N(R_{\gamma(t_i)})_{\ast}^{-1}
(\partial f)_i(\gamma)(t_i\wedge t-t_it)\in H_{0,0},& 
\text{if $X=\Pea$},
\end{cases}
\end{align*}
where $(\partial f)_i(\gamma)$ is the derivative with respect to 
$i$-th element $\gamma(t_i)$ and
$(R_a)_{\ast} : T_xG\to T_{xa}G$ is the derivative of 
the right multiplication mapping,
$R_a(x)=xa$.
In other words, we consider the Bismut tangent space 
along $\gamma$ by using
the right invariant connection on $G$.
Note that the tangent space depends on the choice of the connection on $G$.
For example, the derivative which is defined by the Levi-Civita connection and
left invariant connection
are totally different from the one in the above (\cite{driver2}).
We now define a Witten Laplacian $-L_{\la,\Pea}$ acting on functions on $\Pea$.
First note that $\FC(\Pea)$ is a dense subset of $L^2(\Pea,\nu_{\la,a})$.
We consider the following densely defined
symmetric form $\E_{\la}$ on $L^2(\Pea,\nu_{\la,a})$:
\begin{align*}
 \E_{\la}
(F,G)&=\int_{\Pea}\left((\nabla F)(\gamma),(\nabla G)(\gamma)\right)_{H_{0,0}}
d\nu_{\la,a}(\gamma), 
\end{align*}
where $F, G\in \FC(\Pea)$.
We have the integration by parts formula
\begin{align*}
 \int_{\Pea}\nabla_hF(\gamma)G(\gamma)d\nu_{\la,a}(\gamma)=
\int_{\Pea}F(\gamma)\left\{
-\nabla_hG(\gamma)+\la\,(b,h)G(\gamma)\right\}
d\nu_{\la,a}(\gamma),
\end{align*}
where $b(t)$ is the stochastic development of $\gamma_t$ by the right
invariant connection, that is,
$
 b(t)=\int_0^t\circ d\gamma_s\gamma_s^{-1},
$
and $(b,h):=\int_0^1(\dot{h}_s,db_s)$.
Here $\circ d\gamma_s\gamma_s^{-1}$ denotes
the Stratonovich integral and the product is a matrix product.

We refer the readers for $b(t)$ and $(h,b)$ to
Remark~\ref{development of path}.
By using this integration by parts formula and the denseness of
the set 
$\{\sum_{i=1}^NF_i(\gamma)h_i~|~F_i\in \FC(\Pea),
h_i\in H_{0,0},\,
N\in \mathbb{N}\}$
in $L^2(\Pea\to H_{0,0},\nu_{\la,a})$,
we see that if $F_1=F_2\in \FC(\Pea)$ $\nu_{\la,a}$-a.s., 
then $\nabla F_1=\nabla F_2$ $\nu_{\la,a}$ a.s. holds.
Hence, $(\nabla F(\gamma),\nabla G(\gamma))$ can be defined for
the equivalence class of $\nu_{\la,a}$-a.s.equal $F, G\in \FC(\Pea)$.
Consequently,
we can check that $\E_{\la}$ is a symmetric form for $\nu_{\la,a}$
almost surely equal equivalence class of $\FC(\Pea)$.
Also, by using the integration by parts formula, it is easy to see that
it is closable.
We consider the smallest closed extension Dirichlet form
$\E_{\la}$ and we denote the domain of $\E_{\la}$ by 
$\rD(\E_{\la})$ and the nonnegative generator and the domain 
by $-L_{\la,\Pea}$ and
$\rD(L_{\la,\Pea})$ respectively.
Note that for $F\in\rD(\E_{\la})$, 
$\nabla F\in L^2(\Pea\to H_{0,0},\nu_{\la,a})$ can be defined.
We now consider a Dirichlet form defined on a subset
of $\Pea$ with the Dirichlet boundary condition.
To this end, we prepare a lemma.

\begin{lem}\label{Dirichlet laplacian denseness}
 Let $\varphi\in \rD(\E_{\la})$ and we fix a positive number $R$.
Let $\D=\{\gamma\in \Pea~|~\varphi(\gamma)<R\}$ and suppose that
$\nu_{\la,a}(\D)>0$.
Then the subset of functions on $\Pea$
\begin{align*}
V_{\varphi,R}=\{F(\gamma)\chi(\varphi(\gamma))~|~F\in \FC(\Pea),\, 
\text{$\chi$ is a $C^1_b$ function on $\RR$ satisfying
$\supp\,\chi\subset (-\infty,R)$}
\}
\end{align*}
is dense in $L^2(\D,\nu_{\la,a})$ and for any
 $\tilde{F}\in V_{\varphi,R}$, 
$\tilde{F}(\gamma)=0$ 
for $\nu_{\la,a}$-almost all 
$\gamma\in \D^\complement$
and $\tilde{F}\in \rD(\E_{\la})$.
\end{lem}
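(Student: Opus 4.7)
The plan is to establish the three claims---membership of $V_{\varphi,R}$ in $\rD(\nabla)$, the vanishing on $\D^c$, and $L^2$-density in $L^2(\D,\nu_{\la,a})$---in that order. All three are consequences of standard manipulations with the closable derivation $\nabla$ and of the density of $\FC(\Pea)$ in $L^2(\Pea,\nu_{\la,a})$.

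First I would verify $V_{\varphi,R}\subset\rD(\nabla)$. For $G=F\chi(\varphi)\in V_{\varphi,R}$, applying the chain rule for the closable $H$-derivative to $\varphi\in\rD(\E_\la)=\rD(\nabla)$ yields $\chi(\varphi)\in\rD(\nabla)$ with $\nabla\chi(\varphi)=\chi'(\varphi)\nabla\varphi$, and this function is bounded since $\chi\in C^1_b$. Because $G$ is compact, $F\in\FC(\Pea)$ is bounded together with $\nabla F$, and the product rule then yields $G\in\rD(\nabla)$ with
\[
 \nabla G = \chi(\varphi)\,\nabla F + F\,\chi'(\varphi)\,\nabla\varphi.
\]
The vanishing on $\D^c$ is immediate: if $\varphi(\gamma)\geq R$, then $\varphi(\gamma)\notin\supp\chi$, so $\chi(\varphi(\gamma))=0$ and hence $G(\gamma)=0$.

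For $L^2$-density, given $f\in L^2(\D,\nu_{\la,a})$ I would extend by zero to $\tilde f\in L^2(\Pea,\nu_{\la,a})$ and choose $F_n\in\FC(\Pea)$ with $F_n\to\tilde f$ in $L^2(\Pea,\nu_{\la,a})$. Next, I would take cutoff functions $\chi_k\in C^1_b(\RR)$ satisfying $0\leq\chi_k\leq 1$, $\chi_k\equiv 1$ on $(-\infty,R-1/k]$, and $\supp\chi_k\subset(-\infty,R-1/(2k))\subset(-\infty,R)$, constructed by mollifying a piecewise-linear function. Then $\chi_k(\varphi(\gamma))\to \ch_\D(\gamma)$ pointwise as $k\to\infty$, and since $F_n$ is bounded on the probability space $(\Pea,\nu_{\la,a})$, dominated convergence gives $F_n\chi_k(\varphi)\to F_n\ch_\D$ in $L^2(\Pea,\nu_{\la,a})$ for each fixed $n$. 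A standard diagonal choice then produces $G_n=F_n\chi_{k(n)}(\varphi)\in V_{\varphi,R}$ with $G_n\to f$ in $L^2(\D,\nu_{\la,a})$.

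The lemma is essentially a routine consequence of the chain and product rules for $\E_\la$ combined with the density of smooth cylindrical functions, so I do not anticipate a real obstacle. The only mildly delicate point is the simultaneous requirement that $\chi_k$ lie in $C^1_b$ and that $\chi_k(\varphi)\to\ch_{\{\varphi<R\}}$ pointwise, which the shifted-and-mollified construction above handles cleanly.
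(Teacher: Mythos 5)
The paper states this lemma without proof, so there is no in-paper argument to compare against; assessed on its own merits, your proof is correct. You correctly read the paper's somewhat garbled clause ``$G(\gamma)=0$ for $\nu_{\la,a}$-almost all $\gamma$'' as asserting vanishing on $\D^\complement$, which is confirmed by the definition of $\rD(\E_{\la,\D})$ that immediately follows. Two small imprecisions worth tidying: ``because $G$ is compact'' should read ``because the Lie group $G$ is compact'', and when you write ``and this function is bounded'' the bounded object should be $\chi(\varphi)$ itself, not its gradient $\chi'(\varphi)\nabla\varphi$ (the latter is merely in $L^2(H)$, since $\nabla\varphi$ need not be bounded); boundedness of $\chi(\varphi)$ paired with boundedness of $F$ and $\nabla F$ is exactly what the product rule requires. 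The chain-rule step also implicitly needs a further smooth approximation of $\chi\in C^1_b$ so that $\chi(\varphi_n)$ can be kept inside $\FC(\Pea)$ before passing to the closure, but this is routine. The density argument via zero-extension, cylindrical approximation of the extension, and a diagonal cutoff $\chi_{k(n)}$ is the natural one and works as written; in particular your choice $\supp\chi_k\subset(-\infty,R-1/(2k))$ ensures $\chi_k(\varphi(\gamma))=0$ whenever $\varphi(\gamma)\ge R$, so the pointwise convergence to $\ch_{\D}$ holds everywhere, not just almost everywhere.
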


\begin{proof}
 The denseness follows from the denseness 
$\FC(\Pea)\subset L^2(\Pea,\nu_{\la,a})$
with an easy calculation.
The remaining part follows from the definition of $\tilde{F}$.
\end{proof}

\begin{rem}\label{development of path}
(1) Note that the pinned Brownian motion $\gamma_t$ under $\nu_{\la, a}$ 
is a semimartingale as well as non-pinned process $\gamma_t$ under $\nu_{\la}$.
Also, $b(t)$ and the stochastic development process 
$w_t=\int_0^t\gamma_s^{-1}\circ d\gamma_s$
by the left invariant connection
are standard Brownian motions 
on $\RR^d(\simeq \g)$ satisfying
$E[(w_t,e_i)(w_s,e_j)]=E[(b_t,e_i)(b_s,e_j)]=\la^{-1}\delta_{i,j}t\wedge s$
respectively under the probability measure $\nu_{\la}$.
We refer the readers to \cite{g2}, \cite{hsu2}, \cite{driver1}, \cite{driver1-2},
\cite{a2000} 
for these results
as well as the integration by parts formula.
In this paper, we fix a versions of them in Section~\ref{preliminary} 
with the help of rough path analysis.

\noindent
(2) Of course, we can define a Dirichlet form in 
$L^2(\Pe,\nu_{\la})$ similarly to the above.
\end{rem}

By Lemma~\ref{Dirichlet laplacian denseness},
the following definition is well-defined.

\begin{dfi}\label{Dirichlet form with DBC}
Suppose that $\varphi$ satisfies the assumption 
in Lemma~$\ref{Dirichlet laplacian denseness}$
and we consider the set $\D$ in the lemma.
Let $\nu_{\la,\D}(d\gamma)=\nu_{\la,a}(d\gamma)/\nu_{\la,a}(\D)$
be the normalized probability measure on $\D$.
We consider the Dirichlet form on $L^2(\D,\nu_{\la,\D})$,
\begin{align*}
& \E_{\la,\D}(F,G)=\int_{\D}\left(\nabla F(\gamma),\nabla G(\gamma)\right)
d\nu_{\la,\D}(\gamma),
\end{align*}
with the domain
$
 \rD(\E_{\la,\D})=\left\{
F\in\rD(\E_{\la})~|~
\text{$F=0$ on $\D^{\complement}$
$\nu_{\la,a}$-a.s.$\gamma$}
\right\}.
$
We denote the non-negative generator of this Dirichlet form by
$-L_{\la,\D}$.
\end{dfi}

\begin{rem}\label{remark on the generator}
Let $F\in \rD(L_{\la,\Pea})$ and assume $F=0$ $\nu_{\la,a}$-a.s. 
$\gamma\in \D^\complement$.
Then, by the definition of the Dirichlet form and the generator,
we see that $F\in \rD(L_{\la,\D})$ and
$(L_{\la,\D}F)(\gamma)=(L_{\la,\Pea}F)(\gamma)$ for a.s. $\gamma$ on $\D$.
\end{rem}

We are interested in the asymptotic behavior of the spectrum of
$-L_{\la,\D}$ under $\la\to\infty$.
Actually, we consider more concrete domain $\D$.
We introduce exhaustion domain of $P_{e,a}(G)$.
Since $\gamma_t$ is a path on $G$, it may be natural to consider
the Riemmanian distance $d(\gamma_t,\gamma_s)$, but,
we adopt the norm $|\gamma_t-\gamma_s|$ in the ambient space
$M(n,\CC)$ to evaluate the distance for simplicity.
Note that the two distances are equivalent since $G$ is compact.

\begin{dfi}\label{definition of domain}
(1) For $K,m\in \NN$, $0<\theta<1$, $0<\beta<1/2$, define
\begin{align*}
 \varphi_{\infty,K}(\gamma)&=\max_{i=0,1,\ldots,K-1}\sup_{s,t\in [i/K,(i+1)/K]}
|\gamma_t-\gamma_s|,\\
\varphi_{B, m,\theta}(\gamma)&=\left(\iint_{0<s<t<1}
\frac{|\gamma_t-\gamma_s|^{4m}}{|t-s|^{2+2m\theta}}dsdt\right)^{1/(4m)},\qquad
\varphi_{H,\beta}(\gamma)=\sup_{0\le s<t\le 1}
\frac{|\gamma_t-\gamma_s|}{|t-s|^{\beta}}.
\end{align*}

\noindent
(2)
Let $\delta, M>0$ in addition to $K, m, \theta, \beta$.
Using the above functional, we define subset of $\Pea$ as follows:
\begin{align*}
& D_{K,\delta}
=
\left\{
\gamma~\Big |~\varphi_{\infty,K}(\gamma)<\delta\right\},\quad
B_{M,m,\theta}=
\left\{\gamma~\Big |~
\varphi_{B,m,\theta}(\gamma)<M
\right\},\\
& C_{M,\beta}=\left\{\gamma~\Big |~
\varphi_{H,\beta}(\gamma)<M
\right\},\quad
 \mathscr{D}_M=\left\{\gamma~\Big |~
|b(1,\gamma)|<M\right\}.
\end{align*}

\noindent
(3) Let $\D$ be one of the above domains.
We call the sets
\begin{align*}
 \{\varphi_{\infty,K}(\gamma)=\delta\},\quad
\{\varphi_{B,m,\theta}(\gamma)=M\},\quad
\{\varphi_{H,\beta}(\gamma)=M\},\quad
\{|b(1,\gamma)|=M\}
\end{align*}
boundary of each of $\D$
and we denote them by $\partial \D$.
Also we say that the boundary measure is $0$ if $\partial\D$ is null set with respect to
$\nu_{\la,a}$.
\end{dfi}

\begin{rem}
(1) $\varphi_{B,m,\theta}(\gamma)$ is the Besov norm of $\gamma$.
The choice of $4m$ is just for the convenience of the calculation below.
Later, we define a similar functional 
$\bar{\Gamma}_{m,\theta}(w)$ for $\bar{w}$ which is a 
geometric rough path lift of
the standard Brownian motion $w_t=\int_0^t\gamma_s^{-1}\circ d\gamma_s$
in Lemma~\ref{cut-off function}.
See also Remark~\ref{remark on Besov norm}.
Recall that, we define
$
 b(t)=\int_0^t\circ d\gamma_s\gamma_s^{-1}
$
as an almost surely defined random variable.
In this paper, we fix a version of $b(t)$ in Lemma~\ref{pathwise gamma} and
prove $b(t)\in \rD(\E_\la)$ in Lemma~\ref{derivative of b1}.

\noindent
(2) 
The boundary measure of $\D=\{\gamma~|~\varphi(\gamma)<R\}$ is 0 is equivalent to
that $R$ is not an atom of the distribution of $\varphi$ 
under $\nu_{\la,a}$.
It is plausible that all boundaries are measure zero according to
the absolute continuity property of
the distribution of several maximum processes
which can be found in \cite{nualart}.
\end{rem}

\begin{lem}
Suppose $\gamma$ is a continuous path.
 \begin{enumerate}
  \item[$(1)$] It holds that $\varphi_{H,\theta/2}(\gamma)\le 
C\varphi_{B,m,\theta}(\gamma)$
and $\varphi_{B,m,\theta}(\gamma)\le C\varphi_{H,\beta}(\gamma)$,
where $\beta>\frac{\theta}{2}+\frac{1}{4m}$.
\item[$(2)$] $C_{K^{\beta}\delta,\beta}\subset D_{K,\delta}$.
 \end{enumerate}
\end{lem}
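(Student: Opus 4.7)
The plan is to handle the two parts of (1) separately and then read off (2) from an elementary bound. None of the three estimates is deep; they amount to bookkeeping of exponents together with one application of the Garsia--Rodemich--Rumsey inequality.

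For the first inequality in (1), I view the integrand defining $\varphi_{B,m,\theta}^{4m}$ as the standard Besov seminorm on $[0,1]$ with integrability exponent $4m$ and regularity $\alpha$ determined by $4m\alpha = 2+2m\theta$, i.e.\ $\alpha = \theta/2 + 1/(2m)$. Applying the Garsia--Rodemich--Rumsey inequality with $\Psi(u)=u^{4m}$ and $p(u)=u^{\alpha}$ converts the finiteness of the double integral into a pointwise modulus of continuity of the form
$$
|\gamma_t-\gamma_s|\;\le\; C\,\varphi_{B,m,\theta}(\gamma)\int_0^{|t-s|} u^{\alpha-1-1/(2m)}\,du
\;=\; C'\,\varphi_{B,m,\theta}(\gamma)\,|t-s|^{\theta/2},
$$
since $\alpha - 1/(2m) = \theta/2 > 0$. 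Taking the supremum over $s<t$ yields $\varphi_{H,\theta/2}(\gamma)\le C\,\varphi_{B,m,\theta}(\gamma)$.

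For the second inequality in (1), I would simply substitute the H\"older bound $|\gamma_t-\gamma_s|\le \varphi_{H,\beta}(\gamma)\,|t-s|^{\beta}$ into the Besov integral:
$$
\iint_{0<s<t<1} \frac{|\gamma_t-\gamma_s|^{4m}}{|t-s|^{2+2m\theta}}\,ds\,dt
\;\le\; \varphi_{H,\beta}(\gamma)^{4m}\iint_{0<s<t<1} |t-s|^{4m\beta-2-2m\theta}\,ds\,dt.
$$
The right-hand integral is finite exactly when $4m\beta-2-2m\theta>-1$, which is the hypothesis $\beta>\theta/2+1/(4m)$; taking $4m$-th roots then delivers the required bound, with a constant depending only on $m,\theta,\beta$.

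For part (2), fix $\gamma\in C_{K^{\beta}\delta,\beta}$ and any $i\in\{0,\ldots,K-1\}$. For $s,t\in[i/K,(i+1)/K]$ one has $|t-s|\le 1/K$, so
$$
|\gamma_t-\gamma_s|\;\le\; \varphi_{H,\beta}(\gamma)\,|t-s|^{\beta}\;<\;K^{\beta}\delta\cdot K^{-\beta}=\delta.
$$
Taking the supremum over $s,t$ and the maximum over $i$ gives $\varphi_{\infty,K}(\gamma)<\delta$, i.e.\ $\gamma\in D_{K,\delta}$. The only nonelementary ingredient in the whole lemma is the invocation of GRR in the first step, which is entirely classical, so I anticipate no substantive obstacle.
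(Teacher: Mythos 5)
Your proposal is correct and follows essentially the same route as the paper, which simply defers part (1) to Remark~\ref{remark on Besov norm} (where the Besov-controls-H\"older direction is the GRR estimate cited to \cite{a2011} and the converse is the same direct substitution under $\beta>\frac{\theta}{2}+\frac{1}{4m}$) and calls part (2) an elementary calculation. Your write-up just supplies the details the paper omits, and the exponent bookkeeping checks out.
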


\begin{proof}
We refer the proof of (1) to Remark~\ref{remark on Besov norm}.
The proof of (2) is an elementary calculation.
\end{proof}

\begin{lem}
It holds that
$\varphi_{\infty,K}, \varphi_{B,m,\theta}, \varphi_{H,\beta}, |b(1)|\in 
\rD(\E_{\la})$.
\end{lem}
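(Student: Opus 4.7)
The plan is to exhibit, for each of the four functionals $\varphi$, a sequence $F_n\in\FC(\Pea)$ of smooth cylindrical approximations such that $F_n\to\varphi$ in $L^2(\Pea,\nu_{\la,a})$ and $\sup_n\E_\la(F_n,F_n)<\infty$. Closedness of $(\E_\la,\rD(\E_\la))$ then forces $\varphi\in\rD(\E_\la)$: extract a weakly convergent subsequence of $\nabla F_n$ in $L^2(\Pea\to H_{0,0},\nu_{\la,a})$, apply Mazur's lemma to obtain strongly $\E_\la$-convergent convex combinations, and identify the limit as $\varphi$ by lower semicontinuity of the form.

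The basic input for the gradient estimates is \eqref{derivative of gamma}, which gives $\nabla_h(\gamma_t-\gamma_s)=h_t\gamma_t-h_s\gamma_s$, and hence the pointwise bound
\begin{align*}
|\nabla(\gamma_t-\gamma_s)|_{H_{0,0}} \le C\bigl(|t-s|^{1/2}+|\gamma_t-\gamma_s|\bigr),
\end{align*}
using $|h_t-h_s|\le|t-s|^{1/2}\|h\|_H$ and compactness of $G\subset M(n,\CC)$. For $\varphi_{\infty,K}$ and $\varphi_{H,\beta}$ I would replace the continuous supremum by a supremum over the first $N$ pairs in a dense enumeration of dyadic rationals and then mollify the finite maximum by a $C^\infty$ softmax; the bounded Lipschitz constant of softmax combined with the display above yields $|\nabla F_n|_{H_{0,0}}\le C(1+\varphi_{H,\beta}(\gamma))$ uniformly in $n$, whose square is $\nu_{\la,a}$-integrable by Fernique-type moment bounds for Hölder norms of pinned Brownian motion on a compact group. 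For $\varphi_{B,m,\theta}$ I would approximate the double Besov integral by Riemann sums over dyadic partitions, each a smooth cylindrical function of the finitely many marginals. For $|b(1)|$ I would approximate $b(1)=\int_0^1\circ d\gamma_s\,\gamma_s^{-1}$ by Stratonovich Riemann sums, relying on the semimartingale property of $\gamma$ under $\nu_{\la,a}$ for $L^2$-convergence and on Burkholder–Davis–Gundy together with the gradient estimates above for the uniform energy bound.

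The main obstacle is the Besov functional $\varphi_{B,m,\theta}$: the singular kernel $|t-s|^{-2-2m\theta}$ forces a careful balance between the exponent $m$ and the available Gaussian moments of the path increments, and one must additionally handle the chain rule for $x\mapsto x^{1/(4m)}$ at points where its argument is small. I would resolve this by choosing $m$ large, by truncating the integral near the diagonal before differentiating, and by invoking the comparison $\varphi_{B,m,\theta}\le C\varphi_{H,\beta}$ from the preceding lemma to bootstrap finiteness of every $L^p$-moment. A secondary technical point for $|b(1)|$ is that convergence of the Stratonovich Riemann sums must be verified in the graph norm of $\nabla$ rather than only in $L^2$; this can be checked by passing to Itô form and controlling the Malliavin derivatives of the resulting iterated integrals via the explicit $H$-derivative formula for $\gamma_t$.
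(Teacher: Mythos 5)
Your overall strategy---explicit $H$-derivative computations for finite-dimensional approximants, uniform energy bounds, then the closedness of $(\E_\la,\rD(\E_\la))$---is the same as the paper's, and your treatment of $|b(1)|$ via Stratonovich Riemann sums with control of the derivatives matches the paper's Lemma \ref{derivative of b1} (which uses the sums $K^N_t$ and the pathwise convergence from Lemma \ref{pathwise gamma}). The substantive difference is in the gradient estimate for the increments. Your bound $|\nabla(\gamma_t-\gamma_s)|_{H_{0,0}}\le C(|t-s|^{1/2}+|\gamma_t-\gamma_s|)$ is the naive one; the paper computes $\nabla_h|\gamma_t-\gamma_s|^{2}$ exactly and finds that, because $\gamma$ is unitary and $h$ is $\su(n)$-valued, the derivative is $\tr\{(\Gamma_{s,t}-\frac{\tr\Gamma_{s,t}}{n}I)(h_t-h_s)^{\ast}\}$ with $|\Gamma_{s,t}|\le 2|\gamma_t-\gamma_s|$, so only the increment $h_t-h_s$ appears and one gets the sharper bound $\|\nabla|\gamma_t-\gamma_s|\|_{H_{0,0}}\le C|t-s|^{1/2}$ \emph{uniformly in} $\gamma$. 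This makes $\varphi_{B,m,\theta}$, $\varphi_{H,\beta}$ and $\varphi_{\infty,K}$ Lipschitz in the $H$-direction (gradient bounded by a deterministic constant), whereas your route only yields $|\nabla\varphi|\le C(1+\varphi)$ and therefore has to import Fernique-type second-moment bounds for the H\"older and Besov norms under $\nu_{\la,a}$. That is workable, but it is genuinely heavier machinery than the paper needs, and the Lipschitz property is reused later, so the cancellation is worth noticing.

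Two smaller points. First, the non-differentiability of $x\mapsto|x|$ at coincidence points $\gamma_t=\gamma_s$ affects $\varphi_{\infty,K}$ and $\varphi_{H,\beta}$ just as much as the Besov functional; the paper regularizes with $g_\ep=\sqrt{|\gamma_t-\gamma_s|^2+\ep}$ before passing to the limit, and your softmax only smooths the outer maximum, not the inner norms, so you should add this step. Second, you cannot ``choose $m$ large'': the pair $(m,\theta)$ is fixed in the statement, and the condition that actually makes the Besov computation close (finiteness of $\iint|t-s|^{2m-2-2m\theta}\,ds\,dt$ after H\"older) is $2m(1-\theta)>1$, which is the standing assumption under which $\varphi_{B,m,\theta}$ is finite on Brownian paths in the first place; the argument should be run for the given exponents rather than bootstrapped through a different $m$.
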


\begin{proof}
Let $f_m(\gamma)=|\gamma_t-\gamma_s|^{2m}$.
Then $f_m(\gamma)=
\left(\tr(\gamma_t-\gamma_s)(\gamma_t^{\ast}-\gamma_s^{\ast}))\right)^{m}$.
Thus, for $h\in H_{0,0}$ and $\kappa\in \RR$
\begin{align*}
 f_m(e^{\kappa h}\gamma)=
\left[\tr\left\{
\left((\gamma_t-\gamma_s)+\kappa(h_t\gamma_t-h_s\gamma_s)\right)
\left((\gamma_t^{\ast}-\gamma_s^{\ast})-
\kappa(\gamma_t^{\ast}h_t-\gamma_s^{\ast}h_s)\right)\right\}
+O(\kappa^2)\right]^m
\end{align*}
This implies
\begin{align*}
 \frac{\partial}{\partial\kappa}f_m(e^{\kappa h}\gamma)\Big|_{\kappa=0}&=
m|\gamma_t-\gamma_s|^{2(m-1)}
\tr\left((\gamma_t\gamma_s^{\ast}-\gamma_s\gamma_t^{\ast})(h_t-h_s)^{\ast}\right)\\
&=m|\gamma_t-\gamma_s|^{2(m-1)}
\tr\left\{(\Gamma_{s,t}-\frac{\tr\, \Gamma_{s,t}}{n} I)(h_t-h_s)^{\ast}\right\},
\end{align*}
where
$\Gamma_{s,t}=\gamma_t\gamma_s^{\ast}-\gamma_s\gamma_t^{\ast}$.
Note that $\Gamma_{s,t}^{\ast}=-\Gamma_{s,t}$ and 
$\tr\, \Gamma_{s,t}^{\ast}=-\tr\, \Gamma_{s,t}$.
The second result implies that $\tr\, \Gamma_{s,t}\in \sqrt{-1}\RR$.
Thus, $\Gamma_{s,t}-\frac{\tr\, \Gamma_{s,t}}{n}I\in \su(n)$.
This shows
\[
 (\nabla f_m)(\gamma)_u=m|\gamma_t-\gamma_s|^{2(m-1)}
\left(\Gamma_{s,t}-\frac{\tr\, \Gamma_{s,t}}{n}I\right)
\left\{\chi_{t}(u)-\chi_{s}(u)\right\},
\]
where $\chi_{t}(u)=u\wedge t-ut$.
Noting
 $\Gamma_{s,t}=\gamma_t\gamma_s^{-1}-\gamma_s\gamma_t^{-1}
=(\gamma_t-\gamma_s)\gamma_t^{-1}+(\gamma_t-\gamma_s)\gamma_s^{-1}$,
we get $|\Gamma_{s,t}|\le 2|\gamma_t-\gamma_s|$.
By using this and a limiting argument, we obtain
$\varphi_{B,m,\theta}^{4m}\in \rD(\E_{\la})$ and
\begin{align*}
 \nabla_h\varphi_{B,m,\theta}^{4m}(\gamma)=
\iint_{0<s<t<1}
\frac{2m|\gamma_t-\gamma_s|^{2(2m-1)}
\left(\Gamma_{s,t}-
\frac{\tr \Gamma_{s,t}}{n}I,h_t-h_s\right)}{|t-s|^{2+2m\theta}}dsdt.
\end{align*}
Using H\"older's inequality, the estimate on $\Gamma_{s,t}$ and 
$\varphi_{B,m,\theta}(h)\le C\|h\|_{H_{0,0}}$, we obtain
$|\nabla_h\varphi_{B,m,\theta}(\gamma)|\le C_m\|h\|_{H_{0,0}}$.
This also implies $|\nabla\varphi_{B,m,\theta}|_{H_{0,0}}$ is bounded, 
that is a Lipschitz function.

We next prove $\varphi_{H,\beta}\in \rD(\E_{\la})$.
The proof is similar to that of Lemma~2.2 in \cite{a2000} 
as follows.
Let $g(\gamma)=|\gamma_t-\gamma_s|$ and
$g_{\ep}(\gamma)=\sqrt{f_1(\gamma)+\ep}$.
Then $\lim_{\ep\to 0}g_{\ep}(\gamma)=g(\gamma)$ for all $\gamma$.
Noting that $|\Gamma_{s,t}-\frac{\tr \Gamma_{s,t}}{n}I|\le 
C|\gamma_t-\gamma_s|$,
\begin{align*}
 (\nabla g_{\ep})(\gamma)_u&=\frac{\nabla f_1(\gamma)}{2g_{\ep}(\gamma)}
=\frac{(\Gamma_{s,t}-\frac{\tr \Gamma_{s,t}}{n}I)(\chi_t-\chi_s)}
{\sqrt{|\gamma_t-\gamma_s|^2+\ep}}
\to 
\begin{cases}
0 & \text{if $\gamma_t=\gamma_s$}\\
\frac{(\Gamma_{s,t}-\frac{\tr \Gamma_{s,t}}{n}I)(\chi_t-\chi_s)}
{|\gamma_t-\gamma_s|} & \text{if $\gamma_t\ne \gamma_s$}
\end{cases}
\end{align*}
Consequently, $g\in \rD(\E_{\la})$ and
\begin{align}
 (\nabla g)(\gamma)&
=\frac{(\Gamma_{s,t}-\frac{\tr \Gamma_{s,t}}{n}I)(\chi_t-\chi_s)}
{|\gamma_t-\gamma_s|}1_{(0,\infty)}(|\gamma_t-\gamma_s|),\label{nablag}
\end{align}
where $1_{(0,\infty)}$ is the indicator function of $(0,\infty)$.
Moreover, $\|(\nabla g)(\gamma)\|_{H_{0,0}}\le C (t-s)^{1/2}$,
where $C$ is a constant independent of $\gamma$.
Let $\{t_i\}_{i=1}^{\infty}$ be a dense subset of $[0,1]$ such that
$t_i\ne t_j$ if $i\ne j$.
Then $\varphi_{\beta,N}(\gamma)=\max_{1\le i\ne j\le N}
\frac{|\gamma_{t_i}-\gamma_{t_j}|}{|t_i-t_j|^{\beta}}$
converges to $\varphi_{H,\beta}(\gamma)$ for all $\gamma\in \Pea$
and in $L^2(\nu_{\la,a})$ as $N\to\infty$.
By approximating the maximum function by $\ell^{n}$ norm as $n\to\infty$
and using (\ref{nablag}), we get
$\varphi_{\beta,N}\in \rD(\E_\la)$ and 
$\|(\nabla\varphi_{\beta,N})(\gamma)\|_{H_{0,0}}\le C\max_{1\le i\ne j\le N}
|t_i-t_j|^{(1/2)-\beta}\,\, a.s.$ 
Hence letting $N\to\infty$, we obtain 
$\varphi_{H,\beta}\in\rD(\E_\la)$ and
$\|\nabla\varphi_{H,\beta}(\gamma)\|_{H_{0,0}}\le C$ $a.s.$.
The proof for $\varphi_{\infty,K}$ is similar to that of
$\varphi_{H,\beta}$.
We postpone the proof of $|b(1)|\in\rD(\E_{\la})$ 
to Lemma~\ref{derivative of b1}.
\end{proof}

\subsection{Hessian of the energy function of 
\texorpdfstring{$H^1$}-paths and main theorem}
\label{statement of main theorem}

Let $\PeH$ and $\PeaH$ denote the subset of all paths $\gamma\in \Pe$
and $\gamma\in \Pea$ respectively,
whose energies $E(\gamma)
:=\frac{1}{2}\int_0^1|\dot{\gamma}(t)|^2dt$
are finite.
By using the right invariant connection on $G$, we define
the right invariant Riemmanian metric $\langle \cdot,\cdot\rangle$ 
on $\PeaH$ as follows.
\begin{enumerate}
 \item[(1)] 
Let $h :[0,1]\to TM$ be the vector field along $\gamma$, that is,
$h(t)\in T_{\gamma(t)}G$ $(0\le t\le 1)$.
\item[(2)]
We say that the vector field $h$ along $\gamma$
belongs to the $H^1$ tangent space of $\gamma\in \PeaH$
if and only if $\{(R_{\gamma(t)})_{\ast}^{-1}h(t)\}_{0\le t\le 1}\in H_{0,0}$
holds
and the Riemannian metric is defined by
$\langle h,h\rangle:=\|(R_{\gamma(t)})_{\ast}^{-1}h(\cdot)\|_{H_{0,0}}^2$.
\end{enumerate}

We consider the differential equation
\begin{align}
 \dot{Y}(t,e,h)=Y(t,e,h)\dot{h}(t),\qquad Y(0,e,h)=e.\label{ODE}
\end{align}
Then the inverse map $Y\mapsto h$ defines the Cartan development map
defined by the left-invariant connection.
Let $S_{a}^H=\{h\in H~|~Y(1,e,h)=a\}$.
Then it is easy to see that $S_{a}^H$ is a smooth submanifold in $H$ and 
the mapping $Y : h\in S_{a}^H\mapsto Y(\cdot,e,h)\in\PeaH$ is a diffeomorphism mapping.
Note that we can consider the tangent space $T_hS_a^H$ $(h\in S_a^H)$ and
the induced Riemmanian metric on $S_a^H$ by $H$.
Then we have
\begin{pro}\label{derivative smooth case}
The derivative $DY : T_{h}S_a^H\to T_{Y(h)}\PeaH$ 
is a Riemmanian isometry mapping.
More precisely, we have the following identity:
for any $k\in H$,
\begin{align*}
 (R_{Y(t,e,h)})_{\ast}^{-1}DY(t,e,h)[k]=\int_0^tAd(Y(s,e,h))\dot{k}(s)ds.
\end{align*}
\end{pro}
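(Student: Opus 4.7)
My plan is to establish the explicit formula by differentiating the ODE~\eqref{ODE} with respect to $h$, and then to derive the isometry property from the bi-invariance of the metric on $G$.

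First, I would set $Z(t):=DY(t,e,h)[k]$ and differentiate $\dot{Y} = Y\dot{h}$ in the direction $k\in H$ to obtain the linear variational equation
\begin{align*}
\dot{Z}(t) = Z(t)\dot{h}(t) + Y(t,e,h)\dot{k}(t), \qquad Z(0)=0.
\end{align*}
Right-trivializing by $V(t):=Z(t)Y(t,e,h)^{-1} = (R_{Y(t,e,h)})_{\ast}^{-1}Z(t)$ and using $\dot{Y}Y^{-1} = Y\dot{h}Y^{-1}$, a direct computation yields
\begin{align*}
\dot{V}(t) = Y(t,e,h)\dot{k}(t)Y(t,e,h)^{-1} = Ad(Y(t,e,h))\dot{k}(t),
\end{align*}
so that integrating from $0$ with initial condition $V(0)=0$ gives the claimed integral formula.

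For the isometry statement, I would invoke the bi-invariance of the Riemannian metric, which is equivalent to saying that the inner product on $\g$ is $Ad$-invariant. Hence $Ad(Y(t,e,h))$ is an orthogonal transformation of $\g$ for every $t$, so that
\begin{align*}
\langle DY(h)[k], DY(h)[k]\rangle_{Y(h)} = \int_0^1 |Ad(Y(t,e,h))\dot{k}(t)|^2\,dt = \int_0^1 |\dot{k}(t)|^2\,dt = \|k\|_H^2,
\end{align*}
which is precisely the norm on $T_hS_a^H\subset H$ inherited from $H$.

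To close the argument, I would observe that $T_hS_a^H$ is the kernel of $k\mapsto DY(1,e,h)[k]$, namely those $k\in H$ with $\int_0^1 Ad(Y(s,e,h))\dot{k}(s)\,ds = 0$; this is exactly the condition that the right-trivialized tangent vector lie in $H_{0,0}$. Thus $DY(h)$ sends $T_hS_a^H$ into $T_{Y(h)}\PeaH$, and combined with the length preservation above it is a linear isometry onto its image. I do not expect a genuine obstacle here: once the variational equation is right-trivialized, both the formula and the isometry reduce to elementary ODE manipulation and the $Ad$-invariance of the bi-invariant inner product.
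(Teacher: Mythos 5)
Your proof is correct, and since the paper states this proposition without proof (it is presented as a direct consequence of the smooth ODE~\eqref{ODE}), your argument is exactly the expected one: differentiate the ODE to get the linear variational equation $\dot{Z}=Z\dot h+Y\dot k$, right-trivialize to $V=ZY^{-1}$ so that $\dot V=\mathrm{Ad}(Y)\dot k$, integrate, and invoke $\mathrm{Ad}$-invariance of the bi-invariant inner product on $\g$ for the isometry. Your closing observation that $k\in T_hS_a^H$ iff $V(1)=0$ (equivalently $\int_0^1\mathrm{Ad}(Y(s,e,h))\dot k(s)\,ds=0$, consistent with Definition~\ref{tangent space}) correctly places $V$ in $H_{0,0}$ and completes the identification of the target tangent space.
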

Although there do not exist the pinned Brownian motion measure and
the induced surface measure on
$\PeaH$ and $S_{a}^H$, 
the corresponding rigorous measure-theoretical
objects $\Pea$ and $S_a=\{w~|~Y(1,e,w)=a\}$ 
can be defined by using a solution of
a Stratonovich stochastic differential equation driven by a Brownian motion $w$ 
\begin{align}
 dY(t,e,w)=Y(t,e,w)\circ dw(t),\qquad Y(0,e,w)=e.\label{sde}
\end{align}
We give the precise definition of $S_a$ in Section~\ref{preliminary}
by the solution of a rough differential equation.
Roughly speaking, the mapping $Y : w\in S_a\mapsto \Pea$
is an isomorphism (see Proposition~\ref{isomorphism})
and using this we will construct a local coordinate system around each geodesic
and we reduce the local problem on the curved space $\Pea$ to the problem on
the small neighborhood in an Wiener space in Section~\ref{local coordinate0}.

The following is proved in \cite{a2003} except the final statement in
(4) (ii).
Note that (1) is well-known result.
For example, we refer the readers to \cite{milnor}.

\begin{lem}\label{hessian of E}
 Let $a\in G$ and consider the energy functional $E$ on $\PeaH$.
\begin{enumerate}
 \item[$(1)$] The sets of critical points of $E$ $(\text{the set of geodesics})$
is given by 
$\{l_{\xi}(t)=e^{t\xi}~|~\xi\in V\}$, where
$V$ is the set of all elements $\xi\in \g$ satisfying $e^{\xi}=a$.
When $a$ is not a conjugate point of $e$ along any geodesics,
$V$ is a countable set and
$\lim_{i\to\infty}|\xi_i|_{\g}=\infty$ $(V=\{\xi_i\}_{i=1}^{\infty})$ holds.
\item[$(2)$] The Hessian $\nabla^2E$ at $l_\xi$ is equal to
$I_{H_{0,0}}+T_\xi$, where $T_\xi$ is a self-adjoint 
Hilbert-Schmidt operator on
$H_{0,0}$ given by
\[
 (T_\xi h)(t)=\int_0^t[h(s),\xi]ds-t\int_0^1[h(s),\xi]ds.
\]
\item[$(3)$] There exists an
orthonormal basis 
$\{u_i\}_{i=1}^l\cup\{\tilde{u}_i\}_{i=1}^l
\cup\{w_j\}_{j=1}^{d-2l}$
of $\g$ and positive number $\zeta_i$ such that
\begin{align*}
 \ad(\xi)u_i=2\pi\zeta_i \tilde{u}_i,\quad 
\ad(\xi)\tilde{u}_i=-2\pi\zeta_i u_i,\quad
\ad(\xi)w_j=0,\quad 1\le i\le l,\quad 1\le j\le d-2l.
\end{align*}
\item[$(4)$] 
Let $\Xi(\xi)$ denote the set of all eigenvalues of 
$I_{H_{0,0}}+T_\xi$ counting multiplicities.
\begin{enumerate}
 \item[{\rm (i)}] We have
\begin{align*}
 \Xi(\xi)=\left\{
1\pm \frac{\zeta_i}{m},\,\, 
1\pm \frac{\zeta_i}{m}\,\Big |\,\, m\in \NN,\,\, 1\le i\le l
 \right\}
\cup \Xi_0,
\end{align*}
where $\Xi_0=\{1,1,\ldots\}$.
\item[{\rm (ii)}] Let $G=SU(n)$ and consider the case
$a=D[e^{2\pi\theta_1\sqrt{-1}},\ldots,e^{2\pi\theta_n\sqrt{-1}}]\in G$, where
$\theta_i$ are real numbers satisfying
$\sum_{i=1}^n\theta_i=0$ and
$\theta_i-\theta_j\notin \ZZ$  if $i\ne j$.
Also $D[a_1,\ldots,a_n]$ denotes the diagonal matrix whose $(i,i)$ element
is $a_i$.
Then 
\begin{align*}
 V&=\left\{2\pi\sqrt{-1}D[\theta_1+k_1,\ldots, \theta_n+k_n]~\Big |~
\text{$k_i\in \ZZ$ $(1\le i\le n)$\, and\, $\sum_{i=1}^nk_i=0$}\right\}
\end{align*}
and for $\xi=2\pi\sqrt{-1}D[\theta_1+k_1,\ldots, \theta_n+k_n]$,
$\ad(\xi)^2$ has zero eigenvalues with multiplicities $n-1$,
the positive eigenvalues are
$
\left\{(2\pi)^2\{(\theta_i-\theta_j)+(k_i-k_j)\}^2~\Big 
|~1\le i\ne j\le n\right\}
$
and $\Xi(\xi)$ is given by
\begin{align}
&\left\{1\pm\frac{|(\theta_i+k_i)-(\theta_j+k_j)|}{m},\,\,
1\pm\frac{|(\theta_i+k_i)-(\theta_j+k_j)|}{m}~\Big |~m\in \NN,
\,\, 1\le i<j\le n
\right\}\nonumber\\
&\qquad\qquad\qquad \cup\{1,1,\ldots\}.\label{eigenvalues}
\end{align}
In particular, $a$ is not a conjugate point of $e$ along any geodesics
under the above assumption on
$a$. Moreover, the minimum geodesic is unique.
Hence, $a$ is not a point of the cut-locus of $e$.
Moreover, except the unique global minimal geodesics, 
indices of all geodesics are 
positive even number.
That is, there are no local minimum geodesics between $e$ and $a$ other than
the shortest one.
\end{enumerate}
\end{enumerate}
\end{lem}

\begin{proof}
It suffices to
prove that there are no local minimum geodesics other than the unique minimum one
in the setting (4)(ii).
Let $\xi=2\pi\sqrt{-1}D[\theta_1,\ldots,\theta_n]\in V$ 
be the element which attains the
smallest value of the norm $\|\xi\|_{\g}$ in $V$.
This $(\theta_i)$ should satisfy $|\theta_i-\theta_j|\le 1$ for all
$i,j$.
This follows from the fact:
If $\theta_i-\theta_j>1$, then the element $\tilde{\xi}$
which is obtained by
replacing $\theta_i$ and $\theta_j$ by $\theta_i-1$ and $\theta_j+1$
respectively has smaller norm than $\xi$.
This argument shows the uniqueness of the minimizer $\xi$ also.
Actually 
the assumption $\theta_i-\theta_j\notin \ZZ$ $(i\ne j)$
implies that $|\theta_i-\theta_j|<1$ for all $i, j$.
Clearly we see that all eigenvalues of $I_{H_{0,0}}+T_\xi$ are positive.
Taking (\ref{eigenvalues}) into account,
we see that
\begin{itemize}
 \item[(i)] $a$ is not a conjugate point along any geodesics between
$e$ and $a$,
\item[(ii)] minimal geodesic between $e$ and $a$ is unique
\end{itemize}
which is equivalent to that
$a$ does not belong to the cut-locus of $e$
(see \cite{ghl}).
Let $\xi(k)=v+2\pi\sqrt{-1}(k_1,\ldots,k_n)(\ne \xi)\in V$.
Then, there exist distinct $i$ and $j$ such that
$k_i\ge 1$ and $k_j\le -1$.
For this $i,j$, we obtain
$|(k_i+\theta_i)-(k_j+\theta_j)|\ge |k_i-k_j|-|\theta_i-\theta_j|>2-1>1.$
Hence $1-|(k_i+\theta_i)-(k_j+\theta_j)|<0$.
This implies that 
$I_{H_{0,0}}+T_{\xi(k)}$
 has negative eigenvalues with even multiplicities
which implies the desired results.
\end{proof}

\begin{rem}\label{remark hessian of E}
(1) For simplicity, we write $\Xi(\xi)=\{\zeta_i(\xi)\}_{i=1}^{\infty}$ 
counting multiplicities.

\noindent
(2) In the case of $SU(n)$ and the choice of $a$ in (4), local minimum geodesic
is just a global minimum geodesic only.
When we consider general compact Lie group or if we choose different kind of end point
$a$, there may exist local minimum geodesics other than global one.
In that case, exponentially small eigenvalue appear.
However, we do not study exponential precise asymptotic behavior in this paper
and we leave the related problem to future study.
\end{rem}

Let us recall the results for Witten Laplacians acting on functions
on $\RR^d$.
Let $E$ be a Morse function which has finitely many critical points 
$\{c_i\}_{i=1}^N$ and
the Hessians are nondegenerate at them.
Assume $\nu_{\la}(dx)=\rho_{\la}(x)dx$ is a probability measure on $\RR^d$, 
where $dx$ is the Lebesgue measure,  $\rho_{\la}(x)=e^{-\la E(x)}/Z_{\la}$ and
$Z_{\la}$ is the normalized constant.
Let $-L_{\la}$ be the nonnegative generator of the
Dirichlet form $\mathcal{E}_{\la}(f,f)=\int_{\RR^d}|Df(x)|^2d\nu_{\la}(x)$
on $L^2(\RR^d,\nu_{\la})$.
By using the unitary transformation 
$
M_\la : L^2(\RR^d,\nu_{\la})
\to L^2(\RR^d,dx)
$ defined by $M_\la f=\rho_{\la}^{1/2}f$, we see that
the generator $-L_{\la}$ is unitarily equivalent to
the Schr\"odinger operator
$-H_{\la}=-\Delta+\frac{\la^2}{4}|DE(x)|^2-\frac{\la}{2}\Delta E(x)$
on $L^2(\RR^d,dx)$.
Under appropriate assumptions on $E$, when $\la\to\infty$, the
lowlying spectrum of $-\la^{-1}H_{\la}$ can be approximated by the spectral set of
the quantum harmonic oscillators 
$-H_{i,\la}=-\Delta+\frac{\la^2}{4}D^2E(c_i)((x-c_i)^{\otimes 2})
-\frac{\la}{2}(\Delta E)(c_i)$
which can be obtained by replacing the potential function by the quadratic 
approximate functions of the potential function at $\{c_i\}_{i=1}^N$.
Also the spectral set of $-H_{i,\la}$ in $L^2(\RR^d,dx)$ consists of discrete spectrum
only and explicitly we have
\begin{align}
 \sigma\left(-\la^{-1}H_{i,\la}\right)&=\Biggl\{
\sum_{k=1}^d|\zeta^k_i|n_k+
\sum_{\{k\,|\,\zeta^k_i<0\}}|\zeta^k_i|~\Big |~
n_k\in \ZZ_{\ge 0}\Biggr\},\label{finite dimension}
\end{align}
where
$\{\zeta^k_i\}_{k=1}^d$ is the eigenvalues of $(D^2E)(c_i)$.
This implies that the lowlying spectrum of $-\la^{-1}L_{\la}$
can be approximated by the set of numbers of (\ref{finite dimension})
when $\lambda\to\infty$.
This argument is valid for Witten Laplacians with Dirichlet boundary conditions
under suitable assumptions on the domain and $E$.
By this analogy, in the setting of Lemma~\ref{hessian of E},
for $\xi\in V$, using the set $\Xi(\xi)=\{\zeta_k(\xi)\}_{k=1}^{\infty}$ in the lemma,
the following set of numbers $\Lambda(\xi)$, counting multiplicities,
is a candidate of the approximate lowlying
eigenvalues of approximate operator of $-\la^{-1}L_{\la,\D}$ around $l_{\xi}$
and this set is actually eigenvalues of
an infinite dimensional
Ornstein-Uhlenbeck type operator
$-\la^{-1}L_{\la,T_{\xi},W_0}$ 
which corresponds to $H_{i,\la}$ in finite dimensional cases.
We introduce $-L_{\la,T_{\xi},W_0}$ 
in Section~\ref{cons in wiener space} in a general setting.

\begin{dfi}
\begin{align}
\Lambda(\xi)
&=\left\{
\sum_{k=1}^{\infty}n_k|\zeta_k(\xi)|+E_0(\xi)
~\Big|~n_k\in \ZZ_{\ge 0}\,\,
\sum_kn_k<\infty\right\}\label{approximate ev},
\end{align}
where $E_0(\xi)=\sum_{\{k\,|\,\zeta_k(\xi)<0\}}|\zeta_k(\xi)|$.
\end{dfi}

We give an example of $\Lambda(\xi)$.

\begin{exm}\label{example su2}
We consider the case $G=SU(2)$ and the situation in
Lemma~\ref{hessian of E} (4) (ii).
There are freedom of choice of $\theta_1, \theta_2$.
Here, we choose so that $(\theta_1, \theta_2)$ corresponds to
the shortest geodesic.
Write $|\theta_1|=|\theta_2|=\theta>0$.
By checking the calculation in the proof of Lemma~\ref{hessian of E},
we see that $2\theta<1$.
Also note that $2|\theta+k|>1$ if $k\in \ZZ\setminus \{0\}$.
We can identify $V$ with the set consists of
$\theta(k)$ $(k\in \ZZ)$ which are defined by
\begin{align*}
 \theta(k)=
\begin{cases}
 (\theta+k,-\theta-k) & \text{if $\theta_1=\theta$,}\\
(-\theta-k,\theta+k) &\text{if $\theta_2=\theta$.}
\end{cases}
\end{align*}
Then the spectrum of 
$I_{H_{0,0}}+T_{\theta(k)}$, counting multiplicities, is
\begin{align*}
&\left\{1\pm\frac{2|\theta+k|}{m},\,\,
1\pm\frac{2|\theta+k|}{m}~\Big |~m\in \NN
\right\} \cup\{1,1,\ldots\}.
\end{align*}
Let $k\ge 0$.
Then all negative eigenvalues of $I_{H_{0,0}}+T_{\theta(k)}$ 
are given by
\begin{align*}
 \left\{1-\frac{2(\theta+k)}{m},\quad 1\le m\le 2k\right\}.
\end{align*}
Also all positive eigenvalues, except 1, are
\begin{align*}
 \left\{1+\frac{2(\theta+k)}{m},\quad 1\le m\le 2k\right\},\quad \quad
\left\{1\pm \frac{2(\theta+k)}{m}, \quad m\ge 2k+1\right\}.
\end{align*}
Note that the multiplicity of them are two.
Also
\begin{align}
E_0(\theta(k))=2\sum_{m=1}^{2k}\left(\frac{2(\theta+k)}{m}-1\right)=
4\left(k\sum_{m=2}^{2k}\frac{1}{m}+\theta\sum_{m=1}^{2k}\frac{1}{m}\right).
\label{bottom nonnegative}
\end{align}
We consider the case $k\le -1$.
Then all negative eigenvalues of 
$I_{H_{0,0}}+T_{\theta(k)}$ 
are given by
\begin{align*}
 \left\{1-\frac{2(|k|-\theta)}{m},\quad 1\le m\le 2|k|-1\right\}
\end{align*}
Also all positive eigenvalues, except 1,  are
\begin{align*}
 \left\{1+\frac{2(|k|-\theta)}{m},\quad 1\le m\le 2|k|-1\right\},\quad \quad
\left\{1\pm \frac{2(|k|-\theta)}{m}, \quad m\ge 2|k|\right\}.
\end{align*}
Again the multiplicity of them are two and
\begin{align}
E_0(\theta(k))=2\sum_{m=1}^{2|k|-1}\left(\frac{2(|k|-\theta)}{m}-1\right)=
2\left(2|k|\sum_{m=2}^{2|k|-1}\frac{1}{m}+1-2\theta\sum_{m=1}^{2|k|-1}
\frac{1}{m}\right).
\label{bottom negative}
\end{align}
Consequently, 
for any $k\in \ZZ$, we have
\begin{align}
& \Lambda(\theta(k))
=\Biggl\{
E_0(\theta(k))+\sum_{m=1}^{(|2k|\vee |2k+1|)-1}n_m\left(\frac{2|\theta+k|}{m}-1\right)
+\sum_{m=1}^{(|2k|\vee |2k+1|)-1}n_{m}'\left(\frac{2|\theta+k|}{m}+1\right)\nonumber\\
&\quad\qquad\qquad\qquad
+\sum_{m\ge |2k|\vee |2k+1|}N'_m\left(1+\frac{2|\theta+k|}{m}\right)
+\sum_{m\ge |2k|\vee |2k+1|}N_m\left(1-\frac{2|\theta+k|}{m}\right)+n\nonumber\\
&\quad\qquad\qquad\qquad \Bigg |~
\text{$n_m, n'_m, N'_m, N_m, n\in \ZZ_{\ge 0}$
and $\sum_{m}(n_m+n'_m+N'_m+N_m)<\infty$}
\Biggr\},
\label{identity of eigenvalue1}
\end{align}
where note that
\[
|2k|\vee |2k+1|
=
 \begin{cases}
 2k+1 & \text{$k\ge 0$,}\\
2|k| & \text{$k<0$.}
 \end{cases}
\]
\end{exm}

To state our main theorem,
we need to consider the set of essential spectrum of 
Ornstein-Uhlenbeck type operator
$-L_{\la,T_{\xi},W_0}$ at the geodesic $l_\xi$.

\begin{lem}\label{accumulation point}
 Let $\Lambda(\xi)^a$ be the set of accumulation points or infinite multiplicity
points of $\Lambda(\xi)$.
\begin{enumerate}
 \item[$(1)$] $\Lambda(\xi)^a=\Lambda(\xi)+\NN:=\left\{x+n~\Big |~ x\in 
\Lambda(\xi), 
n\in \mathbb{N}\right\}$ holds and $\Lambda(\xi)^a$ is a closed set.
Here in the definition of $\Lambda(\xi)+\NN$, 
we do not take the multiplicity into account.
Furthermore, $\Lambda(\xi)^a\subset \Lambda(\xi)$.
%That is, the essential spectrum is also an eigenvalue.
\item[$(2)$] $\Lambda(\xi)^a$ coincides with the set of essential spectrum 
of 
$-\la^{-1}L_{\la,T_{\xi},W_0}$ at the geodesic $l_\xi$.
We refer the readers to Theorem~$\ref{cons}$ for the
definition of $-L_{\la,T,W_0}$.

\item[$(3)$] Let $R>0$, $0<r<1$ and define 
\begin{align}
\Sigma_{R,r}=\left\{\left(\cup_{i=1}^N\Lambda(\xi_i)\right)
\cap [0,R]\right\}\setminus
\left(\cup_{i=1}^N(\Lambda(\xi_i)^a+(-r,r))\right),\label{e_1e_L}
\end{align}
counting multiplicities.
Then $\Sigma_{R,r}$ is a finite set.
\end{enumerate}
\end{lem}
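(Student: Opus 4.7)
The plan is to derive (1) by explicitly identifying accumulation and infinite-multiplicity points of $\Lambda(\xi)$ with $\Lambda(\xi)+\NN$; then (3) follows by a short compactness argument, and (2) by invoking the spectral structure of the Ornstein--Uhlenbeck--type operator $-L_{\la,T_\xi,W_0}$ constructed in Section~2.4.

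The structural input for (1) comes from Lemma~\ref{hessian of E}: the eigenvalue $1$ of $I_{H_0}+T_\xi$ has infinite multiplicity via $\Xi_0$, the remaining eigenvalues $1\pm\zeta_i/m$ each have finite multiplicity, and the only accumulation point of the multiset $\{|\zeta_k(\xi)|\}_k$ is $1$; moreover the no--conjugate-point hypothesis yields $c:=\inf_k|\zeta_k(\xi)|>0$. The forward inclusion $\Lambda(\xi)+\NN\subset\Lambda(\xi)^a$ is then immediate: given $y=x+n$ with $x\in\Lambda(\xi)$ and $n\ge 1$, any representation of $x$ extends to infinitely many representations of $y$ by distributing $n$ extra unit summands among the infinitely many $k\in I_0:=\{k:\zeta_k(\xi)=1\}$, so $y\in\Lambda(\xi)$ with infinite multiplicity. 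For the reverse direction I plan to unify the two defining cases of $\Lambda(\xi)^a$: in both, I obtain a sequence of distinct tuples $(n_k^{(j)})_j$ with $\sum_k n_k^{(j)}|\zeta_k(\xi)|$ equal (or converging) to $y-E_0(\xi)$. The lower bound $c>0$ yields a uniform bound on $\sum_k n_k^{(j)}$, hence on the support size, so after subsequencing I may assume each representation has a common length $s$ and write the summand values as $(a_p^{(j)})_{p=1}^s$; a further diagonal subsequence gives $a_p^{(j)}\to a_p^*$, with $a_p^*$ in the closure of $\{|\zeta_k(\xi)|\}$, which equals $\{|\zeta_k(\xi)|\}$ since its sole accumulation point $1$ is itself attained. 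Because the tuples are distinct, some coordinate $p$ must fail to be eventually constant; since $\{|\zeta_k(\xi)|\}$ is discrete away from $1$, such a drifting coordinate forces $a_p^*=1$. Thus $y=E_0(\xi)+\sum_p a_p^*$ has at least one unit summand, so $y=y'+n$ with $y'\in\Lambda(\xi)$ and $n=|\{p:a_p^*=1\}|\ge 1$. Closedness of $\Lambda(\xi)^a$ follows by observing $\Lambda(\xi)+1\subset\Lambda(\xi)$ (adjoin a unit via $I_0$), hence $\Lambda(\xi)+\NN=\Lambda(\xi)+1$; and $\Lambda(\xi)$ is closed because its accumulation points lie in $\Lambda(\xi)^a\subset\Lambda(\xi)$, so the translate $\Lambda(\xi)+1$ is closed as well.

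For (2), $-L_{\la,T_\xi,W_0}$ is the infinite-dimensional Ornstein--Uhlenbeck--type operator whose underlying quadratic form is $I_{H_0}+T_\xi$; the Wiener-chaos diagonalization identifies its spectrum with multiplicity as $\Lambda(\xi)$, and the essential spectrum as the accumulation/infinite-multiplicity part, which is $\Lambda(\xi)^a$ by (1). For (3), assume toward a contradiction that $(\Lambda(\xi_i)\cap[0,R])\setminus(\Lambda(\xi_i)^a+(-r,r))$ contains an infinite sequence $x_n$ for some $i$; by compactness of $[0,R]$ one extracts $x_n\to x\in[0,R]$ with the $x_n$ distinct, forcing $x\in\Lambda(\xi_i)^a$ and contradicting $|x_n-x|\ge r$. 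Hence each of the $N$ sets is finite, and $\Sigma_{R,r}$ is finite. The main obstacle is the inclusion $\Lambda(\xi)^a\subset\Lambda(\xi)+\NN$: infinite multiplicity can arise from combinatorial coincidences among non-$I_0$ summands (e.g.\ $(1-\zeta_i/m)+(1+\zeta_i/m)=2$ for every $m$), and the decisive point is to argue that any such coincidence still forces a drifting coordinate whose limit value must be $1$, so that the limiting representation necessarily uses a unit summand drawn from $I_0$.
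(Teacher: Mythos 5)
The paper itself gives no proof of this lemma (it is declared ``easy'' and omitted), so there is nothing to compare against; your argument supplies the missing details and is essentially correct. The forward inclusion via padding with unit summands from the infinite-multiplicity eigenvalue $1$ of $I_{H_{0,0}}+T_\xi$, the uniform bound on $\sum_k n_k^{(j)}$ from $\inf_k|\zeta_k(\xi)|>0$, the diagonal extraction, parts (2) and (3), and the closedness argument are all sound. The one step you should tighten is the sentence ``because the tuples are distinct, some coordinate $p$ must fail to be eventually constant.'' That implication is false as stated in the infinite-multiplicity case: infinitely many distinct tuples can all produce the same value-list (e.g.\ one unit placed at varying indices of the eigenvalue $1$), so every coordinate $a_p^{(j)}$ can be eventually constant. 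The correct dichotomy is: either some coordinate drifts, in which case discreteness of $\{|\zeta_k(\xi)|\}$ away from $1$ forces its limit to be $1$; or all coordinates are eventually constant, in which case infinitely many distinct tuples realize one fixed value-multiset, and since every value $v\ne 1$ is attained by only finitely many indices $k$ (only finitely many pairs $(i,m)$ give $|1\pm\zeta_i/m|=v$) while the counts $n_k$ are uniformly bounded, that multiset must contain the value $1$. In either branch a unit summand appears in the limiting representation, so $y\in\Lambda(\xi)+\NN$ and your conclusion stands; note also that in the pure accumulation-point case (distinct values $x_j\to y$) a drifting coordinate genuinely is forced, so the repair is only needed for the infinite-multiplicity case. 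Finally, for (3) you should add one line observing that every point of $\Sigma_{R,r}$ lies outside $\cup_i\Lambda(\xi_i)^a$ and hence has finite multiplicity in each $\Lambda(\xi_i)$, so finiteness counting multiplicities follows from finiteness as a set.
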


\begin{rem}
 The results (1) and (2) above imply that
all real numbers which belong to
essential spectrum of $-\la^{-1}L_{\la,T_{\xi},W_0}$ 
are eigenvalues of $-\la^{-1}L_{\la,T_{\xi},W_0}$.
\end{rem}

\begin{proof}[Proof of Lemma~$\ref{accumulation point}$]
After $-L_{\la,T_\xi,W_0}$ will be defined, 
 the proof is easy and we omit the proof.
\end{proof}

Note that the function $N_R(r)=|\Sigma_{R,r}|$ $(r\in (0,1))$
is a decreasing 
function, where $|\cdot|$ denotes the cardinality of the set.

We also recall the definition of
the injectivity radius.
Let $i(G)$ be the injectivity radius of $G$.
That is,
\begin{align}
i(G)&=\sup\left\{r~|~\mbox{$\exp : \{\xi\in \g~|~|g|<r\}\to G$ gives a
local chart at $e$}
\right\}.\label{injectivity radius}
\end{align}

The following is our main theorem.

\begin{thm}\label{main theorem} 
Suppose that
$a$ does not belong to the cut-locus of $e$.
Let $\D$ be one of $D_{K,\delta}$, $B_{M,m,\theta}$, $C_{M,\beta}$, 
$\mathscr{D}_M$
which includes the minimal geodesic.
We assume the boundary measure is 0 in the sense of 
Definition~$\ref{definition of domain}$.
When $\D=D_{K,\delta}$, we assume $\delta$ is sufficiently small.
Let $\{e^{t\xi_i}\}_{i=1}^N$ be all geodesics in
$\D$ and suppose that $\partial \D$ does not
contain any geodesics.
Let $R$ be a positive number such that $R\notin \cup_{i=1}^N\Lambda(\xi_i)$.
Let us choose $r\in (0,1)$ such that $r$ is a continuous point of the function $N_R$.
Let $L=N_R(r)$
and
write $\Sigma_{R,r}=\{e_1,\ldots,e_L\}$
counting multiplicities in ascending order.
Then there exists $\la_0>0$ such that for all $\la\ge\la_0$, the following hold
for the spectral set $\sigma(-\la^{-1}L_{\la,\D})$.
\begin{enumerate}
 \item[$(1)$] 
$
 \sigma(-\la^{-1}L_{\la,\D})\cap 
[0,R]\cap
\left(\cup_{i=1}^N(\Lambda(\xi_i)^a+(-r,r))\right)^{\complement}
$
consists of $L$ eigenvalues
$
\left\{e_i(\la)\right\}_{i=1}^L
$
counting multiplicity in ascending order.
\item[$(2)$] $\lim_{\la\to\infty}e_i(\la)=e_i$\,\, $(1\le i\le L)$ hold.
\end{enumerate}
\end{thm}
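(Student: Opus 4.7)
The plan is to follow the classical Witten/semiclassical strategy, adapted to the infinite-dimensional pinned-path setting via the tools developed in Sections 2--5. There are two main ingredients: an upper bound on the low-lying eigenvalues obtained from explicit trial functions supported near each geodesic, and a matching lower bound obtained from an IMS-type localization combined with the refined log-Sobolev inequality with potential from Section~2.5.

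\textbf{Upper bound via trial functions.}
For each geodesic $l_{\xi_i}\in\D$, I would use the isomorphism $Y:S_a\cong \Pea$ of Proposition~\ref{isomorphism} and the local coordinate chart of Section~3 to identify a small neighborhood of $l_{\xi_i}$ in $\Pea$ with a neighborhood of $0$ in the Wiener space $W_0$, in such a way that, up to an explicit density, the Dirichlet form $\E_{\la,\D}$ is close to that of $-L_{\la,T_{\xi_i},W_0}$. I would then pick the eigenfunctions of $-L_{\la,T_{\xi_i},W_0}$ attached to the discrete eigenvalues in $\Sigma_{R,r}$, multiply them by the rough-path cut-offs from Section~4 so that they are supported inside the chart (and hence inside $\D$), and push them forward to $\Pea$. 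A direct change of variable together with the rough-path estimates should give
\[
\E_{\la,\D}(f,f)/\|f\|_{L^2(\nu_{\la,\D})}^2=e_i+o(1),\qquad \la\to\infty,
\]
for each of the chosen eigenvalues, so that the min-max principle yields at least $L$ eigenvalues of $-\la^{-1}L_{\la,\D}$ below $e_i+\eta$ for any $\eta>0$.

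\textbf{Lower bound via IMS localization and log-Sobolev with potential.}
Introduce a smooth partition of unity $\{\chi_0^2,\chi_1^2,\dots,\chi_N^2\}$ on $\D$: $\chi_i$ is supported in the chart around $l_{\xi_i}$ for $i\ge 1$, while $\chi_0$ is supported in the complement of neighborhoods of all the $l_{\xi_i}$ inside $\D$. The IMS formula gives
\[
\E_{\la,\D}(f,f)=\sum_{i=0}^N\E_{\la,\D}(\chi_if,\chi_if)-\sum_{i=0}^N\int_{\D}|\nabla\chi_i|_{H_{0,0}}^2\,|f|^2\,d\nu_{\la,\D},
\]
with the second sum $O(1)$ in $\la$. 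Inside each local chart $i\ge 1$ the change of variable of Section~3 compares $\E_{\la,\D}(\chi_if,\chi_if)$ with the quadratic form of $-L_{\la,T_{\xi_i},W_0}$ up to errors controlled by rough-path moments. Since $\Sigma_{R,r}$ avoids the essential spectrum $\Lambda(\xi_i)^a$ of each $-L_{\la,T_{\xi_i},W_0}$ by a margin $r$, the spectral theorem applied to the approximate operator then gives the required lower bound on $\E_{\la,\D}(\chi_if,\chi_if)$ once one projects onto the orthogonal complement of the approximate eigenspace built in the upper bound. For $i=0$ I would use the refined Gross log-Sobolev inequality with potential recalled in Section~2.5 together with the finiteness of the exponential integrals established in Section~5, exploiting that $E$ has no critical points in the support of $\chi_0$, to obtain
\[
\E_{\la,\D}(\chi_0f,\chi_0f)\ge (R+1)\la\int_{\D}|\chi_0 f|^2\,d\nu_{\la,\D}
\]
for all sufficiently large $\la$. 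Summing the two contributions and applying min-max then forbids any eigenvalue of $-\la^{-1}L_{\la,\D}$ below $R$ to escape the $r$-neighborhoods of $\Lambda(\xi_i)^a$ beyond the $L$ approximate eigenvalues constructed above. Discreteness of the portion of the spectrum in $[0,R]\setminus\bigcup_i\bigl(\Lambda(\xi_i)^a+(-r,r)\bigr)$ then follows, because any accumulation point would force an accumulation point inside some $\Lambda(\xi_i)^a$, contradicting the margin $r$.

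\textbf{Main obstacle.} The hardest step is the ``away-from-geodesics'' lower bound on $\E_{\la,\D}(\chi_0f,\chi_0f)$. In finite dimensions it is immediate from $|DE|^2>0$ outside the critical set via the unitary transform $M_\la$, but here this transform is unavailable; one must extract an $O(\la)$ gap directly from the log-Sobolev inequality with potential, while verifying that the exponential functionals appearing on its right-hand side remain uniformly bounded in $\la$ (this is precisely the technical content of Section~5). Moreover, matching this estimate with the rough-path local-coordinate comparison so that the cumulative errors are genuinely $o(\la)$ rather than merely $O(1)$, uniformly over the trial subspace, is the delicate point on which the whole quantitative convergence $e_i(\la)\to e_i$ rests.
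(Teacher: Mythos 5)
Your overall architecture — trial functions from the local Ornstein--Uhlenbeck models for the upper bound, IMS localization plus the refined log-Sobolev inequality with potential for the lower bound, rough-path cut-offs to confine everything to a small chart around each geodesic — is indeed the strategy the paper follows, and you correctly identify the away-from-geodesics estimate (Lemma~\ref{exponential estimate} together with Corollary~\ref{lower bound of an energy of f}) as the place where the absence of a unitary ground-state transform must be compensated.

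However, there is a genuine gap in your treatment of the lower bound near each geodesic. You propose to conclude by ``projecting onto the orthogonal complement of the approximate eigenspace built in the upper bound'' and then invoking the spectral theorem for $-L_{\la,T_{\xi_i},W_0}$. This is insufficient: the orthogonal complement of the finitely many discrete eigenfunctions attached to $\Sigma_{R,r}$ inside $L^2(W_0,\hat\mu_{\la,T_i,W_0})$ still contains the entire spectral subspace for the essential spectrum $\Lambda(\xi_i)^a$, which can meet $(0,R)$; so merely being orthogonal to the chosen eigenvectors yields no lower bound of order $R$. What is needed, and what you do not explain, is that the \emph{localization} $\widetilde{f\chi_{\frac13,i,m,\theta,\la,\delta}}$ of a near-eigenfunction $f$ of $-\la^{-1}L_{\la,\YD}$ with approximate eigenvalue $\beta$ staying a distance $\ge r$ from $\cup_i\Lambda(\xi_i)^a$ has \emph{small} projection onto the spectral subspace $P_{\la,i,F}$ of the local model associated with the bad set $F=[0,R]\cap(\cup_i\Lambda(\xi_i)^a+[-r_-,r_-])$. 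Establishing this is the content of the paper's Lemma~\ref{projection on F}, whose proof is delicate: one has to express $\widetilde{L_{\la,S_a}f}$ in the local coordinate (Lemma~\ref{expression of L in local chart}), compare it with $L_{\la,T_i,W_0}$, control the commutators with the $\la$-dependent cut-off, bound the perturbation $\tilde K_k$ coming from the curved metric, and only then use the gap $r-r_-$ in the spectral theorem for $-\la^{-1}L_{\la,T_i,W_0}$. Without that lemma, the IMS lower bound near each $k_i$ does not close, and the claimed contradiction in step (i) — and similarly the exclusion of extraneous spectrum in step (ii) — cannot be drawn. The rest of your outline (the $(R+1)\la$ form of the outside bound, the role of Section~5's exponential estimate, the use of shrinking rough-path cut-offs for $e_i(\la)\to e_i$) is consistent with the paper once this missing ingredient is supplied.
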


\begin{rem}\label{remark on main theorem}
(1) 
Consider the case where $G=SU(n)$ and the situation in
Lemma~\ref{hessian of E} (4) (ii).
If $l_{\xi}$ is the minimum geodesic, then
$E_0(\xi)=0$.
Also
we see that $\lim_{|\xi|\to\infty}E_0(\xi)=\infty$.
Hence $0\notin \cup_{\xi\in V}\Lambda(\xi)^a$ and $e_1=0$.
For any $\ep>0$, by taking $\theta$ to be sufficiently small, 
we see that $\inf (E_0(\xi)\setminus \{0\})\ge 2-\ep$ holds.
Hence there exists $M$ such that
$\{1\pm \frac{|\theta_i-\theta_j|}{m} ; 1\le i<j\le n,\, m\ge M\}
\cap \cup_{\xi\in V}\Lambda(\xi)^a=\emptyset$.
This implies that $\lim_{r\to 0}N_R(r)=\infty$.

\noindent
(2)
We consider the case $G=SU(2)$ and the setting in
Example~\ref{example su2}.
In this case, $\min_{k\ne 0} E_0(\theta(k))=2(1-2\theta)$
and $\min\left(\{\zeta(\theta(0))\}\setminus\{0\}\right)=1-2\theta$.
Hence, $e_2=1-2\theta$.

\noindent
(3)
In the above theorem, we assume that $\D$ contains the minimal geodesic.
Hence, $|\nu_{\la,a}(\D)-1|=O(e^{-\la C})$ holds as $\la\to\infty$
for a certain positive constant $C$.
This follows from the large deviation estimate for 
$\nu_{\la,a}$ (\cite{ks1},\cite{inahama}).

\noindent
(4) Note that Witten Laplacian is originally defined as a differential operator acting
on differential forms on Riemannian manifolds 
without boundary (\cite{witten}).
In \cite{a2011}, we proved a vanishing theorem of Witten Laplacian acting on
$1$-forms
on based loop group
$P_{e,e}(G)$ with the measure $\nu_{1,e}$, 
where $G$ is a compact connected and simply connected Lie
group.
That is, we defined Witten Laplacian $\Box$ acting on 
$L^2(\nu_{1,e})$-$1$-forms on $P_{e,e}(G)$ and
proved that $\dim\ker \Box=0$.
The proof is not based on spectral theoretical properties of 
Witten Laplacian but
on the fact that the Betti number of $P_{e,e}(G)$ is 0.

Witten Laplacian acting on differential forms on a manifold with the boundary
also have been defined and studied in finite dimensional cases
(\cite{chang-liu}, \cite{helffer-nier}).
We may consider infinite dimensional version of them if the boundary of
$\D$ is smooth.
To explain the case where the boundary is smooth, let $G=SU(n)$. 
Then, the Malliavin covariance matrix of
the random variable $(b(t),Y(t,e,w))$ is nondegenerate.
Hence, 
$\{w\in \Omega~|~Y(1,e,w)=a, |b(1)|=M\}$ and 
$\{h\in H~|~Y(1,e,h)=a, |b(1)|=M\}$ are smooth submanifolds in
$\Omega$ and $H$ respectively.
Hence $\{w\in \Omega~|~Y(1,e,w)=a, |b(1)|<M\}=Y^{-1}(\mathscr{D}_{M})$
is a domain of $S_a=\{w\in \Omega~|~Y(1,e,w)=a\}$
with a smooth boundary.
We explain the relation $\D$ and $Y^{-1}(\D)$ in more general setting in
Theorem~\ref{D and Y^-1(D)}.

Also, note that $\varphi_{B,m,\theta}^{4m}(Y(w))$ 
is smooth function in the sense of
Malliavin for any $G$.
 If this functional is non-degenerate in the sense of Malliavin, then
$B_{M,m,\theta}$ and
$\{w\in S_a~|~\varphi_{B,m,\theta}(Y(w))<M\}$ is also another candidate of
domain for consideration.

In \cite{chang-liu}, \cite{helffer-nier},
they assume that
the Morse function $E$ on $\D$
should be a Morse function on the boundary
$\partial \D$ also.
Note that $E$ is an energy function of paths in our case and
it is not trivial to see whether this assumption generically holds or not.

Also, we refer the readers to \cite{hino} for a study on
spectral properties of 
Witten Laplacians with respect to weighted Wiener measures.
See also Remark~\ref{remark on log-Sobolev with potential}
in Section~\ref{log-Sobolev with potential function}.

\noindent
(5)
The smallness assumption on $\delta$ in the case where $\D=D_{K,\delta}$
is used in Lemma~\ref{exponential estimate0}.
\end{rem}

In the above remark, we mention the existence of infinitely many
discrete spectrum of $-\la^{-1}L_{\la,\D}$ for large $\la$ around 1.
For large discrete spectrum, we have the following result.
We give the proof in Appendix.

\begin{pro}\label{example of discrete spectrum}
We consider Example~$\ref{example su2}$.
We use the notations there in the following statement.
Assume that $\theta$ is an irrational number.
Let $k,l\in \ZZ$, $M$ be a positive integer and $p$ be a prime number satisfying
that $p>\max\left(2|k|, 2|l|, 2M|\theta+k|,M\right)$.
Then it holds that
\begin{align}
 E_0(\theta(k))+M\left(1+\frac{2|\theta+k|}{p}\right)
\notin \Lambda(\theta(l))+\mathbb{N}.\label{criteria}
\end{align}
In particular, the following statement hold.
Let $\{k_i\}_{i=1}^N\subset \ZZ$ be a set of distinct integers.
For any $M\in \NN$ and prime number $p$ satisfying
$p>\max\{\{2|k_i|,2M|k_i+\theta|\}_{i=1}^N, M\}$, it holds that
\begin{align*}
\left\{E_0(\theta(k_i))+M\left(1+\frac{2|\theta+k_i|}{p}\right)
~\Big |~1\le i\le N
\right\}\subset
 \left(\cup_{i=1}^N\Lambda(\theta(k_i))\right)\setminus
\left(\cup_{i=1}^N\Lambda(\theta(k_i))^a\right).
\end{align*}
\end{pro}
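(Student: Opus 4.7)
The plan is to argue by contradiction using $p$-adic valuation. Suppose $v := E_0(\theta(k)) + M(1 + 2|\theta+k|/p) \in \Lambda(\theta(l)) + \NN$. By \eqref{identity of eigenvalue1} applied at $\xi = \theta(l)$, there exist non-negative integers $n_m, n_m', N_m, N_m', n_0$ (finitely many nonzero) and an integer $n \ge 1$ for which the corresponding right-hand side of \eqref{identity of eigenvalue1} equals $v$. Since $E_0$ and every eigenvalue modulus are $\QQ$-affine in $\theta$, I would view the identity as an equation between $\QQ$-linear combinations of $1$ and $\theta$; for irrational $\theta$ this decouples into a $\theta$-coefficient equation and a constant-term equation (the rational case can be handled by continuity in $\theta$ together with the hypothesis $\theta_i - \theta_j \notin \ZZ$ to exclude accidental resonances).

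Equating $\theta$-coefficients and setting $H_n := \sum_{j=1}^n 1/j$, the equation reduces to
\[
\sum_{m \le |2l|\vee|2l+1|-1} \frac{e_m}{m} + \sum_{m \ge |2l|\vee|2l+1|} \frac{f_m}{m} \;=\; 2\bigl(H_{2|k|}-H_{2|l|}\bigr) + \frac{M}{p},
\]
where $e_m \in \ZZ_{\ge 0}$ (both $\pm$ moduli contribute $+2/m$ in this range) and $f_m \in \ZZ$. The bound $p > 2\max(|k|,|l|)$ ensures $p$ does not divide the denominators of $H_{2|k|}, H_{2|l|}$, while $p > 2M|\theta+k|$ combined with $|\theta+k| \ge 1$ (the relevant case $|k| \ge 1$, since $\theta \in (0,1/2)$) forces $p > 2M$, so $v_p(M/p) = -1$. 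Matching $p$-adic valuations on both sides and using the finiteness of the support then pins $f_p \equiv M \pmod{p}$, whence $|f_p| \ge \min(M, p-M) = M$.

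Substituting back into the constant-term equation, the total excitation cost $\sum(e_m + |f_m|) + n_0$ is at least $M$; combined with $n \ge 1$ this makes the right-hand side of the constant-term equation exceed the explicitly computed constant part of $v - E_0(\theta(l))$, which is $M + O(1)$ with fractional piece less than $1$ thanks to $2M|\theta+k|/p < 1$. The resulting strict inequality is the contradiction. The main obstacle is the edge case $|\theta+k| < 1$ (in particular $k = 0$), where the stated hypothesis does not directly force $p \ge 2M+1$; there one must combine the $p$-adic constraint with the constant-term equation and the irrationality hypothesis to rule out cheap Diophantine shortcuts (e.g., pairing a single integer-mode correction such as $1 - 2|\theta+l|$ with a low-magnitude $m=p$ term). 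For the "in particular" statement, one applies the main claim pairwise: each $v_i := E_0(\theta(k_i)) + M(1 + 2|\theta+k_i|/p)$ lies directly in $\Lambda(\theta(k_i))$ via $M$ excitations of the $(m=p,+)$ eigenvalue of $\theta(k_i)$, while the main criterion excludes $v_i$ from every $\Lambda(\theta(k_j))^a = \Lambda(\theta(k_j)) + \NN$, giving the required containment.
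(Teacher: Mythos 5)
Your starting idea --- decoupling the eigenvalue identity into a $\theta$-coefficient equation and a constant-term equation (valid since $\theta$ is irrational), and then extracting a divisibility/$p$-adic constraint from the $M/p$ term --- is the right first move and matches the paper. But the proposal is not a complete proof, for the following concrete reasons.

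First, the factual claim $|\theta+k|\ge 1$ whenever $|k|\ge 1$ is false: for $k=-1$ one has $|\theta+k|=1-\theta\in(1/2,1)$. And even where $|\theta+k|\ge 1$ does hold, the hypothesis $p>2M|\theta+k|$ yields $p>M$, not $p>2M$, so your conclusion $\min(M,p-M)=M$ is not justified. The step ``pins $f_p\equiv M\pmod p$'' is therefore not established; moreover, the $p$-divisibility constraint from the $\theta$-coefficient equation does not isolate the single index $m=p$ but the totality of indices divisible by $p$ in the (finite) support. The paper isolates the $p$-denominator part of equation \eqref{coefficient of alpha} carefully, writing $\sum_j N'_{m_j}/(pq_j)-\sum_j N_{m_j}/(pr_j)=M/p$ and concluding only that $\sum_j N'_{m_j}\ge M$, which is then substituted back into the eigenvalue identity to give $E_0(\theta(k))+M+n\le \text{RHS}=\text{LHS}=E_0(\theta(k))+M+2M|\theta+k|/p$, contradicting $2M|\theta+k|/p<1\le n$.

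Second, and more importantly, your argument collapses the cases $k=l$ and $k\ne l$ into a single ``constant-term'' estimate, but that estimate only works when $E_0(\theta(k))$ and $E_0(\theta(l))$ are identical. For $k\ne l$ the paper needs an extra algebraic step you do not have: form the combination $(\ref{coefficient of alpha})\times 2k-(\ref{coefficient of the other})$ to eliminate the $|\theta(l)|$-dependence, divide by $2(k-l)$ (legal and useful because $p\nmid 2(k-l)$, which follows from $p>2\max(|k|,|l|)$), and thereby express $\sum_{m}\frac{1}{m}(N'_m-N_m)$ as a rational number whose denominator is coprime to $p$; this contradicts the $M/p$ appearing in \eqref{coefficient of alpha}. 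Without that linear-combination trick the $k\ne l$ case does not close via the ``excitation cost'' bound you propose. You flag an ``edge case'' at the end, but the gap is broader than the $k=0$ subcase: the whole $k\ne l$ analysis requires this separate argument.

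In short, the proposal captures the irrationality decoupling and the spirit of the $p$-adic obstruction, but it omits the $k=l$ vs.\ $k\ne l$ case split and the distinct argument each requires, makes a false elementary estimate on $|\theta+k|$, and substitutes a vague single-index $p$-adic matching for the partial-fraction argument the paper actually uses.
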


\subsection{Preliminary from rough path analysis and Malliavin calculus}
\label{preliminary}

In this section, we prepare necessary results about Brownian rough path.
First, we recall the basic notions in rough path theory.
References for rough path theory can be found
in \cite{friz-hairer}, \cite{friz-victoir}, \cite{lq}.
Let
$\bX_{s,t}=(\X_{s,t}, \XX_{s,t})_{0\le s\le t\le 1}$ be a pair of
continuous two parameter functions with values
in $\RR^d\oplus (\RR^d\otimes \RR^d)$.
Below, we may denote $\X_{s,t}$ and $\X_{0,t}$ by $X_{s,t}$ and
$X_t$ respectively.
Let $\frac{1}{3}<\alpha\le \frac{1}{2}$ and
$2\le p<3$.
We consider the following condition 
(1), (2), (3).
\begin{enumerate}
 \item[(1)] $\X_{s,t}=\X_{s,u}+\X_{u,t}$, \quad
$\XX_{s,t}=\XX_{s,u}+\XX_{u,t}+\X_{s,u}\otimes \X_{u,t}$ $(0\le s\le u\le t),$
\item[(2)] $\displaystyle{\|\bX\|_{\alpha}:=\|\X\|_{\alpha}
+\sqrt{\|\XX\|_{2\alpha}}}<\infty$,
\item[(3)] $\displaystyle{\|\bX\|_{p\hyp var}:=\|\X\|_{p\hyp var}
+\sqrt{\|\XX\|_{p/2\hyp var}}}<\infty$,
\end{enumerate}
where for $Z=(Z_{s,t})$ $(0\le s\le t\le 1)$ with
values in a finite dimensional
normed linear space $V$, 
$\|Z\|_{\beta}$ $(0<\beta\le 1)$ and
$\|Z\|_{q\hyp var}$ $(q\ge 1)$ are defined by
\begin{align*}
& \|Z\|_{\beta}=\sup_{0\le s<t\le 1}\frac{|Z_{s,t}|_V}{|t-s|^{\beta}},\\
 & \|Z\|_{q\hyp var,[s,t]}\nonumber\\
&\,=\sup\Biggl\{\left(\sum_{i=1}^N|Z_{t_{i-1},t_i}|_V^q\right)^{1/q}~\Big |~
 \text{$\{s=t_0<\cdots<t_N=t\}$ moves all finite partition of $[s,t]$}
\Biggr\},\\
&\|Z\|_{q\hyp var}=\|Z\|_{q\hyp var,[0,1]}.
\end{align*}

$(\bX_{s,t})_{0\le s\le t\le 1}$ is called
an $\alpha$-H\"older rough path (a rough path with finite $p$-variation)
on $\RR^d$
if (1) and (2) ((1) and (3)) hold respectively.
We may denote $\bX_{s,t}$ by $\overline{X}_{s,t}$.
$\mathscr{C}^{\alpha}([0,1]\to\RR^d)$ (or $\mathscr{C}^{\alpha}(\RR^d)$ simply)
and $\mathscr{V}^p([0,1]\to\RR^d)$ (or $\mathscr{V}^p(\RR^d)$ simply)
denote the
set of all $\alpha$-H\"older rough paths and all rough paths 
with finite $p$-variation
on $\RR^d$ respectively.
They are complete metric spaces 
with respect to the distance $d_{\mathscr{C}^{\alpha}}(\bX,\bY)=
\|\X-\Y\|_{\alpha}+\|\XX-\YY\|_{2\alpha}$ and
$d_{\mathscr{V}^{p}}(\bX,\bY)=
\|\X-\Y\|_{p\hyp var}+\|\XX-\YY\|_{\frac{p}{2}\hyp var}$
respectively.
Note that we have natural inclusion 
$\mathscr{C}^{\alpha}([0,1]\to\RR^d)\subset \mathscr{V}^{1/\alpha}([0,1],\RR^d)$.

The property (1) implies $\X_{s,t}=\X_{0,t}-\X_{0,s}$ $(0\le s\le t\le 1)$.
Hence $\|\X\|_{\alpha}$ and $\|\X\|_{p\hyp var}$
denote the $\alpha$-H\"older norm and the $p$-variation norm of
the path $X_t=X_{0,t}$ 
and we use the same notation 
$\|X\|_{\alpha}$ and $\|X\|_{p\hyp var}$ to denote the
$\alpha$-H\"older norm and the $p$-variation norm
of a path $X=(X_t)_{0\le t\le 1}$ with values in 
a Euclidean space respectively.
Also we use the notation $C^{\alpha}([0,1]\to\RR^d)$ and 
$\mathcal{V}^p([0,1]\to \RR^d)$ 
to denote the set of $\alpha$-H\"older continuous
paths and continuous paths with finite $p$-variations
respectively.
Below, we use the notation $X_{s,t}=X_t-X_s$ for a path $X=(X_t)_{0\le t\le 1}$.

If $(X_t)_{0\le t\le 1}$ is a piecewise $C^1$ path, then 
$\X_{s,t}=X_t-X_s$, $\XX_{s,t}=\int_s^t\X_{s,u}\otimes d\X_{u}$ defines a
$\alpha$-H\"older rough path for any $\alpha\in (1/3,1/2]$
which is called a smooth rough path.
The set of
all $\alpha$-H\"older geometric rough paths on $\RR^d$ which
is denoted by $\mathscr{C}^{\alpha}_g(\RR^d)$ is the subset of
$\mathscr{C}^{\alpha}(\RR^d)$ consists of the elements which can be approximated 
by smooth rough paths in the topology of $d_{\alpha}$.
Similarly, the set of geometric rough paths with finite $p$-variation 
$\mathscr{V}^p_g(\RR^d)$ is defined.

For the standard basis $\{e_i\}_{i=1}^d$ of $\RR^d$, 
we write components of $\X$ and $\XX$ as follows:
\begin{align*}
 X^i_{s,t}=\left(\X_{s,t},e_i\right),\qquad
X^{i,j}_{s,t}=\left(\XX_{s,t},e_i\otimes e_j\right).
\end{align*}

\begin{rem}\label{remark on Besov norm}
The following norms $\|~\|_{B,4m,\theta/2}$
$\|~\|_{B,2m,\theta}$ are also used instead of $\|~\|_{\alpha}$,
$\|~\|_{2\alpha}$.
\begin{align*}
 \|\overline{X}^1\|_{B,4m,\theta/2}^{4m}
=
\iint_{0<s<t<1}
\frac{|\overline{X}^1_{s,t}|^{4m}}{(t-s)^{2+2m\theta}}dsdt,
\quad
\|\overline{X}^2\|_{B,2m,\theta}^{2m}
=
\iint_{0<s<t<1}
\frac{|\overline{X}^2_{s,t}|^{2m}}{(t-s)^{2+2m\theta}}dsdt,
\end{align*}
where $m\in \NN$ and $0<\theta<1$.
Actually, these norms are equivalent to each other in the following sense.
\begin{align*}
 \|\X\|_{\frac{\theta}{2}}\le C_{m,\theta}\|\X\|_{B,4m,\theta/2}, \quad\quad
\|\XX\|_{\theta}\le C_{m,\theta}
\left(\|\XX\|_{B,4m,\theta}+\|\X\|_{B,4m,\theta/2}^2\right)
\end{align*}
holds.
See \cite{a2011}.
Conversely, by an elementary calculation, we see that for 
$\alpha>\frac{\theta}{2}+\frac{1}{4m}$,
 \begin{align*}
\|\X\|_{B,4m,\theta/2}\le C_{m,\theta}\|\X\|_{\alpha},\quad
\|\XX\|_{B,4m,\theta}\le C_{m,\theta}\|\XX\|_{2\alpha}
\end{align*}
holds.
\end{rem}
We now recall the notion of Brownian rough path
$\bw$ which is a geometric lift of Brownian path
$w_t$.
Let $W$ be the set of all continuous mapping from $[0,1]$ to
$\mathfrak{g}$ starting at $0$.
By using the orthonormal basis $\{\ep_i\}\subset \g\subset M(n,\CC)$,
we can identify $W$ with $C([0,1]\to \RR^d ; w_0=0)$ 
which is a set of continuous paths starting at
$0$ as follows:
$w=(w(t))_{0\le t\le 1}(=\sum_{i=1}^d w^i(t)\ep_i
=(w^1(t),\ldots,w^d(t)))\in W$.
Let $\mu$ be the standard Brownian motion measure on 
$C([0,1],\RR^d)$ (and hence on $W$).
$H=H^1([0,1]\to\RR^d~|~h_0=0)$ is the Cameron-Martin space of 
$(W,\mu)$.
Let $\mu_{\la}$ denote the image measure of $\mu$ by
the mapping $w\mapsto \la^{-1/2}w$.
We may write $\mu_{\la,W}$.
We need to lift $w$ to a geometric rough path for $\mu_{\la}$
almost all $w$ for all $\la>0$.

Let $N$ be a positive integer.
We consider dyadic polygonal approximations of $w$:
\begin{align}
 w^N(t)=w(\tNkN)+2^{-N}(t-\tNkN)w_{\tNkN,\tNk}\quad
\tNkN\le t\le \tNk, \quad 1\le k\le 2^N, \label{dyadic}
\end{align}
where $\tNk=k2^{-N}$.
Let $\bw^N_{s,t}=(\wN_{s,t},\wwN_{s,t})$ be the smooth rough path
associated with $w^N$.
Also let
${w^N}^{\perp}(t)=w(t)-w^N(t)$ and define
\begin{align*}
 C(w^N,{w^N}^{\perp})_{s,t}&=\int_s^tw^N_{s,u}\otimes d{w^N}^{\perp}_u,\qquad
C({w^N}^{\perp},w^N)_{s,t}=\int_s^t{w^N}^{\perp}_{s,u}\otimes dw^N_{u}.
\end{align*}
Also we may use the notation
$
 C(x,y)_{s,t}=\int_s^tx_{s,u}dy_u,
$
where $x$ and $y$ are real valued continuous functions
and one of them are
bounded variation.
To fix a version of the solution of (\ref{sde}), 
we introduce the following set $\Omega$.

\begin{thm}\label{lift of w}
Let $\overline{w^N}_{s,t}=(\wN_{s,t},\wwN_{s,t})$ be the smooth rough path
associated with $w^N$ and set
\begin{align*}
 \Omega=\left\{w\in W~\Big |~
\text{the following assertions $\mathrm{(i)}\sim \mathrm{(iv)}$ 
concerning $w$ holds.}\right\}
\end{align*}
\begin{itemize}
\item[{\rm (i)}] $\lim_{N\to\infty}\overline{w^N}$ converges 
in the norm of 
$\|~\cdot~\|_{\alpha}$ for all $\alpha<1/2$.
We denote the limit by $\bw_{s,t}=(\w_{s,t},\ww_{s,t})$
and write $w^i_{s,t}=(\bw_{s,t},e_i)$, 
$w^{i,j}_{s,t}=(\bw_{s,t},e_i\otimes e_j)$.

\item[{\rm (ii)}] 
$C({w^N}^{\perp},w^N)$ and $C(w^N, {w^N}^{\perp})$ 
converge to $0$ as $N\to\infty$ in
the norm $\|~\cdot~\|_{2\alpha}$ for all $\alpha<1/2$.
\item[{\rm (iii)}]  For $1\le k,l\le d$, using $w^{k,l}_{s,t}$ which is defined in
{\rm (i)}, let
$
 d^{N,k,l}_{\tNiN,\tNi}(w)=w^{k,l}_{\tNiN,\tNi}
-\frac{1}{2}w^k_{\tNiN,\tNi}w^l_{\tNiN,\tNi}
$
and set 
\begin{align}
d^{N,k,l}_t(w)=\sum_{i=1}^{2^Nt}d^{N,k,l}_{\tNiN,\tNi}(w), \quad
t\in \{\tNk\}_{k=0}^{2^N}.\label{dN on tNk}
\end{align}
Then $\lim_{N\to\infty}\|d^{N,k,l}(w)\|_{p\hyp var}=0$
for all $p>1$ and $1\le k,l\le d$, where $\|~\cdot~\|_{p\hyp var}$ denotes the discrete 
$p$-variation norm for functions defined on $\{\tNk\}$.
\end{itemize}
Then the following hold.
\begin{enumerate}
 \item[$(1)$] $H\subset\Omega$ holds.
\item[$(2)$]  $\Omega$ is invariant under the multiplication of real numbers and
the addition of the element of $H$.
\item[$(3)$] For all $\la>0$, it holds that $\mu_{\la}(\Omega)=1$
and $\Omega^{\complement}$ is a slim set with respect to $\mu_{\la}$ for all
$\la>0$.
\item[$(4)$] The mappings $\Omega\ni w\mapsto \bw
\in \mathscr{C}^{\alpha}(\RR^d)$
is $\infty$-quasi continuous for any $0<\alpha<\frac{1}{2}$,
where the topology of $\Omega$ is of $C([0,1],\RR^d)$.
\end{enumerate}
\end{thm}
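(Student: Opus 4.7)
The plan is to verify assertions (1)--(4) by combining the classical dyadic approximation results for Brownian rough paths with standard Malliavin capacity arguments, using the scaling structure to reduce everything to the $\la = 1$ case.

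For (1), I would fix $h \in H$ and check each condition directly. The piecewise linear approximations $h^N$ converge to $h$ uniformly, and because $h$ is absolutely continuous with $\dot h \in L^2$, the second level iterated integrals $\int_s^t h^N_{s,u} \otimes d h^N_u$ converge to the canonical Young lift; the $\alpha$-H\"older norms are uniformly controlled by $\|h\|_H$, so (i) follows. Condition (ii) is a Young estimate: $C(h^N, h - h^N)$ and $C(h - h^N, h^N)$ are bounded by $\|h^N\|_{1/2} \cdot \|h - h^N\|_{1/2-\ep}$, which tends to zero. For (iii), on dyadic points $d^{N,k,l}$ is an algebraic correction that reduces to a Young integral vanishing in any $p$-variation as the mesh shrinks. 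Assertion (2) is essentially algebraic: scaling $w$ by $c \in \RR$ scales $\wN$ by $c$ and $\wwN$ by $c^2$, so the four conditions are preserved; for translation by $h \in H$, one expands $\overline{(w+h)^N}$ using the cross terms $C(w^N, h^N)$ and $C(h^N, w^N)$ which are again handled by Young integration once (i) and (ii) are in place.

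For (3), the heart of the matter, I would invoke the dyadic approximation theorem for the Stratonovich Brownian rough path: $\overline{w^N} \to \bw$ in $d_\alpha$ for every $\alpha < 1/2$, both $\mu$-almost surely and in every $L^p(\mu)$. Rescaling then transfers this to $\mu_\la$. The cross terms $C(w^N, {w^N}^{\perp})$ and $C({w^N}^{\perp}, w^N)$ are handled by decomposing $[s,t]$ into dyadic blocks and applying a Garsia--Rodemich--Rumsey estimate together with Kolmogorov's moment bound, giving convergence to $0$ in $\|\cdot\|_{2\alpha}$. The finite $p$-variation control on $d^{N,k,l}$ follows from computing $\mathbb{E}\bigl[|d^{N,k,l}_{\tNiN,\tNi}|^q\bigr]$, which scales like $2^{-Nq}$ by the Brownian scaling and a direct moment calculation; summing over $i$ and taking $p$ large gives the desired $p$-variation estimate. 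To upgrade almost sure convergence to slimness of $\Omega^{\complement}$, I would observe that each of the approximation errors sits in $\DD^{\infty}$ (as a polynomial functional of Wiener chaoses up to order two) and decays geometrically in every Sobolev $(p,r)$-norm; a Borel--Cantelli argument along a geometric subsequence then shows that the exceptional set has $(p,r)$-capacity zero for every $(p,r)$, which is slimness.

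For (4), I would build on the preceding step. Each approximation $w \mapsto \overline{w^N}$ is a smooth cylindrical functional (a polynomial in finitely many coordinates of $w$), hence $\infty$-quasi-continuous as a map into $\mathscr{C}^{\alpha}(\RR^d)$. Since the Sobolev $(p,r)$-norms of $\overline{w^N} - \bw$ tend to zero with geometric rate on dyadic subsequences, the limit is a quasi-uniform limit of quasi-continuous functions, and a standard capacity argument yields the $\infty$-quasi-continuity of the limit map. The main obstacle I anticipate is item (iii): making the discrete $p$-variation estimate on $d^{N,k,l}$ uniform across all $p > 1$ simultaneously, since the definition on the restricted grid $\{\tNk\}$ forces one to track the maximum of the partition-dependent sum rather than just its diagonal blocks, and the geometric decay in $N$ must be strong enough to survive the eventual capacity upgrade. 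The remaining work is largely routine rough path bookkeeping, but this moment estimate is the point where the construction is genuinely infinite-dimensional in character.
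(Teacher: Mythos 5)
Your proposal is correct and follows essentially the same route as the paper: conditions (i)--(ii) and the invariance statements are handled by the known dyadic approximation theory for Brownian rough paths (which the paper simply cites from earlier work), and the genuinely new point, the slimness of the exceptional set for condition (iii), is obtained exactly as you describe --- a second-moment estimate of order $2^{-N}$ for the summed L\'evy-area corrections, hypercontractivity to upgrade to higher moments, a Garsia--Rodemich--Rumsey bound on a piecewise-linear extension to control the discrete $p$-variation, and then a Chebyshev-plus-Borel--Cantelli argument for $C^s_q$ capacities. The delicate point you flag (uniform $p$-variation control on the dyadic grid) is resolved in the paper precisely by that piecewise-linear extension and Besov-norm device, so your outline matches the paper's proof in substance.
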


\begin{proof}
 In \cite{a2011}, we proved that the assertions (1), (2) hold for 
$\Omega$ which satisfies properties (i) and (ii).
Note that $\|h\|_{1\hyp var,[s,t]}\le \|h\|_H(t-s)^{1/2}$
holds for $h\in H$ and hence by the estimate of Young integral, we have
$\|C(h,w)\|_{2\alpha}\le C\|h\|_H\|w\|_{\alpha}$.
This implies the invariance of $\Omega$ 
under the addition of the element of $H$.
Also, in the same paper, we proved (3) in the case where $\la=1$ which 
immediately implies the same results hold for any $\la$.
So we consider the property (iii).
First, we prove that if (iii) holds for $w$ then so does for
$w+h$ for any $h\in H$.
Note that $\|w\|_{\alpha}<\infty$ holds for any $w\in \Omega$ 
and $\alpha<1/2$.
For $w\in \Omega$,
we have
\begin{align*}
 d^{N,k,l}_{\tNiN,\tNi}(w+h)&=
d^{N,k,l}_{\tNiN,\tNi}(w)+C(w^k,h^l)_{\tNiN,\tNi}
+C(h^k,w^l)_{\tNiN,\tNi}+d^{N,k,l}_{\tNiN,\tNi}(h)\nonumber\\
&\quad-\frac{1}{2}w^k_{\tNiN,\tNi}h^l_{\tNiN,\tNi}
-\frac{1}{2}h^k_{\tNiN,\tNi}
w^l_{\tNiN,\tNi}-\frac{1}{2}h^k_{\tNiN,\tNi}
h^l_{\tNiN,\tNi}.
\end{align*}
We have
$
 \sum_{i=1}^{2^N}|C(w^k,h^l)_{\tNiN,\tNi}|\le
C\sup_i|w^k_{\tNiN,\tNi}|
\|h^l\|_{1\hyp var, [0,1]}\to 0\quad \text{as $N\to\infty$}.
$
Except the sum of $d^{N,k,l}_{\tNiN,\tNi}(w)$,
we can estimate the sum of the other terms similarly to this.
Consequently, it suffices to show that the complement of
$A=\{w\in W~|~\lim_{N\to\infty}\|d^{N,k,l}(w)\|_{p\hyp var}=0\,\,
\text{for all $k,l$ and $p>1$}\}$
is a slim set.
In \cite{an}, we obtained the following.
Let $\tilde{d}^{N,k,l}_t(w)$ $(t\in [0,1])$ be a piecewise linear extension of
$d^{N,k,l}_t(w)$ $(t\in \{\tNk\})$
and set
\begin{align*}
 \Phi_{N,m}(w)&=2^{N/2}
\left(
\iint_{0<s<t<1}\frac{|\tilde{d}^{N,k,l}_{s,t}|^{4m}}{|t-s|^{2+2m\theta}}dsdt
\right)^{1/(4m)},
\end{align*}
where $m$ is a positive integer and $\theta\in (0,1)$.
Then $E[\Phi_{N,m}^{8m}]\le C_{m,\theta}<\infty$ holds for 
$(m,\theta)$ satisfying
$2m(1-\theta)>1$.
This follows from the moment estimate
$E[\left(d^{N,k,l}_{s,t}\right)^{2}]\le C2^{-N} (t-s)$,
the hypercontractivity of the Ornstein-Uhlenbeck semigroup
and a similar argument to the estimate (3.17) in \cite{a2011}.
By the Garsia-Rodemich-Rumsey inequality, we have
\begin{align*}
 |\tilde{d}^{N,k,l}_{s,t}(w)|\le 2^{-N/2}C'_m\Phi_{N,m}(w)
(t-s)^{\theta/2}.
\end{align*}
Choosing $\frac{\theta}{2}=\frac{1}{2}-\frac{1}{2m}$ and 
$\ep<\frac{1}{2}$,
we obtain
\begin{align*}
 |d^{N,k,l}_{s,t}(w)|\le
(2^{-N})^{\ep}C'_m\Phi_{N,m}(w)(t-s)^{1-\frac{1}{2m}-\ep},
\quad s<t,\quad s,t\in \{\tau^N_k\}.
\end{align*}
Let
\begin{align*}
 G_m(w)&=\sum_{N=1}^{\infty}(2^{-N})^{2m\ep}\Phi_{N,m}(w)^{4m}.
\end{align*}
This belongs to Wiener chaos of at most order $8m$ and converges in 
$L^2(\mu_{\la})$ and 
hence $G_m\in \cap_{q,s>1}\DD^{s,q}(\RR)$ holds,
where $\DD^{s,q}(\RR)$ denotes a Sobolev space in Malliavin calculus.
For this and the capacity $C^{s,q}$ which we will mention later, 
we refer the readers to
Remark~$\ref{remark to definition of Omega}$ (3) and references therein.
Consequently, we arrive at
\begin{align*}
\|{d}^{N,k,l}\|_{(1-\frac{1}{2m}-\ep)^{-1}\hyp var}\le
 \|{d}^{N,k,l}\|_{1-\frac{1}{2m}-\ep}\le (2^{-N})^{\ep/2}C'_mG_m^{1/(4m)}(w).
\end{align*}
Let 
\begin{align*}
A_{N,k,l,m}=\left\{w~\big |~\|{d}^{N,k,l}\|_{(1-\frac{1}{2m}-\ep)^{-1}\hyp var}
>N^{-2}\right\}.
\end{align*}
Then by the Chebyshev type inequality of $C^{s,q}$ capacity 
(see Section 2.2 in Chapter IV in 
\cite{malliavin}), we obtain
\begin{align*}
 C^{s,q}(A_{N,k,l,m})\le 
N^2(2^{-N})^{2m \ep}C'_m M_{q,s}\|G_m\|_{\DD^{s,q}}.
\end{align*}
Combining the Borel-Cantelli type inequality 
(1.2.4. Corollary in Chapter IV in \cite{malliavin}),
we obtain 
\begin{align*}
 C^{s,q}\left(\limsup_{N\to\infty}A_{N,k,l,m}\right)=0,
\end{align*}
which implies $\limsup_{N\to\infty}A_{N,k,l,m}$ is a slim set.
Let $E$ be the slim set defined in the proof of Theorem 3.1 and Theorem 3.2 
in \cite{a2011}.
Then, by the superadditivity of $C^{s,q}$ capacity 
(1.2.1 Proposition in Chapter IV in \cite{malliavin}),
$E'=E\cup \left(\cup_{1\le k,l\le d, m, \theta\in \QQ}
\limsup_{N\to\infty}A_{N,k,l,m}\right)$
is also a slim set.
Since ${E'}^{\complement}\subset \Omega$, $\Omega^{\complement}$ is a slim set.
\end{proof}

\begin{rem}
(1) For $w\in \Omega$, we write $C(w,w)_{s,t}=\overline{w}^2_{s,t}$.
Note that, for $w\in \Omega$, $\int_s^t{w^N}^{\perp}_{s,u}\otimes d{w^N}^{\perp}_u$
is well-defined because 
$({w^N}^{\perp}_t)=(w_t-w^N_t)\in \mathcal{C}^{\alpha}_w([0,1]\to\RR^d)$.
We write $C({w^N}^{\perp},{w^N}^{\perp})_{s,t}
=\int_s^t{w^N}^{\perp}_{s,u}\otimes d{w^N}^{\perp}_u$.
Then, it holds that
\begin{align*}
 C(w,w)_{s,t}&=C({w^N}^{\perp},{w^N}^{\perp})_{s,t}
+C(w^N,{w^N}^{\perp})_{s,t}+C({w^N}^{\perp},w^N)_{s,t}+C(w^N,w^N)_{s,t}.
\end{align*}
Moreover, noting the property of the elements of 
$\Omega$ in Theorem~\ref{lift of w} (i), (ii), we see that
for all $0<\alpha<1/2$,
\begin{align*}
 \lim_{N\to\infty}\|C({w^N}^{\perp},{w^N}^{\perp})\|_{2\alpha}=0.
\end{align*}

\noindent
(2) $d^{N,k,l}_t$ appeared in the study of asymptotic error distribution of
Milstein approximation scheme driven by (fractional) Brownian motion.
For instance, see \cite{an}.
\end{rem}

\begin{dfi}
Define
\begin{align*}
 \mathscr{C}^{\Omega}(\RR^d)=\{\bw~|~w\in \Omega\}
\end{align*}
and 
for $\frac{1}{3}<\alpha<\frac{1}{2}$, we define
\begin{align*}
 \mathscr{C}^{\alpha,\Omega}(\RR^d)=\{\bw\in \mathscr{C}^{\alpha}(\RR^d)
~|~w\in\Omega\}
\end{align*}
and identify $\Omega$ with them.
\end{dfi}

\begin{rem}\label{remark to definition of Omega}
(1) $\Omega$ is a Borel measurable subset of $W$ and
$\mathscr{C}^{\Omega}(\RR^d)$
is also a Borel measurable subset of 
$\mathscr{C}^{\alpha}_g(\RR^d)$.
In the calculation below, for $w\in \Omega$, we view $\bw$ as an element
of $\mathscr{C}^{\alpha}(\RR^d)$ for some fixed $\alpha$.

\noindent
(2) 
As already explained, to treat the Brownian motion on $G$ defined by
the transition probability density function
$p(t/\la,x,y)$, we need to consider the Brownian motion 
$w=(w(t))$ on 
$\mathfrak{g}$ whose
covariance satisfies $E[w^i(t)w^j(s)]=\delta_{i,j}\la^{-1}\min(s,t)$.
The above theorem assures the Brownian motion can be lifted to
a geometric rough path for all $\mu_{\la}$ simultaneously.
Also we need to consider the derivative of the functional
$w(\in \Omega)\mapsto Y(t,e,w)$ with the direction $H$,
where $Y$ is the solution of (\ref{RDE}).
The invariance property of $\Omega$ under the addition of the elements
of $H$ in the above is important in that argument.

\noindent
(3) The notion of slim set is due to Malliaivin.
Slim set of $W$ is a negligible set for any $C^{s,q}$ capacity.
As a consequence, any measure associated with a 
positive generalized Wiener functional
does not charge in any slim set.
Let us recall an important example of positive generalized Wiener functionals.
Let $\DD^{k,p}(W,\RR^d)$ (or simply $\DD^{k,p}(\RR^d)$)
denote the Sobolev space which 
consists of the all functions
$F : W\to \RR^N$ such that
$D^kF\in L^p(\mu_{\la})$ for all $k\ge 0$ and $p\ge 1$,
where $D^kF$ denotes the $k$-times Malliavin derivative which takes values in
$\otimes^kH$.
Also let $\DD^{\infty}(\RR^d)=\cap_{p\ge 1, k\ge 0}\DD^{k,p}(\RR^d)$.
As we will explain, $\DD^{1,2}$ is the same as the domain
$\rD(\E_{\la,W})$ in the final part of this Section.
Suppose $(DF(w)DF^{\ast}(w))^{-1}\in L^p(\mu_{\la})$ for all $p\ge 1$.
Then the composition function $\delta_a(F)$
of the delta function $\delta_a$ $(a\in \RR^N)$
and $F$ is a positive generalized Wiener functional in the sense of
Watanabe and Sugita.
More precisely, $\delta_a(F)$ belongs to a Sobolev space
$\DD^{-k,p}(\RR)$, where $k$ is a positive integer and for a certain
$p>1$.
 It is known that the coupling $\langle \delta_a(F),\varphi\rangle$
where $\varphi$ is a smooth function in the sense of Malliavin coincides with
the integral $\int_W\tilde{\varphi}(w)dm_{\la,a}(w)$, where
$\tilde{\varphi}$ is an $\infty$-quasi-continuous modification of
$\varphi$ and $m_{\la,a}$ 
is a finite measure on $W$ associated with $\delta_a(F)$ and
sometimes is written as $\delta_a(F)\mu_{\la}(w)$.
This is not a 
probability measure and we denote the normalized probability measure
$p(\la^{-1},e,a)^{-1}\delta_a(F)\mu_{\la}
(=p(\la^{-1},e,a)^{-1}m_{\la,a})$ by $\mu_{\la,a}$.
Also we have the following estimate
\begin{align}
 \left|\int_W\tilde{\varphi}(w)d\mu_{\la,a}(w)\right|
&\le
p(\la^{-1},e,a)^{-1}\|\varphi\|_{\DD^{k,q}}\|\delta_a(F)\|_{\DD^{-k,p}},
\label{pgwf}
\end{align}
where $q$ is a positive number satisfying $\frac{1}{p}+\frac{1}{q}=1$.
Actually, it is proved that $\mu_{\la,a}(S_a^\complement)=0$, where
$S_a=\{w\in W~|~\tilde{F}(w)=a\}$ and $\tilde{F}$ is again 
an $\infty$-quasi-continuous modification of $F$.
These results can be naturally extended to the 
compact Riemannian manifold $M$ valued
Wiener functional $F$.
In this paper, we apply such kind of results to the case 
where $M$ is a compact Lie group $G$ and $F(w)=Y(1,e,w)$.
We refer the readers for the above subjects and results 
to \cite{malliavin}, \cite{nualart}, \cite{shigekawa1}, \cite{sugita1},
\cite{watanabe1}, \cite{airault-malliavin}.

We recall necessary results on Sobolev spaces 
over submanifold $S_a$ 
to study the operator $-L_{\la,\Pea}$
in the final part of this section.

\noindent
(4)
In \cite{a1993}, we consider submanifolds in Wiener spaces defined by
solutions of stochastic differential equations(=SDEs)
and made use of Malliavin calculus developed at that time.
However, after the works of Terry Lyons on rough path analysis
(\cite{lyons}, \cite{lq}),
it is more natural to define the submanifolds by using the solutions
to rough differential equations which give the nice versions of the solutions
of SDEs.
In this paper, we study our problem in such a framework.
\end{rem}

\begin{dfi}\label{W_0} 
We can view $W^{\ast}$ as the subspace of $H$ by the identification
$W^{\ast}\subset H^{\ast}\simeq H\subset W$.
In this paper, we call a linear subspace $W_0$ of $W$ finite codimensional subspace if
there exists a finite dimensional subspace $V\subset W^{\ast}$
such that
\[
 W_0=\{w-P_Vw~|~w\in W\},
\]
where $P_V$ denotes the orthogonal projection mapping onto $V$
defined on $W$ by
$P_Vw=\sum_{i=1}^N (w,v_i)v_i$ and
$\{v_i\}_{i=1}^N$ is an orthonormal basis of $V\subset W^{\ast}\subset H$.
Also we define $H_0=\{P_{W_0}h~|~h\in H\}$.
In the above, $(w,v_i)$ is the natural paring and coincides with the inner product
$(w,v_i)$ in $H$ if $w\in H$.
Also we write $P_{W_0}w=w-P_Vw$ $(w\in W)$ and denote $V$ by $W_0^{\perp}$.
In the following, we use the notation $\eta$ and
$\eta^{\perp}$ of the elements of $W_0$ and $W_0^{\perp}$ respectively.
Finally, we note that we allow the case $W=W_0$.
\end{dfi}

Note that $H_0$ is the Cameron-Martin space of
$W_0$ with the image measure $\mu_{\la,W_0}=(P_{W_0})_{\ast}\mu_{\la,W}$.
We now recall the statement Remark~\ref{remark to definition of Omega} (2),
``the measure $\mu_{\la,a}$ exists on the submanifold $S_a$''.
In this paper, we define $S_a$ by using the solution $Y(t,e,w)$ of RDE driven by
$w\in \Omega$ (c.f. (\ref{RDE}), (\ref{RDE2}))) and 
introduce a local coordinate on $S_a$ in a neighborhood of 
the point $k\in S_a^H$.
$H_0$ arises as the tangent space $T_kS_a$ of $S_a$ at $k$.
The local coordinate and the coordinate function are defined on 
$W_0=\oTkSa$.
We explain the definition of $T_kS_a$ and $\oTkSa$ in 
Definition~\ref{tangent space}.
Furthermore, we see that the measure $\mu_{\la,a}$ locally can be viewed as the
image measure of weighted Wiener measure on $W_0$.
The following proposition is trivial by the definition
and the regularity property of the element of $H$.

\begin{pro}\label{Omega0}
 Let $W_0$ be a finite codimensional subspace of $W$.
Then $P_{W_0}\Omega\subset \Omega$ and $P_{W_0}\Omega+H_0\subset P_{W_0}\Omega$.
Also it holds that $\Omega\cap W_0=P_{W_0}\Omega$.
We write $\Omega_0=\Omega\cap W_0$.
\end{pro}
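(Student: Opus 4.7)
The plan is to reduce everything to the invariance property stated in Theorem~\ref{lift of w}~(2), using the observation that $W^{\ast}\subset H$ ensures that the correction term $P_V w$ is a Cameron--Martin element, so $P_{W_0} w$ differs from $w$ only by an $H$-shift.

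First I would prove $P_{W_0}\Omega\subset\Omega$. Take $w\in\Omega$ and write $P_{W_0}w=w-P_V w$, where by definition $P_V w=\sum_{i=1}^N(w,v_i)v_i$ with $v_i\in W^{\ast}\subset H$. Since $V$ is a finite-dimensional subspace of $H$, this immediately gives $P_V w\in H$, hence $-P_V w\in H$. By Theorem~\ref{lift of w}~(2), $\Omega$ is invariant under addition of elements of $H$, so $P_{W_0} w=w+(-P_V w)\in\Omega$. This is the only step that uses the structural input about $\Omega$ from Theorem~\ref{lift of w}; the rest is bookkeeping.

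Next I would verify $P_{W_0}\Omega+H_0\subset P_{W_0}\Omega$. Let $\eta=P_{W_0}w\in P_{W_0}\Omega$ and let $h_0\in H_0$, so $h_0=P_{W_0}h$ for some $h\in H$. Observe $P_{W_0}$ is an idempotent on $W$ (because $P_V h_0=P_V h-P_V^2 h=0$, so $P_{W_0}h_0=h_0$). Therefore
\[
\eta+h_0=P_{W_0}w+P_{W_0}h=P_{W_0}(w+h).
\]
Because $h\in H$, Theorem~\ref{lift of w}~(2) gives $w+h\in\Omega$, and the inclusion already established yields $P_{W_0}(w+h)\in P_{W_0}\Omega$.

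Finally I would establish $\Omega\cap W_0=P_{W_0}\Omega$. The inclusion $P_{W_0}\Omega\subset\Omega\cap W_0$ is immediate from the first step together with the fact that $P_{W_0}$ takes values in $W_0$. Conversely, if $w\in\Omega\cap W_0$ then $P_V w=0$, hence $w=P_{W_0} w\in P_{W_0}\Omega$. There is no real obstacle here: the only subtle point is verifying that the projection correction lies in $H$, and that reduces to the identification $W^{\ast}\subset H\subset W$ built into the definition of a finite-codimensional subspace; everything else is formal manipulation of the projections together with the $H$-invariance of $\Omega$.
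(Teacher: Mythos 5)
Your proof is correct and uses exactly the ingredients the paper points to when it declares this proposition ``trivial by the definition and the regularity property of the element of $H$'': the correction $P_Vw$ lies in $H$ because $V\subset W^{\ast}\subset H$, and $\Omega$ is invariant under $H$-shifts by Theorem~\ref{lift of w}~(2). The paper omits the proof entirely, so your write-up is a faithful filling-in of the intended argument rather than a different route.
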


We now introduce the set of pathwise smooth functions.

\begin{dfi}\label{C^k_b function}
Let $W_0$ be a finite codimensional subspace of $W$.
\begin{enumerate}
 \item[$(1)$] Let $U$ be a subset of $\Omega_0$.
For $\eta\in \Omega_0$, let $U(\eta)=\{h\in H_0~|~\eta+h\in U\}$.
We call $U$ an $H_0$-open subset if $U(\eta)$ is an open subset of $H_0$
for any $\eta\in U$.
\item[$(2)$] Let $U$ be an $H_0$-open subset of $\Omega_0$.
Let $f : U\to E$ be a measurable function with values in 
a Hilbert space $E$.
Let $m, n\in \NN\cup \{0\}$.
We say that $f$ is a $C^n_b$ function with direction $H_0$ if
$f(\eta+\cdot)\in C^n_b(U(\eta)\to E)$ and 
$\|f\|_{C^n_{b}}:=\sum_{l=0}^n\sup\{\|D^lf(\eta+\cdot)\|_{L_2(H_0^{\otimes l},E)}~|~
\eta\in U\}<\infty$, where $L_2$ denotes the set of Hilbert-Schmidt operators.
We denote the set of all such functions by $C^{n}_{b,H_0}(U,E)$.
Let $D$ be an open subset of a compact Lie group $G$ or Euclidean spaces.
$C^{m,n}_{b,H_0}(D\times U)$ denotes the set of functions on
$D\times U$ which are of $C^m_b$ with respect to the variable of $D$
and of $C^n_{b,H_0}$ with respect to the variable of $U$.
\end{enumerate}
 \end{dfi}

\begin{exm}\label{examples of H open set}
 Let $W_0$ be a finite codimensional subspace of $W$.
For $\ep>0$ and $k\in H$, define
\begin{align*}
 B_{\ep}^{\Omega_0}(0)&=\left\{\eta\in \Omega_0~
|~\|\overline{\eta}\|_{\alpha}<\ep
\right\},\\
B_{k,\ep}^{\Omega_0}(0)&=\left\{\eta\in \Omega_0~|~
\|\overline{\eta}\|_{\alpha}<\ep,\,
\|C(k,\eta)\|_{2\alpha}<\ep,\, \|C(\eta,k)\|_{2\alpha}<\ep\right\},\\
U_{\ep}^{\Omega}(k)&=\left\{\eta\in \Omega~|~
\|\overline{\eta-k}\|_{\alpha}<\ep, 
\|C(\eta-k,k)\|_{2\alpha}<\ep, \|C(k,\eta-k)\|_{2\alpha}<\ep
\right\}.
\end{align*}
The first two sets are neighborhoods of $0$ in $\Omega_0$.
The last one is a neighborhood of $k$ in $\Omega$ and
$U_{\ep}^{\Omega}(k)=k+B_{k,\ep}^{\Omega}(0)$ holds.
\end{exm}

\begin{rem}\label{remark on controlled path}
(1) The solutions of rough differential equations(=RDEs) 
are typical examples belonging 
to $C^n_{b,H}(U_{\ep}^{\Omega}(k),\RR^d)$.
Let us recall the notion of controlled paths to explain 
RDEs and the solutions.

Let $1/3<\alpha<1/2$.
Let $V$ be a finite dimensional normed
linear space.
$V$-valued path $(Y_t)_{0\le t\le 1}$ is 
said to be cotrolled by $X\in C^{\alpha}([0,1]\to \RR^d)$ 
($X\in \mathcal{V}^p([0,1]\to\RR^d)$ respectively)
if there exists 
$(Y'_t)\in C^{\alpha}([0,1]\to \LL(\RR^d,V))$
($(Y'_t)\in \mathcal{V}^{p}([0,1]\to \LL(\RR^d,V))$ respectively)
such that
$R^Y_{s,t}=Y_{s,t}-Y'_sX_{s,t}$
satisfies $\|R^Y\|_{2\alpha}<\infty$ 
($\|R^Y\|_{\frac{p}{2}\hyp var}<\infty$ respectively).
The pair $(Y_t,Y'_t)$ is called a controlled rough path
and we denote the set of controlled rough paths associated with
$X\in C^{\alpha}([0,1]\to\RR^d)$
by
$\mathcal{C}^{\alpha}_X([0,1]\to V)$
which is a Banach space with the norm
$\|(Y,Y')\|_{\mathcal{C}^{\alpha}_X}=|Y_0|+|Y'_0|+\|Y'\|_{\alpha}
+\|R^Y\|_{2\alpha}$.
Similarly, we denote the set of controlled rough paths associated with
$X\in \mathcal{V}^p([0,1]\to\RR^d)$ and the norm by
$\mathcal{V}^{p}_X([0,1]\to\RR^d)$ and 
$\|(Y,Y')\|_{\mathcal{V}^p_X}
=|Y_0|+|Y'_0|+\|Y'\|_{p\hyp var}
+\|R^Y\|_{\frac{p}{2}\hyp var}$
respectively.
Note that for $X\in C^{\alpha}([0,1]\to\RR^d)$,
the natural inclusion $\mathcal{C}^{\alpha}_X([0,1]\to V)\subset
\mathcal{V}^{1/\alpha}_X([0,1]\to V)$ and an estimate
$\|(Y,Y')\|_{\mathcal{V}^{1/\alpha}_X}\le \|(Y,Y')\|_{\mathcal{C}^{\alpha}_X}$
hold.

Let $\bX_{s,t}=(X_{s,t},\XX_{s,t})\in \mathscr{V}^{1/\alpha}([0,1]\to \RR^d)$
and $(Y_t,Y'_t)\in 
\mathcal{V}_X^{1/\alpha}([0,1]\to \LL(\RR^d,V))$.
Note that 
$Y'_t\in \LL(\RR^d,\LL(\RR^d,V))$ and 
$Y'_t(\xi)\in \LL(\RR^d,V)$ for $\xi\in \RR^d$.
We can define rough integral 
$I_t=\int_0^tY_sd\bX_s$
as follows.
Let $\Xi_{s,t}=Y_sX_{s,t}+Y'_s\XX_{s,t}$.
Here precisely, $Y'_s\XX_{s,t}=\sum_{i=1}Y'_s(e_i)e_j X^{i,j}_{s,t}$.
Then $I_t$ is defined by 
$I_t:=\mathcal{I}(\Xi)_{0,t}
=\lim_{|\Delta|\to 0}\sum_{i=1}^M\Xi_{t_{i-1},t_i}$, where
$\Delta=\{0=t_0<\cdots<t_M=t\}$ and $|\Delta|=\max_i|t_i-t_{i-1}|$.
The existence of the limit can be checked by the Sewing lemma 
(\cite{gubinelli}, \cite{friz-hairer}).
Note that $(I_t,Y_t)\in \mathcal{V}_X^{1/\alpha}([0,1]\to V)$ holds.

We consider the following RDE driven by $\bw$ which is a lift
of $w\in \Omega$.
\begin{align}
 Y(t,x,w)=x+\int_0^tY(s,x,w)dw_s,\quad x\in M(n,\CC).
\label{RDE}
\end{align}
Actually we consider the case where $x\in G\subset SU(n)$ only.
Note that $(Y(t,x,w),Y(t,x,w))\in \mathcal{C}_w^{\alpha}([0,1],M(n,\CC)))$
and the above integral is a rough integral.
For notational simplicity, we use the notation
$Y(t,x,w)$ and $dw_s$ instead of $Y(t,x,\bw)$ and $d\bw_s$ respectively
throughout this paper.
We think this makes no confusion since
we already defined $\bw$ canonically for each $w\in \Omega$.
Later, we define rough path $\overline{w+h}$ and $\overline{w_f}$.
For these, we may not omitting the overline to denote them for the clarity.
In (\ref{RDE}),
note that $Y(t,x,w)$ takes values in $M(n,\CC)$ and
the product in the above equation should be understood as a matrix product.
That is, the equation reads
\begin{align}
 Y(t,x,w)=x+\sum_{i=1}^d\int_0^t
Y(s,x,w)\ep_i dw^i(s)\label{RDE2}
\end{align}
and the product $Y(s,x, w)\ep_i$ is the matrix product.
Note that $Y(t)$ is the solution to (\ref{RDE}) ((\ref{RDE2})) 
if and only if
$Y(0)=x$ and the following estimate holds:

There exists $C>0$ which depends only on $x$ and $w$ such that
\begin{align*}
 \left|Y(t)-Y(s)-\sum_{i}Y(s)\ep_iw^i_{s,t}
-\sum_{i,j}Y(s)\ep_i\ep_jw^{i,j}_{s,t}\right|
\le C|t-s|^{3\alpha}\qquad \text{for any $0<s\le t<1$},
\end{align*}
where $Y(s)\ep_i, Y(s)\ep_i\ep_j$ are the matrix products.
Finally, note that the solution of (\ref{RDE}) 
is a version of the solution to the Stratonovich SDE 
\begin{align}
 Y(t,x,w)=x+\int_0^tY(s,x,w)\circ dw(s), 
\qquad Y(0,x,w)=x.\label{stratonovich sde}
\end{align}

\noindent
(2)
Let $h\in H$.
Of course, the solution $Y(t,x,w+h)$ satisfies the following inequality.
\begin{align*}
&  \Bigl|Y(t,x,w+h)-Y(s,x,w+h)-
\sum_i Y(s,x,w+h)\ep_i\left(w^i_{s,t}+h^{i}_{s,t}\right)\\
&\qquad \qquad
-\sum_{i,j}Y(s,x,w+h)\ep_i\ep_j\overline{w+h}^{i,j}_{s,t}\Bigr|\le
C|t-s|^{3\alpha}.
\end{align*}
Note that $(Y(t,x,w), Y(t,x,w))\in \mathcal{C}^{\alpha}_w([0,1],M(n,\CC))$ 
holds.
However
$(Y(t,x,w+h),Y(t,x,w+h))\in
\mathcal{C}^{\alpha}_w([0,1], M(n,\CC))$ does not hold in general
because $h$ may not belong to $C^{2\alpha}$.
That is, $R^{Y(w+h)}_{s,t}:=Y(t,x,w+h)-Y(s,x,w+h)-
Y(s,x,w+h)w_{s,t}$ may not satisfy the necessary H\"older continuity.
However, $\|h\|_{1\hyp var, [s,t]}\le \|h\|_H(t-s)^{1/2}$ holds
and hence
we see that $\|R^{Y(w+h)}\|_{1/(2\alpha)\hyp var}<\infty$
for any $1/3<\alpha<1/2$.
That is, $(Y(t,x,w+h),Y(t,x,w+h))\in 
\mathcal{V}^{1/\alpha}_w([0,1], M(n,\CC))$ holds.

For $\bX_{s,t}=(X_{s,t},\XX_{s,t})\in 
\mathscr{V}^{1/\alpha}([0,1]\to\RR^d)$ and
$(Y_t,Y'_t)\in \mathcal{V}^{1/\alpha}_X([0,1]\to \mathcal{L}(\RR^d,V))$,
we have the following estimate 
(\cite{friz-shekhar}, \cite{rtt}, \cite{friz-hairer}):
\begin{align}
 &\left|\int_s^tY_udX_u-Y_sX_{s,t}-Y'_s\XX_{s,t}\right|\nonumber\\
&\le
C\left(\|R^Y\|_{1/(2\alpha)\hyp var,[s,t]}\|X\|_{1/\alpha\hyp var, [s,t]}+
\|Y'\|_{1/\alpha\hyp var,[s,t]}\|\XX\|_{1/(2\alpha)\hyp var,[s,t]}\right).
\label{estimate of rough integral with 1/alpha variation}
\end{align}
We apply this estimate to rough integrals $I_t$ and $J_{s,t}$ below.
Let $f\in C^2_b(G,\LL(\g,\RR^m))$ and 
$g\in C^2_b(G,\LL(\RR^m\otimes \g,\RR^l))$
and set
\begin{align*}
 I_t&=\int_0^tf(Y(u,e,w+h))dw_u,\\
J_{s,t}&=\int_s^tg(Y(u,e,w+h))(I_{s,u},dw_u),
\end{align*}
where $f\in C^n_b$ means that
$f\in C^n$ and $f$ and all its derivatives are bounded continuous functions.
Let
\begin{align*}
 \Xi^I_{u,v}&=f(Y(u,e,w+h))w_{u,v}+(Df)(Y(u,e,w+h))[Y(u,e,w+h)\ep_i]
\ep_j w^{i,j}_{u,v},\\
\Xi^J_{u,v}&=g(Y(u,e,w+h))(I_{s,u},w_{u,v})
+g(Y(u,e,w+h))(f(Y(u,e,w+h)\ep_i,\ep_j))w^{i,j}_{u,v}\nn\\
&\quad
+(Dg)(Y(u,e,w+h))[Y(u,e,w+h)\ep_i]
\left(I_{s,u},\ep_j\right)w^{i,j}_{u,v}.
\end{align*}
Then by the definition of rough integrals,
we have $I_t=\mathcal{I}(\Xi^I)_{0,t}$ 
and $J_{s,t}=\mathcal{I}(\Xi^J)_{s,t}$.
Then, by the estimate for any $w\in\Omega$, we have, for $0\le s\le t\le 1$,
\begin{align}
& \left|I_{s,t}-\left(f(Y(s,e,w+h))w_{s,t}+
(Df)(Y(s,e,w+h))[Y(s,e,w+h)\ep_i]\ep_j
\overline{w}^{i,j}_{s,t}\right)\right|\nn\\
&\le C(\|\overline{w+h}\|_{\alpha})
\Bigl(
\left(\|\overline{w+h}\|_{\alpha}\|\bar{w}\|_{\alpha}^2
+\|\overline{w+h}\|_{\alpha}^2\|\bar{w}\|_{\alpha}\right)(t-s)^{3\alpha}
+\|h\|_{1\hyp var, [s,t]}\|w\|_{\alpha}(t-s)^{\alpha}
\Bigr), \label{estimate of rough integral0}\\
&\|J\|_{2\alpha}\le C(\|\overline{w}\|_{\alpha},\|h\|_H)
\|\bar{w}\|_{\alpha}^2,\label{estimate of iterated integral0}
\end{align}
where $C$ is a positive polynomial growth increasing function
on $\RR^+$.

\noindent
$(3)$
Also the integrals $I$ and $J$ in (2) can be defined using
\begin{align*}
 \tilde{\Xi}^I_{u,v}&=f(Y(u,e,w+h))w_{u,v}+(Df)(Y(u,e,w+h))[Y(u,e,w+h)\ep_i]
\ep_j
C((w+h)^i,w^j)_{u,v},\\
\tilde{\Xi}^J_{u,v}&=g(Y(u,e,w+h))(I_{s,u},w_{u,v})
+g(Y(u,e,w+h))(f(Y(u,e,w+h)\ep_i,\ep_j))w^{i,j}_{u,v}\nn\\
&\quad
+(Dg)(Y(u,e,w+h))[Y(u,e,w+h)\ep_i]
\left(I_{s,u},C((w+h)^i,w^j)_{u,v}\ep_j\right),
\end{align*}
as
$
 I_{s,t}:=\mathcal{I}(\tilde{\Xi}^I)_{s,t}, \,\,
J_{s,v}:=\mathcal{I}(\tilde{\Xi}^J)_{s,v}\,\,
(s\le v\le 1),
$
which follow from 
\[
 |C(h^i,w^j)_{u,v}|\le 
C\|h^i\|_{1\hyp var, [u,v]}\|w^j\|_{1/\alpha\hyp var, [u,v]}.
\]
By this expression, we have the following estimate.
\begin{align}
 &|I_{s,t}-\tilde{\Xi}^I_{s,t}|\le 
C(\|\overline{w+h}\|_{\alpha})
\left(\|w\|_{\alpha}+\|\overline{w}^2\|_{2\alpha}+\|C(h,w)\|_{2\alpha}\right)
(t-s)^{3\alpha},\label{estimate of rough integral}\\
&\left|J_{s,t}-g(Y(s,e,w+h)
(f(Y(s,e,w+h))\ep_i,\ep_j))w^{i,j}_{s,t}\right|\nn\\
&
\le C(\|\bar{w}\|_{\alpha}, \|\bar{h}\|_{\alpha}, \|C(h,w)\|_{2\alpha},
\|C(w,h)\|_{2\alpha})
\Bigl(\|\bar{w}\|_{\alpha}+\|C(h,w)\|_{2\alpha}+
\|C(w,h)\|_{2\alpha}\Bigr)^2(t-s)^{3\alpha}
\label{estimate of iterated integral},
\end{align}
where $C$ are again polynomial growth increasing functions.
Of course, we have $\|C(h,w)\|_{2\alpha}\le C\|h\|_H\|w\|_{\alpha}$
and this is enough for most of purposes.
However, the estimates (\ref{estimate of rough integral}) 
and (\ref{estimate of iterated integral}) are useful for the proof of
Lemma~\ref{local chart2}.
Hence, we use the estimates (\ref{estimate of rough integral}) 
and (\ref{estimate of iterated integral}) in stead of
(\ref{estimate of rough integral0}) 
and (\ref{estimate of iterated integral0})
in the calculations below.

\noindent
(4) Let $w\in \Omega$ and $h\in H$.
Then for any $u_1\in \RR$ and $u_2\in H$, $u_1w+u_2h\in \Omega$.
Therefore, the solution $Y(t,x,u_1w+u_2h)$ is defined as the functional
of the rough path
$\overline{u_1w+u_2h}$.
Clearly, the essential roughness of the driving noise of
$u_1w+u_2h$ is in $w$ only and the solution can be viewed as a functional of $w$.
To be more precise,
$Y(t,x,u_1w+u_2h)$ coincides with 
the solution to the following RDE driven by
$\bw$:
\begin{align*}
 Y_t=x+u_1\int_0^tY_sdw_s+u_2\int_0^tY_sdh_s.
\end{align*}
\end{rem}

In view of the above remark, we prepare a lemma.

\begin{lem}\label{w_f-roughpath}
 Let $f=(f(t))_{0\le t\le 1}\in H^1([0,1],\LL(\RR^d,\RR^d))$ and
set $w_f(t)=\int_0^tf(s)dw(s)$ as the Stieltjes integral for $w\in \Omega$.

\begin{enumerate}
 \item[$(1)$] $(w_f,f)\in \mathcal{V}_w^{1/\alpha}([0,1]\to\RR^d)$ holds,
where $\alpha<1/2$.
Therefore, the rough path $\overline{w_f}$ can be defined by the
rough integral against $w$.
Also for any 
$(Y_t,Y'_t)\in \mathcal{V}^{1/\alpha}_w([0,1]\to
\mathcal{L}(\RR^d,\RR^m))$,
it holds that
\begin{align}
 \int_0^tY_sd\overline{w_f}(s)=\int_0^tY_sf(s)dw_s,
\label{chain rule}
\end{align}
as rough integrals in the sense of controlled rough paths by $w$.
\item[$(2)$]Let $w_f^N$ also denote the dyadic polygonal approximation of
$w_f$ similarly to $w^N$.
Set ${w_f^N}^\perp=w_f-w_f^N$.
The following hold for all $w\in \Omega$.
\begin{itemize}
 \item[{\rm (i)}] $\lim_{N\to\infty}\|w_f^N-w_f\|_{1/\alpha\hyp var}=0$ for all 
$\alpha<1/2$.
\item[{\rm (ii)}] 
$\lim_{N\to\infty}\|C(w_f^N,w^N_f)
-\overline{w_f}^2\|_{(2\alpha)^{-1}\hyp var}=0$
for all $\alpha<1/2$.
\item[{\rm (iii)}] 
$\lim_{N\to\infty}\|C(w_f^N,{w_f^N}^\perp)\|_{1/(2\alpha)\hyp var}=
\lim_{N\to\infty}\|C({w_f^N}^{\perp},{w_f^N})\|_{1/(2\alpha)\hyp var}=0
$ for all $\alpha<1/2$.
\end{itemize}
\end{enumerate}
\end{lem}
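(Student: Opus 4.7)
The plan for part (1) is to show that $w_f$ is a controlled path of $w$ with Gubinelli derivative $(w_f)'=f$, after which the identity (\ref{chain rule}) becomes the standard chain rule for rough integrals. The first observation is that $f\in H^1([0,1],\LL(\RR^d,\RR^d))$ has $\dot f\in L^2\subset L^1$ by Cauchy--Schwarz, so $f$ is of bounded variation and $w_f(t)=\int_0^t f(s)\,dw_s$ may be interpreted as a Young integral for each $w\in\Omega$ (which is $\alpha$-H\"older for every $\alpha<1/2$). Integration by parts then gives
\[
R^{w_f}_{s,t}:=w_f(t)-w_f(s)-f(s)\,w_{s,t}=(f(t)-f(s))\,w_{s,t}-\int_s^t(w_u-w_s)\,df(u).
\]
Combining $|f(t)-f(s)|\le \|\dot f\|_{L^2[s,t]}(t-s)^{1/2}$ with the $\alpha$-H\"older regularity of $w$ yields $|R^{w_f}_{s,t}|\le C\|w\|_\alpha \|\dot f\|_{L^2[s,t]}(t-s)^{\alpha+1/2}$. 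Superadditivity of $\|\dot f\|_{L^2[\cdot]}^2$ together with Cauchy--Schwarz over partitions then gives finite $1/(2\alpha)$-variation of $R^{w_f}$ for every $\alpha\in(1/3,1/2)$, which combined with $f\in C^{1/2}$ shows $(w_f,f)$ is a controlled path in the sense of Remark~\ref{remark on controlled path}(2). The identity (\ref{chain rule}) then follows from the uniqueness of the rough integral: both sides arise as limits of compensated Riemann sums whose first-order increments differ by $\sum_k Y_{s_k}R^{w_f}_{s_k,s_{k+1}}$, and the bound above forces this difference to vanish in the limit.

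For (2)(i), the controlled path structure shows $w_f\in C^\alpha$ for every $\alpha<1/2$, so a standard interpolation estimate for dyadic polygonal approximations of H\"older paths gives $\|w_f^N-w_f\|_\beta\to 0$ at rate $2^{-N(\alpha-\beta)}$ for $\beta<\alpha$. Since a $\beta$-H\"older path $X$ satisfies $\|X\|_{1/\beta\hyp var}\le \|X\|_\beta$, this yields $\|w_f^N-w_f\|_{1/\alpha\hyp var}\to 0$ for every $\alpha<1/2$. For (2)(ii)--(iii), the strategy is to reduce the claims about $w_f^N$ to the defining convergences of $w^N$ built into $\Omega$ (Theorem~\ref{lift of w}(i),(ii)). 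On each dyadic interval $[\tNkN,\tNk]$, part (1) gives
\[
w_{f,\tNkN,\tNk}=f(\tNkN)\,w_{\tNkN,\tNk}+R^{w_f}_{\tNkN,\tNk},
\]
with $|R^{w_f}_{\tNkN,\tNk}|\le C\|\dot f\|_{L^2[\tNkN,\tNk]}2^{-N(\alpha+1/2)}$. Substituting this into $C(w_f^N,w_f^N)$, $C(w_f^N,(w_f^N)^{\perp})$ and $C((w_f^N)^{\perp},w_f^N)$ decomposes each one into a main piece built from the corresponding quantity for $w^N$ weighted by $f$, plus cross-terms involving $R^{w_f}$. The main piece converges to the correct rough-path object using the defining properties of $\Omega$ and (\ref{chain rule}), while the cross-terms vanish in $1/(2\alpha)$-variation thanks to the above bound and the square-summability $\sum_k\|\dot f\|_{L^2[\tNkN,\tNk]}^2=\|\dot f\|_{L^2}^2$.

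The hard part will be the precise error accounting in (2)(ii), where the polygonal approximation $w_f^N$ is defined from the values of $w_f$ at dyadic times rather than as $\int f\,dw^N$; the non-constancy of $f$ inside each dyadic cell introduces correction terms that do not occur in the pure $w^N$-case. The $H^1$-regularity of $f$ enters in exactly two places to control these: once to give the Young/integration-by-parts representation of $R^{w_f}$ underlying part (1), and once through the identity $\sum_k\|\dot f\|_{L^2[\tNkN,\tNk]}^2=\|\dot f\|_{L^2}^2$, which tames the sum of the local errors uniformly in $N$. Once this reduction is in place, (2)(iii) follows from (2)(ii) combined with $\|C(w^N,(w^N)^{\perp})\|_{2\alpha}$, $\|C((w^N)^{\perp},w^N)\|_{2\alpha}\to 0$ furnished by the definition of $\Omega$.
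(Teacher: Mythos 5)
Your part (1) and part (2)(i) are fine. For (1) you identify the Gubinelli derivative of $w_f$ as $f$ with remainder $R^{w_f}_{s,t}=\int_s^t df_u\,w_{u,t}$; this is exactly the decomposition the paper records in the remark following the lemma, and your $L^2$ bound on the remainder is correct. For (2)(i) your argument (interpolation of the dyadic polygonal approximation in H\"older norm, then $\|X\|_{1/\beta\hyp var}\le\|X\|_{\beta}$) is actually more elementary than the paper's, which derives (i) from the splitting $w_f=f\cdot w+R(w,f)$ together with its estimate on $\ep_N=(f\cdot w)^N-f^N\cdot w^N$. In (2)(ii)--(iii) your route genuinely diverges from the paper's: the paper compares $(f\cdot w)^N$ with $f^N\cdot w^N$ and spends most of its effort bounding $\|\ep_N\|_{p\hyp var}$ by a two-scale dyadic argument through the $\|\cdot\|_{p,\kappa}$ norm of Lyons--Qian, then reads off the remaining convergences from the explicit bilinear expansion (\ref{expansion of C}); you instead substitute $w_{f,\tNkN,\tNk}=f(\tNkN)\,w_{\tNkN,\tNk}+R^{w_f}_{\tNkN,\tNk}$ cell by cell, which avoids $\ep_N$ altogether, and your square-summability argument does correctly kill the $R^{w_f}$ cross-terms in $1$-variation, hence in $1/(2\alpha)$-variation.

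The gap is in the ``main piece'' of (2)(ii). After your substitution, $C(w_f^N,w_f^N)_{s,t}$ for dyadic $s,t$ reduces to the \emph{uncompensated} Riemann sum
$\sum_k (w_f)_{s,\tNkN}\otimes f(\tNkN)w_{\tNkN,\tNk}+\tfrac12\sum_k \bigl(f(\tNkN)w_{\tNkN,\tNk}\bigr)^{\otimes 2}$,
whereas $\overline{w_f}^2_{s,t}$ is the compensated limit containing $\ww_{\tNkN,\tNk}$. Their difference contains, after contracting with $f(\tNkN)\otimes f(\tNkN)$, precisely the quantities $d^{N,k,l}_{\tNiN,\tNi}=w^{k,l}_{\tNiN,\tNi}-\tfrac12 w^k_{\tNiN,\tNi}w^l_{\tNiN,\tNi}$ of Theorem~\ref{lift of w}(iii). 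So to conclude you need (a) property (iii) of $\Omega$, not just (i)--(ii), and (b) a discrete Young/sewing estimate showing that $\sum_i f(\tNiN)^{\otimes 2}\,d^{N}_{\tNiN,\tNi}\to 0$ in $1/(2\alpha)$-variation, exploiting that $f$ has finite $1$-variation while $\|d^N\|_{p\hyp var}\to 0$ for every $p>1$. You must also upgrade pointwise statements at dyadic $(s,t)$ to $1/(2\alpha)$-variation over arbitrary partitions, which requires a superadditive control-function argument of the type the paper runs through (\ref{pkappanorm}). None of this is supplied by the phrase ``the defining properties of $\Omega$ and (\ref{chain rule})'', and it is exactly where the real work of the paper's proof lies; as written, your (2)(ii) is a correct reduction but not yet a proof.
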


\begin{exm}\label{example of f}
 Typical example of $f$ is as follows.
Let $k\in H$ and consider the solution $Y(t,e,k)$ and
set $f(t)=Ad\left(Y(t,e,k)\right)$.
In this case, we denote $w_f$ by $U_kw$.
That is, $(U_kw)(t)=\int_0^tAd\left(Y(s,e,k)\right)dw_s$.
This defines a unitary operator from $T_kS_a$ to $H_{0,0}$.
We use this mapping many times as the unitary operator
from $T_kS_a$ to $H_{0,0}$.
Note that $T_kS_a$ will be introduced in Definition~$\ref{tangent space}$.
\end{exm}

\begin{rem}\label{remark on wf}
 Actually, $(w_f,f)
\in \mathcal{C}^{\alpha}_w([0,1]\to\RR^d)$ holds.
To see this, note
$w_f(t)=\int_0^tf(s)dw(s)=f(t)w(t)-\int_0^tdf(u)w(u)$.
Hence
$w_f(t)-w_f(s)=f(s)w_{s,t}+\int_s^tdf_uw_{u,t}$.
Set $R^{w_f}_{s,t}=\int_s^tdf_uw_{u,t}$.
Using $\|f\|_{1\hyp var, [s,t]}\le \|f\|_H(t-s)^{1/2}$,
we have
$|R^{w_f}_{s,t}|\le \|f\|_H\|w\|_{\alpha}(t-s)^{\frac{1}{2}+\alpha}$.
However, we treat $w_f$ as a controlled path in the space of finite
$1/\alpha$-variation norm.
\end{rem}

\begin{proof}[Proof of Lemma~$\ref{w_f-roughpath}$]
\noindent
(1) 
We already showed $w_f'=f$ in Remark~$\ref{remark on wf}$.
Hence,
(\ref{chain rule}) is a well-known property of the
rough integral.

\noindent
(2) 
We have
$w_f(t)=\int_0^tf(s)dw(s)=f(t)w(t)-\int_0^tdf(u)w(u)$.
Write $R(w,f)_{t}=-\int_0^tdf(u)w(u)$.

The second level path $\overline{w_f}^2_{s,t}$ is given as follows.
\begin{align}
 \overline{w_f}^2_{s,t}&=C(f\cdot w, f\cdot w)_{s,t}+
C\left(f\cdot w, R(w,f)\right)_{s,t}
+C\left(R(w,f), f\cdot w\right)_{s,t}
+C\left(R(w,f),R(w,f)\right)_{s,t},\label{wf-representation}
\end{align}
where $(f\cdot w)(t)=f(t)w(t)$.
The latter three integrals on the R.H.S. of (\ref{wf-representation})
are Young integrals and the first one
is a rough integral.
To explain the explicit form, we use component of $f(s)=(f^i_j(s))$ and 
$w_s=(w^i_s)$.
Note that $(f\cdot w)^i_s=\sum_jf^i_j(s)w^j_s$.
For $H^1$ paths $f(s)=(f^i_k(s))$, $g(s)=(g^j_l(s))$ and two controlled paths 
$\phi=(\phi^k)$ and $\varphi=(\varphi^l)$ of $w$, we have
\begin{align}
& \left(C(f\cdot \phi, g\cdot \varphi)_{s,t}, e_i\otimes e_j\right)\nonumber\\
&=
\sum_{i,j,k,l}C(\phi^k, \varphi^l)_{0,t}(f^i_k)_{s,t}g^j_l(t)
-\sum_{i,j,k,l}\int_s^tC(\phi^k, \varphi^l)_{0,u}d_u((f^i_k)_{s,u}g^j_l(u))\nonumber\\
&\quad +\sum_{i,j,k,l}f^i_k(s)g^j_l(t)C(\phi^k,\varphi^l)_{s,t}
-\sum_{i,j,k,l}f^i_k(s)\int_s^tC(\phi^k,\varphi^l)_{s,u}d(g^j_l(u))
\nonumber\\
&\quad +\sum_{i,j,k,l}\int_s^t(f^{i}_k)_{s,u}\phi^k_u\varphi^l_udg^j_l(u)
+\sum_{i,j,k,l}f^i_k(s)\int_s^t\phi^k_{s,u}\varphi^l_udg^j_k(u)
\label{expansion of C}
\end{align}
Also we have
\begin{align}
 w_f^N(t)&=(f\cdot w)^N(t)+R(w,f)^N_t
=f^N(t)\cdot w^N(t)+\ep_N(t)+R(w,f)^N_t,\label{wfN}\\
 {w_f^N}^{\perp}(t)&=w_f(t)-w^N_f(t)\nn\\
&={f^N}^{\perp}(t)\cdot w(t)+f^N(t)\cdot {w^N}^{\perp}(t)
+{f^N}^{\perp}(t)\cdot {w^N}^{\perp}(t)+{R(w,f)^N}^{\perp}_t
-\ep_N(t).\label{wfNperp}
\end{align}
where $\ep_N(t):=(f\cdot w)^N(t)-f^N(t)\cdot w^N(t)$.
Clearly,
\begin{align*}
 \lim_{N\to\infty}\|{R(w,f)^N}^{\perp}\|_{1/(2\alpha)\hyp var}
\le\lim_{N\to\infty}\|{R(w,f)^N}^{\perp}\|_{H^1}=0.
\end{align*}
On the other hand, we have
\begin{align*}
 \ep_N(t)&=
\left(f^N_{\tNkN}-f_t^N\right)2^N(\tNk-t)w_{\tNkN}
+\left(f^N_{\tNk}-f^N_t\right)2^N(t-\tNkN)w_{\tNk}\\
&=:\ep_N^1(t)+\ep_N^2(t)
\quad \tNkN\le t\le \tNk\quad 1\le k\le 2^N,
\end{align*}
where $\tNk=k2^{-N}$.
$\ep^i_N(\tNk)=0$ $(0\le k\le 2^N, i=1,2)$ holds and
we have
$\lim_{N\to\infty}\|\ep_N^i\|_{1/(2\alpha)\hyp var}=0$ for any
$\alpha<1/2$.
To prove this estimate, we recall the following.

For $z\in C([0,1]\to \RR^m)$ and $\kappa>p-1$, let
\begin{align*}
\|z\|_{p,\kappa}:=
\left[\sum_{n=1}^{\infty}\left\{n^{\kappa}\sum_{k=1}^{2^n}
|z(\tau^n_k)-z(\tau^n_{k-1})|^p\right\}\right]^{1/p}.
\end{align*}
We have the following estimate:
there exists a positive number $C$ such that
\begin{align}
\|z\|_{p\hyp var}\le C\|z\|_{p,\kappa}\qquad 
\mbox{for all $z\in C([0,1]\to \RR^m)$},
\label{pkappanorm}
\end{align}
which can be found in Proposition~4.1.1 in \cite{lq}.
Using $\|f\|_{1/2}\le \|f\|_H$, we have
\[
 \max\{|f_{\tNkN}-f^N_t|, |f_{\tNk}-f^N_t|\}\le \|f\|_H 2^{-N/2},
\quad \tNkN\le t\le \tNk.
\]
Let $p>1$ and choose $\ep>0$ such that $p-\ep>1$.
Write $\varphi^{N,k}_t=2^N(\tau^N_k-t)w_{\tau^N_{k-1}}$ 
$(\tau^N_{k-1}\le t\le\tau^N_k)$.
Noting $|\varphi^{N,k}_t|\le \|w\|_{\infty}$,
we have
\begin{align*}
& \sum_{k=0}^{2^N-1}\sum_{i=2^lk+1}^{2^l(k+1)}
|f^N_{(i-1)2^{-(N+l)},
i2^{-(N+l)}}|^p|\varphi^{N,k}_{i2^{-(N+l)}}|^p\\
&\le \sum_{k=0}^{2^N-1}
\max_{2^lk+1\le i\le 2^l(k+1)}
|f^N_{(i-1)2^{-(N+l)},
i2^{-(N+l)}}|^{\ep}
\sum_{i=2^lk+1}^{2^l(k+1)}|f^N_{(i-1)2^{-(N+l)},
i2^{-(N+l)}}|^{p-\ep}\|w\|_{\infty}^p\\
&\le \left(\frac{1}{2^{N+l}}\right)^{\ep/2}\|f\|_{H^1}^{\ep}
\sum_{k=0}^{2^N-1}
\sum_{i=2^lk+1}^{2^l(k+1)}|f^N_{(i-1)2^{-(N+l)},
i2^{-(N+l)}}|^{p-\ep}\|w\|_{\infty}^p\\
&\le \left(\frac{1}{2^{N+l}}\right)^{\ep/2}\|f\|_{H^1}^{\ep}
\|f^N\|_{p-\ep \hyp var, [0,1]}^{p-\ep}\|w\|_{\infty}^p=:I_1(l).
\end{align*}
On the other hand,
noting
\begin{align*}
\sum_{i=2^lk+1}^{2^l(k+1)}|\varphi^{N,k}_{(i-1)2^{-(N+l)},i2^{-(N+l)}}|^p
&=\left(\frac{1}{2^l}\right)^{p-1}|w_{\tNkN}|^p,
\end{align*}
we get
\begin{align*}
& \sum_{k=0}^{2^N-1}\sum_{i=2^lk+1}^{2^l(k+1)}
|f^N_{\tau^N_k,
(i-1)2^{-(N+l)}}|^p|\varphi^{N,k}_{(i-1)2^{-(N+l)},i2^{-(N+l)}}|^p\\
&\le \sum_{k=0}^{2^N-1}
\max_{2^lk+1\le i\le 2^l(k+1)}|f^N_{\tau^N_k,(i-1)2^{-(N+l)}}|^{p-\ep}
\|f\|_H^{\ep}\left(\frac{1}{2^N}\right)^{\ep/2}\left(\frac{1}{2^l}\right)^{p-1}
|w_{\tNkN}|^p\\
&\le \left(\frac{1}{2^N}\right)^{\ep/2}\left(\frac{1}{2^l}\right)^{p-1}
\|f\|_H^{\ep}\|f^N\|_{p-\ep\hyp var, [0,1]}^{p-\ep}\|w\|_{\infty}^p=:I_2(l)
\end{align*}
By (\ref{pkappanorm}), we get
\begin{align*}
 \|\ep_N^1\|_{p\hyp var}^p&\le
C\sum_{l=1}^{\infty}(N+l)^{\kappa}(I_1(l)+I_2(l))\to 0
\qquad \text{as $N\to\infty$.}
\end{align*}
We can estimate $\ep_N^{2}$ similarly and
we get 
\begin{align}
 \lim_{N\to\infty}\|\ep_N\|_{(2\alpha)^{-1}\hyp var}=0 
\qquad \text{for $\alpha<1/2$}.\label{epsilonN}
\end{align}
We now prove (i)$\sim$(iii).
We see that (i) holds 
by using (\ref{wfNperp}), (\ref{epsilonN}) and
\begin{align}
 \|{f^N}^{\perp}\|_{(2\alpha)^{-1}\hyp var}\to 0,\quad
\|{w^N}^{\perp}\|_{\alpha^{-1}\hyp var}\to 0,\quad
\|{R(w,f)^N}^{\perp}\|_{(2\alpha)^{-1}\hyp var}
\to 0\quad \text{for $\alpha<1/2$}.\label{fNperp}
\end{align}
We next prove (ii).
By (\ref{wfN}), we have
\begin{align*}
 C(w_f^N,w_f^N)_{s,t}&=C\Big(f^N\cdot w^N,f^N\cdot w^N\Big)_{s,t}+
C\Bigl(f^N\cdot w^N, \ep_N+R(w,f)^N\Bigr)_{s,t}\nonumber\\
&\quad +
C\Bigl(\ep_N+R(w,f)^N,f^N\cdot w^N\Bigr)_{s,t}
+C\Bigl(\ep_N+R(w,f)^N,\ep_N+R(w,f)^N\Bigr).
\end{align*}
Hence, using (\ref{wf-representation}), we obtain
\begin{align*}
& C(w_f,w_f)_{s,t}-C(w_f^N,w_f^N)_{s,t}=
C(f\cdot w,f\cdot w)_{s,t}-C\Big(f^N\cdot w^N,f^N\cdot w^N\Big)_{s,t}\nonumber\\
&\quad\quad\qquad +C\Bigl(f\cdot w,R(w,f)\Bigr)_{s,t}-
C\Bigl(f^N\cdot w^N,\ep_N+R(w,f)^N\Bigr)_{s,t}\nonumber\\
&\quad\quad\qquad +C\Bigl(R(w,f), f\cdot w\Bigr)_{s,t}-
C\Bigl(\ep_N+R(w,f)^N,f^N\cdot w^N\Bigr)_{s,t}\nonumber\\
&\quad\quad\qquad +C(R(w,f),R(w,f))_{s,t}
-C\Bigl(\ep_N+R(w,f)^N,\ep_N+R(w,f)^N\Bigr)_{s,t}.
\end{align*}
Except the term $I_N(s,t):=
C(f\cdot w,f\cdot w)_{s,t}-C\Big(f^N\cdot w^N,f^N\cdot w^N\Big)_{s,t}$,
we see that the other terms converge to 0 in
$(2\alpha)^{-1}$-variation norm by the estimates 
in (\ref{epsilonN}) and (\ref{fNperp}) and the estimate of Young integrals.
We need to show $\lim_{N\to\infty}\|I_N\|_{(2\alpha)^{-1}\hyp var}=0$.
We see this by noting
$f\cdot w=f^N\cdot w^N +
{f^N}^{\perp}\cdot w^N +
f^N\cdot {w^N}^{\perp}+ {f^N}^\perp\cdot {w^N}^\perp$
and the property Theorem~\ref{lift of w} (ii) of $w$
and applying the formula (\ref{expansion of C}).
The property (iii) also follows from (\ref{expansion of C}) by 
a similar argument.
\end{proof}

For the rough path $\overline{w_f}$, we have the following result.

\begin{lem}\label{rough integral wf}
Let $w\in \Omega$.
Let $f=(f(t))\in H^1([0,1],\LL(\RR^d,\RR^d))$.
Let $(Y_t,Y_t')\in 
\mathcal{V}^{1/\alpha}_w([0,1]\to \LL(\RR^d,\RR^m))$.
Suppose $f(t)$ is invertible for all $0\le t\le 1$.
Then $\big(Y_t,Y'_t(f(t)^{-1}\cdot,\cdot)\big)\in 
\mathcal{V}^{1/\alpha}_{w_f}([0,1]\to \LL(\RR^d,\RR^m))$.
Namely, 
for $R^Y_{s,t}(w_f)=Y_{s,t}-Y'_sf^{-1}(s)w_f(s,t)$
we have
$\|R^Y(w_f)\|_{1/(2\alpha)\hyp var}<\infty$ holds.
Also, it holds that
\begin{align*}
 \int_0^tY_sf(s)dw_s=\int_0^tY_sd\overline{w_f}(s),
\end{align*}
where the integral on the L.H.S. is a rough integral of
a controlled path $Y_tf_t$ against the rough path $\bw$
and the one of the R.H.S. is a rough integral of $Y$ 
as a controlled path by $\overline{w_f}$.
\end{lem}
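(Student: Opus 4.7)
My plan is to reduce everything to the decomposition
\[
w^f_{s,t} = f(s) w_{s,t} + \rho_{s,t}, \qquad \rho_{s,t} := (f(t)-f(s)) w_{s,t} - \int_s^t w_{s,u} f'(u)\,du,
\]
obtained by Stieltjes integration by parts, using that $f \in H^1$ is absolutely continuous. Combining $|f(t)-f(s)| \le \|f\|_H (t-s)^{1/2}$ with $|w_{s,u}| \le \|w\|_\alpha (u-s)^\alpha$ and Cauchy--Schwarz yields $|\rho_{s,t}| \le C(t-s)^{\frac12 + \alpha}$. Since $\frac12 + \alpha > 2\alpha$ for $\alpha < \frac12$, $\rho$ has finite $1/(2\alpha)$-variation. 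Substituting $w_{s,t} = f(s)^{-1}(w^f_{s,t} - \rho_{s,t})$ into $Y_{s,t} = Y'_s w_{s,t} + R^Y_{s,t}$ immediately gives $R^Y_{s,t}(w^f) = R^Y_{s,t} - Y'_s f(s)^{-1} \rho_{s,t}$ with finite $1/(2\alpha)$-variation, proving the first claim and making $\int_0^t Y_s d\overline{w^f}_s$ a well-defined rough integral in the $w^f$-controlled framework.

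For the integral identity I would compare Riemann sums. Let $\Sigma_2 = \sum_i [Y_{s_i} w^f_{s_i, s_{i+1}} + (Y'_{s_i} f(s_i)^{-1}) \overline{w^f}^2_{s_i, s_{i+1}}]$ be the sum defining $\int_0^t Y_s d\overline{w^f}_s$, and $\Sigma_1 = \sum_i [Y_{s_i} f(s_i) w_{s_i, s_{i+1}} + (Y'_{s_i} f(s_i)) \overline{w}^2_{s_i, s_{i+1}}]$ the one defining $\int_0^t Y_s f(s) d\bw_s$ (the Gubinelli derivative of $Yf$ as a controlled path of $w$ being $Y'f$, since $f$ contains no $w$-dependence). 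A direct index computation under the rough-integration conventions shows that $(Y'_s f(s)^{-1})$ contracted with $f(s)^{\otimes 2} \overline{w}^2_{s,t}$ reproduces $(Y'_s f(s)) \overline{w}^2_{s,t}$, so
\[
\Sigma_2 - \Sigma_1 = \sum_i Y_{s_i} \rho_{s_i, s_{i+1}} + \sum_i (Y'_{s_i} f(s_i)^{-1}) \bigl[\overline{w^f}^2_{s_i, s_{i+1}} - f(s_i)^{\otimes 2} \overline{w}^2_{s_i, s_{i+1}}\bigr].
\]

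The second-order error is controlled by the expansion $\overline{w^f}^2_{s,t} = f(s)^{\otimes 2} \overline{w}^2_{s,t} + \tau_{s,t}$ with $|\tau_{s,t}| \le C(t-s)^{\frac12 + 2\alpha}$. I would obtain this by decomposing $\overline{w^f}^2_{s,t} = \int_s^t w^f_{s,u} \otimes f(u)\,dw_u$ via $w^f_{s,u} = f(s)w_{s,u} + \rho_{s,u}$ and $f(u) = f(s) + (f(u)-f(s))$; the three error pieces are handled by integration by parts against the already constructed iterated integral $\overline{w}^2_{s,\cdot}$ (which is $2\alpha$-H\"older), each gaining a $(t-s)^{1/2}$ factor from $f \in H^1$. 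Summing over a mesh-$\Delta$ partition gives $O(\Delta^{-\frac12 + 2\alpha}) \to 0$ for $\alpha > \frac14$. For the first-order error, substituting the explicit form of $\rho$ and applying a discrete Cauchy--Schwarz,
\[
\Bigl|\sum_i Y_{s_i} (f(s_{i+1})-f(s_i)) w_{s_i, s_{i+1}}\Bigr| \le \|Y\|_\infty \Bigl(\sum_i f_{s_i, s_{i+1}}^2\Bigr)^{1/2} \Bigl(\sum_i w_{s_i, s_{i+1}}^2\Bigr)^{1/2} \le C \|Y\|_\infty \|f\|_H \|w\|_\alpha \Delta^\alpha,
\]
since $\sum f^2_{s_i, s_{i+1}} \le \Delta \|f\|_H^2$ and $\sum w^2_{s_i, s_{i+1}} \le \|w\|_\alpha^2 \Delta^{2\alpha - 1}$; the Lebesgue-integral piece is bounded analogously. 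Hence both error sums vanish as $\Delta \to 0$ and the identity follows.

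The main obstacle is proving the second-order expansion $\tau_{s,t} = O((t-s)^{\frac12 + 2\alpha})$. Naive Young estimates on $\int_s^t g(u)\,dw_u$ fail when $g \in C^\beta$ with $\beta + \alpha \le 1$, so one must interpret the error integrals rigorously as Young integrals against the second-level path $\overline{w}^2_{s,\cdot}$, exploiting the absolute continuity of $f$ and $f' \in L^2$ via integration by parts. This is the essential place where $H^1$ regularity of $f$ (rather than merely $C^\alpha$) is used; confirming the tensor contraction that identifies the compensators in $\Sigma_1$ and $\Sigma_2$ is an additional bookkeeping step but conceptually routine.
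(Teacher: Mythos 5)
Your proposal is correct and follows essentially the same route as the paper: the controlled-path claim rests on the same Stieltjes integration-by-parts decomposition $w^f_{s,t}=f(s)w_{s,t}+\rho_{s,t}$ (the paper writes the remainder as $-Y'_sf(s)^{-1}f_{s,t}w(t)-Y'_sf(s)^{-1}R(w,f)_{s,t}$, which is the same object), and the integral identity is likewise proved by comparing the two compensated Riemann sums and checking that the first- and second-level discrepancies are of order strictly better than $1$ in the mesh. The only difference is presentational: you make the error estimates explicit (discrete Cauchy--Schwarz for $\sum_iY_{s_i}\rho_{s_i,s_{i+1}}$ and the $O((t-s)^{\frac12+2\alpha})$ bound on $\overline{w^f}^2_{s,t}-f(s)^{\otimes 2}\overline{w}^2_{s,t}$, obtained exactly as you say by integrating the $H^1$ factor by parts against $\overline{w}^2_{s,\cdot}$), whereas the paper packages them as $O(\omega_i(s,t)^{3\alpha})$ and $O(\omega_i(s,t)^{4\alpha})$ with control functions $\omega_i$.
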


\begin{rem}
 In the above setting $Y'_t\in \LL(\RR^d,\LL(\RR^d,\RR^m))$.
That is, for the vector $v_1, v_2\in \RR^d$, we mean that
$Y'_t(v_1,\cdot)\in \LL(\RR^d,\RR^m)$ 
and 
$Y'_t(v_1,v_2)\in \RR^m$.
That is, $Y'_t(f(t)\cdot,\cdot)(v_1,v_2):=Y'_t(f(t)v_1,v_2)$ and
$Y'_t(\cdot,f(t)^{-1}\cdot)(v_1.v_2):=Y'_t(v_1,f(t)^{-1}v_2)$.
\end{rem}

\begin{proof}
For $(Y_t, Y'_tf(t)^{-1})$, we have
\begin{align}
 R^Y_{s,t}(w_f)&=Y_{s,t}-Y'_sf(s)^{-1}w^f_{s,t}\nonumber\\
&=Y_{s,t}-Y'_sf(s)^{-1}\Bigl(f(t)w(t)-f(s)w(s)+R(w,f)_{s,t}\Bigr)\nonumber\\
&=Y_{s,t}-Y'_sw_{s,t}-Y'_sf(s)^{-1}f_{s,t}w(t)-Y'_sf(s)^{-1}R(w,f)_{s,t}
\nonumber\\
&=R^Y(w)_{s,t}-Y'_sf(s)^{-1}f_{s,t}w(t)-Y'_sf(s)^{-1}R(w,f)_{s,t}.\label{RYwf}
\end{align}
By the assumption on $Y$ and $f\in H^1$, this implies the desired result.
We prove the last assertion.
Let $I_t=\int_0^tY_s f(s)dw_s$ and $\tilde{I}_t=\int_0^tY_sdw^f_s$.
Let
\begin{align*}
 \Xi_{s,t}&=Y_sf_sw_{s,t}
+\sum_{i,j,k}Y'_s(\ep_i,\ep_j)\int_s^tw^i_{s,u}f^j_k(s)dw^k_u,\\
\tilde{\Xi}_{s,t}&=Y_sw_f(s,t)+\sum_{i,j}Y'_s(f(s)^{-1}\ep_i,\ep_j)
\int_s^tw^i_f(s,u)dw^j_f(u).
\end{align*}
Then, using the partitions $\Delta=\{0=t_0<\cdots<t_M=t\}$,
we have
$I_t=\lim_{|\Delta|\to 0}\sum_{i=1}^M\Xi_{t_{i-1},t_i}$
and $\tilde{I}_t=\lim_{|\Delta|\to 0}\sum_{i=1}^M\tilde{\Xi}_{t_{i-1},t_i}$.
Note that
\begin{align*}
&w_f(s,t)-f(s)w_{s,t}=O(\omega_1(s,t)^{3\alpha}),\\
& \int_s^tw^i_f(s,u)dw^j_f(u)\\
&=\int_s^t\left(\int_s^uf^i_k(r)dw^k(r)\right)
f^j_l(u)dw^l(u)\\
&=f^i_k(s)\int_s^tw^k_{s,u}f^j_l(u)dw^l_u+
\int_s^t\left(\int_s^uf^i_k(s,r)dw^k_r\right)f^j_l(u)dw^l_u\\
&=f^i_k(s)\int_s^tw^k_{s,u}f^j_l(s)dw^l_u
+f^i_k(s)\int_s^tw^k_{s,u}f^j_l(s,u)dw^l_u
+\int_s^tf^i_k(s,u)w^k_{s,u}f^j_l(u)dw^l_u\\
&\qquad
-\int_s^t\left(\int_s^uw^k_{s,r}df^i_k(r)\right)f^j_l(u)dw^l_u\\
&=:I^1_{s,t}+I^2_{s,t}+I^3_{s,t}+I^4_{s,t},\\
&\quad 
I^2_{s,t}=O(\omega_2(s,t)^{3\alpha}),\qquad
I^3_{s,t}=O(\omega_3(s,t)^{3\alpha}), 
\qquad I^4_{s,t}=O(\omega_4(s,t)^{4\alpha}),\\
&\quad Y'_s(f(s)^{-1}\ep_i,\ep_j)I^1_{s,t}
=Y'_s(\ep_i,\ep_j)\int_s^tw^i_{s,u}f^j_l(s)dw^l_s,
\end{align*}
where $\omega_i(s,t)$ $(1\le i\le 4)$ are certain control functions.
Hence, we obtain $I_t=\tilde{I}_t$, which completes the proof.
\end{proof}

\begin{lem}\label{approximation of rough integral}
Let $w\in \Omega$.
Let $f=(f(t))\in H^1([0,1],\LL(\RR^d,\RR^d))$
and $(Y_t,Y_t')\in 
\mathcal{V}^{1/\alpha}_w([0,1]\to \LL(\RR^d,\RR^m))$.
Suppose $f(t)$ is invertible for all $0\le t\le 1$.
Then, we have
\begin{align}
 \lim_{N\to\infty}\int_0^tY_s d{w_f}^N(s)
&=\int_0^tY_s d\overline{w_f}(s),
\label{approximation of rough integral3}
\end{align}
where the rough integral on the R.H.S. is defined as
Lemma~$\ref{w_f-roughpath}$ $(1)$ or
as in Lemma~$\ref{rough integral wf}$.
\end{lem}

\begin{proof}
Write $I^N_t=\int_0^tY_sd{w_f}^N(s)$ and $I_t=\int_0^tY_sd\overline{w_f}(s)$.
$I^N_t$ is just a Riemann-Stieltjes integral but we may write the limit
in the following way.
Let $\Xi^N_{s,t}=Y_sw^N_f(s,t)+Y'_sf(s)^{-1}\int_s^tw_f(s,r)\otimes dw^N_f(r)$,
where
\[
 Y'_sf(s)^{-1}\int_s^tw_f(s,r)\otimes dw^N_f(r)=
\sum_{i,j}Y'_s(f(s)^{-1}(\ep_i),\ep_j)\int_s^tw^{i}_f(s,r)
d(w^N_f)^j(r).
\]
Clearly, for partitions $\Delta=\{0=t_0<\cdots<t_M=t\}$, we have
\[
 \lim_{|\Delta|\to 0}\sum_{i=1}^M\Xi^N_{t_{i-1},t_i}=\int_0^tY_sdw_f^N(s).
\]
Let $\Xi_{s,t}=Y_sw_f(s,t)+Y'_sf(s)^{-1}\overline{w_f}^2(s,t)=Y_sw_f(s,t)
+\sum_{i,j}Y'_s(f(s)^{-1}\ep_i,\ep_j)\int_s^t(w_f)^i(s,r)d(w_f)^j(r)$.
Then $\delta\Xi_{s,u,t}=\Xi_{s,t}-\Xi_{s,u}-\Xi_{u,t}$ 
$(0\le s\le u\le t\le 1)$
satisfies
\begin{align*}
 \delta\Xi_{s,u,t}&=
-R^Y(w_f)_{s,u}w_f(u,t)-(Y'\cdot f^{-1})_{s,u}\overline{w_f}^2(u,t).
\end{align*}
where
$R^Y(w_f)$ is calculated in (\ref{RYwf}).
Then, by the definition of $I_t$, we have
\begin{align*}
 I_t=\lim_{|\Delta|\to 0}\sum_{i=1}^M\Xi_{t_{i-1},t_i}.
\end{align*}
Let ${\Xi^N}^\perp_{s,t}=\Xi_{s,t}-\Xi^N_{s,t}$.
Then we have
\begin{align*}
 {\Xi^N}^\perp_{s,t}&=Y_{s}{w_f^N}^{\perp}_{s,t}
+Y_{s}'f(s)^{-1}C(w_f,{w_f^{N}}^{\perp})_{s,t},
\end{align*}
where
$Y'_sf(s)^{-1}C(w_f,(w_f^{N})^{\perp})_{s,t}
=\sum_{i,j}Y'_s(f(s)^{-1}\ep_i,\ep_j)\int_s^tw_f^i(s,r)d({w_f^N}^{\perp})^j_r$.
For $\delta{\Xi^N}^{\perp}_{s,u,t}={\Xi^N}^{\perp}_{s,t}
-{\Xi^N}^{\perp}_{s,u}-{\Xi^N}^{\perp}_{u,t}$,
we have
\begin{align*}
 \delta{\Xi^N}^{\perp}_{s,u,t}=
-R^Y(w_f)_{s,u}({w_f^N}^{\perp})_{u,t}
-(Y'\cdot f^{-1})_{s,u}C(w_f,{w_f^N}^{\perp})_{u,t}.
\end{align*}
Let us consider the following control function
\begin{align*}
 \omega_N(s,t)&=\frac{1}{2}
\left(\|R^Y(w_f)\|_{1/(2\alpha)\hyp var,[s,t]}^{1/(2\alpha)}\right)^{2/3}
\left(\|{w_f^N}^\perp\|_{1/\alpha\hyp var, [s,t]}^{1/\alpha}\right)^{1/3}
\nonumber\\
&\quad+
\frac{1}{2}
\left(\|C(w_f,{w_f^N}^{\perp})\|_{1/(2\alpha)\hyp var, [s,t]}^{1/(2\alpha)}
\right)^{2/3}
\left(\|Y'\cdot f^{-1}\|_{\alpha^{-1}\hyp var, [s,t]}^{1/\alpha}\right)^{1/3}.
\end{align*}
Then, we have
$|\delta{\Xi^N}^{\perp}_{s,u,t}|\le 2\left(\omega_N(s,t)\right)^{3\alpha}$.
By the Sewing lemma, we obtain
\begin{align}
 \left|I_{s,t}-I^N_{s,t}\right|&\le
\left|Y_{s}{w^N_f}^{\perp}_{s,t}+Y'_sf(s)^{-1}
C(w_f,{w^N_f}^{\perp})_{s,t}
\right|+C\omega_N(s,t)^{3\alpha}.
\label{I-I^N}
\end{align}
By Lemma~\ref{w_f-roughpath} (2),
$\lim_{N\to\infty}\|C(w_f,{w_f^N}^{\perp})\|_{(2\alpha)^{-1}\hyp var}=0$
and $\lim\omega_{N}(s,t)=0$,
we see that the RHS of (\ref{I-I^N})
converges to 0, which completes the proof.
\end{proof}

We return to (\ref{RDE}) and 
summarize basic properties of the solution.
The pathwise differentiability property of the solution $Y(t,e,w)$ with respect to
$w$ in (3) holds for more general case of solutions of RDE 
(see \cite{friz-hairer}).
We give a proof of the differentiability property in our case for the completeness.

\begin{lem}\label{properties of Y} Let $w\in \Omega$.
 \begin{enumerate}
\item[$(1)$] Let $\varphi\in \PeH$.
Then for all $0\le t\le 1$, we have
\begin{align}
 \varphi(t)Y(t,e,w)&=
Y\left(t,e,w+\int_0^{\cdot}Ad(Y(s,e,w)^{-1})
(\varphi^{-1}(s)\dot{\varphi}(s))ds\right)
\label{varphitimesY}
\end{align}
holds.
Note that 
$\left(w+\int_0^{\cdot}Ad(Y(s,e,w)^{-1})(\varphi^{-1}(s)\dot{\varphi}(s))ds,
I\right)\in \mathcal{V}^{1/\alpha}_w([0,1]\to \RR^d)$.
\item[$(2)$]
Let $u\in\RR$.
Let $\phi(t)$ be the solution to the following RDE,
\begin{align*}
 \phi_t=I+\int_0^t\phi(s)Ad\left(Y(s,e,w)\right)udw(s).
\end{align*}
Then $\phi(t)Y(t,e,w)=Y(t,e,(1+u)w)$ $(0\le t\le 1)$ holds.
\item[$(3)$] For $h\in H$ and $w\in \Omega$,
we consider a mapping $(u_1,u_2)(\in \RR^2)\mapsto Y(t,e,u_1w+u_2h)$.
Then this is a $C^{\infty}$ mapping and
it holds that
\begin{align}
 \frac{\partial}{\partial u_1}
Y(t,e,u_1w+u_2h)
&=
\left(\int_0^tAd(Y(s,e,u_1w+u_2h))dw_{s}\right)
Y(t,e,u_1w+u_2h)
,\label{W derivative}\\
 \frac{\partial}{\partial u_2}Y(t,e,u_1w+u_2h)
&=
\left(\int_0^tAd(Y(s,e,u_1w+u_2h))dh_{s}\right)
Y(t,e,u_1w+u_2h)
.\label{H derivative}
\end{align}
\item[$(4)$] $Y(t,e,\cdot)\in C^{\infty}_{b,H}(\Omega, M(n,\CC))$ and
$
 Y(t,x,w)=x Y(t,e,w)
$
holds for any $x\in M(n,\CC)$ and $w\in \Omega$.
\item[$(5)$] Let $h\in H$. For $r\in \RR$, set $b(t,r)=\int_0^tAd(Y(s,e,w+rh))dw_s$.
Then $r\mapsto b(t,r)$ is an $C^{\infty}$ function and
\begin{align*}
 \frac{\partial}{\partial r}
 b(t,r)&=\left[\int_0^tAd(Y(s,e,w+rh))dh_s,\int_0^tAd(Y(s,e,w+rh))dw_s\right]
\nonumber\\
&\qquad-\int_0^t\left[Ad(Y(s,e,w+rh))dh_s,\int_0^sAd(Y(u,e,w+rh))dw_u\right].
\end{align*}
Then $D_hb(t)=\int_0^tAd(Y(s,e,w))dh_s+\int_0^tD_h(Ad(Y(s,e,w)))dw_s$.
\item[$(6)$] $w(\in \Omega)\mapsto Y(t,x,w)\in G$ is 
$\infty$-quasicontinuou mapping,
where the topology of $\Omega$ is of $C([0,1],\RR^d)$.
 \end{enumerate}
\end{lem}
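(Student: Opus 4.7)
The strategy is to handle (1) and (2) by uniqueness of the driving RDE, and (3)--(6) by combining pathwise differentiability of the It\^o--Lyons map with the quasi-continuity of $w \mapsto \bar w$ from Theorem~\ref{lift of w} (4).

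For (1), I would set $Z(t) := \varphi(t) Y(t,e,w)$. Since $\varphi \in P_e(G)^H$ has finite $1$-variation, the matrix product $Z$ is a controlled path of $\bar w$. Using the Leibniz rule and the identity $\dot\varphi(t) Y(t,e,w) = Z(t) \cdot Ad(Y(t,e,w)^{-1})(\varphi(t)^{-1}\dot\varphi(t))$, I obtain
\begin{align*}
 dZ_t = Z_t \cdot Ad(Y(t,e,w)^{-1})(\varphi(t)^{-1}\dot\varphi(t))\,dt + Z_t\, d\bar w_t,
\end{align*}
with $Z(0) = e$, which is exactly the RDE driven by the controlled path $w + \int_0^\cdot Ad(Y(s,e,w)^{-1})(\varphi(s)^{-1}\dot\varphi(s))\,ds$; uniqueness of RDEs then yields (\ref{varphitimesY}). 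Part (2) is analogous: for $Z(t) := \phi(t) Y(t,e,w)$, the identity $\phi \cdot Ad(Y) u\,dw \cdot Y = Z \cdot u\,dw$ gives $dZ = Z\,d((1+u)\bar w)$, so uniqueness yields $Z(t) = Y(t,e,(1+u)w)$.

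For (3), joint $C^\infty$ smoothness in $(u_1, u_2)$ follows from standard differentiability of the It\^o--Lyons map, since $\overline{u_1 w + u_2 h}$ depends smoothly on $(u_1, u_2)$ (the $h$-perturbation produces only Young-type cross terms that are polynomial in the parameters). To verify the explicit formula (\ref{W derivative}), I would check directly that $J(t) := \bigl(\int_0^t Ad(Y)\,dw_s\bigr)\, Y(t,e,u_1 w + u_2 h)$ solves the variational RDE $dJ = J\,d(\overline{u_1 w + u_2 h}) + Y\,dw$ with $J(0) = 0$, and argue similarly for (\ref{H derivative}) using Young integration. Then (4) follows by iterating (3) and noting that $Y$ takes values in the compact group $G$, while (6) follows from continuity of the It\^o--Lyons map $\bar w \mapsto Y(\cdot,x,w)$ composed with the $\infty$-quasi-continuous lift $w \mapsto \bar w$ of Theorem~\ref{lift of w} (4).

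The main obstacle is (5). Applying (3) to $w + rh$ gives $\partial_r Y(s,e,w+rh) = M_r(s) \cdot Y(s,e,w+rh)$ with $M_r(s) := \int_0^s Ad(Y(u,e,w+rh))\,dh_u$, from which $\partial_r Ad(Y(s,e,w+rh))(\cdot) = [M_r(s),\, Ad(Y(s,e,w+rh))(\cdot)]$. The plan is to exchange $\partial_r$ with the rough integral $\int_0^t Ad(Y)\,dw_s$; the delicate step is controlling the resulting difference quotient in the $1/(2\alpha)$-variation topology, for which I would invoke the approximation in Lemma~\ref{approximation of rough integral}. After the exchange, an integration by parts for iterated rough integrals applied to the pair of controlled paths $M_r(\cdot)$ and $\int_0^\cdot Ad(Y)\,dw_s$ rearranges the result into the claimed commutator form. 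The final Malliavin identity $D_h b(t) = \int_0^t Ad(Y(s,e,w))\,dh_s + \int_0^t D_h(Ad(Y(s,e,w)))\,dw_s$ then follows by the product rule for rough integrals applied to the integrand $Ad(Y)$.
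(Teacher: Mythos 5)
Your proposal is correct and, for parts (1), (2), (5), and (6), follows essentially the same route as the paper: compute the two-parameter increment of the product and identify the RDE it satisfies, then use uniqueness; for (5), differentiate the germ $\Xi^r_{s,t}=Ad(Y^r_s)w_{s,t}+Ad(Y^r_s)[\ep_i,\ep_j]w^{i,j}_{s,t}$ in $r$ using the derivative formula from (3), pass the derivative through the Riemann-sum limit, and integrate by parts to reach the commutator form. The one genuine divergence is part (3): you propose to invoke general smoothness of the It\^o--Lyons map and then verify that $J(t)=\bigl(\int_0^tAd(Y)\,dw\bigr)Y(t)$ solves the variational RDE $dJ=J\,d(\overline{u_1w+u_2h})+Y\,d\bar w$, whereas the paper avoids the variational-equation machinery entirely by exploiting the group structure: it constructs the multiplicative correction $\varphi_\ep$ solving $\dot\varphi_\ep=\ep\,\varphi_\ep\,Ad(Y)\dot h$, uses part (1) to identify $\varphi_\ep Y(\cdot,e,u_1w+u_2h)=Y(\cdot,e,u_1w+(u_2+\ep)h)$, and differentiates $\varphi_\ep$ at $\ep=0$ directly as a difference quotient. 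Your route is standard and works, but requires uniqueness for the variational equation as an extra input; the paper's route is more self-contained given (1) and (2), and computes the difference quotient explicitly. Two small cautions: in (1) you should justify the Leibniz expansion for $\varphi$ that is only $H^1$ (the paper does $C^2$ first and approximates; alternatively note that $\|\dot\varphi\|_{L^2[s,t]}|t-s|^{1/2+\alpha}$ is a control to a power exceeding one, so the sewing argument applies directly), and in (5) Lemma~\ref{approximation of rough integral} is not really the tool for exchanging $\partial_r$ with the rough integral --- what is needed, and what the paper uses, is uniform convergence in $r$ of the compensated Riemann sums $\sum\Xi^r_{t_{i-1},t_i}$ and $\sum\partial_r\Xi^r_{t_{i-1},t_i}$.
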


\begin{proof}
 (1) First, we consider the case $\varphi\in C^2$.
We have
\begin{align*}
 \varphi_tY_t-\varphi_sY_s&=
\varphi_s Y_{s,t}+\varphi_{s,t}Y_s+\varphi_{s,t}Y_{s,t}\nonumber\\
&=\varphi_s\sum_iY_s\ep_i w^i_{s,t}+\varphi_sY_s\ep_i\ep_jw^{i,j}_{s,t}+
\dot{\varphi}_sY_s(t-s)+O(|t-s|^{3\alpha})+O(|t-s|^2).
\end{align*}
Setting $\tY_t=\varphi_tY_t$,
the above estimate implies
\begin{align*}
 \left|\tY_t-\tY_s-\tY_s\left(\ep_iw^i_{s,t}+\ep_i\ep_jw^{i,j}_{s,t}
+Ad(Y_s^{-1})(\varphi_s^{-1}\dot{\varphi_s})(t-s)\right)\right|
\le C|t-s|^{3\alpha}
\end{align*}
and hence $\tY_t$ is the solution to
\begin{align*}
 \tY_t=I+\int_0^t\tY_s dw_s
+\int_0^t\tY_sAd(Y_s^{-1})(\varphi_s^{-1}\dot{\varphi_s})ds,
\end{align*}
which shows (\ref{varphitimesY}) in the case where $\varphi\in C^2$.
General cases follows from the case of $C^2$ by using the approximation of
$H^1$-path by $C^2$-paths and the continuity of
the mapping $h(\in H)\mapsto X(t,e,w+h)$.

\noindent
(2) This can be checked by calculating
the difference $\phi_tY_t-\phi_sY_s$.
The calculation is simple and
we omit the proof.

\noindent
(3)
Let $h\in H$ and
let $\varphi_{\ep}(t)$ be an $H^1$-path with values in $G$ such that
\[
 \dot{\varphi}_{\ep}(t)=\ep\varphi_{\ep}(t)Ad(Y(t,e,u_1w+u_2h))\dot{h}(t)
\quad 0\le t\le 1,\qquad \varphi_{\ep}(0)=I.
\]
Then $\varphi_{\ep}(t)Y(t,e,u_1w+u_2h)=Y(t,e,u_1w+(u_2+\ep) h)$ 
$(0\le t\le 1)$ holds by (1).
Since
\begin{align*}
 \ep^{-1}\left(\varphi_{\ep}(t)-\varphi_0(t)\right)=
 \ep^{-1}\left(\varphi_{\ep}(t)-I\right)
=\int_0^t\varphi_{\ep}(s)Ad(Y(s,e,u_1w+u_2h))\dot{h}(s),
\end{align*}
we have
\begin{align*}
 \frac{\partial}{\partial \ep}\varphi_{\ep}(t)\Bigg|_{\ep=0}
=\int_0^tAd(Y(s,e,u_1w+u_2h))dh_s,
\end{align*}
which implies
\begin{align}
&\lim_{\ep\to 0}\ep^{-1}
\left(Y(t,e,u_1w+(u_2+\ep)h)-Y(t,e,u_1w+u_2h)\right)\nonumber\\
&=
\lim_{\ep\to 0}\ep^{-1}(\varphi_{\ep}(t)-I)Y(t,e,u_1w+u_2h)\nonumber\\
&=\left(\int_0^tAd(Y(s,e,u_1w+u_2h))dh_s\right)Y(t,e,u_1w+u_2h).
\label{H-derivative}
\end{align}
We can prove (\ref{W derivative}) in a similar way
to the above by using (2).
We omit the proof.

\noindent
(4) The former statement is a consequence of (3)
and the latter is easy to check by the definition.

\noindent
(5) Write $Y^r_t=Y(t,e,w+rh)$ and set
\begin{align*}
 \Xi^r_{s,t}=Ad(Y^r_s)w_{s,t}+Ad(Y^r_s)[\ep_i,\ep_j]w^{i,j}_{s,t}.
\end{align*}
By the definition of $b(t,r)$, we have
$
 b(t,r)=\lim_{|\Delta|\to 0}\Xi^r_{t_{k-1},t_k}.
$
We have
\begin{align*}
 {\partial r}\Xi^r_{t_{k-1},t_k}=
\left[\int_0^{t_{k-1}}Ad(Y^r_u)dh_u,Ad(Y^r_{t_{k-1}})w_{t_{k-1},t_k}\right]+
\left[\int_0^{t_{k-1}}Ad(Y^r_u)dh_u,Ad(Y^r_{t_{k-1}})
[\ep_i,\ep_j]w^{i,j}_{t_{k-1},t_k}\right].
\end{align*}
Let $(Z_s)\in C^{\alpha}([0,1], \LL(\RR^d,\RR^d))$ be the following:
\begin{align*}
 Z_s\xi=\left[\int_0^sAd(Y^r_u)dh_u,Ad(Y^r_s)\xi\right].
\end{align*}
Then $Z'_s\in \LL(\RR^d, \LL(\RR^d,\RR^d))$, that is,
$Z'_s(v)\in \LL(\RR^d,\RR^d)$ is given by
\begin{align*}
 Z'_s(v)(\tilde{v})=\left[\int_0^sAd(Y^r_u)dh_u,Ad(Y^r_s)[v,\tilde{v}]\right].
\end{align*}
Therefore, by the definition of rough integral, we have
\begin{align*}
 \lim_{|\Delta|\to 0}\sum_{k=1}^M
{\partial r}\Xi^r_{t_{k-1},t_k}=
\int_0^t\left[\int_0^sAd(Y^r_u)dh_u,Ad(Y^r_s)dw_{s}\right]
\end{align*}
The convergence $\lim_{|\Delta|\to 0}\sum_{i=1}\Xi^r_{t_{k-1,t_k}}$
and $\lim_{|\Delta|\to 0}\sum_{i}
{\partial r}\Xi^r_{t_{k-1},t_k}$ are uniform in 
$r\in (-R,R)$ for any $R$.
This implies $b(t,r)$ is $C^1$ in $r\in \RR$.

\noindent
(6) This is a consequence of Theorem~\ref{lift of w} (4) and
$Y(\cdot,e,w)$ is a continuous function of
$\bw$.
\end{proof}

\begin{rem}
 In the above, we prove that the mapping 
$w(\in \Omega)\mapsto Y(t,e,w)$
belongs to $C^{\infty}_{b,H}(\Omega)$.
Actually the derivative coincides with the Malliavin derivative.
Moreover, it holds that $Y(t,e,\cdot)
\in \DD^{\infty}(W,M(n,\CC))$.
By (\ref{H derivative}),
we have
\begin{align}
 DY(1,e,w)[h]=\left(\int_0^1Ad(Y(s,e,w))dh_s\right)Y(1,e,w).
\label{C^1 property of Y}
\end{align}
This implies $DY(1,e,w)DY(1,e,w)^{\ast}=I_{T_{Y(1,e,w)}G}$ and
hence the probability measure 
$d\mu_{\la,a}(w)=p(\la^{-1},e,e)^{-1}\delta_a(Y(1,e,w))d\mu_{\la}(w)$
can be defined.
See Remark~\ref{remark to definition of Omega} (3).
\end{rem}

Denote the (stochastic) development 
of $\gamma_t$ to $T_eG$ by the left invariant connection and the right invariant
connection by $w_t$ and $b_t$ respectively.
We use the same notation for the development for the pinned process as well as 
non pinned process.
In the present case, we have
\begin{align}
 w_t&=\int_0^t\gamma_s^{-1}\circ d\gamma_s=\lim_{|\Delta|\to 0}\sum_{i=1}^N
\frac{\gamma_{t_{i-1}}^{-1}+\gamma_{t_{i}}^{-1}}{2}(\gamma_{t_i}-\gamma_{t_{i-1}}),
\label{approximation of w}
\\
b_t&=\int_0^t\circ d\gamma_s\gamma
_s^{-1}=\lim_{|\Delta|\to 0}
\sum_{i=1}^N(\gamma_{t_i}-\gamma_{t_{i-1}})
\frac{\gamma_{t_{i-1}}^{-1}+\gamma_{t_{i}}^{-1}}{2},\label{approximation of b}
\end{align}
where $\Delta=\{0=t_0<\cdots<t_N=t\}$ and the convergence is in the sense of
probability with respect to $\nu_{\la}$ and $\nu_{\la, a}$.
Of course, if (non pinned Brownian motion) $\gamma_t$ is given by
$\gamma_t=Y(t,e,w)$ via (\ref{RDE}) (or (\ref{stratonovich sde})), then
the development of $Y(t,e,w)$ by (\ref{approximation of w})
is almost surely equal to the driving process
$w_t$.
Similar result holds true for the pinned Brownian motion $\gamma_t$ but
the law of the development process $w_t$ is not the Brownian motion measure
but a measure $\mu_{\la,a}$.
We strength these results as pathwise results in the following lemma.

\begin{lem}\label{pathwise gamma}
Here, we write $Y^{t}(\tNi)=Y(\tNi\wedge t,e,w)$ for simplicity.
\begin{enumerate}
\item[$(1)$] For $w\in \Omega$, let
\begin{align}
 J^N_t=\sum_{i=1}^{2^N}\frac{Y^t(\tNiN)^{-1}+Y^t(\tNi)^{-1}}{2}
\left(Y^t(\tNi)-Y^t(\tNiN)\right), \quad t\in [0,1].
\label{stratonovich type integral}
\end{align}
The we have $\lim_{N\to\infty}\max_{0\le t\le 1}|J^N_t-w_t|=0$.
\item[$(2)$] For $w\in\Omega$, let
\begin{align*}
 K^N_t&=\frac{1}{2}\sum_{i=1}^{2^N} 
\left(Y^t(\tNi)-Y^t(\tNiN)\right)
\left(Y^t(\tNiN)^{-1}+Y^t(\tNi)^{-1}\right), \quad t\in [0,1].
\end{align*}
Define
$\displaystyle{
b_t=\int_0^tAd(Y(s,e,w))dw_s}.
$
Here, the integral is defined as the rough integral.
Then we have
$\lim_{N\to\infty}\max_{0\le t\le 1}|K^N_t-b_t|=0$.
\end{enumerate}
\end{lem}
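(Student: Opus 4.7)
The plan is to prove both parts by Taylor expanding each summand using the RDE estimate for $Y(t,e,w)$, isolating a leading telescoping term, and controlling the remaining Lie-bracket-valued term by the dyadic cancellation encoded in Theorem~\ref{lift of w}~(iii). Throughout, I fix $\alpha\in(1/3,1/2)$ so that $\bw$ is an $\alpha$-H\"older geometric rough path and satisfies the shuffle identity $w^k_{s,t}w^l_{s,t}=w^{k,l}_{s,t}+w^{l,k}_{s,t}$.

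For Part (1), if $s<t'$ lie in a common dyadic interval then the RDE gives
\begin{align*}
Y(t')-Y(s)=Y(s)\bigl(\ep_k w^k_{s,t'}+\ep_k\ep_l w^{k,l}_{s,t'}\bigr)+R^Y_{s,t'},\qquad |R^Y_{s,t'}|\le C|t'-s|^{3\alpha},
\end{align*}
where $Y(\cdot)$ abbreviates $Y(\cdot,e,w)$. Writing $M=Y(s)^{-1}Y(t')-I$, the algebraic identity
\begin{align*}
\tfrac{Y(s)^{-1}+Y(t')^{-1}}{2}(Y(t')-Y(s))=\tfrac12\bigl(Y(s)^{-1}Y(t')-Y(t')^{-1}Y(s)\bigr)=M-\tfrac12M^2+O(|t'-s|^{3\alpha})
\end{align*}
together with the shuffle relation reduces the summand to
\begin{align*}
\ep_k w^k_{s,t'}+\tfrac12[\ep_k,\ep_l]w^{k,l}_{s,t'}+O(|t'-s|^{3\alpha}).
\end{align*}
With $(s,t')=(\tNiN\wedge t,\tNi\wedge t)$, the first piece telescopes to $w_t$; the cubic remainder sums to $O(2^{N(1-3\alpha)})\to 0$ since $3\alpha>1$; and the bracket piece, by antisymmetry of $[\ep_k,\ep_l]$ against the symmetric combination $\tfrac12 w^kw^l$, equals $[\ep_k,\ep_l]\,d^{N,k,l}_{\tNi\wedge t}$ up to a boundary contribution of size $O(2^{-2\alpha N})$. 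Both vanish uniformly in $t$ by Theorem~\ref{lift of w}~(iii).

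For Part (2), the same manipulation applied to $\widetilde M=Y(t')Y(s)^{-1}-I$ gives
\begin{align*}
(Y(t')-Y(s))\tfrac{Y(s)^{-1}+Y(t')^{-1}}{2}=Ad(Y(s))\ep_k\,w^k_{s,t'}+\tfrac12 Ad(Y(s))[\ep_k,\ep_l]\,w^{k,l}_{s,t'}+O(|t'-s|^{3\alpha}).
\end{align*}
Since $Ad(Y(\cdot))$ is a controlled path of $\bw$ with Gubinelli derivative $(Ad(Y))'_s(e_k)(v)=Ad(Y_s)[\ep_k,v]$, the Sewing Lemma gives the rough-integral representation
\begin{align*}
\int_s^{t'}Ad(Y_u)d\bw_u=Ad(Y(s))\ep_k\,w^k_{s,t'}+Ad(Y(s))[\ep_k,\ep_l]\,w^{k,l}_{s,t'}+O(|t'-s|^{3\alpha}).
\end{align*}
The per-interval discrepancy between the trapezoidal summand and this Riemann-sum approximation of $\int Ad(Y)d\bw$ is thus $-\tfrac12 Ad(Y(s))[\ep_k,\ep_l]w^{k,l}_{s,t'}$, which after antisymmetrization equals $-\tfrac12 Ad(Y(s))[\ep_k,\ep_l]\bigl(w^{k,l}-\tfrac12 w^kw^l\bigr)_{s,t'}$. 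Summed over $i$, this is a Riemann--Stieltjes sum of the $\alpha$-H\"older weight $Ad(Y(\cdot))[\ep_k,\ep_l]$ against the increments of $d^{N,k,l}$. Since $\|d^{N,k,l}\|_{p\hyp var}\to 0$ for every $p>1$ by Theorem~\ref{lift of w}~(iii), the Young integration estimate forces this sum to $0$ uniformly in $t$, and hence $K^N_t\to b_t$.

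The main obstacle is the persistent $\tfrac12[\ep_k,\ep_l]w^{k,l}$ term in both expansions. Naive bounds $|w^{k,l}_{s,t}|\le C|t-s|^{2\alpha}$ with $2\alpha<1$ yield $\sum_i|w^{k,l}_{\tNiN,\tNi}|\sim 2^{N(1-2\alpha)}\not\to 0$, so the refined dyadic cancellation of Theorem~\ref{lift of w}~(iii) is indispensable, and in Part (2) it must be propagated through the non-constant weight $Ad(Y(\tNiN))$, which is why a Young-type integration argument replaces the direct telescoping used in Part (1).
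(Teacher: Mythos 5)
Your proposal is correct and follows essentially the same route as the paper: expand each trapezoidal summand via the RDE estimate, telescope the first-order term, sum the $O(|t'-s|^{3\alpha})$ remainders, and recognize the leftover second-level term as $\sum_{k,l}\ep_k\ep_l\,d^{N,k,l}$ (resp.\ its $Ad(Y(t_{i-1}))$-weighted version), which vanishes by property (iii) of $\Omega$ — in part (2) via exactly the discrete Young estimate against the $\alpha$-H\"older weight that the paper invokes. The only cosmetic difference is that you truncate the dyadic intervals at $t$ while the paper first reduces to $t\in\{\tau^N_i\}$ using the H\"older continuity of $Y$; both are fine.
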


\begin{proof}
(1)  
We write $[t]^N_-=\max\{\tNi~|~\tNi\le t\}$.
We have
\begin{align*}
 \sup_{t\in [0,1]}\left|J^N_t-J^{N}_{[t]^N_-}\right|&\le 
C|Y_t-Y_{[t]^N_-}|\le C(w)(2^{-N})^\alpha,
\end{align*}
where $\alpha<1/2$.
Hence it suffices to show
$\lim_{N\to\infty}\max_{t\in \{\tNi\}}|J^N_t-w_t|=0$.
By an elementary calculation, we have
\begin{align*}
 \Bigl|Y_t^{-1}-Y_s^{-1}+\sum_{k}\ep_k Y_s^{-1}w^k_{s,t}
-\sum_{k,l}\ep_k\ep_l Y_s^{-1}w^{l,k}_{s,t}
\Bigr|&\le C|t-s|^{3\alpha}.
\end{align*}
Hence
\begin{align*}
 &\sum_{i}\frac{Y_{t_{i-1}}^{-1}+Y_{t_i}^{-1}}{2}
\left(Y_{t_i}-Y_{t_{i-1}}\right)=
\sum_{i}Y_{t_{i-1}}^{-1}Y_{t_{i-1},t_i}+
\frac{1}{2}\sum_iY^{-1}_{t_{i-1},t_i}Y_{t_{i-1},t_i}\\
&=\sum_iY_{t_{i-1}}^{-1}\Bigl(Y_{t_{i-1}}w_{t_{i-1},t_i}+Y_{t_{i-1}}
\sum_{k,l}\ep_k\ep_lw^{k,l}_{t_{i-1},t_i}+O(|t_i-t_{i-1}|^{3\alpha})\Bigr)
\nonumber\\
&\quad-
\frac{1}{2}
\Bigl(\sum_{k}\ep_k Y_{t_{i-1}}^{-1}w^k_{t_{i-1},t_i}
-\sum_{k,l}\ep_k\ep_l Y_{t_{i-1}}^{-1}w^{l,k}_{t_{i-1},t_i}+
O(|t_i-t_{i-1}|^{3\alpha})\Bigr)\nonumber\\
&\qquad\Bigl(\sum_{k}Y_{t_{i-1}}\ep_kw^k_{t_{i-1},t_i}+\sum_{k,l}Y_{t_{i-1}}\ep_k\ep_l
w^{k,l}_{t_{i-1},t_i}+O(|t_i-t_{i-1}|^{3\alpha}|)\Bigr)\nonumber\\
&=\sum_iw_{t_{i-1},t_i}+\sum_i\sum_{k,l}\ep_k\ep_l
\Bigl(
w^{k,l}_{t_{i-1},t_i}
-\frac{1}{2}w^{k}_{t_{i-1},t_i}w^{l}_{t_{i-1},t_i}
\Bigr)
+\sum_iO(|t_i-t_{i-1}|^{3\alpha}).
\end{align*}
By the assumption on $w$, we arrive at the conclusion of (1).

\noindent
(2) 
Similarly to $J^N$, it is sufficient to prove
$\lim_{N\to\infty}\max_{t\in \{\tNi\}}|K^N_t-b_t|=0$.
Also note that $b_t$ can be obtained as follows by the definition 
of the rough integral.
Let $\Xi_{s,t}=Ad(Y_s)w_{s,t}
+Ad(Y_s)\sum_{k,l}\ep_k\ep_l(2w^{k,l}_{s,t}-w^k_{s,t}w^l_{s,t})$.
Then
$
\displaystyle{
 b_t=\lim_{|\Delta|\to 0}\sum_{i=1}^N\Xi_{t_{i-1},t_i},
}
$
where $\Delta=\{0=t_0<\cdots t_N=t\}$ is a partition of $[0,t]$ and
$|\Delta|=\max_i|t_i-t_{i-1}|$.
On the other hand, by an elementary calculation, we have
\begin{align*}
& \sum_{i=1}^{N}(Y_{t_i}-Y_{t_{i-1}})\frac{Y_{t_{i-1}}^{-1}+Y_{t_i}^{-1}}{2}
\nonumber\\
&~=\sum_{i=1}^{N}Ad(Y_{t_{i-1}})w_{t_{i-1},t_i}+
\sum_{i=1}^NAd(Y_{t_{i-1}})\sum_{k,l}\ep_k\ep_l
\left(w^{k,l}_{t_{i-1},t_i}-\frac{1}{2}w^k_{t_{i-1},t_i}w^l_{t_{i-1}.t_i}\right)
+\sum_{i=1}^N O(|t_i-t_{i-1}|^{3\alpha})\nonumber\\
&~=\sum_{i=1}^N\Xi_{t_{i-1},t_i}-
\sum_{i=1}^NAd(Y_{t_{i-1}})\sum_{k,l}\ep_k\ep_l
\left(w^{k,l}_{t_{i-1},t_i}-\frac{1}{2}w^k_{t_{i-1},t_i}w^l_{t_{i-1}.t_i}\right)
+\sum_{i=1}^N O(|t_i-t_{i-1}|^{3\alpha}).
\end{align*}
By the assumption on $d^{N,k,l}_{t}$, if we consider dyadic partitions of
$[0,1]$, then the sum of second term converges to 0 in $p$-variation norm
applying the estimate of discrete Young integral and
the H\"older continuity property of $Y$.
This completes the proof.
\end{proof}

We now define submanifolds in $\Omega$ which is isomorphic to
$\Pea$ in a certain sense defined by $Y(t,e,w)$.

\begin{dfi}
 Let $Y$ be the solution to the RDE $(\ref{RDE})$.
For $a\in G$, we define
$S_a=\{w\in \Omega~|~Y(1,e,w)=a\}$
and write $S_a^H=S_a\cap H$.
\end{dfi}

Note that we already defined $S_a^H$ in Section~\ref{statement of main theorem}.
The above definition coincides with it.
The following result follows from that
$w(\in \Omega)\mapsto Y(1,e,w)$ is an $\infty$-quasi continuous map.

\begin{lem}
 For any $a\in G$, it holds that $\mu_{\la,a}(S_a^{\complement})=0$.
\end{lem}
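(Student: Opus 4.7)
The plan is to identify $\nu_{\la,a}$ (equivalently, via the isomorphism $Y$ of Proposition~\ref{isomorphism}, the measure $\mu_{\la,a}$ on $W$) with the normalized Watanabe--Sugita distribution measure $p(\la^{-1},e,a)^{-1} m_a$ on $W$, where $m_a$ is the finite Borel measure associated to the positive generalized Wiener functional $\delta_a(F)$ with $F(w) := Y(1,e,w)$, exactly as recalled in Remark~\ref{remark to definition of Omega}~(3). Once this identification is in place, the lemma reduces to the defining support property of $m_a$.

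First I would verify that $\delta_a(F)$ is a bona fide positive generalized Wiener functional. This requires (i) $F \in \bigcap_{k,p} \DD^{k,p}(G)$, immediate from the smooth $H$-differentiability in Lemma~\ref{properties of Y}~(3)--(4), and (ii) nondegeneracy of the Malliavin covariance $\sigma_F$ with $(\det \sigma_F)^{-1} \in \bigcap_p L^p(\mu_\la)$. For (ii), Lemma~\ref{properties of Y}~(3) gives the explicit expression
\[
D_h F(w) = \Bigl(\int_0^1 Ad(Y(s,e,w))\, dh(s)\Bigr) F(w),
\]
so the problem reduces to the classical nondegeneracy of the Brownian endpoint on a compact Lie group, a direct consequence of H\"ormander's bracket condition for the Laplace--Beltrami operator.

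Then I would invoke Sugita's theorem: the functional $\delta_a(F)$ determines a unique finite Borel measure $m_a$ on $W$ which (a) assigns zero mass to every slim set, (b) satisfies $\langle \delta_a(F),\varphi\rangle = \int_W \tilde{\varphi}\, dm_a$ for every $\varphi \in \bigcap_{k,p}\DD^{k,p}$ and any $\infty$-quasi-continuous modification $\tilde{\varphi}$, and (c) is concentrated on $\{\tilde{F} = a\}$, where $\tilde{F}$ is the $\infty$-quasi-continuous modification of $F$. By Theorem~\ref{lift of w}~(3), $\Omega^\complement$ is slim, so $m_a(\Omega^\complement)=0$; by Lemma~\ref{properties of Y}~(6), $w\mapsto Y(1,e,w)$ is already $\infty$-quasi-continuous on $\Omega$, so $\tilde{F}=F$ on $\Omega$ off a slim set. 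Combining,
\[
m_a(S_a^\complement) \le m_a(\Omega^\complement) + m_a(\Omega \cap \{F\neq a\}) = 0,
\]
and dividing by $p(\la^{-1},e,a)$ yields the claim.

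The only genuinely nontrivial ingredient, beyond the standard Watanabe--Sugita machinery, is the coincidence of Sugita's quasi-continuous modification $\tilde{F}$ with the pathwise rough-path map $Y(1,e,\cdot)$ on $\Omega$; this is precisely Lemma~\ref{properties of Y}~(6) together with the slimness of $\Omega^\complement$ from Theorem~\ref{lift of w}. All the remaining steps are routine distribution theory of Watanabe and Sugita, transferred from $\RR^d$-valued to compact-Lie-group-valued Wiener functionals via local charts on $G$.
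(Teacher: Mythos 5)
Your argument is correct and is exactly the one the paper intends: the paper gives no explicit proof of this lemma, but Remark~\ref{remark to definition of Omega}~(3) sketches precisely this Watanabe--Sugita argument ($m_a$ charges no slim set, $m_a$ is concentrated on $\{\tilde F=a\}$, $\Omega^{\complement}$ is slim by Theorem~\ref{lift of w}~(3), and $Y(1,e,\cdot)$ is $\infty$-quasi-continuous on $\Omega$ by Lemma~\ref{properties of Y}~(6)). The only point to phrase carefully is the identification of $\nu_{\la,a}$ with $p(\la^{-1},e,a)^{-1}m_a$ via the It\^o map, which you should justify directly from the transition-density characterization of the pinned measure rather than by citing Proposition~\ref{isomorphism}, since that proposition is stated as a consequence of this lemma.
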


The following is an immediate consequence of the above lemma
and (\ref{varphitimesY}).

\begin{pro}\label{isomorphism}
Let
\begin{align*}
 \Peomega=\left\{Y(\cdot,e,w)~|~w\in \Omega\right\},
\quad
\Peaomega=\left\{Y(\cdot,e,w)~|~w\in S_a\right\}.
\end{align*}
\begin{enumerate}
 \item[$(1)$] The mapping
$Y :w(\in\Omega)\mapsto Y(\cdot,e,w)\in \Peomega$
and $Y|_{S_a} : S_a\to \Peaomega$ $(a\in G)$
are bijective Borel measurable mappings and 
the inverse mapping $Y^{-1}$ is obtained by
the limit of Stratonovich type integral in $(\ref{stratonovich type integral})$.
Furthermore, it holds that $Y_{\ast}\mu_{\la,a}=\nu_{\la,a}$ and
$Y^{-1}_{\ast}\nu_{\la,a}=\mu_{\la,a}$.
\item[$(2)$] The sets, $\Peomega$ and $\Peaomega$ are
 invariant by the left multiplication of
$H^1$-paths. That is, for any $\gamma\in \Peomega$ and $\varphi\in \PeH$,
it holds that $\varphi\cdot\gamma\in \Peomega$.
\end{enumerate}
\end{pro}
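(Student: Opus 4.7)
The plan is to treat bijectivity, measurability, and the pushforward identities in (1) separately, and then derive (2) directly from (\ref{varphitimesY}). The author hints that the whole statement is ``an immediate consequence'' of the preceding lemma and (\ref{varphitimesY}), so the work should amount to unpacking those two inputs.

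For (1), surjectivity of $Y : \Omega \to \Peomega$ and of $Y|_{S_a} : S_a \to \Peaomega$ is immediate from the definitions of the codomains as images of $Y$. For injectivity I would invoke Lemma~\ref{pathwise gamma}(1): the Stratonovich-type Riemann sums $J^N_t$ built from $Y(\cdot,e,w)$ alone converge uniformly to $w_t$ for every $w\in \Omega$, so $Y(\cdot,e,w)=Y(\cdot,e,w')$ forces $w=w'$. The same limit formula provides the pathwise inverse asserted in the proposition. Borel measurability of $Y$ follows from the $\infty$-quasi-continuity of the lift $w\mapsto \bw$ (Theorem~\ref{lift of w}(4)) combined with continuity of the Itô--Lyons map associated with (\ref{RDE}); measurability of $Y^{-1}$ follows because it is realized as a pointwise limit of continuous (polynomial) functionals of $\gamma$.

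For the pushforward assertions, I would use the lemma immediately preceding the proposition, which ensures that $\mu_{\la,a}$ is concentrated on $S_a$. Since $\mu_{\la,a}$ is the normalization of $p(\la^{-1},e,a)^{-1}\delta_a(Y(1))\, d\mu_\la$, i.e.\ the Watanabe disintegration of $\mu_\la$ along the smooth Wiener functional $w\mapsto Y(1,e,w)$ at the value $a$, its image under $Y(\cdot,e,\cdot)$ coincides with the disintegration of $Y_\ast\mu_\la=\nu_\la$ along $\gamma\mapsto \gamma_1$ at $a$, which is $\nu_{\la,a}$ by definition. The reverse identity $Y^{-1}_\ast\nu_{\la,a}=\mu_{\la,a}$ is then automatic from bijectivity.

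For (2), let $\gamma=Y(\cdot,e,w)\in \Peomega$ with $w\in\Omega$ and $\varphi\in \PeH$. Setting $k(t)=\int_0^t Ad(Y(s,e,w)^{-1})(\varphi^{-1}(s)\dot{\varphi}(s))\,ds$, the boundedness of $Ad$ on the compact group $G$ together with $\dot{\varphi}\in L^2$ give $k\in H$; by Theorem~\ref{lift of w}(2) we have $w+k\in\Omega$, and (\ref{varphitimesY}) then yields $\varphi\cdot\gamma=Y(\cdot,e,w+k)\in\Peomega$. The main technical point is the identification $Y_\ast\mu_{\la,a}=\nu_{\la,a}$; once the preceding lemma is accepted, the rest is essentially definitional, and the invariance in (2) is a one-line application of (\ref{varphitimesY}) and the additive stability of $\Omega$ under $H$.
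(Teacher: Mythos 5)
Your proposal is correct and takes essentially the same route the paper intends: the paper offers no written proof, stating only that the proposition is an immediate consequence of the preceding lemma (concentration of the pinned measure on $S_a$) and of (\ref{varphitimesY}), and your argument unpacks exactly those ingredients, recovering the inverse from the Stratonovich-type sums of Lemma~\ref{pathwise gamma} as the statement itself indicates. I see no gap.
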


\begin{rem}
For a fixed $\infty$-quasi continuous version $Y(t,e,w)$ of
the Stratonovich SDE's solution of (\ref{sde}),
 it is not difficult to see that there exists an almost surely defined 
 measurable bijective mapping between $\Pe$ and $W$, $\Pea$ and $S_a$, 
respectively.
Also our problem is concerned with the operator on function spaces and hence
it is sufficient to establish the isomorphism between Sobolev spaces 
on underlying
spaces which were already done in \cite{a1995}.
However, it may be interesting to 
construct an isomorphism by restricting elements of
$w\in W$ in the above way.
\end{rem}

By using the approximation (\ref{approximation of w})  and 
(\ref{approximation of b}),
we have the following.
This results can be found in \cite{g2} and \cite{a1995}.

\begin{lem}\label{derivative of b1} 
For all $0\le t\le 1$,
 $w_t, b_t\in \rD(\E_{\la})$ and
\begin{align*}
 \nabla_h w_t=\int_0^tAd(\gamma_s^{-1})dh_s,\quad 
\nabla_h b_t=h_t+\int_0^t[h_s,db_s],
\end{align*}
where 
$h\in H_{0,0}$.
\end{lem}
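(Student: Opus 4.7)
My plan is to obtain $w_t, b_t \in \rD(\E_\lambda)$ by approximating them with the cylindrical Stratonovich-type Riemann sums $J^N_t$ and $K^N_t$ of Lemma~\ref{pathwise gamma}, computing the $H$-derivatives of those approximants explicitly via \eqref{derivative of gamma}, and then passing to the limit with the help of closability of $\E_\lambda$.

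First I would note that each $J^N_t$ and $K^N_t$ depends only on the finitely many values $\gamma(\tau^N_i\wedge t)$, so both lie in $\FC(\Pea)\subset \rD(\E_\lambda)$. Using $\nabla_h\gamma_t = h_t\gamma_t$ and $\nabla_h\gamma_t^{-1} = -\gamma_t^{-1}h_t$ from \eqref{derivative of gamma}, direct differentiation of each summand of \eqref{stratonovich type integral} produces
\[
-\tfrac{1}{2}\bigl(\gamma_{t_{i-1}}^{-1}h_{t_{i-1}}+\gamma_{t_i}^{-1}h_{t_i}\bigr)(\gamma_{t_i}-\gamma_{t_{i-1}})+\tfrac{1}{2}\bigl(\gamma_{t_{i-1}}^{-1}+\gamma_{t_i}^{-1}\bigr)(h_{t_i}\gamma_{t_i}-h_{t_{i-1}}\gamma_{t_{i-1}}).
\]
Writing $h_{t_i}\gamma_{t_i}-h_{t_{i-1}}\gamma_{t_{i-1}}=(h_{t_i}-h_{t_{i-1}})\gamma_{t_i}+h_{t_{i-1}}(\gamma_{t_i}-\gamma_{t_{i-1}})$ and expanding, the terms rearrange into
\[
\tfrac{1}{2}(\gamma_{t_{i-1}}^{-1}+\gamma_{t_i}^{-1})(h_{t_i}-h_{t_{i-1}})\gamma_{t_i}-\tfrac{1}{2}\gamma_{t_i}^{-1}(h_{t_i}-h_{t_{i-1}})(\gamma_{t_i}-\gamma_{t_{i-1}}).
\]
The first piece is a Stratonovich-type Riemann sum for $\int_0^t\!Ad(\gamma_s^{-1})\dot h_s\,ds$, while the second is bounded in $H_{0,0}$ by $C(w)\,\|h\|_H\,(\max_i|t_i-t_{i-1}|)^{\alpha-1/2}$ and vanishes as $N\to\infty$. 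An analogous rearrangement of $\nabla K^N_t$ isolates the telescoping contribution that produces $h_t$ and a Stratonovich-type sum approximating $\int_0^t[h_s,db_s]$.

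Next I would establish pathwise convergence of these derivative sums in $H_{0,0}$ for $w\in\Omega$. The argument is essentially the same rough-path analysis used in Lemma~\ref{pathwise gamma}: one expands the differences $\gamma_{t_i}-\gamma_{t_{i-1}}$ using the RDE, uses the vanishing of $d^{N,k,l}(w)$ from property (iii) of $\Omega$, and uses the $\alpha$-H\"older bound on $\bw$ together with the $1$-variation bound on $h$. To upgrade pointwise convergence to convergence in $L^2(\nu_{\lambda,a};H_{0,0})$, I would use uniform-in-$N$ moment bounds on the $H_{0,0}$-norms of $\nabla J^N_t$ and $\nabla K^N_t$ obtained from rough-path moment estimates of Theorem~\ref{lift of w} (together with Lemma~\ref{w_f-roughpath} for the controlled-path integrals), and then apply dominated convergence. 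Closability of $\E_\lambda$ then yields $w_t,b_t\in\rD(\E_\lambda)$ with the claimed derivatives.

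The main obstacle I expect is the second formula: $[h_s,db_s]$ must be interpreted as a genuine integral against the pathwise rough path $b$, not the Brownian $w$. I would handle this by invoking Lemma~\ref{w_f-roughpath} with $f(s)=Ad(Y(s,e,w))$ so that $b = w_f$ is itself a controlled path of $\bw$ (as in Example~\ref{example of f}); then $\int_0^t[h_s,db_s]$ becomes a rough integral against $\bw$ and fits into the framework of Lemma~\ref{approximation of rough integral}, ensuring both the identification of the limit and the needed convergence of the approximating Riemann sums.
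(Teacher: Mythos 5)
Your plan follows the paper's route almost exactly: approximate by the cylindrical Stratonovich-type Riemann sums $J^N_t, K^N_t$ of Lemma~\ref{pathwise gamma}, differentiate term by term using \eqref{derivative of gamma}, isolate the main term and a vanishing error, and close via $L^2$ convergence and closability. Your algebraic rearrangement of $\nabla_h J^N_t$ is correct (the paper instead first simplifies the product $(\gamma^{-1}_{t_{i-1}}+\gamma^{-1}_{t_i})(\gamma_{t_i}-\gamma_{t_{i-1}})=\gamma^{-1}_{t_{i-1}}\gamma_{t_i}-\gamma^{-1}_{t_i}\gamma_{t_{i-1}}$ before differentiating, but the resulting split into a Riemann sum for $\int_0^t Ad(\gamma_s^{-1})\dot h_s\,ds$ plus an error of the form $\gamma^{-1}_{t_i}h_{t_{i-1},t_i}\gamma_{t_{i-1},t_i}$ is the same).

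There is, however, a genuine gap in your error estimate. You claim the remainder
\[
-\tfrac{1}{2}\sum_i\gamma_{t_i}^{-1}(h_{t_i}-h_{t_{i-1}})(\gamma_{t_i}-\gamma_{t_{i-1}})
\]
is $O\bigl(\|h\|_H\,(\max_i|t_i-t_{i-1}|)^{\alpha-1/2}\bigr)$ and vanishes. Since Brownian-type paths give $\alpha<1/2$, the exponent $\alpha-\tfrac{1}{2}$ is negative and $(2^{-N})^{\alpha-1/2}=2^{N(1/2-\alpha)}\to\infty$, so the bound you wrote down diverges rather than vanishes. The bound you obtained is what comes from $|h_{t_{i-1},t_i}|\le\|h\|_H(t_i-t_{i-1})^{1/2}$ combined with $|\gamma_{t_{i-1},t_i}|\le C(w)(t_i-t_{i-1})^\alpha$ and counting $2^N$ terms, and that is too lossy. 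The fix (which is how the paper and the surrounding lemmas handle such sums throughout) is to keep $\sum_i|h_{t_{i-1},t_i}|\le\|h\|_{1\hyp var}\le\|h\|_H$ intact rather than decaying it per term, and only pull out the Hölder factor from $\gamma$: this gives
\[
\Bigl|\sum_i\gamma_{t_i}^{-1}h_{t_{i-1},t_i}\gamma_{t_{i-1},t_i}\Bigr|
\le C(w)\,(\max_i|t_i-t_{i-1}|)^{\alpha}\,\|h\|_{1\hyp var}
\le C(w)\,(\max_i|t_i-t_{i-1}|)^{\alpha}\,\|h\|_H,
\]
which does vanish. Once that correction is made, the rest of your plan (the Abel-type summation to isolate $h_t$ and $\int_0^t[h_s,db_s]$ in $\nabla_h K^N_t$, the rough-path interpretation of $\int_0^t[h_s,db_s]$ via Lemma~\ref{w_f-roughpath} with $f(s)=Ad(Y(s,e,w))$, and the $L^p$ moment bounds on $\|Y\|_\alpha$ for $L^2$ convergence) matches the paper's proof in substance.
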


In the above lemma, we consider the case where
the functionals $w_t, b_t$ are defined on
$\Pea$.
Similar results hold true as functionals on $P_e(G)$ with $\nu_{\la}$.

\begin{proof}
Let $\gamma_t=Y(t,e,w)$ $(w\in \Omega)$.
$w_t$ and $b_t$ can be obtained as a pathwise limit of
$J^N_t$ and $K^N_t$.
If $\tNiN<t\le \tNi$,
then the sum is defined by
the partition of $[0,t]$ :
$0=\tau^N_0<\cdots<\tNiN<t=t$.
For simplicity, we denote this partition
by $0=t_0<\cdots<t_{t(N)}=t$.
Then, we have
\begin{align*}
 J^N_t=\sum_{i=1}^{t(N)}\frac{\gamma^{-1}_{t_{i-1}}+\gamma^{-1}_{t_i}}{2}
(\gamma_{t_i}-\gamma_{t_{i-1}})
\end{align*}
As we proved in Lemma~\ref{pathwise gamma}, this converges to $w_t$.
Actually, by checking the proof, we see that this converges in $L^2(\nu_{\la,a})$.
In the proof, we need to use
$\|Y\|_{\alpha}\in \cap_{p\ge 1}L^p(\nu_{\la,a})$ $(\alpha<1/2)$.
This is non-trivial but it can be checked to make use the fact that
the power of the Besov type norm of $Y(\cdot,e,w)$ belongs to
$\DD^{p,r}$. We refer the readers to \cite{malliavin} for this.
We calculate the derivative of $J^N_t$.
Using (\ref{derivative of gamma}), we have
\begin{align*}
 \frac{1}{2}\nabla_h\left((\gamma^{-1}_{t_{i-1}}+\gamma^{-1}_{t_i})
(\gamma_{t_i}-\gamma_{t_{i-1}})\right)&=
\frac{1}{2}\nabla_h
(\gamma_{t_{i-1}}^{-1}\gamma_{t_i}-\gamma_{t_i}^{-1}\gamma_{t_{i-1}})
\nonumber\\
&=\frac{1}{2}\left(\gamma_{t_{i-1}}^{-1}h_{t_{i-1},t_i}\gamma_{t_i}+
\gamma_{t_i}^{-1}h_{t_{i-1},t_i}\gamma_{t_i}\right)\nonumber\\
&=Ad(\gamma_{t_i}^{-1})h_{t_{i-1},t_i}
-
\frac{1}{2}
\gamma^{-1}_{t_{i-1},t_i}h_{t_{i-1},t_i}\gamma_{t_i}
-\frac{1}{2}\gamma_{t_i}^{-1}h_{t_{i-1},t_i}\gamma_{t_{i-1},t_i}.
\end{align*}
The summation of the second and third terms of the above
converges to 0 as $N\to \infty$.
Also it is easy to show $\nabla J^N_t$ converges in $L^2(\nu_{\la,a})$.
We consider the derivative of $b_t$.
We have
\begin{align*}
& \nabla_h\left((\gamma_{t_i}-\gamma_{t_{i-1}})\frac{1}{2}
(\gamma_{t_{i-1}}^{-1}+\gamma_{t_i}^{-1})\right)\nn\\
&=
\frac{1}{2}\nabla_h\left(\gamma_{t_i}\gamma_{t_{i-1}}^{-1}-\gamma_{t_{i-1}}
\gamma_{t_i}^{-1}\right)\nn\\
&=\frac{1}{2}\left(h_{t_i}\gamma_{t_i}\gamma_{t_{i-1}}^{-1}-
\gamma_{t_i}\gamma_{t_{i-1}}^{-1}h_{t_{i-1}}-h_{t_{i-1}}\gamma_{t_{i-1}}
\gamma_{t_i}^{-1}+\gamma_{t_{i-1}}\gamma_{t_i}^{-1}h_{t_i}\right)\nn\\
&=h_{t_i}-h_{t_{i-1}}+
\frac{1}{2}\left[h_{t_{i-1}},(\gamma_{t_i}-\gamma_{t_{i-1}})
(\gamma_{t_i}^{-1}+\gamma_{t_{i-1}}^{-1})\right]
+\frac{1}{2}\left(h_{t_{i-1},t_i}\gamma_{t_{i-1},t_i}\gamma_{t_{i-1}}^{-1}
-\gamma_{t_{i-1},t_i}\gamma_{t_i}^{-1}h_{t_{i-1},t_i}
\right)\nn\\
&=:I^1_i+I^2_i+I^3_i,
\end{align*}
where $[\cdot,\cdot]$ denotes the Lie bracket.
Let $I_k=\frac{1}{2}\sum_{i=1}^k(\gamma_{t_i}-\gamma_{t_{i-1}})
(\gamma_{t_i}^{-1}+\gamma_{t_{i-1}}^{-1})$.
Then 
\begin{align*}
 \sum_{i=1}^nI^2_i&=
\sum_{i=1}^n[h_{t_{i-1}},I_i-I_{i-1}]
=[h_{t_n},I_n]-\sum_{i=1}^n[h_{t_i}-h_{t_{i-1}},I_i].
\end{align*}
Considering the case $t_n=t(N)$ and using Lemma~\ref{pathwise gamma} (2),
we obtain
$\lim_{N\to\infty}\nabla_hK^N_t=h_t+\int_0^t[h_s,db_s]$.
Also, it is easy to show the convergence of
$K^N_t$ and $\nabla K^N_t$ in $L^2(\nu_{\la,a})$.
This completes the proof.
\end{proof}

As already shown in Lemma~\ref{properties of Y},
we have
\begin{align*}
 DY(1,e,w)[h]=\left(\int_0^1Ad(Y(s,e,w))dh_s\right)Y(1,e,w).
\end{align*}
Taking this formula into account,
we now introduce the notion of tangent space of the submanifold $S_a$.

\begin{dfi}[Tangent space]\label{tangent space}
(1) Let $a\in G$ and $w\in S_a$. We define the tangent space of $S_a$ at $w$ by
\begin{align*}
 T_wS_a&=\left\{h\in H~\Big | \int_0^1Ad(Y(s,e,w))\dot{h}(s)ds=0\right\}.
\end{align*}

\noindent
(2) In the case where $k\in S_a^H$, we may write $W_0=\oTkSa$,
$H_0=T_kS_a$, $H_0^{\perp}=T_kS_a^{\perp}$
for simplicity although these spaces depend on $k$.
Recall that we use the notation $\Omega_0$ to denote $\Omega\cap W_0$ and
we may denote the elements of $W_0$ and $W_0^{\perp}$ by $\eta$ and
$\eta^{\perp}$.
\end{dfi}

We can check that the above 
$W_0$ is a finite codimensional subspace of
$W$ in the sense of Definition~\ref{W_0} by setting
$V=T_kS_a^{\perp}$ in the following lemma.

\begin{lem}\label{tangent space lemma} Let $a\in G$ and choose $w\in S_a$.
 \begin{enumerate}
  \item[$(1)$] The orthogonal complement of $T_wS_a$ in $H$ is given by
\begin{align*}
 T_wS_a^{\perp}&=
\left\{Q(w)v~\Big |~
v\in \mathfrak{g}\right\},
\end{align*}
where $Q(w)v=\int_0^{\cdot}Ad(Y(s,e,w)^{-1})vds$.
\item[$(2)$] The explicit form of the projection operator 
$P(w) : H\to T_wS_a$ is given by
\begin{align}
 P(w)h=h-\int_0^{\cdot}Ad(Y(s,e,w)^{-1})
\left(\int_0^1Ad(Y(u,e,w))dh_u\right)ds.
\label{Pk}
\end{align}
\item[$(3)$] Let $k\in S_a^H$.
The direct sum decomposition $W=\oTkSa\oplus T_kS_a^{\perp}$
holds.
Explicitly, this decomposition is given as follows.
Let 
\[
 N(k)w=\int_0^{\cdot}Ad(Y(s,e,k)^{-1})\left(\int_0^1Ad(Y(u,e,k))dw_u\right)ds
\in T_kS_a^{\perp}.
\]
Write $P(k)w=w-N(k)w$.
Then $P(k)w\in \oTkSa$ and $w=P(k)w+N(k)w$ is the direct sum decomposition of $w\in W$. 
Moreover if $w\in \Omega$, then $P(k)w\in \Omega$ holds 
and hence the lift 
$\overline{P(k)w}$
can be defined.
Explicitly, we have
\begin{align*}
(N(k)w)_t&=\int_0^tAd(Y(s,e,k)^{-1})\left(\int_0^1Ad(Y(u,e,k))dw_u\right)ds,\\
 (P(k)w)_t&=w_t-\int_0^tAd(Y(s,e,k)^{-1})
\left(\int_0^1Ad(Y(u,e,k))dw_u\right)ds.
\end{align*}
\end{enumerate}
\end{lem}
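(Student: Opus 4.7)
The plan is to treat (1) and (2) as standard Hilbert-space computations via the adjoint of a natural bounded linear map, and then extend the resulting projection from $H$ to $W$ by continuity to establish (3).

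For (1), I introduce the bounded linear map $L_w : H \to \g$ defined by $L_w(h) = \int_0^1 Ad(Y(s,e,w))\dot h(s)\,ds$, so that by construction $T_wS_a = \ker L_w$. Using the $H$-inner product and the bi-invariance of the metric on $\g$ (which makes $Ad(Y(s,e,w))$ orthogonal), a direct pairing computation yields
\begin{align*}
 (L_w h, v)_{\g} = \int_0^1 (\dot h(s), Ad(Y(s,e,w)^{-1})v)\,ds = (h, Q(w)v)_H,
\end{align*}
so $L_w^{\ast} v = Q(w)v$. Since $L_w$ is surjective (take $\dot h(s) = Ad(Y(s,e,w)^{-1})v$ to hit any $v\in\g$), its range is closed and the range of $L_w^{\ast}$ is the finite-dimensional space $\{Q(w)v:v\in\g\}$, which therefore equals $T_wS_a^{\perp}$.

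For (2), I write $h = P(w)h + Q(w)v(h)$ using (1), and determine $v(h)$ by imposing $P(w)h \in \ker L_w$. Applying $L_w$ gives $L_w(h) = L_w(Q(w)v(h)) = v(h)$, where I use the identity $L_w\circ Q(w) = \mathrm{id}_{\g}$ (again by orthogonality of $Ad$). Thus $v(h) = \int_0^1 Ad(Y(u,e,w))\,dh_u$, which is exactly the displayed formula for $P(w)$.

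For (3), the main task is to extend the finite-rank projection $N(k)$ from $H$ to $W$ when $k\in S_a^H$. Since $k\in H$, the path $s \mapsto Ad(Y(s,e,k))$ is absolutely continuous with derivative in $L^2$, so for continuous $w\in W$ the quantity
\begin{align*}
 L_k(w) := Ad(Y(1,e,k))w_1 - \int_0^1 w_u\,\tfrac{d}{du}Ad(Y(u,e,k))\,du
\end{align*}
is a well-defined Riemann--Stieltjes integral that is continuous linear from $W$ into $\g$ and coincides with $\int_0^1 Ad(Y(u,e,k))\,dh_u$ on $H$. Setting $N(k)w = Q(k)L_k(w)\in T_kS_a^{\perp}\subset H$ and $P(k)w = w - N(k)w$, both maps are bounded linear on $W$ and restrict on $H$ to the orthogonal projections of (1)-(2). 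Because $P(k)H\subset T_kS_a$ and $H$ is dense in $W$, continuity forces $P(k)W\subset \overline{T_kS_a}^W = W_0$, and $W = W_0 \oplus T_kS_a^{\perp}$ follows. Finally, for $w\in\Omega$ the fact that $N(k)w\in H$ combined with the $H$-invariance of $\Omega$ in Theorem~\ref{lift of w}(2) gives $P(k)w = w - N(k)w \in \Omega$, so the lift $\overline{P(k)w}$ exists.

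The mildly delicate point of the whole lemma is the extension of $L_k$ from $H$ to $W$ via integration by parts and the verification that this extension is consistent with the $H$-definition; everything else is routine once $L_w$, $Q(w)$, and the $H$-invariance of $\Omega$ are in hand.
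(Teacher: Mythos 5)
Your proof is correct, and it follows the route the paper clearly intends (the paper states this lemma without proof, treating it as routine): identify $T_wS_a$ as $\ker L_w$, compute $L_w^{\ast}=Q(w)$ via the $\mathrm{Ad}$-invariance of the metric so that $(\ker L_w)^{\perp}$ is the finite-dimensional (hence closed) range of $Q(w)$, use $L_w\circ Q(w)=\mathrm{id}_{\g}$ to get the projection formula, and extend $L_k$ to $W$ by integration by parts using $Ad(Y(\cdot,e,k))\in H^1$. The only point left implicit is the directness of $W=W_0\oplus T_kS_a^{\perp}$, but this is immediate from your setup: the continuous extension of $L_k$ vanishes on $T_kS_a$, hence on its $W$-closure $W_0$, while $L_k Q(k)v=v$, so $W_0\cap T_kS_a^{\perp}=\{0\}$.
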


\begin{rem}\label{remark on tangent space}
(1) We identify $\g$ with $T_kS_a^{\perp}$ 
by the natural isometry mapping
$v\mapsto Q(k)v$.

\noindent
(2) Since $\Omega=\cup_{a\in G}S_a$ holds, this means that
the projection operator $P(w) : H\to T_wS_{Y(1,e,w)}$ can be defined
for all $w\in \Omega$.

\noindent
(3) 
Also $P(w)w$ can be defined as the rough integral in (\ref{Pk}) for all
$w\in \Omega$.
Suppose $w\in S_a$.
We see that $P(w)w=0$ is equivalent to 
that $Y(\cdot,e,w)$ is a geodesic connecting $e$ and $a$.
The proof is as follows.
\begin{align*}
 P(w)w=0&\Longleftrightarrow w(t)=\int_0^tAd(Y(s,e,w)^{-1})
\left(\int_0^1Ad(Y(u,e,w))dw_u\right)ds\quad (0\le t\le 1)\\
&\qquad
\text{Setting $v=\int_0^1Ad(Y(u,e,w))dw_u\in\g$,}\\
&\Longleftrightarrow
w(t)=\int_0^tAd(Y(s,e,w)^{-1})vds\quad (0\le t\le 1)\\
&\Longleftrightarrow \text{$w\in H$ and}\,\,
Y(t,e,w)\dot{w}_t=vY(t,e,w)\quad (0\le t\le 1)\\
&\Longleftrightarrow
\frac{d}{dt}Y(t,e,w)=v Y(t,e,w)\quad (0\le t\le 1)\\
&\Longleftrightarrow
Y(t,e,w)=e^{t v}\quad (0\le t\le 1).
\end{align*}

\noindent
(4) We rethink the property $P(w)w=0$ when $w\in S_a^H$.
Since the mapping $h(\in S_a^H)\mapsto Y(\cdot,e,h)$ is energy preserving, that is,
$\|h\|_{H}^2=\int_0^1|\dot{Y}(t,e,h)|^2dt$ holds.
Hence if $k\in S_a^H$ is a critical point of the functional
$\frac{1}{2}\|h\|_H^2$ on $S_a^H$, then $Y(\cdot,e,k)$ is a critical point of 
the energy function $E$, that is, a geodesic and the converse also holds.
If $k$ is the critical point of $\frac{1}{2}\|h\|_H^2$ on $S_a^H$,
by the Lagrange multiplier method, we obtain
that for any $h\in H$, there exists $v\in \g$,
\[
 (h,k)_H+\left(\int_0^1Ad(Y(s,e,k))\dot{h}_s,v\right)=0.
\]
This is equivalent to $Y(t,e,k)\dot{k}_t=-vY(t,e,k)$
and hence $P(k)k=0$.

\end{rem}

We already defined the Dirichlet forms and the nonnegative generators
\[
(\E_{\la},\rD(\E_{\la}))\,\,, -L_{\la,\Pea}\,\,\,
\text{in $L^2(\Pea,\nu_{\la,a})$,}\,\,\quad
(\E_{\la,\D},\rD(\E_{\la,\D}))\,\,,-L_{\la,\D}\,\,\,
\text{in $L^2(\D,\nu_{\la,\D})$ $(\D\subset \Pea)$.}
\] 
The latter one defines the Witten Laplacian acting on functions with the Dirichlet
boundary condition on $\D$.
We now recall the counterparts
on $S_a$ and summarize necessary results in this paper.

Let $\FC(W)$ be the set of smooth cylindrical 
functions $F$ on $W$, that is, which can be written as
$F(w)=f\left((w,h_1),\ldots,(w,h_N)\right)$ where 
$h_i\in W^{\ast}$ $(1\le i\le N)$,
$N$ can be any positive integer and
$f\in C^{\infty}_b\left((\RR^d)^N\right)$.
For such an $F$, define
\begin{align*}
 (DF)(w)&=\sum_{1\le i\le N}(\partial_{i}f)(w)h_i\in H,\\
 (D_{S_a}F)(w)&=P(w)(DF)(w)\in T_wS_a.
\end{align*}
Then the following integration by parts formula holds:
For any $G\in \FC(W)\otimes H$,
\begin{align*}
 \int_{S_a}\left((D_{S_a}F)(w),P(w)G(w)\right)d\mu_{\la,a}(w)=
\int_{S_a}F(w)(D_{\mu_{\la}}^{\ast}PG)(w)d\mu_{\la,a}(w).
\end{align*}
$D_{\mu_{\la}}^{\ast}$ denotes the adjoint operator of
$D : L^2(W,d\mu_{\la})\to L^2(W\to H,d\mu_{\la})$.
Also, $\{P(w)G(w)~|~G\in \FC(W)\otimes H\}$
in $\{G\in L^2(S_a\to H,\nu_{\la,a}~|~P(w)G(w)=G(w)\,\mu_{\la,a}\, a.s.\}$
is dense.
Hence, similarly to $\Pea$, we see that
if $F_1, F_2\in \FC(W)$ is equivalent in the sense that
$
 F_1=F_2\quad \nu_{\la,a}a.s.
$
then
$
D_{S_a}F_1=D_{S_a}F_2\quad \nu_{\la,a}a.s.
$
holds.
Hence, for the equivalence class of $F\in \FC(W)|_{S_a}$,
$\DS F$ can be defined $\mu_{\la,a}$ a.s.
Let
\begin{align*}
\E_{\la,W}(F,G)&=\int_{W}(DF(w),DG(w))_{H}d\mu_{\la}(w),\quad
F, G\in \FC(W),\\
 \E_{\la,S_a}(F,G)&=\int_{S_a}(\DS F(w),\DS G(w))_{H}d\mu_{\la,a}(w),\quad
F, G\in \FC(W)|_{S_a}.
\end{align*}
These are closable and we denote the Dirichlet forms, 
closed extension $H$-derivatives, and the generators
by $\rD(\E_{\la,W}), D$, $-L_{\la,W}$ and 
$\rD(\E_{\la,S_a}), D_{S_a}, -\LS$ respectively.
In Malliavin calculus, $\rD(\E_{\la,W})$ is usually denoted by 
$\DD^{1,2}$ when $\la=1$.

The following can be found in \cite{a1993},\cite{a1995}.

\begin{lem}
$(L_{\la,\Pea},\FC(\Pea))$ and
$(L_{\la,S_a},\FC(W)|_{S_a})$ are essentially selfadjoint\\
 in
$L^2(\Pea,\nu_{\la,a})$ and
$L^2(S_a,\mu_{\la,a})$ respectively.
\end{lem}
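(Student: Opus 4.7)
Plan: My first step is to reduce the claim on $\Pea$ to the claim on $S_a$. By Proposition~\ref{isomorphism} the map $Y$ is a bijective measurable correspondence with $Y_\ast\mu_{\la,a}=\nu_{\la,a}$, and by Lemma~\ref{properties of Y}(3) together with Proposition~\ref{derivative smooth case} it intertwines, up to the unitary change of variable $U_k$ of Example~\ref{example of f}, the $H$-derivatives $\nabla$ on $\Pea$ and $\DS$ on $S_a$. Pulling back by $Y$ therefore yields a unitary equivalence between the two Dirichlet forms and hence between the two generators; essential self-adjointness on one side transfers to the other, so it suffices to argue on $S_a$.

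For $(\LS,\FC(W)|_{S_a})$ I would apply the standard range criterion: a symmetric non-negative operator is essentially self-adjoint if and only if the range of $I+A$ is dense in $L^2$. Using the integration-by-parts formula for $\DS$ recorded in the excerpt together with the regularity of the projection $P(w)$ from Lemma~\ref{tangent space lemma}, I would first verify that every $F\in \FC(W)|_{S_a}$ lies in $\rD(\LS)$ with $\LS F\in L^2(S_a,\mu_{\la,a})$ given by an explicit second-order differential expression in the $\DS$-derivatives of $F$.

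Next, suppose $g\in L^2(S_a,\mu_{\la,a})$ is orthogonal to $(I-\LS)(\FC(W)|_{S_a})$. The central task is to show that $g$ necessarily lies in the form domain $\rD(\E_{\la,S_a})$. Once this is established, the fact that $\FC(W)|_{S_a}$ is a form core, which holds by construction of $\E_{\la,S_a}$ as the smallest closed extension, extends the identity $\E_{\la,S_a}(F,g)+(F,g)_{L^2}=0$ from $F\in\FC(W)|_{S_a}$ to every $F\in\rD(\E_{\la,S_a})$; choosing $F=g$ then yields $\|g\|^2+\E_{\la,S_a}(g,g)=0$, hence $g=0$, closing the argument.

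The main obstacle is this form-regularity step: promoting an element $g$ in the domain of the adjoint of $\LS|_{\FC(W)|_{S_a}}$ to an element of the form domain $\rD(\E_{\la,S_a})$. My plan is to mollify $g$ by the Ornstein--Uhlenbeck semigroup $e^{tL_{\la,W}}$ on the ambient Wiener space $W$ and to cylindrically truncate, then to descend the ambient approximation to $S_a$ through the Watanabe--Sugita disintegration $p(\la^{-1},e,a)^{-1}\delta_a(Y(1))d\mu_\la$, relying on the nondegeneracy of the Malliavin covariance of $Y(1,e,w)$ recalled in Remark~\ref{remark to definition of Omega}(3). The delicate point is that the cylindrical truncation on $W$ does not respect the constraint $Y(1,e,w)=a$ that defines $S_a$, so the approximation must be verified to converge both in $L^2(S_a,\mu_{\la,a})$ and in the form norm $\E_{\la,S_a}(\cdot,\cdot)^{1/2}$ uniformly enough to pass to the limit in the orthogonality identity.
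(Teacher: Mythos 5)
The paper gives no proof of this lemma; it simply cites \cite{a1993,a1995}, so there is no in-paper argument to match. Your overall skeleton (range criterion for a nonnegative symmetric operator, plus the claim that any $g$ orthogonal to $\mathrm{Ran}(I-\LS|_{\FC})$ lies in the form domain) is the standard route for Dirichlet operators and is the right shape of argument. However, there are two genuine gaps.

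First, the reduction from $\Pea$ to $S_a$ does not work as stated. The unitary $Y^{\ast}$ identifies the two self-adjoint generators, but it does not identify the two cores: $Y^{\ast}$ sends $F(\gamma)=f(\gamma(t_1),\ldots,\gamma(t_N))\in\FC(\Pea)$ to $f(Y(t_1,e,w),\ldots,Y(t_N,e,w))$, which is a smooth functional of the RDE solution and is \emph{not} an element of $\FC(W)|_{S_a}$ (functions of finitely many Wiener integrals $(w,h_i)$); conversely $(Y^{-1})^{\ast}$ does not land in $\FC(\Pea)$. Essential self-adjointness is a property of the pair (operator, core), so knowing that $\FC(W)|_{S_a}$ is an operator core for $\overline{\LS}$ does not imply that $Y^{\ast}\FC(\Pea)$ is; the two assertions of the lemma require separate arguments (or a common argument showing both families are graph-dense in the domain of the one self-adjoint generator).

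Second, and more seriously, the step you yourself flag as the main obstacle is the entire content of the proof, and the strategy you propose for it is problematic. The Ornstein--Uhlenbeck semigroup $e^{tL_{\la,W}}$ on the ambient space does not act on $L^2(S_a,\mu_{\la,a})$: an element of that space is only a $\mu_{\la,a}$-equivalence class, applying the ambient semigroup requires choosing an extension to $W$, and the restriction of the mollified function to $S_a$ depends on that choice. Moreover $e^{tL_{\la,W}}$ does not commute with $\DS=P(w)D$ (the projection $P(w)$ depends on $w$ through $Y(\cdot,e,w)$), so ambient smoothing gives no control of $\DS g$ in $L^2(S_a,\mu_{\la,a})$, which is exactly what is needed to conclude $g\in\rD(\E_{\la,S_a})$. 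As written, the argument terminates at an acknowledged unresolved difficulty rather than a proof; the cited references \cite{a1993,a1995} carry out precisely this approximation analysis on the submanifold itself, and that work cannot be bypassed by mollification on $W$.
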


There exists a pathwise measure preserving 
isomorphism between $(\Peaomega,\nu_{\la,a})$ and 
$(S_a,\nu_{\la,a})$.
As we remarked, $DY(\cdot,e,w) : T_{w}S_a\to T_{\gamma}\Peaomega$
is isometry. 
Hence, we can expect that there exists some isomorphism between
Sobolev spaces on $S_a$ and $\Peaomega$.
Before showing our result, we introduce 
Dirichlet forms with the Dirichlet boundary
condition on domains in $S_a$ also.

\begin{dfi}
For $\phi\in \rD(\E_{\la,S_a})$, let
$\tilde{\D}=\{w\in S_a~|~\phi(w)<R\}$.
Suppose $\mu_{\la,a}(\tilde{\D})>0$.
Then, in the same way as in Definition~\ref{Dirichlet form with DBC},
we can define the Dirichlet form $\E_{\la,S_a,\tilde{\D}}$
on $\tilde{\D}$ with the Dirichlet
boundary condition and its nonnegative generator $-L_{\la,S_a,\tilde{\D}}$.
\end{dfi}

\begin{thm}\label{D and Y^-1(D)}
Let us consider the bijective measurable mappings
$Y : \Omega\to \Pe^\Omega$ and $Y : S_a\to \Pea^\Omega$
and their inverses $Y^{-1}$ which
are defined in Proposition~$\ref{isomorphism}$.
Let $Y^{\ast}$ and $(Y^{-1})^{\ast}$
denote the pullback for functions on them.
We have the following.
\begin{enumerate}
 \item[$(1)$] Let $f$ be a Borel measurable function on $\Pea$.
Then $f\in \rD(\E_{\la})$ and $f\in \rD(L_{\la,\Pea})$ are equivalent to
$Y^{\ast}f\in \rD(\E_{\la,S_a})$ and $Y^{\ast}f\in \rD(\LS)$ respectively
and it holds that
\begin{align*}
& D_{S_a}Y^{\ast}f=\int_0^{\cdot}Ad(Y(s,e,w)^{-1})
\overbrace{(\nabla f)(Y)_s}^{\cdot}ds, \,\,
\|D_{S_a}Y^{\ast}f\|_{T_wS_a}=\|(\nabla f)(Y(w))\|_{H_{0,0}}\,
\text{$\mu_{\la,a}$-a.s.$w$}\\
& \LS Y^{\ast}f=Y^{\ast}(L_{\la,\Pea}f)\quad \text{$\mu_{\la,a}$-a.s.$w$},
\quad \E_{\la,S_a}(Y^{\ast}f,Y^{\ast}f)=\E_{\la}(\nabla f,\nabla f).
\end{align*}
Conversely, similar relations between  measurable functions $g$ on $S_a$ and 
$(Y^{-1})^{\ast}g$ on $\Pea$ hold true
with respect to $\nu_{\la,a}$
\item[$(2)$] For $\phi\in \rD(\E_{\la,S_a})$, let
$\tilde{\D}=\{w\in S_a~|~\phi(w)<R\}$.
Suppose $\mu_{\la,a}(\tilde{\D})>0$ and
$\mu_{\la,S_a,\tilde{\D}}$ denotes the normalized probability measure 
of $\mu_{\la,a}|_{\tilde{\D}}$.
Then we can define the Dirichlet form $\E_{\la,S_a,\tilde{\D}}$ 
in $L^2(\tilde{\D},\mu_{\la,S_a,\tilde{\D}})$ with the Dicichlet boundary
condition similarly to $\E_{\la,\D}$ 
in Definition~$\ref{Dirichlet form with DBC}$.
We denote the nonnegative generator of $\E_{\la,S_a,\tilde{\D}}$ by
$-L_{\la,S_a, \tilde{\D}}$.
\item[$(3)$] Let $\varphi\in \rD(\E_{\la})$ and $R>0$.
Suppose $\nu_{\la,a}(\left\{\gamma~|~\varphi(\gamma)<R\right\})>0$.
Let $\D=\{\gamma\in \Pea~|~\varphi(\gamma)<R\}$
and set $\YD=\{w\in S_a~|~Y^{\ast}(\varphi)(w)<R\}$.
Let us consider the Dirichlet form $\E_{\la,S_a,\YD}$ 
with the Dirichlet boundary condition on $Y^{-1}(\D)$
which is well-defined because $Y^{\ast}\varphi\in \rD(\E_{\la,S_a})$.
Then $f\in \rD(\E_{\la,\D})$ and $f\in \rD(L_{\la,\D})$ are equivalent to
$Y^{\ast}f\in \rD(\E_{\la,S_a,Y^{-1}(\D)})$ 
and $Y^{\ast}f\in \rD(L_{\la,S_a,\YD})$ respectively.
Furthermore, $-L_{\la,\D}$ on $L^2(\D,\nu_{\la,\D})$ and
$-L_{\la,S_a,\YD}$ on $L^2(\YD,\mu_{\la,S_a,\YD})$ are unitarily equaivalent by
the mapping
$f(\in L^2(\D,\nu_{\la,\D}))\mapsto Y^{\ast}f\in L^2(\YD,\mu_{\la,S_a,\YD})$.
\end{enumerate}
\end{thm}

\begin{proof}

(1) follows from Lemma 3.3 and Proposition 3.6 in \cite{a1995}.
In \cite{a1995}, we consider the case where $a=e$ and $\la=1$ only.
However, the argument can be applied to the case of any $a$.
The proof of (2) is similar to the case of $\E_{\la,\D}$.
We can prove (3) by establishing similar relations to (1).
This is almost trivial and we omit the proof.
\end{proof}

\begin{rem}
 For notational simplicity, we denote 
$\mu_{\la,S_a,Y^{-1}(\D)}$, $\E_{\la,S_a,\YD}$ and $L_{\la,S_a,\YD}$ by
$\mu_{\la, Y^{-1}(\D)}$, $\E_{\la,\YD}$ and $L_{\la,\YD}$ respectively.
\end{rem}

\subsection{Certain Ornstein-Uhlenbeck type operators
on Wiener spaces}\label{cons in wiener space}

We now introduce the operator $-L_{\la,T_{\xi},W_0}$ 
which we mentioned in Section~\ref{statement of main theorem}.
Below, $H_n(x)$ denotes the Hermite polynomial, that is,
the eigenfunction of the 1-dimensional Ornstein-Uhlenbeck operator
$-L=-\frac{d^2}{dx^2}+x\nabla$ in 
$L^2(\RR,(2\pi)^{-1/2}e^{-\frac{x^2}{2}}dx)$
such that $-LH_n=n H_n$ $(n\ge 0)$.

\begin{thm}\label{cons}
Let $W_0$ be a finite codimensional subspace of $W$
and $H_0$ be the Cameron-Martin subspace.
$D_0$ denote the $H$-derivative in direction $H_0$ on $W_0$.
Let $\la>0$.
Let $\mu_{\la,W_0}$ be the induced Gaussian measure on $W_0$ from $\mu_{\la,W}$ on
$W$.
 Let $T$ be a self-adjoint Hilbert-Schmidt operator on $H_0$.
Let $N\ge 0$ and $\{e_i\}_{i=1}^{\infty}$ be the 
complete orthonormal system which are eigenvectors of $T$ such that
$Te_i=\zeta_i e_i$ $(i\ge 1)$.
Suppose $1+\zeta_i<0$ $(1\le i\le N)$ and
$1+\zeta_{i}>0$ $(i\ge N+1)$.
We define
\begin{align*}
 :\left(T\eta,\eta\right):_{\mu_{\la}}=\sum_{i=1}^{\infty}
\zeta_i\left\{(\eta,e_i)^2-\frac{1}{\la}\right\},\qquad \eta\in W_0.
\end{align*}
$(\eta,e_i)$ denotes the Wiener integral and we write $\eta_i=(\eta,e_i)$.
Let $-L_{\la,T,W_0}$ be the nonnegative generator of the Dirichlet form
\begin{align*}
 \E_{\la,T,W_0}(f,f)=\int_{W_0}|D_0f(\eta)|^2_{H_0}d\mu_{\la,T,W_0}(\eta)
\end{align*}
in $L^2(W_0,d\mu_{\la,T,W_0})$.
Here $d\mu_{\la,T,W_0}(\eta)=
\exp\left(-\frac{\la}{2}:\left(T\eta,\eta\right):_{\mu_{\la,W_0}}\right)
d\mu_{\la,W_0}$
\begin{enumerate}
\item[$(1)$] $-L_{\la,T,W_0}$ has eigenvectors which constitute 
a complete orthonormal system
of $L^2(W,d\mu_{\la,T,W_0})$ such that
\begin{align*}
\mathbf{e}_{\mathbf{n},T,\la}
&=
\left|\det{}_2(I_{H_0}+(|I_{H_0}+T|-I_{H_0}))\right|^{1/4}
\prod_{j=N+1}^{\infty}H_{n_j}(\sqrt{\la(1+\zeta_j)}\eta_j)\nonumber\\
&\qquad\qquad\times \prod_{i=1}^NH_{n_i}(\sqrt{\la|1+\zeta_i|}\eta_i)
\exp\left(-\frac{\la}{2}|1+\zeta_i|(\eta_i^2-\frac{1}{\la})\right),\\
&\qquad
 \mathbf{n}:=(n_1,n_2,\ldots,)\qquad (n_i\ge 0, \sum_{i=1}^{\infty}n_i<\infty)
\end{align*}
and the eigenvalue of $\mathbf{e}_{\mathbf{n},T,\la}$ is
$
\la
\left(\sum_{i=1}^N|1+\zeta_i|+\sum_{i=1}^{\infty}n_i|1+\zeta_i|\right).
$
Note that $\mathbf{e}_{\mathbf{n},T,\la}$ depends on the order of
the pair of eigenvalue and eigenvector $\{(\zeta_i,e_i)\}_{i=1}^{\infty}$.
\item[$(2)$] For any $\mathbf{n}, \mathbf{m}$ and $\la>0$, the following identity of 
signed measures
hold.
\begin{align}
 \mathbf{e}_{\mathbf{n},T,\la}(\eta)
\mathbf{e}_{\mathbf{m},T,\la}(\eta)
d\mu_{\la,T,W_0}(\eta)
=\mathbf{e}_{\mathbf{n},|I_{H_0}+T|-I_{H_0},\la}(\eta)
\mathbf{e}_{\mathbf{m},|I_{H_0}+T|-I_{H_0},\la}(\eta)
d\mu_{\la,|I_{H_0}+T|-I_{H_0},W_0}(\eta),\label{identity of measures}
\end{align}
where the order of the pair of eigenvalue and eigenvector of $|I_{H_0}+T|-I_{H_0}$
are given by
$\{(|1+\zeta_i|-1,e_i)\}_{i=1}^{\infty}$.
In particular, the measure 
$\mathbf{e}_{\mathbf{n},T,\la}(\eta)^2d\mu_{\la,T,W_0}(\eta)$
is the Gaussian measure whose covariance operator is
$\left(\la |I_{H_0}+T|\right)^{-1}$ on $H_0$.
\item[$(3)$] Let $k$ be a nonnegative integer and $p\ge 2$.
Then we have
\begin{align*}
\|\mathbf{e}_{\mathbf{n},T,\la}\|_{L^p(\mu_{\la,T,W_0})}\le C_{\mathbf{n},T,p},
\quad
\||D_0^k\mathbf{e}_{\mathbf{n},T,\la}|_{H.S.}\|
_{L^p(\mu_{\la,T,W_0})}\le \la^{k/2}C_{\mathbf{n},T,p},
\end{align*}
where $|\cdot|_{H.S.}$ denotes the Hilbert-Schmidt norm.
\end{enumerate}
\end{thm}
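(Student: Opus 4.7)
The plan is to diagonalize $T$ and reduce the theorem to an infinite tensor product of one-dimensional problems. Using the orthonormal eigenbasis $\{e_i\}$ of $T$ with coordinates $\eta_i=(\eta,e_i)$, the Wick-renormalized quadratic form factorizes as $:(T\eta,\eta):_{\mu_{\la,W_0}}=\sum_i\zeta_i(\eta_i^2-1/\la)$, the derivative $D_0$ decomposes as a sum of $\partial_{\eta_i}$, and the candidate eigenfunction $\mathbf{e}_{\mathbf{n},T,\la}$ factorizes over the $e_i$-directions. Consequently the whole analysis reduces to direction-by-direction one-dimensional computations, and $L^2$-completeness of the $\mathbf{e}_{\mathbf{n},T,\la}$ will follow from classical completeness of Hermite polynomials under Gaussian measures combined with this tensor product structure.

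In each one-dimensional direction the analysis splits according to the sign of $1+\zeta_i$. For $i\ge N+1$ the formal density $\exp(-\tfrac{\la}{2}(1+\zeta_i)\eta_i^2)\,d\eta_i$ is a genuine Gaussian with variance $(\la(1+\zeta_i))^{-1}$, and classical one-dimensional Ornstein--Uhlenbeck theory yields eigenvalues $n_i\la(1+\zeta_i)$ with eigenfunctions $H_{n_i}(\sqrt{\la(1+\zeta_i)}\eta_i)$. For $i\le N$ the formal density has the wrong sign and is not integrable; the extra factor $\exp(-\tfrac{\la}{2}|1+\zeta_i|(\eta_i^2-1/\la))$ appearing in $\mathbf{e}_{\mathbf{n},T,\la}$ plays the role of a ground state. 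Substituting $f_i=u_i\exp(-\tfrac{\la|1+\zeta_i|}{2}(\eta_i^2-1/\la))$ and computing with integration by parts, the one-dimensional Dirichlet form is unitarily equivalent to the genuine Ornstein--Uhlenbeck Dirichlet form on $L^2(\exp(-\tfrac{\la}{2}|1+\zeta_i|\eta_i^2)d\eta_i)$ shifted by the additive constant $\la|1+\zeta_i|$. This identifies the one-dimensional eigenvalues as $(1+n_i)\la|1+\zeta_i|$ with the eigenfunctions asserted in the statement. Summing over $i$ yields the total eigenvalue formula $\la\bigl(\sum_{i=1}^N|1+\zeta_i|+\sum_i n_i|1+\zeta_i|\bigr)$.

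The identity of signed measures in (2) is then verified coordinate by coordinate by the same algebra: in each direction with $i\le N$, the squared one-dimensional factor of $\mathbf{e}_{\mathbf{n},T,\la}$ multiplied with the formal $i$-th factor of $d\mu_{\la,T,W_0}$ produces exactly the analogous quantity for the operator with eigenvalue $|1+\zeta_i|-1$ in place of $\zeta_i$, and for $i\ge N+1$ both sides agree trivially since $|1+\zeta_i|-1=\zeta_i$. The prefactor $\bigl|\dettwo(I_{H_0}+(|I_{H_0}+T|-I_{H_0}))\bigr|^{1/4}$ absorbs the infinite product of Gaussian normalizing constants; its convergence is ensured by the Hilbert--Schmidt hypothesis on $T$, which gives $\sum_i(|1+\zeta_i|-1)^2<\infty$. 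Setting $\mathbf{n}=\mathbf{m}$ in (2) identifies $\mathbf{e}_{\mathbf{n},T,\la}^2\,d\mu_{\la,T,W_0}$ as the Gaussian with covariance $(\la|I_{H_0}+T|)^{-1}$. Part (3) then follows: $\|\mathbf{e}_{\mathbf{n},T,\la}\|_{L^p}^p$ reduces via (2) to an integral of products of rescaled Hermite polynomials against a genuine Gaussian, bounded by classical Hermite moment estimates, and each application of $D_0$ contributes a factor $\sqrt{\la|1+\zeta_i|}$ through $D_0H_{n_i}(\sqrt{\la|1+\zeta_i|}\eta_i)=\sqrt{\la|1+\zeta_i|}\,H_{n_i-1}(\cdot)\,e_i$, giving the $\la^{k/2}$ scaling after $k$ derivatives.

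The main technical obstacle is making rigorous sense of the whole construction when $N\ge 1$, because $\mu_{\la,T,W_0}$ is then not a finite positive measure and the Dirichlet form $\E_{\la,T,W_0}$ must be interpreted through the ground-state transformation rather than as a standard quadratic form. The identity in (2) is exactly what legitimizes the construction: it shows that the formal bilinear form $\int fg\,d\mu_{\la,T,W_0}$ restricted to the span of the eigenfunctions coincides with a genuine Gaussian integral, and the $\dettwo$ normalization guarantees this passes to the infinite-dimensional limit under the Hilbert--Schmidt hypothesis on $T$.
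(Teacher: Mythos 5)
Your proposal is correct and follows essentially the same route as the paper: diagonalize $T$, exploit the product structure over the eigendirections, treat the finitely many directions with $1+\zeta_i<0$ by a ground-state transformation that turns the (infinite-measure) Dirichlet form into a genuine one-dimensional harmonic-oscillator/Ornstein--Uhlenbeck problem shifted by $\la|1+\zeta_i|$, and then read off (2) by direct computation and (3) from the Gaussianity of $\mathbf{e}_{\mathbf{n},T,\la}^2\,d\mu_{\la,T,W_0}$. The only cosmetic difference is that the paper conjugates to $L^2(\RR,dx)$ and a quadratic-potential Schr\"odinger operator, whereas you conjugate to the Ornstein--Uhlenbeck form on a Gaussian $L^2$ space; these are equivalent and yield the same eigenvalues $(n+1)\la|1+\zeta_i|$ in the negative directions.
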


\begin{proof}
 (1) Let $K>0$ and consider a Dirichlet form
$\E(f,f)=\int_{\RR}|f'(x)|^2d\mu_{\la,K}(x)$ on
$L^2(\RR,\mu_{\la,K})$, where
$\mu_{\la,K}(dx)=\rho_K(x)dx$ 
$(\rho_K(x)=\left(\frac{\la}{2\pi}\right)^{1/2}e^{\frac{\la}{2}Kx^2})$ and
we denote the nonnegative generator of this Dirichlet form by
$-L_{\la,K}$.
Then, using the unitary transformation $\Psi_K : L^2(\RR,\mu_{\la,K})\to L^2(\RR,dx)$
defined by $\Psi_Kf=\sqrt{\rho_K}f$, we see that
$\Psi_K(-L_{\la,K})\Psi_K^{-1}f=-\Delta f+(\frac{K^2}{4}x^2+\frac{K}{2})f$.
By this, we see that 
$
 -L_{\la,K}h_n=\la K(n+1)h_n,
$
where $h_n(x)=H_n(\sqrt{\la Kx})K^{1/4}
e^{-\frac{\la K}{2}x^2}$
and eigenvectors $\{h_n\}$ constitutes the complete orthonormal system
in $L^2(\RR,\mu_{\la,K})$.
The measure $\mu_{\la,T,W_0}$ is a product measure of
the measure $\mu_{\la,K_i}$ $(1\le i\le N)$ with $K_i=|1+\zeta_i|$ 
and the remaining Gaussian measure
$\mu_{\la,T|_{{\{e_i\}_{i=1}^N}^{\perp}}, 
\overline{{\{e_i\}_{i=1}^N}^{\perp}}^{W_0}}.$
Using this, we obtain the result (1).

\noindent
(2) This can be checked by a simple calculation.

\noindent
(3) This follows from the fact that 
the measure $\mathbf{e}_{\mathbf{n},T,\la}(\eta)^2d\mu_{\la,T,W_0}(\eta)$
is the Gaussian measure.
\end{proof}

\begin{rem}
(1) In this paper, we call these kinds of $L_{\la,T,W_0}$ 
Ornstein-Uhlenbeck type operators.
Note that this is not a standard usage.

\noindent
(2)
In the above theorem, $I_{H_0}+T$ corresponds to the Hessian of 
the energy function $E$ at critical points. 
The case $N=0$ corresponds to the local minimum critical points.
The case $N\ge 1$ corresponds to the negative index case
and the measure $\mu_{\la,T,W_0}$ is an
infinite measure and the eigenfunctions
\[
 \mathbf{e}_{(n_1,\ldots,n_N,0,0,\ldots),T,\la},\qquad (n_1,\ldots,n_N)\ne 0
\]
do not belong to $L^1$ space but belong to $L^2$ space
with respect to the infinite measure.
Also, if there exists $\zeta_i$ satisfying $1+\zeta_i=0$, that is,
 the degenerate case,
the spectrum of $-L_{\la,T,W_0}$ is identical to $[0,\infty)$.
We do not consider such a situation in this paper.
\end{rem}

\subsection{Log-Sobolev inequality with a potential function}
\label{log-Sobolev with potential function}

First, we define entropy functional on a probability space
$(Y,\mathcal{F},m)$.
Let $g$ be a nonnegative integrable function on $Y$.
We define
\begin{align*}
 \Ent_m(g)=\int_Yg \log g\, dm-\|g\|_{L^1(m)}\log\|g\|_{L^1(m)}.
\end{align*}

The following lower boundedness theorem has been applied to 
generators of hyperbounded semigroups, {\it e.g.} in the 
Euclidean quantum field theory, and called 
a NGS bound (\cite{simon}) or the Federbush semi-boundedness theorem
(\cite{federbush}, \cite{g1}, \cite{g3}).

\begin{lem}\label{NGS bound}
 Let $S\subset L^2(Y,m)$ be a subset
satisfying that $S$ is invariant by the multiplication of
a real number, that is, $tf\in S$ if $f\in S$ and $t\in \RR$ hold.
Let $Q : S\to \RR$ be a measurable map satisfying
$Q(tf)=t^2Q(f)$ for all $f\in S$ and $t\in \RR$.
Let $C>0$.
Then the following $(1)$ and $(2)$ is equivalent.
\begin{enumerate}
 \item[$(1)$] 
$
\displaystyle
{\Ent_m(f^2)\le C Q(f)}
$ 
holds
for all $f\in S$.
\item[$(2)$] For all bounded measurable function $\rho$ on $Y$ and $f\in S$, 
it holds that
\[
 Q(f)+\int_Y\rho f^2dm\ge 
-\log\left(\|e^{-\rho}\|_{L^{C}(Y,m)}\right)\|f\|_{L^2(m)}^2.
\]
\end{enumerate}
\end{lem}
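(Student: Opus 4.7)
The plan is to read the equivalence as Legendre/convex duality between entropy and the log-moment generating function. The one analytic input I will invoke is the variational inequality
\begin{equation*}
\int_Y \Phi g \, dm \;\le\; \Ent_m(g) + \|g\|_{L^1(m)}\log\int_Y \re^{\Phi}\, dm,
\end{equation*}
valid for every nonnegative $g\in L^1(m)$ and every bounded measurable $\Phi:Y\to\RR$. This reduces to the probability-density case via the identity $\|g\|_{L^1}\Ent_m(g/\|g\|_{L^1})=\Ent_m(g)$, and then to the pointwise Young inequality $uv\le u\log u - u + \re^v$ applied with $u=g/\|g\|_{L^1}$ and $v=\Phi-\log\int \re^{\Phi}dm$.

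For the direction $(1)\Rightarrow(2)$ I will apply this duality to $g=f^2$ and $\Phi=-CV$, so that $\int_Y \re^{-CV}dm = \|\re^{-V}\|_{L^C(m)}^C$. Using (1) to bound $\Ent_m(f^2)\le CQ(f)$ then gives
\begin{equation*}
-C\!\int_Y V f^2\, dm \;\le\; CQ(f) + C\|f\|_{L^2(m)}^2\log\|\re^{-V}\|_{L^C(m)},
\end{equation*}
and dividing by $C$ and rearranging produces the inequality in (2).

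For $(2)\Rightarrow(1)$ I fix $f\in S$ (the case $f=0$ being trivial by the $2$-homogeneity of both sides) and try to saturate (2) by choosing $V$ essentially equal to $-\tfrac{1}{C}\log f^2$. Since (2) only provides the bound for bounded $V$, I will use the truncation
\begin{equation*}
V_{\ep,M}:=-\tfrac{1}{C}\log\bigl((f^2\vee\ep)\wedge M\bigr),\qquad 0<\ep<1<M<\infty,
\end{equation*}
for which $\|\re^{-V_{\ep,M}}\|_{L^C(m)}^{C}=\int_Y ((f^2\vee\ep)\wedge M)\,dm$. Inserting into (2) yields
\begin{equation*}
CQ(f)\;\ge\; \int_Y f^2\log\!\bigl((f^2\vee\ep)\wedge M\bigr)\,dm \;-\; \|f\|_{L^2(m)}^2\log\!\int_Y ((f^2\vee\ep)\wedge M)\,dm,
\end{equation*}
and I will send $\ep\downarrow 0$, $M\uparrow\infty$ to recover $\Ent_m(f^2)$ on the right-hand side.

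The main obstacle is of almost no substance; the argument is purely algebraic once the dual formula is granted. The only real care needed is in the limit passage of the converse direction, handled by splitting on $\{f^2\ge 1\}$ and $\{f^2<1\}$: on the former, monotone convergence gives $f^2\log((f^2\vee\ep)\wedge M)\uparrow f^2\log f^2\ge 0$; on the latter, the integrand is bounded below uniformly by $-1/\re$ (since $x\log x\ge -1/\re$) and converges pointwise to $f^2\log f^2$, so dominated convergence applies. The integral inside the outer logarithm converges to $\|f\|_{L^2(m)}^2$ by dominated convergence using the bound $(f^2\vee\ep)\wedge M\le f^2+1$ for $\ep<1$.
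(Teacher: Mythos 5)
Your proof is correct and follows essentially the same route as the paper, which simply invokes the Donsker--Varadhan variational formula for entropy; your duality inequality is exactly that formula, and your truncation $V_{\ep,M}$ is the standard way to realize the supremum for the converse direction. The limit passage via the split on $\{f^2\ge 1\}$ and $\{f^2<1\}$ is handled correctly.
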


The above estimate can be proved by applying the Donsker-Varadhan variational formula:
For a probability measure $m$ on $Y$ and a function $f$ satisfying $\int_Yf^2dm=1$,
it holds that
\begin{align*}
&\int_{Y}f^2\log f^2 dm\\
&\qquad 
=\sup\Biggl\{
\int_Y\phi\, f^2dm
-\log\left(\int_Ye^{\phi}dm\right)
~\Bigg |~
\text{
$\phi$ is a bounded measurable function on $Y$.}
\Biggr\}
 \end{align*}

For $\E_{\la}$ on $L^2(\Pea,\nu_{\la,a})$,
we have the following inequality
which is a refined version of Gross's log-Sobolev inequality with
a potential function on $\Pea$ (\cite{a2008},\cite{g2},\cite{getzler}).

\begin{thm}\label{refined version of gross lsi}
Let $a\in G$.
Then there exist constants $C_1,C_2>0$ such that
for any sufficiently large $\la>0$ and $f\in \FC(\Pea)$,
it holds that
\begin{align}
\Ent_{\nu_{\la,a}}(f^2)
&\le
C_{\la}
\E_{\la,V_{\la,a}}(f,f), \label{grosslsi1}
\end{align}
where $C_{\la}=\frac{2}{\la}\left(1+\frac{C_1}{\la}\right)$ and
\begin{align*}
\E_{\la,V_{\la,a}}(f,f)
&=
\int_{\Pea}|(\nabla f)(\gamma)|_H^2d\nu_{\la,a}
+\int_{\Pea}
\la^2V_{\la,a}(\gamma)
f(\gamma)^2d\nu_{\la,a},\\
V_{\la,a}(\gamma)&=
\frac{1}{4}
\left\{|b(1)|^2+\frac{2}{\la}\log\left(\la^{-d/2}p(\la^{-1},e,a)\right)
\right\}+\frac{C_2}{\la}
\left\{1+|b(1)|^2+\left(\int_0^1|b(s)|ds\right)^2\right\},
\end{align*}
where $p(t,x,y)=e^{\frac{t}{2}\Delta}(x,y)$ is the heat kernel on
$G$.
\end{thm}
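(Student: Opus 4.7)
The plan is to derive (\ref{grosslsi1}) from the standard Gross logarithmic Sobolev inequality for the unpinned Brownian motion measure $\nu_\la$ on $P_e(G)$ combined with a disintegration over the endpoint $\gamma(1)$ and short-time estimates for the heat kernel $p(\la^{-1}, e, \cdot)$ on $G$. First I would invoke the inequality
\begin{align*}
\Ent_{\nu_\la}(F^2) \le \frac{2}{\la} \int_{\Pe} |\nabla F|_H^2 \, d\nu_\la, \qquad F \in \FC(\Pe),
\end{align*}
with $\nabla$ valued in the full Cameron--Martin space $H$ (this follows, via the Ito map, from Gross's inequality on Wiener space). Decompose $H = H_{0,0} \oplus H_1$, where $H_1 = \{h(t) = t\xi : \xi \in \g\}$ is the endpoint-variation complement; the $H_{0,0}$-component of $\nabla F$ coincides with the pinned derivative, while $\nabla_{H_1} F$ corresponds exactly to differentiation in $\gamma(1) \in G$.

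Next, applying $\nu_\la(d\gamma) = \int_G \nu_{\la,b}(d\gamma) \, p(\la^{-1}, e, b) \, db$ and the tower identity
\begin{align*}
\Ent_{\nu_\la}(F^2) = \int_G \Ent_{\nu_{\la,b}}(F^2) \, p(\la^{-1},e,b) \, db + \Ent_{q_\la}\!\left(b \mapsto \int F^2 d\nu_{\la,b}\right),
\end{align*}
with $q_\la$ the endpoint distribution on $G$, I would dominate the boundary entropy using the logarithmic Sobolev inequality for $q_\la$ on $G$ itself. Since $G$ has bi-invariant metric, hence nonnegative Ricci curvature, Bakry--Emery together with short-time heat-kernel estimates gives a log-Sobolev constant of the form $(2/\la)(1 + O(1/\la))$ for $q_\la$; this absorbs the $\nabla_{H_1} F$ contribution into the boundary entropy and produces the multiplicative factor $C_\la = (2/\la)(1 + C_1/\la)$ in front of $\Ent_{\nu_{\la,a}}(f^2)$. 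A localization step, taking test functions of the form $F(\gamma) = f(\gamma)\chi_\epsilon(\gamma(1))$ with $\chi_\epsilon$ approximating the Dirac mass at $a$, isolates the pointwise inequality at the fixed endpoint.

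The potential $V_{\la,a}$ then emerges by comparing, via the NGS/Federbush semi-boundedness Lemma~\ref{NGS bound}, the $\nu_\la$-distribution of the right-anti-development $b(1) = \int_0^1 \circ \, d\gamma_s \, \gamma_s^{-1}$ (centered Gaussian on $\g$ with covariance $\la^{-1}I$) to the true endpoint distribution $p(\la^{-1}, e, a) \, da$. The leading piece $\frac{1}{4}\{|b(1)|^2 + \frac{2}{\la}\log(\la^{-d/2}p(\la^{-1},e,a))\}$ is exactly the $\la$-rescaled relative log-density between these two measures, while the remainder $\frac{C_2}{\la}\{1 + |b(1)|^2 + (\int_0^1 |b(s)| \, ds)^2\}$ absorbs the curvature and commutator errors from the right-invariant development map and from the Clark--Ocone martingale representation of $\log p(\la^{-1}, \gamma(s), a)$ used to relate $\nu_\la$ to $\nu_{\la,a}$ by Girsanov. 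The main obstacle is producing the precise constant $(2/\la)(1 + C_1/\la)$ rather than a qualitative $O(1/\la)$ bound, and matching the sub-leading error terms to the specific form of $V_{\la,a}$; this demands uniform-in-$\la$ $C^1$-estimates on $\nabla_a \log p(\la^{-1}, e, a)$ together with careful control of the Bismut parallel transport appearing in the Clark--Ocone formula, essentially as carried out in \cite{a2008, g2, getzler}.
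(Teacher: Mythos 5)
The paper does not actually prove Theorem~\ref{refined version of gross lsi}; it quotes it from \cite{a2008} and only remarks that it follows from ``the inheritance argument due to Gross'' applied to the log-Sobolev inequality for the unpinned measure $\nu_\la$ on $\Pe$. Your overall strategy --- start from the free LSI with constant $2/\la$, condition on the endpoint, and let the endpoint-direction derivative generate the potential --- is therefore the right one in spirit. But as written the proposal has a gap precisely at the step where the potential is supposed to appear. If you take $F(\gamma)=f(\gamma)\chi_\ep(\gamma(1))$ with $\chi_\ep$ tending to (a square root of) the Dirac mass at $a$, then $\nabla_hF$ for $h(t)=t\xi$ contains $(D\chi_\ep)(\gamma(1))[\xi\gamma(1)]f(\gamma)$, and $\int_{\Pe}|\nabla\chi_\ep(\gamma(1))|^2f^2\,d\nu_\la$ blows up like $\ep^{-2}$ relative to the normalization $\nu_\la(\gamma(1)\in B_\ep(a))\sim\ep^d$. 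Nothing in your argument converts this divergence into the finite quantity $\frac{\la^2}{4}|b(1)|^2+\frac{\la}{2}\log(\la^{-d/2}p(\la^{-1},e,a))$. The mechanism that does this is the ground-state--transform substitution sketched in the paper's own finite-dimensional Remark~2.41: one must take $F=f\,\rho_\ep^{1/2}$ with $\rho_\ep$ an approximation of the conditional density of $\gamma(1)$ (equivalently of $b(1)$), expand $|\nabla F|^2=\rho_\ep|\nabla f|^2+\tfrac14|\nabla\log\rho_\ep|^2\rho_\ep f^2+\tfrac12(\nabla f^2,\nabla\rho_\ep)$, integrate the cross term by parts using the formula $\int\nabla_hF\,G\,d\nu_{\la,a}=\int F(-\nabla_hG+\la(b,h)G)\,d\nu_{\la,a}$, and identify $\tfrac14|\nabla\log(\text{density})|^2$ in the directions $h=t\xi$ with $\tfrac{\la^2}{4}|b(1)|^2$ (the score $\la(b,h)=\la(\xi,b(1))$); the $\log p$ term is the normalization of the conditional density and the $C_2\la^{-1}\{1+|b(1)|^2+(\int_0^1|b(s)|ds)^2\}$ remainder collects the non-abelian correction terms. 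You gesture at this at the very end (``relative log-density'', Clark--Ocone, Girsanov), but it is not integrated into the argument, and the hard analytic content --- passing to the limit $\ep\to0$ in all three terms simultaneously, which is nontrivial because $\nu_{\la,a}$ is singular with respect to $\nu_\la$ --- is exactly what is missing.

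Two further points. First, the tower-identity/Bakry--\'Emery step does not do what you claim: to pass from $\Ent_{\nu_\la}(F^2)$ to $\int_G\Ent_{\nu_{\la,b}}(F^2)\,q_\la(db)$ one simply discards the nonnegative boundary entropy, so no LSI on $G$ is needed there; conversely, the $\nabla_{H_1}F$ contribution cannot be ``absorbed into the boundary entropy'' --- it is precisely the term that must survive and become $\la^2V_{\la,a}$. The claim that Bakry--\'Emery on $G$ produces the factor $C_\la=\frac{2}{\la}(1+\frac{C_1}{\la})$ is unsubstantiated; that correction comes from the commutator/curvature error terms in the Clark--Ocone or inheritance computation, not from a log-Sobolev constant for the endpoint law. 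Second, Lemma~\ref{NGS bound} is an equivalence between an entropy bound and semiboundedness of Schr\"odinger-type forms; in this paper it is used \emph{after} Theorem~\ref{refined version of gross lsi} (in Corollary~\ref{lower bound of an energy of f}) to exploit the inequality, and invoking it to ``compare the distribution of $b(1)$ to the endpoint distribution'' is not a valid mechanism for producing the potential.
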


Note that $V_{\la,a}\in L^p(\Pea,\nu_{\la,a})$ for all $p>1$ and hence
$f^2V_{\la,a}\in L^1(\Pea,\nu_{\la,a})$ for all $f\in \FC(\Pea)$.

We derive a log-Sobolev inequality on $\D$ from the above result.

\begin{lem}\label{LSI on D}
 Let $\D$ be a domain considered in Theorem~$\ref{main theorem}$
and consider the Dirichlet form $\E_{\la,\D}$
with the Dirichlet boundary condition in
Definition~$\ref{Dirichlet form with DBC}$.
Then, there exists $C'>0$ such that for all 
$f\in \rD(\E_{\la,\D})\cap L^{\infty}(\D)$,
it holds that
\begin{align*}
 \Ent_{\nu_{\la,D}}(f^2)\le
C_{\la}\left(\E_{\la,V_{\la,a},\D}(f,f)
+e^{-\la C'}\|f\|_{L^2(\nu_{\la,\D})}^2\right),
\end{align*}
where $C_{\la}$ is the constant defined in 
Theorem~$\ref{refined version of gross lsi}$
and
\begin{align*}
 \E_{\la,V_{\la,a},\D}(f,f)&=
\int_{\D}|(\nabla f)(\gamma)|_H^2d\nu_{\la,\D}
+\int_{\D}
\la^2V_{\la,a}(\gamma)
f(\gamma)^2d\nu_{\la,\D}.
\end{align*}
\end{lem}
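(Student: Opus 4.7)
The plan is to apply Theorem~\ref{refined version of gross lsi} directly to $f$, viewed as a function on $\Pea$ by trivial extension, and then convert both sides of the resulting inequality into the corresponding quantities on $\D$ with respect to the normalized measure $\nu_{\la,\D}$. Set $Z := \nu_{\la,a}(\D)$. By the definition of $\rD(\E_{\la,\D})$ (Definition~\ref{Dirichlet form with DBC}), $f$ extended by zero lies in $\rD(\E_\la)\cap L^\infty(\Pea)$ with $\nabla f=0$ a.s.\ on $\D^\complement$.

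The first technical step is to upgrade Theorem~\ref{refined version of gross lsi} from $\FC(\Pea)$ to $\rD(\E_\la)\cap L^\infty(\Pea)$. I would choose a sequence $\{f_n\}\subset \FC(\Pea)$ converging to $f$ in $\rD(\E_\la)$, truncate by the bounded Lipschitz map $t\mapsto (-M)\vee (t\wedge M)$ with $M=\|f\|_\infty+1$ (which preserves membership in $\rD(\E_\la)$ and does not increase the Dirichlet energy), and pass to the limit. The gradient term converges by the definition of $\rD(\E_\la)$; the potential term $\int\la^2 V_{\la,a} f_n^2\,d\nu_{\la,a}$ converges by dominated convergence since $V_{\la,a}\in L^p(\nu_{\la,a})$ for all $p>1$ and $f_n$ is uniformly bounded; the entropy functional is lower semicontinuous. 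This yields $\Ent_{\nu_{\la,a}}(f^2)\le C_\la \E_{\la,V_{\la,a}}(f,f)$ for our $f$.

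Next, using $d\nu_{\la,a}\restriction_\D = Z\, d\nu_{\la,\D}$ and the vanishing of $f$ on $\D^\complement$, a direct computation from the definition of entropy gives
\[
Z\,\Ent_{\nu_{\la,\D}}(f^2) = \Ent_{\nu_{\la,a}}(f^2) + Z\,\|f\|_{L^2(\nu_{\la,\D})}^2\log Z,
\]
while for the right-hand side
\[
\E_{\la,V_{\la,a}}(f,f) = Z\,\E_{\la,V_{\la,a},\D}(f,f).
\]
Inserting the LSI yields
\[
\Ent_{\nu_{\la,\D}}(f^2) \le C_\la\,\E_{\la,V_{\la,a},\D}(f,f) + \bigl(-\log Z\bigr)\|f\|_{L^2(\nu_{\la,\D})}^2.
\]

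It remains to absorb the $-\log Z$ term. Since $\D$ contains the minimal geodesic, the large deviation estimate for $\nu_{\la,a}$ referred to in Remark~(3) following Theorem~\ref{main theorem} gives $1-Z=\nu_{\la,a}(\D^\complement)\le e^{-\la C}$ for some $C>0$ and all sufficiently large $\la$; hence $-\log Z\le 2(1-Z)\le 2e^{-\la C}$. Because $C_\la\ge 2/\la$ for large $\la$, for any $C'\in(0,C)$ we have $-\log Z\le C_\la e^{-\la C'}$ once $\la$ is large, which produces the claimed bound. The main technical obstacle is the density/approximation argument upgrading the Gross inequality to $\rD(\E_\la)\cap L^\infty$; the measure bookkeeping and the large deviation bound are routine given Theorem~\ref{refined version of gross lsi} and the cited remark.
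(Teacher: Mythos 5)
Your proposal is correct and follows essentially the same route as the paper: apply Theorem~\ref{refined version of gross lsi} to the zero-extension of $f$ (after the density upgrade from $\FC(\Pea)$ to $\rD(\E_\la)\cap L^\infty$), convert entropy and energy to the normalized measure $\nu_{\la,\D}$ using $\nabla f=0$ a.s.\ on $\D^\complement$, and absorb the $\log\nu_{\la,a}(\D)$ term via the large deviation bound $\nu_{\la,a}(\D^\complement)\le e^{-C\la}$. The only point treated more lightly than in the paper is the justification that $\nabla f=0$ a.s.\ on $\D^\complement$ (the paper derives this from the zero-boundary-measure assumption and an integration by parts/denseness argument), but this does not affect the validity of the argument.
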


\begin{proof}
First note that (\ref{grosslsi1}) holds for
any $f\in \rD(\E_{\la})\cap L^{\infty}(\Pea)$
because there exists $f_n\in \FC(\Pea)$ such that
$f_n\to f$ in $L^2(\Pea,\nu_{\la,a})$ and
$\lim_{n\to\infty}\E_{\la}(f_n,f_n)=\E_{\la}(f,f)$.
Let $f\in \rD(\E_{\la,\D})\cap L^{\infty}(\D)$.
Note that $f\in \rD(\E_{\la})\cap L^{\infty}(\Pea)$.
Hence (\ref{grosslsi1}) holds for $f$.
We have
\begin{align*}
&\Ent_{\nu_{\la,D}}(f^2)=\int_{\D}f^2\log f^2d\nu_{\la,\D}-
\|f\|^2_{L^2(\nu_{\la,\D})}\log \|f\|^2_{L^2(\nu_{\la,\D})}\\
&\,=\nu_{\la,a}(\D)^{-1}\left(
\int_{\Pea}f^2\log f^2d\nu_{\la,a}\right)\\
&\qquad -
\nu_{\la,a}(\D)^{-1}\|f\|_{L^2(\Pea,\nu_{\la,a})}^2
\log\left(\nu_{\la,a}(\D)^{-1}\|f\|^2_{L^2(\Pea,\nu_{\la,a})}\right)\\
&\,=\nu_{\la,a}(\D)^{-1}\Ent_{\nu_{\la,a}}(f^2)
+\nu_{\la,a}(\D)^{-1}\log \nu_{\la,a}(\D)\|f\|_{L^2(\nu_{\la,a})}^2\\
&\le \nu_{\la,a}(\D)^{-1}
C_{\la}\left(\int_{\Pea}|\nabla f|_H^2d\nu_{\la,a}
+\int_{\Pea}
\la^2V_{\la,a}
f^2d\nu_{\la,a}\right)\\
&\qquad +\nu_{\la,a}(\D)^{-1}\log \nu_{\la,a}(\D)\|f\|_{L^2(\nu_{\la,a})}^2\\
&=C_{\la}\left(\E_{\la,V_{\la,a},\D}(f,f)+C_{\la}^{-1}
\nu_{\la,a}(\D)^{-1}\log \nu_{\la,a}(\D)\|f\|_{L^2(\nu_{\la,a})}^2\right),
\end{align*}
where we have used that 
$(\nabla f)(\gamma)=0\,\,\nu_{\la,a}\text{-} a.s.$ on $\D^{\complement}$.
This follows from the following.
We may assume $f$ is a bounded function.
By the assumption on the boundary, it suffices to show
\[
 \int_{(\D\cup \partial\D)^{\complement}}
|\nabla f(\gamma)|_H^2d\nu_{\la,a}=0.
\]
Setting $\tilde{f}(w)=(Y^{\ast}f)(w)$ and using 
Theorem~\ref{D and Y^-1(D)}, we see that this is equivalent to
for any $r>r_0$
\begin{align}
 \int_{\{\tilde{\varphi}>r\}}
|D_{S_a} \tilde{f}(w)|_H^2d\mu_{\la,a}=0.\label{nablaf=0}
\end{align}
Here $\tilde{\varphi}(w)=\varphi(Y(w))$ and $\varphi$ is the defining function
of $\D$ and $r_0$ is $\delta$ if $\varphi=\varphi_{\infty,K}$ and
$M$ if $\varphi$ is given by other functions.
By proof by contradiction,
(\ref{nablaf=0}) follows from the integration by parts formula and
the denseness of $\FC(W)\otimes H$ in $L^2(S_a\to H,\mu_{\la,a})$.
Since $\D$ contains the minimal geodesic, we see that there exists
$C'>0$ such that $\nu_{\la,a}(\D^c)\le e^{-C'\la}$, which implies
$\log \nu_{\la,a}(\D)=O(e^{-C'\la})$.
This completes the proof.
\end{proof}

Combining Lemma~\ref{LSI on D} and Lemma~\ref{NGS bound}, we obtain the following
lower bound estimate which is important in the analysis of $-L_{\la,\D}$
outside neighborhood of the geodesics $\{l_{\xi_i}\}$ in $\D$.
To state the result, we need the following estimate of
the exponential function of
$V_{\la,a}$.

\begin{lem}\label{exponential estimate0}
Let $p>1$.
 Let $\D$ be one of the domains which are given in Theorem~$\ref{main theorem}$.
We assume $\delta$ is sufficiently small if $\D=D_{K,\delta}$
according to $p$.
Then it holds that there exists $\la_p, C_p, C_p'>0$ such that
for any $\la\ge \la_p$,
$\int_{\D}e^{2p\la V_{\la,a}}d\nu_{\la,\D}\le C_p' e^{C_p\la}$
holds.
\end{lem}

\begin{proof}
Note that 
$\sup_{\la>1}\la^{-1}\left|\log\left(\la^{-d/2}p(\la^{-1},e,a)\right)\right|<\infty$
by the heat kernel estimate.
Also by using the estimate (\ref{pgwf}), 
it is not difficult to show that 
for any $p>1$
there exists $\la_p>0$ such that
\begin{align*}
\sup_{\la\ge \la_p} \int_{\Pea}\exp\left\{p
\left(1+|b(1)|^2+\left(\int_0^1|b(s)|ds\right)^2\right)\right\}
d\nu_{\la,a}(\gamma)<\infty.
\end{align*}
For this, see the estimate (\ref{estimate of W}).
Therefore, it suffices to show that
there exists $C_p, C_p'>0$ such that
$\int_{\D}e^{p\la |b(1)|^2}d\nu_{\la,\D}\le C_p' e^{C_p\la}$.
This is proved in Lemma~4.2 in \cite{a2008}.
This completes the proof.
\end{proof}

\begin{cor}\label{lower bound of an energy of f}
 Let $\D$ be a domain considered in Theorem~$\ref{main theorem}$
and consider the Dirichlet form with the Dirichlet boundary condition in
Definition~$\ref{Dirichlet form with DBC}$.
When $\D=D_{K,\delta}$, we assume $\delta$ is sufficiently small.
Let $\rho$ be a bounded measurable function on $\D$.
Then for sufficiently large $\la$,  
$\int_{\D}\exp
\left(
C_{\la}\la^2
(V_{\la,a}+\rho)\right)d\nu_{\la,\D}<\infty$ and
the following estimate holds:
For any $f\in \rD(\E_{\la,\D})$, 
it holds that
\begin{align*}
&\int_{\D}|\nabla f|^2d\nu_{\la,\D}
\\
&\ge
-C_{\la}^{-1}
\log\left\{\int_{\D}\exp
\left(
C_{\la}\la^2
(V_{\la,a}+\rho)\right)d\nu_{\la,\D}
\right\}\|f\|^2_{L^2(\D,\nu_{\la,\D})}
+\la^2\int_{\D}(\rho-e^{-C'\la})f^2d\nu_{\la,\D},
\end{align*}
where $C_{\la}$ is the constant defined 
in Theorem~$\ref{refined version of gross lsi}$.
\end{cor}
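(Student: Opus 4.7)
The plan is to combine the log-Sobolev inequality of Lemma~\ref{LSI on D} with the NGS/Federbush semi-boundedness theorem (Lemma~\ref{NGS bound}) and then rearrange. First, I would set
$$
Q(f) := \E_{\la,V_{\la,a},\D}(f,f) + e^{-C'\la}\|f\|^2_{L^2(\nu_{\la,\D})}, \qquad C := C_\la,
$$
and take $S := \rD(\E_{\la,\D})\cap L^\infty(\D)$. The hypotheses of Lemma~\ref{NGS bound} are met: $S$ is closed under scalar multiplication, $Q(tf)=t^2 Q(f)$, and Lemma~\ref{LSI on D} gives $\Ent_{\nu_{\la,\D}}(f^2)\le C_\la Q(f)$ for every $f\in S$.

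Second, I would apply Lemma~\ref{NGS bound}(2) with the potential $V := -\la^2(V_{\la,a}+\rho)$. Since $\rho$ is bounded but $V_{\la,a}$ is only in $\cap_{p\ge 1}L^p(\nu_{\la,a})$, $V$ itself is not bounded, so I would first truncate by $V_n := \max(V, -n)$, apply the semi-boundedness theorem to obtain
$$
Q(f) + \int_\D V_n f^2\, d\nu_{\la,\D} \ge -\log \|e^{-V_n}\|_{L^{C_\la}(\nu_{\la,\D})}\cdot \|f\|^2_{L^2(\nu_{\la,\D})},
$$
and then pass $n\to\infty$ by monotone convergence on both sides. The limit is meaningful because Lemma~\ref{exponential estimate0}, together with the boundedness of $\rho$, ensures $\int_\D e^{C_\la\la^2(V_{\la,a}+\rho)}d\nu_{\la,\D} < \infty$ for every $\la$.

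Third, expanding $Q(f)$ and $V$ in the limiting inequality, the two occurrences of $\la^2\int_\D V_{\la,a}f^2\, d\nu_{\la,\D}$ cancel, leaving
$$
\int_\D |\nabla f|^2\, d\nu_{\la,\D} \ge -C_\la^{-1}\log\!\left(\int_\D e^{C_\la\la^2(V_{\la,a}+\rho)}d\nu_{\la,\D}\right)\|f\|^2_{L^2(\nu_{\la,\D})} + \la^2\int_\D \rho f^2\, d\nu_{\la,\D} - e^{-C'\la}\|f\|^2_{L^2(\nu_{\la,\D})}.
$$
For $\la$ large, $e^{-C'\la}\le \la^2 e^{-C'_-\la}$ for any $C'_-<C'$, so the last two terms merge into $\la^2\int_\D(\rho-e^{-C'_-\la})f^2\, d\nu_{\la,\D}$, giving the form stated in the Corollary (the exponent $e^{-C'\la}$ there is to be read with $C'_-$, consistent with the remark about $C'_-$ in the statement). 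Finally, I would extend from $f\in S$ to arbitrary $f\in \rD(\E_{\la,\D})$ by truncating $f$ at level $M$, applying the inequality, and letting $M\to\infty$ using Fatou and the Markov property of the Dirichlet form.

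The main obstacle is the unboundedness of $V_{\la,a}$, which prevents direct application of Lemma~\ref{NGS bound} with the choice $V=-\la^2(V_{\la,a}+\rho)$. The truncation-and-limit resolution is routine but hinges on the exponential integrability supplied by Lemma~\ref{exponential estimate0}; without it, the logarithmic term on the right-hand side would be $-\infty$ and the inequality vacuous.
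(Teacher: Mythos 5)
Your proposal is correct and follows essentially the same route as the paper: verify the entropy bound via Lemma~\ref{LSI on D}, apply Lemma~\ref{NGS bound} with the truncated potential $-\la^2(V_{\la,a}+\rho)$, pass to the limit using the exponential integrability from Lemma~\ref{exponential estimate0}, and extend to general $f\in\rD(\E_{\la,\D})$ by truncation. The only cosmetic difference is that the paper truncates $V_{\la,a}$ on both sides while you truncate only from below (which suffices, since $V_{\la,a}$ is bounded below), and your handling of the $C'$ versus $C'_-$ discrepancy is if anything more explicit than the paper's.
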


\begin{proof}
For $n\in \NN$, let $V_{\la,a,n}=(-n)\vee V_{\la,a}\wedge n$.
Let $f\in \rD(\E_{\la,\D})\cap L^{\infty}(\D)$.
Applying Lemma~\ref{NGS bound} to the case
$Q(f)=\E_{\la,V_{\la,a},\D}(f,f)+e^{-\la C'}\|f\|^2_{L^2(\nu_{\la,\D})}$, 
$S=\rD(\E_{\la.\D})\cap L^{\infty}(\D)$
and $C=C_{\la}$,
we have
\begin{align*}
& \E_{\la,V_{\la,a},\D}(f,f)+e^{-C'\la}\|f\|_{L^2(\nu_{\la,\D})}^2
-\int_{\D}\la^2(\rho+V_{\la,a,n})f^2d\nu_{\la,\D}\\
&\ge -C_{\la}^{-1}
\log\left\{
\int_{\D}\exp(C_{\la}\la^2(V_{\la,a,n}+\rho))d\nu_{\la,\D}
\right\}\|f\|_{L^2(\nu_{\la,\D})}^2.
\end{align*}
Letting $n\to\infty$ and using Lemma~\ref{exponential estimate0},
we see that the desired estimate holds for 
$f\in \rD(\E_{\la,\D})\cap L^{\infty}(\D)$.
By approximating $f\in \rD(\E_{\la,\D})$ by
$(-n)\vee f\wedge n$, we can complete the proof.
\end{proof}

\begin{rem}\label{remark on log-Sobolev with potential}
If $\exp(\frac{\la}{2}|b(1)|^2)\in L^p(\nu_{\la,a})$ for some $p>1$, then
there exists $C'_\la>0$ such that
$\Ent_{\nu_{\la,a}}(f^2)\le C_\la'\E_{\la}(f,f)$ $(f\in \rD(\E_{\la}))$.
That is, log-Sobolev inequality without potential function holds.
However, it is proved that $\int_{\Pea}e^{\frac{\la}{2}|b(1)|^2}d\nu_{\la,a}=\infty$
in the case where $G=SU(n)$ in \cite{a2008}.
At the moment, the validity of log-Sobolev inequality 
$\E_{\la}$ on $\Pea$ is still unknown.

In this remark, we explain log-Sobolev inequality with suitable potential functions
are still useful for some purpose.
 Let $F\in C^{2}_b(\RR^d)$.
Suppose that $\tilde{F}(x)=\frac{1}{2}|x|^2+F(x)$ is a Morse function on $\RR^d$,
that is, $\tilde{F}$ has finite nondegenerate critical points $c_1,\ldots,c_N$.
Let $\mu_{\la}(dx)=\frac{e^{-\frac{\la}{2}|x|^2}}{(2\pi)^{d/2}}$.
We have the following Gross's log-Sobolev inequality,
\begin{align}
 \Ent_{\mu_{\la}}(f^2)\le \frac{2}{\la}\E_{\la}(f,f),\label{gross lsi0}
\end{align}
where $\E_{\la}(f,f)=\int_{\RR^d}|\nabla f(x)|^2d\mu_{\la}(x)$
and $f\in C^1_b(\RR^d)$.
Consider a probability measure
$\mu_{\la,F}(dx)=\frac{e^{-\la F(x)}}{Z_{\la}}\mu_{\la}(dx)$ on $\RR^d$,
where $Z_{\la}$ is the normalized constant.
Let us consider a closable form given by 
$\E_{\la,F}(f,f)=\int_{\RR^d}|\nabla f(x)|^2d\mu_{\la,F}(x)$,
($f\in C^{\infty}_0(\RR^d)$).
We denote the Dirichlet form which is obtained by taking the closure
by the same notation $\E_{\la,F}$.
Then by the Holley-Stroock's perturbation result, 
we see that 
\begin{align*}
 \Ent_{\mu_{\la,F}}\le 2\la^{-1}e^{2\la\|F\|_{\infty}}\E_{\la,F}(f,f),
\quad f\in C^1_b(\RR^d),
\end{align*}
which implies the log-Sobolev inequality without potential function.
On the other hand,
we can obtain the following inequality by
substituting $f\cdot e^{-F/2}$ into
(\ref{gross lsi0}).
\begin{align}
 \Ent_{\mu_{\la,F}}(f,f)\le \frac{2}{\la}
\left(\E_{\la,F}(f,f)+\int_{\RR^d}V_{\la,F}f^2d\mu_{\la,F}\right),
\quad f\in C^1_b(\RR^d),
\label{log-sobolev with potential}
\end{align}
where
$
V_{\la,F}(x)=
\frac{\la^2}{4}|\nabla F(x)|^2-\frac{\la^2}{2}F(x)
+\frac{\la}{2}L_{\la}F(x)
$ and $L_{\la}F(x)=\Delta F(x)-\la (x,\nabla F(x))$.
By using Lemma~\ref{NGS bound}, for any bounded measurable function 
$\rho$ on $\RR^d$,
we have
\begin{align}
 \E_{\la,F}(f,f)&\ge
-\frac{\la}{2}\log
\left(
\int_{\RR^d}\exp\left\{-\frac{2}{\la}
\left(
\frac{\la^2}{4}|DF|^2-\frac{\la}{2}L_{\la}F-\la^2\rho-\frac{\la^2}{2}F
\right)d\mu_{\la,F}\right\}
\right)\nn\\
&\quad +\la^2\int_{\RR^d}\rho f^2d\mu_{\la,F}.\label{finite dimension gns}
\end{align}
The integral in (\ref{finite dimension gns}) reads
\begin{align*}
& \int_{\RR^d}\exp\left\{
-\frac{2}{\la}
\left(
\frac{\la^2}{4}|DF|^2-\frac{\la}{2}L_{\la}F-\la^2\rho-\frac{\la^2}{2}F\right)
\right\}d\mu_{\la,F}\nn\\
&=
\int_{\RR^d}\exp\left(
-\frac{\la}{2}\Bigl(
|D\tilde{F}(x)|^2-4\rho(x)\Bigr)+\Delta F(x)\right\}
\left(\frac{\la}{2\pi}\right)^{d/2}dx=:I(\la).
\end{align*}
Let $\kappa$ be a sufficiently small positive number and
set $\rho(x)=\kappa \min_{1\le i\le N}\{|x-c_i|^2,1\}$.
Then, we see that $\lim_{\la\to\infty}I(\la)$ converges by the Laplace method.
Hence, by the estimate (\ref{finite dimension gns}) and the IMS localization formula, 
we see that the 
lowlying spectrum of the generator of $\E_{\la,F}$ can be approximated by
the spectrum of the quantum harmonic oscillator which we explained in 
Section~\ref{statement of main theorem}.
Our proof of Theorem~\ref{main theorem} is an extension of this argument.
Using the inifinite dimension version of
(\ref{finite dimension gns}), I discussed the asymptotics of
the lowest eigenvalues of
Witten Laplacian acting on $p$-forms on Wiener space
(toy model of supersymmetric Hamiltonian because it does not contain renormalizations)
in the conference talk
in, ``Stochastic Analysis in Infinite Dimensional Spaces'',
RIMS November 8, 2002 and
``Geometry on Path Space (Satellite conference to the ICM 2002, Beijing)
(unpublished work).
Related arguments can be found in 
\cite{a2003-2}, \cite{a2007}, \cite{a2009}, \cite{a2012, a2015, a2016}.
See also \cite{b-digesu}.
However note that Theorem~\ref{refined version of gross lsi} can be proved by
the inheritance argument due to Gross (\cite{g2}) by using the log-Sobolev inequality
on $\Pe$ with respect to the measure $\nu_{\la}$ although $\nu_{\la,a}$ is singular
with respect to $\nu_{\la}$.
Several log-Sobolev inequalities (with potential functions) were proved on 
pinned path space on Riemannian manifolds 
(\cite{a1996}, \cite{a2000}, \cite{gong-ma}, \cite{driver3}, 
see also\cite{ae}, \cite{chl}, \cite{hsu1}).
However, it is not clear whether such inequalities can be applied to 
the present problem or not.
Finally, we note that 
log-Sobolev inequalities with potential functions on submanifolds in Euclidean spaces
have been derived from different view points 
(\cite{brendle}, \cite{ecker}).
\end{rem}

\section{A local coordinate system on submanifolds
and a change of variable formula}\label{local coordinate0}

Let $a\in G$ and
pick $k\in S_a^H$.
Recall that we denote $T_kS_a$ and $T_kS_a^{\perp}$ by $H_0$ and $H_0^{\perp}$
respectively as we explained in Definition~\ref{tangent space}.
Recall that $B_{k,\ep}^{\Omega_0}(0)
=\{\eta\in \Omega_0~|~\|\overline{\eta}\|_{\alpha}<\ep, \|C(k,\eta)\|_{2\alpha}<\ep,
\|C(\eta,k)\|_{2\alpha}<\ep\}$
 and
$U_{\ep}^{\Omega}(k)=k+B_{k,\ep}^{\Omega}(0)$
are defined in
Example~$\ref{examples of H open set}$.
Here, $\Omega_0=W_0\cap \Omega$ and $W_0=\overline{H_0}^{W}$.

We will construct a neighborhood $V_{\ep}(k)$ of $k$ in $\Omega$ and 
introduce a local coordinate on $V_{\ep}(k)\cap S_a$ of $S_a$.
The positive number $\ep$ depends on 
$\|\overline{k}\|_{\alpha}$ only.
Below, $B_{\ep}(a)$ denotes the metric open ball
centered at $a$ with the radius $\ep$ in $G$.
Also, recall that $i(G)$ denotes the injectivity radius of $G$.
See (\ref{injectivity radius}).

\begin{lem}\label{local chart}
Let $k\in S_a^H$. 
\begin{enumerate}
 \item[$(1)$]  
There exist positive two numbers $\ep<\ep_0(<i(G))$ 
such that for any 
$(x,\eta)\in B_{\ep}(a)\times B_{k,\ep}^{\Omega_0}(0)$,
there exists a
unique $\varphi_x(\eta)\in H_0^{\perp}$ with
$\|\varphi_x(\eta)\|_H\le \ep_0$ such that
$\varPhi_{k,x}(\eta):=k+\eta+\varphi_x(\eta)\in S_x$. 
Positive numbers $\ep$ and $\ep_0$ depend on 
$\|\overline{k}\|_{\alpha}$.
We may denote them by $\ep(k)$ and $\ep_0(k)$ respectively.
Define $\varPhi_{k} : B_{\ep}(a)\times B_{k,\ep}^{\Omega_0}(0)\to \Omega$
by $\varPhi_{k}(x,\eta)=\varPhi_{k,x}(\eta)$.
The mapping $\varPhi_k$
is one-to-one Borel measurable mapping.
Define $V_{\ep}(k)=\varPhi_k(B_{\ep}(a)\times B_{k,\ep}^{\Omega_0}(0))$
and $V_{\ep}^{S_x}(k)=V_{\ep}(k)\cap S_x$.
It holds that
$\varphi\in C^{1,n}_{b,H_0}
(B_{\ep}(a)\times B_{k,\ep}^{\Omega_0}(0), H_0^{\perp})$,
$(\varphi(x,\eta)=\varphi_x(\eta))$
for any $n\in \NN$ holds and
the image of the derivative $D_0\varPhi_k(x,\eta)$ is included in
$T_{\varPhi_k(x,\eta)}S_a$.
\item[$(2)$] For $\varphi_{x}(\eta)$ defined in $(1)$,
there exists a unique $v=v(x,\eta)\in \g$ such that 
$\varphi_x(\eta)=Q(k)v$ and $\|\varphi_x(\eta)\|_H=|v|_{\g}$.
\item[$(3)$] Let $\ep$ be the positive number specified in $(1)$.
For $(x,\eta)\in B_{\ep}(a)\times B_{k,\ep}^{\Omega_0}(0)$,
Let $w=\varPhi_{k,x}(\eta)$. Then
$(x,\eta)=(Y(1,e,w),P(k)(w-k))$.
\item[$(4)$] Define 
$\varPsi_k : \Omega\to G\times \Omega_0$ by
$\varPsi_k(w)=(Y(1,e,w),P(k)(w-k))$.
Then $\varPsi_k(V_{\ep}(k))=B_{\ep}(a)\times B_{k,\ep}^{\Omega_0}(0)$ 
holds and the mapping 
$\varPsi_k : V_{\ep}(k)\to B_{\ep}(a)\times B_{k,\ep}^{\Omega_0}(0)$ 
and $\varPhi_k : B_{\ep}(a)\times B_{k,\ep}^{\Omega_0}(0)\to V_{\ep}(k)$ are 
bijective Borel measurable mapping and they are inverse mapping to each other.
Also define $\varPsi_{k,x}=P(k)(w-k)|_{S_x} : S_x\to \Omega_0$.
Then,
$\varPsi_{k,x}(V_{\ep}^{S_x}(k))=B_{k,\ep}^{\Omega_0}(0)$ holds.
Also
$\varPsi_{k,x} : V_{\ep}(k)^{S_x}\to B_{k,\ep}^{\Omega_0}(0)$ 
and $\varPhi_{k,x} : B_{k,\ep}^{\Omega_0}(0)\to V_{\ep}^{S_x}(k)$ are 
bijective Borel measurable mapping and they are inverse mapping to each other.
\item[$(5)$] Let $\ep$ be the positive number defined in $(1)$.
There exists $\delta>0$ which depends on 
$\|\overline{k}\|_{\alpha}$
such that $U_{\delta}^{\Omega}(k)\cap S_a\subset V_{\ep}^{S_a}(k)$.
\item[$(6)$] 
For $v(a,\eta)$ $(\eta\in B_{k,\ep}^{\Omega_0}(0))$,
there exists $C>0$ such that
\begin{align}
|D_xv(a,\eta)(R_a)_{\ast}-I_{T_e(G)}|&\le C
\left(\|\bar{\eta}\|_{\alpha}+\|C(k,\eta)\|_{2\alpha}+\|C(\eta,k)\|_{2\alpha}
\right),\label{Dxv}\\
 |v(a,\eta)|&\le 
C\left(\|\bar{\eta}\|_{\alpha}+\|C(k,\eta)\|_{2\alpha}+\|C(\eta,k)\|_{2\alpha}
\right)^2,\label{v}\\
|D_{0}v(a,\eta)|_{H_0\otimes \g}&\le 
C\left(\|\bar{\eta}\|_{\alpha}+\|C(k,\eta)\|_{2\alpha}+\|C(\eta,k)\|_{2\alpha}
\right),\label{D_0v}
\end{align}
where $C$ is a positive constant which depends on $\|\overline{k}\|_{\alpha}$
and $\ep_0$ polynomially.
\end{enumerate}
\end{lem}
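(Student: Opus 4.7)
The plan is to apply the implicit function theorem to the $G$-valued equation
\begin{align*}
F(x, \eta, v) := Y\bigl(1, e, k + \eta + Q(k) v\bigr)\, x^{-1} = e,
\end{align*}
solved for $v \in \g$ as a function of $(x, \eta)$ near the base point $(a, 0, 0)$, where $F(a, 0, 0) = e$. By Lemma~\ref{properties of Y}~(3) together with the formula for $Q(k)$ in Lemma~\ref{tangent space lemma}~(1), the $v$-derivative at the base point satisfies
\begin{align*}
D_v F(a, 0, 0)[v] = DY(1, e, k)[Q(k)v] \cdot a^{-1} = \left(\int_0^1 Ad(Y(s,e,k)) Ad(Y(s,e,k)^{-1}) v \, ds\right) = v,
\end{align*}
so, after identification via $(R_a)_\ast$, it is the identity on $\g$ and in particular invertible. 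The implicit function theorem then produces a unique small solution $v = v(x, \eta)$, and we set $\varphi_x(\eta) := Q(k) v(x, \eta)$; since $Q(k) : \g \to H_0^{\perp}$ is an isometry by Remark~\ref{remark on tangent space}~(1), the identity $\|\varphi_x(\eta)\|_H = |v(x, \eta)|_\g$ holds, giving both assertions (1) and (2).

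To execute this rigorously in the rough-path framework, I would verify joint $C^{1,n}_{b, H_0}$ regularity of $F$ with estimates uniform on $B_\ep(a) \times B_{k,\ep}^{\Omega_0}(0) \times \{|v|_\g < \ep_0\}$. The key inputs are Proposition~\ref{Omega0} (so that $k + \eta + Q(k)v \in \Omega$ with a well-defined lift), the Cameron-Martin smoothness of the It\^o map from Lemma~\ref{properties of Y}~(3)--(4), and the controlled-path bounds (\ref{estimate of rough integral})--(\ref{estimate of iterated integral}) of Remark~\ref{remark on controlled path}~(2). The cross integrals $C(k, \eta)$, $C(\eta, k)$ are explicitly built into the definition of $B_{k,\ep}^{\Omega_0}(0)$ precisely so that the rough-path lift $\overline{k + \eta + Q(k) v}$ is controlled uniformly in $\eta$; this produces thresholds $\ep = \ep(k)$, $\ep_0 = \ep_0(k)$ depending only on $\|\bar k\|_\alpha$. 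The claim that $D_0 \varPhi_{k, x}$ takes values in $T_{\varPhi_{k, x}(\eta)} S_a$ then follows by differentiating the identity $Y(1, e, \varPhi_{k, x}(\eta)) \equiv x$ in the variable $\eta$.

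Assertions (3)--(5) are essentially structural. For (3), writing $w = k + \eta + \varphi_x(\eta)$, the relation $Y(1, e, w) = x$ holds by construction, while $\eta \in W_0$ and $\varphi_x(\eta) \in H_0^\perp$ give $P(k)(w - k) = \eta$; (4) is then the global packaging of (1) and (3). For (5), given $w \in U_\delta^{\Omega}(k) \cap S_a$, I would decompose $w - k = \eta + h$ with $\eta = P(k)(w - k) \in \Omega_0$ and $h = N(k)(w - k) \in H_0^\perp$; the explicit formula for $N(k)$ in Lemma~\ref{tangent space lemma}~(3) together with the smallness of $\|\overline{w - k}\|_\alpha$ and of the cross integrals $C(k, w - k)$, $C(w - k, k)$ forces $\|h\|_H$ to be small, and hence all rough-path quantities defining $B_{k, \ep}^{\Omega_0}(0)$ are small for $\eta = (w - k) - h$; uniqueness in (1) then identifies $w = \varPhi_{k, a}(\eta) \in V_\ep(k)$.

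Finally, for (6), I would differentiate the defining identity $Y(1, e, k + \eta + Q(k) v(x, \eta)) = x$. Differentiation in $x$ at $(a, 0)$ inverts the same isomorphism $v \mapsto v \cdot a$ used above and yields $D_x v(a, 0) (R_a)_\ast = I_{T_e G}$, so (\ref{Dxv}) follows from continuity of the derivative in $\eta$. The bounds (\ref{v}) and (\ref{D_0v}) rely on the observation that $W_0 = \overline{T_k S_a}^W$ is, by definition of $T_kS_a$, the closure of the kernel of the linear map $h \mapsto DY(1, e, k)[h]$, so the level-one term in the rough-path Taylor expansion of $Y(1, e, k + \eta) - a$ in $\eta$ vanishes; the leading contribution is then quadratic, coming from the second iterated integral $\overline\eta^2$ and second derivatives of $Y$, which gives the $\|\bar\eta\|_\alpha^2$ bound in (\ref{v}) and the corresponding linear bound in (\ref{D_0v}) after one further $H_0$-derivative. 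The main technical obstacle I expect is carrying out this second-order rough-path expansion uniformly in $\eta$ while keeping all constants dependent only on $\|\bar k\|_\alpha$, which should follow from the controlled-path estimates (\ref{estimate of rough integral})--(\ref{estimate of iterated integral}) applied with base rough path $\bar k$.
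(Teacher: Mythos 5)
Your proposal is correct and takes essentially the same route as the paper: the paper also reduces (1)--(2) to a finite-dimensional implicit-function problem for $v\in\g$, realized concretely as the fixed-point equation $v=M_{x,\eta}(v)$ in the $\log$-chart near $a$, with the contraction property supplied by exactly the rough-path estimates you cite, and it obtains (6) from the vanishing of $\int_0^1 Ad(Y(s,e,k))d\eta_s$ for $\eta\in W_0$ so that $B(\eta,v)=O(\|\bar{\eta}\|_{\alpha}(\|\bar{\eta}\|_{\alpha}+|v|))$ and hence $|v(a,\eta)|=O(\|\bar{\eta}\|_{\alpha}^2)$. The only point where your sketch would need to be fleshed out is that no off-the-shelf implicit function theorem applies to the parameter $\eta$ (it lives in a non-Banach set with only $H_0$-directional differentiability), so, as the paper does, one must differentiate the Picard iterates $v_{n+1}=M_{x,\eta}(v_n)$ and show $D_0v_n$, $D_xv_n$ converge uniformly to conclude $v\in C^{1,\infty}_{b,H_0}$.
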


\begin{proof}[Proof of Lemma~$\ref{local chart}$]
We prove (1) and (2).
First, we prove the one-to-one property.
Suppose $\varPhi_k(x,\eta)=\varPhi_k(x',\eta')$ 
$(x,x'\in B_{\ep}(a), \eta,\eta'\in B_{k,\ep}^{\Omega_0}(0))$.
Then by the definition of $\hat{\varPsi}$, we have $x=x'$.
The identity $\eta+\varphi_x(\eta)=\eta'+\varphi_x(\eta')$ implies
$\eta-\eta'=\varphi_x(\eta')-\varphi_x(\eta)\in H_0^{\perp}\subset W^{\ast}$.
Hence $\eta-\eta'\in W^{\ast}\subset H$.
Also by the definition of $W_0$, there exists $h_n\in H_0$ such that
$\lim_{n\to\infty}\|h_n-(\eta-\eta')\|_W=0$.
Therefore, we have
\begin{align*}
 \|\eta-\eta'\|_H^2&=\left(\eta-\eta',\varphi_x(\eta')-\varphi_x(\eta)\right)_H\\
&={}_W\left(\eta-\eta',\varphi_x(\eta')-\varphi_x(\eta)\right)_{W^{\ast}}\\
&=\lim_{n\to\infty}\left(h_n,\varphi_x(\eta')-\varphi_x(\eta)\right)_H.
\end{align*}
Noting $(h_n,\varphi_x(\eta')-\varphi_x(\eta))_H=0$, we obtain
$\eta=\eta'$ which implies the injectivity of $\varPhi_k$.
We prove the unique existence $\varphi_x(\eta)$.
 By Lemma~\ref{tangent space lemma}, it suffices to show the unique existence
of $v=v(x,\eta)\in \g$ satisfying
$Y(1,e,k+\eta+Q(k)v)=x$,
where $\varphi_x(\eta)=Q(k)v$ and
$\|\varphi_x(\eta)\|_H=|v|_\g$.
Below, we do the calculation using the embedding
$\g,\, G\subset M(n,\CC)$.
For $\theta\in [0,1]$, $v\in \g$ and $\eta\in \Omega_0$,
let 
\begin{align*}
 Y^{\theta,v,\eta}(s)=
Y\Bigl(s,e, z(k,\theta,\eta,v)\Bigr),
\quad z(k,\theta,\eta,v)=k+\theta\bigl(\eta+Q(k)v\bigr).
\end{align*}

Then, by Theorem~\ref{properties of Y} (3), 
$Y$ is a $C^1$ function of $\theta$ and for any $\xi\in \RR^d$
\begin{align}
&  \partial_{\theta}Y^{\theta,v,\eta}(s)\left(Y^{\theta,v,\eta}\right)^{-1}=
\int_0^s
Ad\Bigl(Y^{\theta,v,\eta}(u))\Bigr)d\eta_u
+
\int_0^sAd\Bigl(Y^{\theta,v,\eta}(u)(Y(u,e,k)^{-1}\Bigr)vdu,\label{derivative of Y}\\
&\partial_{\theta}\left(Ad(Y^{\theta,v,\eta}(s))\xi\right)=
\left[\partial_{\theta}Y^{\theta,v,\eta}(s)\left(Y^{\theta,v,\eta}\right)^{-1},
Ad(Y^{\theta,v,\eta}(s))\xi\right]\nonumber\\
&\qquad=\left[
\int_0^s
Ad\Bigl(Y^{\theta,v,\eta}(u))\Bigr)d\eta_u
+
\int_0^sAd\Bigl(Y^{\theta,v,\eta}(u)(Y(u,e,k)^{-1}\Bigr)vdu,
Ad(Y^{\theta,v,\eta}(s))\xi
\right].\label{derivative of Ad}
\end{align}
If $|v|$, $\|\bar{\eta}\|_{\alpha}$ and $\|C(\eta,k)\|_{2\alpha}$ 
are sufficiently small,
then $d_{\alpha}(\overline{k},\overline{k+\theta(\eta+Q(k)v)})$
is small. This follows from the following calculation.
\begin{align*}
&C(k,k)_{s,t}-C\Big(k+\theta(\eta+Q(k)v),k+\theta(\eta+Q(k)v\Big)\\
&\qquad =
\theta C\Big(k,\eta+Q(k)v\Big)_{s,t}+\theta C\Big(\eta+Q(k)v,k\Big)_{s,t}
+\theta^2 C\Big(\eta+Q(k)v,\eta+Q(k)v\Big)
\end{align*}
Thus, by the locally Lipschitz continuity property of the solution of RDEs, 
for any $\ep>0$, there exists 
$\ep(\|\overline{k}\|_{\alpha})$
such that
\begin{align}
\max\left(v, \|\bar{\eta}\|_{\alpha}, \|C(\eta,k)\|_{2\alpha},
\|C(k,\eta)\|_{2\alpha}\right)
\le \ep(\|\overline{k}\|_{\alpha})
\,\,\Longrightarrow\,\,
\sup_{s,\theta}|Y^{\theta,v,\eta}(s)-Y(s,e,k)|\le \ep.
\label{local lipschitz}
\end{align}
For small positive number $r$, we consider a local coordinate 
map $\phi : B_{r}(a)\to \g$.
For example, $\phi(x)=\log(xa^{-1})$.
If $\max\left(v, \|\bar{\eta}\|_{\alpha}, \|C(\eta,k)\|_{2\alpha}\right)$
is sufficiently small, then
$\phi\left(Y^{\theta,v,\eta}(1)\right)$ can be defined.
Using this, since $Y^{1,v,\eta}(1)=Y(1,e,k+\eta+Q(k)v)$ and
$Y^{0,v,\eta}(1)=Y(1,e,k)=a$,
$Y(1,e,k+\eta+Q(k)v)=x$ is equivalent to
\begin{align}
&  \phi(x)-\phi(a)=
\int_0^1\partial_{\theta}\phi(Y^{\theta,v,\eta}(1))d\theta\nonumber\\
&=
\int_0^1(D\phi)(Y^{\theta,v,\eta}(1))R_{Y^{\theta,v,\eta}(1)}\left(\int_0^1
Ad\Bigl(Y^{\theta,v,\eta}(u)\Bigr)d\eta_u\right)d\theta\nonumber\\
&\qquad+
\left\{
\int_0^1(D\phi)(Y^{\theta,v,\eta}(1))
R_{Y^{\theta,v,\eta}(1)}
\left(\int_0^1Ad\Bigl(Y^{\theta,v,\eta}(u)(Y(u,e,k)^{-1}\Bigr)du\right)d\theta
\right\}
v.
\label{x-a}
\end{align}
Note that 
$\phi'(x):=(D\phi)(x)R_x\in \LL(\g,\g)$
is an invertible mapping for $x\in B_r(a)$.
Also note that $\phi'(x)\xi=V_{\xi}\phi(x)$, where
$V_{\xi}$ is the right invariant vector field associated with $\xi\in \g$.
Furthermore,
$x\mapsto \phi'(x)\in \LL(\g,\g)$ and
$x\mapsto \left(\phi'(x)\right)^{-1}\in \LL(\g,\g)$
are $C^{\infty}$ mapping $(x\in B_r(a))$.
Write
\begin{align*}
 A(\eta,v)&=
\int_0^1\phi'(Y^{\theta,v,\eta}(1))
\left(\int_0^1Ad\Bigl(Y^{\theta,v,\eta}(u)(Y(u,e,k)^{-1}\Bigr)vdu\right)d\theta,\\
B(\eta,v)&=
\int_0^1\phi'(Y^{\theta,v,\eta}(1))
\left(\int_0^1
Ad\Bigl(Y^{\theta,v,\eta}(u)\Bigr)d\eta_u\right)d\theta.
\end{align*}
Note that $A(\eta,v)\in \LL(\g,\g)$ and
$B(\eta,v)\in \g$.
By (\ref{local lipschitz}),
there exists $\ep_0>0$
which depends only on $\|\bar{k}\|_{\alpha}$
such that if 
\begin{align}
 \max\left(v, \|\bar{\eta}\|_{\alpha}, \|C(\eta,k)\|_{2\alpha}\right)
\le\ep_0,\label{smallness of eta etc}
\end{align}
is small
then $A(\eta,v)$ is close to $\phi'(a)$ and thus invertible and
$\left\|A(\eta,v)^{-1}\right\|_{\LL(\g)}\le 2\|\phi'(a)^{-1}\|_{\LL(\g)}$.
Hence under the condition 
(\ref{smallness of eta etc}),
(\ref{x-a}) is equivalent to
\begin{align}
 v&=
A(\eta,v)^{-1}\Bigl(
\phi(x)-\phi(a)-B(\eta,v)
\Bigr)
=:M_{x,\eta}(v).\label{Mxeta}
\end{align}
Again by the locally Lipschitz continuity theorem of solutions of RDEs and 
estimate of rough integrals, taking sufficiently small $\ep<\ep_0$,
we see that for $x$ and $\eta$ satisfying 
\begin{align}
 d(x,a)<\ep,\quad \|\bar{\eta}\|_{\alpha}<\ep,\quad
\|C(\eta,k)\|_{2\alpha}<\ep, \quad \|C(k,\eta)\|_{2\alpha}<\ep,\label{epsilon}
\end{align}
the following hold:
\begin{enumerate}
 \item[(i)] $M_{x,\eta}(\overline{B_{\ep_0}(0)})\subset \overline{B_{\ep_0}(0)}$,
\item[(ii)] $M_{x,\eta}$ is a contraction mapping with the Lipschitz constant 
smaller than $1/2$,
\end{enumerate}
where $B_{\ep_0}(0)$ is the ball in $\g$ with the radius $\ep_0$.
This implies that there exists a unique $v=v(x,\eta)\in \overline{B_{\ep_0}(0)}$ 
such that
$M_{x,\eta}(v)=v$ holds.
For this $v$, by setting $\varphi_x(\eta)=Q(k)v(x,\eta)$, 
$k+\eta+\varphi_x(\eta)\in S_x$ holds.
Let me be more precise about the above (i) and (ii).
First, we consider (i).
By the estimate of the solution and the rough integrals, we have
if $|v|<\ep_0$, then there exists a polynomial growth positive 
increasing function
$C$ on $\RR^{+}$ such that
\begin{align*}
 |M_{x,\eta}(v)|&\le
2\|\phi'(a)^{-1}\|
\cdot |\phi(x)-\phi(a)|
+C\left(\|\overline{k}\|_{\alpha}
+\ep_0\right)
\left(\|\bar{\eta}\|_{\alpha}+\|C(k,\eta)\|_{2\alpha}\right).
\end{align*}
Hence, if $\ep$ in (\ref{epsilon}) is sufficiently small, then
(i) holds.
For $v, v'$ satisfying $\max(|v|,|v'|)\le \ep_0$, there exists a
polynomial growth positive increasing function
$C$ on $\RR^{+}$ such that
\begin{align}
& \left|M_{x,\eta}(v)-M_{x,\eta}(v')\right|
\le C\Bigl(
\|\overline{k}\|_{\alpha}+
\ep_0\Bigr)
\left(\|\bar{\eta}\|_{\alpha}+\|C(k,\eta)\|_{2\alpha}\right)
|v-v'|.\label{contraction of M}
\end{align}
To show this,
let $f\in C^{\infty}_b(M(n,\CC),\LL(\g,\g))$
and set
\begin{align*}
  I(v)&=\int_0^1f\left(Y^{\theta,v,\eta}(s)\right)d\eta_s, \qquad v\in \g.
\end{align*}
For $v, v'\in \g$, we have
\begin{align*}
&I(v)-I(v')\nonumber\\
&=\int_0^1\int_0^1
(Df)(Y(s,e,z(k,\theta,\eta,rv+(1-r)v')))
\phantom{lllllllllllllllllllllllllllllllllllllllllll}
\nn\\
&\quad\qquad\quad\qquad
 \left[(DY)(s,e,z(k,\theta,\eta,rv+(1-r)v'))\bigl[\theta Q_k(v-v')\bigr]\right]
dr d\eta_s
\end{align*}
By the estimate of rough integral (\ref{estimate of rough integral}),
we obtain (\ref{contraction of M}).
This implies (ii) holds for small $\ep$.
We next consider the smoothness of $v(x,\eta)$.
Suppose $v=v(x,\eta)$ is a function of $(x,\eta)\in B_{\ep}(a)\times
B_{k,\ep}^{\Omega_0}(0)$ and satisfies
$v\in C^{1,1}_{b,H_0}(B_{\ep}(a)\times B_{k,\ep}^{\Omega_0}(0))$.
Then,  again, by by Theorem~\ref{properties of Y} (2),
the derivative of $Y^{\theta,v(\eta),\eta}(s)$ can be calculated as
\begin{align}
& D_0Y^{\theta,v(\eta),\eta}(s)[h]=\partial_\eta Y^{\theta,v(\eta),\eta}(s)[h]+
\partial_vY^{\theta,v(\eta),\eta}(s)[(D_0v)[h]]\nonumber\\
&\qquad =\theta\left(\int_0^s
Ad\Bigl(Y^{\theta,v,\eta}(u))\Bigr)dh_u
+
\int_0^sAd\Bigl(Y^{\theta,v,\eta}(u)(Y(u,e,k)^{-1}\Bigr)du(D_0v)[h]
\right)Y^{\theta,v(\eta),\eta}(s),\label{derivative of Y1}\\
& D_0\left(Ad(Y^{\theta,v,\eta}(s))\xi\right)[h]\nonumber\\
&=
\theta\left[\int_0^s
Ad\Bigl(Y^{\theta,v,\eta}(u))\Bigr)dh_u
+
\int_0^sAd\Bigl(Y^{\theta,v,\eta}(u)(Y(u,e,k)^{-1}\Bigr)du(D_0v)[h],
Ad(Y^{\theta,v,\eta}(s))\xi
\right]\label{derivative of Y2}
\\
& D_xY^{\theta, v(x,\eta),\eta}
=\theta
\left(\int_0^sAd(Y^{\theta,v(x,\eta),\eta}(u)Y(u,e,k)^{-1})du
D_xv(x,\eta)\right)
Y^{\theta,v(x,\eta),\eta}(s),\label{derivative of Y3}
\end{align}
\begin{align}
& D_0\left(\int_0^1Ad\left(Y^{\theta,v,\eta}(s)\right)d\eta_s\right)[h]
=\int_0^1Ad\left(Y^{\theta,v,\eta}(s)\right)dh_s
+\theta
\int_0^1D_0\left(Ad\left(Y^{\theta,v,\eta}(s)\right)\right)
[h]d\eta_s\nonumber\\
&=\int_0^1Ad\left(Y^{\theta,v,\eta}(s)\right)dh_s\nonumber\\
&\quad +\int_0^1
\left[\int_0^s
Ad\Bigl(Y^{\theta,v,\eta}(u))\Bigr)dh_u
+
\theta\int_0^sAd\Bigl(Y^{\theta,v,\eta}(u)(Y(u,e,k)^{-1}\Bigr)du(D_0v)[h],
Ad(Y^{\theta,v,\eta}(s))d\eta_s
\right]\nonumber\\
&=\int_0^1Ad\left(Y^{\theta,v,\eta}(s)\right)dh_s
+\theta\left[\int_0^1Ad\left(Y^{\theta,v,\eta}(s)\right)dh_s, 
\int_0^1Ad\left(Y^{\theta,\eta,v}(s)\right)d\eta_s\right]\nonumber\\
&\quad+\theta
\int_0^1\left[\int_0^sAd\left(Y^{\theta,\eta,v}(u)\right)d\eta_u,
Ad\left(Y^{\theta,\eta,v}(s)\right)dh_s\right]\nonumber\\
&\quad+\theta
\left[\int_0^1Ad\Bigl(Y^{\theta,v,\eta}(s)(Y(s,e,k)^{-1}\Bigr)ds(D_0v)[h],
\int_0^1Ad\left(Y^{\theta,v,\eta}(s)\right)d\eta_s\right]\nonumber\\
&\quad+\theta
\int_0^1\left[\int_0^sAd\left(Y^{\theta,v,\eta}(u)\right)d\eta_u,
Ad\Bigl(Y^{\theta,v,\eta}(s)(Y(s,e,k)^{-1}\Bigr)(D_0v)[h]\right]ds.
\label{derivative of Y4}
\end{align}
Moreover, noting $\int_0^1Ad(Y^{0,v,\eta}(s))d\eta_s=0$ and using
(\ref{derivative of Ad}),
we obtain the following expression of
$v$, 
\begin{align}
&\int_0^1
Ad\Bigl(Y^{\theta,v,\eta} (s)\Bigr)d\eta_s\nonumber\\
&=\int_0^1\left(\int_0^{\theta}
\left[
\int_0^s
Ad\Bigl(Y^{\tau,v,\eta}(u))\Bigr)d\eta_u
+
\int_0^sAd\Bigl(Y^{\tau,v,\eta}(u)(Y(u,e,k)^{-1}\Bigr)vdu,
Ad(Y^{\tau,v,\eta}(s))\ep_i
\right]d\tau\right) d\eta^i_s.\label{quadratic form}
\end{align}
We consider successive approximation sequence to prove the continuity and
smoothness of $v(x,\eta)$.
Let us define 
$v_n=v_n(x,\eta)$ $((x,\eta)\in B_{\ep}(a)\times B_{k,\ep}^{\Omega_0}(0)
, n\ge 0)$ inductively by $v_0=0$,
$v_{n+1}=M_{x,\eta}(v_{n})$ $(n\ge 0)$.
Note that
$v_1(\eta)$ is a $C^{\infty}_b$ function of
$Y(s,e,k+\theta\eta)$, $\int_0^1Ad(Y(s,e,k+\theta\eta))d\eta_s$
and $\phi(x)-\phi(a)$.
These functionals belong to 
$C^{1,\infty}_{b,H_0}(B_{\ep}(a)\times B_{k,\ep}^{\Omega_0}(0))$.
See Theorem~\ref{properties of Y} (3).
Hence $v_1\in C^{1,\infty}_{b,H_0}(B_{\ep}(a)\times B_{k,\ep}^{\Omega_0}(0),\RR^d)$.
Also by the contraction property of $M_{x,\eta}$,
we have $|v_{n+1}-v_n|\le 2^{-n}\ep_0$ and
$v(x,\eta)=\lim_{n\to\infty}v_n(x)$ is uniform convergence in
$B_{\ep}(a)\times B_{k,\ep}^{\Omega_0}$.
Hence $v(x,\eta)$ is a continuous function in the product topology of 
the Euclidean topology and $d_{\alpha}$.
We consider the differentiability property of $v_n(x,\eta)$ on $x,\eta$ $(n\ge 2)$.
If $v_n\in C^{1,1}_{b,H_0}(B_{\ep}(a)\times B_{k,\ep}^{\Omega_0}(0))$,
by the definition of $M_{x,\eta}(v)$,
using (\ref{derivative of Y1}),
(\ref{derivative of Y2}),
(\ref{derivative of Y3}) 
and (\ref{derivative of Y4}) and the chain rule of the derivative,
we see that
$v_{n+1}\in C^{1,1}_{b,H_0}(B_{\ep}(a)\times B_{k,\ep}^{\Omega_0}(0))$ holds
 and the derivatives $D_0v_n(x,\eta)\in \LL(H_0,\RR^d)$ satisfies
\begin{align}
 D_0v_{n+1}&=r_n D_0v_n+f_n,\label{D0vn}
\end{align}
where $r_n=r(v_n,\eta)$, $f_n=f(v_n,\eta)$ and
\begin{align*}
r\in 
C^{\infty,\infty}_{b,H_0}\Bigl(\RR^d\times B_{k,\ep}^{\Omega_0}(0),
\mathcal{L}(\RR^d,\RR^d)\Bigr),
\quad
f\in C^{\infty,\infty}_{b,H_0}
\left(\RR^d\times B_{k,\ep}^{\Omega_0}(0),\LL(H_0, \RR^d)\right).
\end{align*}
Here, $r_nD_0v_n$ stands for the composition operator.
Moreover, if $\ep_0$ is sufficiently small, then 
$|r(v,\eta)|\le 1/2$ for all $v$ and $\eta$.
By using this, we obtain
\begin{align*}
\|D_0v_{n+1}\|&\le \frac{1}{2}\|D_0v_n\|+\|f\|
\le \sum_{k=0}^n\left(\frac{1}{2}\right)^{k}\|f\|\le 2\|f\|.
\end{align*}
Thus we have
\begin{align*}
 \|D_0v_{n+1}-D_0v_n\|&\le
\|r_n(D_0v_n-D_0v_{n-1})\|+\|(r_n-r_{n-1})D_0v_{n-1}\|+\|f_n-f_{n-1}\|\\
&\le 2^{-1}\|D_0v_n-D_0v_{n-1}\|+
C|v_n-v_{n-1}|\\
&\le 2^{-1}\|D_0v_n-D_0v_{n-1}\|+C\cdot 2^{-(n-1)}\ep_0\\
&\le 2^{-n}\|D_0v_1\|+C\cdot 2^{-(n-1)}n\ep_0.
\end{align*}
This implies $\lim_{n\to\infty}D_0v_n$ converges in uniformly on
$B_{\ep}(a)\times B_{k,\ep}^{\Omega_0}(0)$.
Hence, we obtain $v$ is differentiable with respect to $\eta$ and
$D_0v(x,\eta)=r(v,\eta)D_0v(x,\eta)+f(x,\eta)$ hold.
Thus, we have
\begin{align*}
 (D_0v)(x,\eta)&=(1-r(v(x,\eta),\eta))^{-1}f(v(x,\eta),\eta).
\end{align*}
By a similar calculation, we obtain that
$v$ is differentiable with respect to
$x$ and there exist 
\begin{align*}
\tilde{r}\in 
C^{\infty,\infty}_{b,H_0}\Bigl(B_{\ep}(a)\times B_{k,\ep}^{\Omega_0}(0),
\mathcal{L}(\RR^d,\RR^d)\Bigr),
\quad
\tilde{f}\in C^{\infty,\infty}_{b,H_0}
\left(B_{\ep}(a)\times B_{k,\ep}^{\Omega_0}(0),\LL(\RR^d,\RR^d)\right),
\end{align*}
such that
\begin{align*}
(D_xv)(x,\eta)&=(1-\tilde{r}(v(x,\eta),\eta))^{-1}\tilde{f}(v(x,\eta),\eta).
\end{align*}
By these identities, we can conclude that
$v\in C^{1,\infty}_{b,H_0}(B_{\ep}(a)\times B_{k,\ep}^{\Omega_0})$.
Since $Y(1,e,k+\eta+\varphi_x(\eta))=x$ $(\eta\in B_{k,\ep}^{\Omega_0}(0))$,
$D_{0}\varPhi_{k}(x,\eta)(H_0)\subset T_{\varPhi_{k}(x,\eta)}S_a$ holds.

We prove (3).
$Y(1,e,w)=x$ is obvious.
$\eta=P(k)(w-k)$ is also obvious because
$\varphi_x(\eta)\in H_0^{\perp}$.

(4) is also obvious by the definition and (1).

We consider (5).
Let $w\in U_{\delta}^{\Omega}(k)\cap S_a$.
Let $\eta'=P(k)(w-k)$.
Note that
\begin{align*}
 \eta'=w-k-N(k)(w-k)=w-k-\int_0^{\cdot}
Ad(Y(s,e,k))\left(\int_0^1Ad(Y(u,e,k))d(w-k)_u\right).
\end{align*}
To estimate this,
we consider 
$I_t=\int_0^tf(Y(s,e,k))d(w-k)_s$, where
$f\in C^2_b(M(n,\CC),\LL(\RR^d,\RR^d))$.
For simplicity, we write $Y_s=Y(s,e,k)$.
Let $\Xi_{s,t}=f(Y_s)(w-k)_{s,t}+(Df)(Y_s)[Y_s\ep_i]\ep_jC(k^i,(w-k)^j)_{s,t}$.
Then, we have
$I_{s,t}=\lim_{|\Delta|\to 0}\sum_{i}\Xi_{t_{i-1},t_i}$,
where $\Delta=\{s=t_0<\cdots<t_N=t\}$ is a partition.
We have there exists $C>0$ such that
\begin{align*}
 \sup_{s,u,t}\frac{|(\delta \Xi)_{s,u,t}|}{|t-s|^{3\alpha}}
\le C\left(\|Y\|_{\alpha}^2\|w-k\|_{\alpha}+\|R^Y\|_{2\alpha}\|w-k\|_{\alpha}
+\|Y\|_{\alpha}\|C(k,w-k)\|_{2\alpha}\right),
\end{align*}
where $R^Y_{s,t}=Y_t-Y_s-Y_sk_{s,t}$ and
$|R^Y_{s,t}|\le C\|\bar{k}\|_{\alpha}(t-s)^{2\alpha}$ hold.
Hence
\begin{align*}
 |I_1|&\le C\left(\|w-k\|_{\alpha}+\|C(k,w-k)\|_{2\alpha}\right)\\
&\qquad+
C\left(\|Y\|_{\alpha}^2\|w-k\|_{\alpha}+\|R^Y\|_{2\alpha}\|w-k\|_{\alpha}
+\|Y\|_{\alpha}\|C(k,w-k)\|_{2\alpha}\right).
\end{align*}
This implies there exists a polynomial 
growth increasing positive function
$C$ on $\RR^+$ such that
\begin{align*}
 \|N(k)(w-k)\|_{2\alpha}\le C(\|\bar{k}\|_{\alpha})\delta.
\end{align*}
Hence, we see that 
\begin{align*}
 \|\bar{\eta'}\|_{\alpha}\le \delta +C(\|\bar{k}\|_{\alpha})\delta,\qquad
\max(\|C(\eta',k)\|_{2\alpha},\|C(k,\eta')\|_{2\alpha})
\le \delta+C(\|\bar{k}\|_{\alpha})\delta.
\end{align*}
By this, for sufficiently small $\delta$ which just depends on
$\|k\|_H$ and $\ep$, $\eta'\in B_{k,\ep}^{\Omega_0}(0)$ holds.
Taking (4) into account, this implies 
$w=k+\eta'+\varphi_a(\eta')\in V_{\ep}^{S_a}(k)$
which completes the proof of (5).
We next consider (6).
Note that
\begin{align*}
& A(0,v(a,0))=\phi'(a),\quad 
|A(\eta,v(a,\eta)-A(0,v(a,0))|=
O(\|\bar{\eta}\|_{\alpha}),\quad
B(0,v)=0,\\
&\qquad \quad |v(a,\eta)|=O(\|\bar{\eta}\|_{\alpha}), \quad
|B(\eta,v)|\le C\|\bar{\eta}\|_{\alpha}
\left(\|\bar{\eta}\|_{\alpha}+|v|\right).
\end{align*}
The first and the third identities can be checked by the definition.
The second, fourth estimates follow from (\ref{estimate of rough integral}).
The fifth estimate follows from (\ref{quadratic form})
and (\ref{estimate of iterated integral}).
We now prove (6).
(\ref{Dxv}) follows from the first and fifth identity in the above
and (\ref{derivative of Y3}) and (\ref{quadratic form}).
By combining the fourth and fifth estimates, we can get better estimate 
$|v(a,\eta)|=O(\|\bar{\eta}\|_{\alpha}^2)$ which proves (\ref{v}).
(\ref{D_0v}) follows from these inequalities and the 
explicit calculation
of $D_0v(a,\eta)$.
\end{proof}

\begin{rem}\label{local coordinate}
(1)
Suppose $Y(\cdot,e,k)$ is a geodesic,
that is, $k_t=t\xi$ for a certain $\xi\in\g$.
In this case, $P(k)k=0$ and hence $\eta=P(k)w$ holds.
In our study, we consider elements $\bw$ in a 
small neighborhood of $k$ in the topology of $\mathscr{C}^{\alpha}$.
If $a\ne e$, then $k\ne 0$ which implies $\bw$ is not small.
The smallness corresponds to the smallness of $\bar{\eta}=\overline{P(k)w}$.
Note that $\bw$ itself is not small but the projection is small.

\noindent
(2) 
The set $V_{\ep}^{S_a}(k)$ is a neighborhood of $k$ in $S_a$
and $B_{k,\ep}^{\Omega_0}(0)$
is an $H_0$-open subset in $\Omega_0$
and they are isomorphism by the mappings:
\begin{align*}
&\varPhi_{k,a} : B_{k,\ep}^{\Omega_0}(0)\ni \eta
\mapsto w=k+\eta+\varphi_a(\eta)\in V_{\ep}^{S_a}(k),\\
& \varPsi_{k,a} : V_{\ep}^{S_a}(k)\ni w\mapsto 
\eta=P(k)(w-k)\in B_{k,\ep}^{\Omega_0}(0).
\end{align*}

\noindent
(3) The property Lemma~\ref{local chart} (6) 
shows that this coordinate has
similar property to the normal coordinate system in Riemannian geometry.
This will make some calculations easy.
\end{rem}

In Lemma~\ref{local chart}, we give an local coordinate neighborhood
$V_{\ep}^{S_a}(k)$ of $k\in S_a^H$.
However, this does not imply that $S_a$ can be covered by 
$V_{\ep}^{S_a}(k)$
$(k\in S_a^H)$.
The following covering lemma is not used in this paper but
we include it for its interest.
In this proof, the property of $w\in \Omega$ in Theorem~\ref{lift of w} (ii)
is essentially important.

\begin{lem}\label{local chart2}
Let $w\in S_a$ and $w^N$ be the dyadic polygonal approximation of $w$
as in $(\ref{dyadic})$ and set $a_N=Y(1,e,w^N)$.
There exists $N(w)\in \NN$ such that 
the following statement hold for all $N\ge N(w)$.
\begin{enumerate}
 \item[$(1)$] 
There exists a unique $v_N\in \g$ such that
$\tw^N:=w^N+Q_{w^N}v_N\in S_a$.
\item[$(2)$] There exists $\ep_N(w)$ such that $\lim_{N\to\infty}\ep_N(w)=0$
and 
$\ep_N(w)<\ep(\|\overline{\tw^N}\|_{\alpha})$ 
holds, where $\ep(\|\overline{k}\|_{\alpha})$ 
is the positive number defined in
Lemma~$\ref{local chart}$.
Hence
$V_{\ep_N(w)}^{S_a}(\tw^N)$ in Lemma~$\ref{local chart}$
can be defined and furthermore $w\in V_{\ep_N(w)}^{S_a}(\tw^N)$ holds.
That is, 
$\teta^N(w)=P(\tw^N)(w-\tw^N)\in B_{\tw^N,\ep_N(w)}^{\Omega_0}$
and
$w=\tw^N+\teta^N(w)+Q_{\tw^N}v_a(\eta^N(w))$ hold.
\end{enumerate}
\end{lem}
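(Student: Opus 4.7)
The plan is to establish (1) by applying Lemma~\ref{local chart} with base point $k = w^N \in S_{a_N}^H$, input $\eta = 0$, and target $x = a$. The radius $\ep(k)$ from Lemma~\ref{local chart}(1) depends only on $\|\bar{k}\|_\alpha$; since $\overline{w^N} \to \bar{w}$ in $d_\alpha$ by Theorem~\ref{lift of w}(i), the norm $\|\overline{w^N}\|_\alpha$ is uniformly bounded in $N$, so there is a uniform $\ep_\ast > 0$ with $\ep(w^N) \geq \ep_\ast$ for all large $N$. By the local Lipschitz continuity of the Itô map for rough paths combined with Theorem~\ref{lift of w}(i), $a_N = Y(1,e,w^N) \to Y(1,e,w) = a$, so $d(a,a_N) < \ep_\ast$ once $N \geq N(w)$. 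Lemma~\ref{local chart}(1)--(2) then yields a unique $v_N := v(a,0) \in \g$ (with $|v_N| \leq \ep_0$) such that $Q(w^N)v_N \in T_{w^N}S_{a_N}^\perp$ and $\tw^N := w^N + Q(w^N)v_N \in S_a$.

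Next I will show $|v_N| \to 0$, which is needed throughout (2). From the fixed-point identity $v_N = M_{a,0}(v_N)$ in the proof of Lemma~\ref{local chart}, the estimate for $|M_{x,\eta}(v)|$ established there yields $|v_N| \leq C(\|\overline{w^N}\|_\alpha)\,|\log a - \log a_N|$; since $\log a_N \to \log a$, this gives $|v_N| \to 0$. Consequently $\tw^N \to w$ in $d_\alpha$: one has $\|\tw^N - w\|_\alpha \leq \|w^N - w\|_\alpha + \|Q(w^N)v_N\|_\alpha \to 0$, the last term controlled via $\|Q(w^N)v_N\|_H = |v_N|$. The two cross-terms will be handled similarly. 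Expanding $\tw^N = w^N + Q(w^N)v_N$ and $w - \tw^N = (w^N)^\perp - Q(w^N)v_N$, the bilinear quantity $C(\tw^N, w - \tw^N)$ splits into four pieces: the genuinely rough piece $C(w^N, (w^N)^\perp)$, which goes to $0$ in $\|\cdot\|_{2\alpha}$ by Theorem~\ref{lift of w}(ii), and three pieces each containing at least one Young integral against the smooth $H^1$-path $Q(w^N)v_N$ whose $1$-variation norm is bounded by $|v_N|$, so these are $O(|v_N|) \to 0$. The same splitting handles $\|C(w - \tw^N, \tw^N)\|_{2\alpha}$.

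With these convergences in hand, the remainder of (2) is structural. I will set
$$\ep_N(w) := \max\!\Bigl(\|\tw^N - w\|_\alpha,\, \|C(\tw^N, w - \tw^N)\|_{2\alpha},\, \|C(w - \tw^N, \tw^N)\|_{2\alpha}\Bigr),$$
so $\ep_N(w) \to 0$ by the preceding paragraph. Since $\|\overline{\tw^N}\|_\alpha$ is also uniformly bounded (as $|v_N| \to 0$), Lemma~\ref{local chart}(5) applied at $\tw^N \in S_a^H$ supplies $\delta_\ast > 0$ and $\ep_\ast' > 0$ with $U_{\delta_\ast}^\Omega(\tw^N) \cap S_a \subset V_{\ep(\tw^N)}(\tw^N) \cap S_a$ and $\ep(\tw^N) \geq \ep_\ast'$ for all large $N$. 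For $N$ so large that $\ep_N(w) < \min(\delta_\ast, \ep_\ast')$, the membership $w \in U_{\ep_N(w)}^\Omega(\tw^N) \cap S_a$ is immediate from the definition of $\ep_N(w)$, and Lemma~\ref{local chart}(4) then furnishes the coordinate representation $w = \tw^N + \teta^N(w) + Q(\tw^N)v_a(\teta^N(w))$ with $\teta^N(w) = P(\tw^N)(w - \tw^N) \in B_{\tw^N, \ep_N(w)}^{\Omega_0}(0)$.

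The main obstacle will be the cross-term expansion in the second paragraph: cleanly decomposing $C(\tw^N, w - \tw^N)$ into the genuinely rough piece $C(w^N, (w^N)^\perp)$, which is controlled by Theorem~\ref{lift of w}(ii), and the three Young-type pieces whose size is $O(|v_N|)$ via $\|Q(w^N)v_N\|_{1\hyp var} \leq C|v_N|$. Everything else is bookkeeping against the uniform bound on $\|\overline{w^N}\|_\alpha$.
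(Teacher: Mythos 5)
Your proposal is correct, and part (1) is exactly the paper's argument: take $x=a$, $\eta=0$, $k=w^N$ in Lemma~\ref{local chart}, use $\sup_N\|\overline{w^N}\|_\alpha<\infty$ for a uniform radius, $a_N\to a$ for applicability, and the contraction estimate for $|v_N|\le C|a-a_N|\to 0$. For part (2) you take a mildly different route: the paper writes out $\teta^N=P(\tw^N)(w-\tw^N)$ explicitly and directly bounds $\|\overline{\teta^N}\|_\alpha$, $\|C(\teta^N,\tw^N)\|_{2\alpha}$, $\|C(\tw^N,\teta^N)\|_{2\alpha}$, reducing everything to the rough-integral convergence $\int_0^1 Ad(Y(u,e,\tw^N))\,d{w^N}^{\perp}(u)\to 0$ (proved as in Lemma~\ref{approximation of rough integral}); you instead verify $w\in U^{\Omega}_{\ep_N(w)}(\tw^N)\cap S_a$ and invoke Lemma~\ref{local chart}~(5), with the analytic content carried by your four-term decomposition of $C(\tw^N,w-\tw^N)$ in which $C(w^N,{w^N}^{\perp})\to 0$ by Theorem~\ref{lift of w}~(ii) and the remaining Young pieces are $O(|v_N|)$. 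Both arguments rest on the same inputs, so this buys nothing essential either way, but two bookkeeping points in your version deserve a sentence: membership in $U^{\Omega}_{\ep}(\tw^N)$ requires control of the full rough-path norm $\|\overline{w-\tw^N}\|_\alpha$, hence also of $\|C(w-\tw^N,w-\tw^N)\|_{2\alpha}$, which you should include in the definition of $\ep_N(w)$ (it tends to $0$ via $C({w^N}^{\perp},{w^N}^{\perp})\to 0$ and the same Young estimates); and passing from $w\in U^{\Omega}_{\ep_N}(\tw^N)$ to $\teta^N\in B^{\Omega_0}_{\tw^N,\ep_N(w)}(0)$ picks up the multiplicative constant $1+C(\|\overline{\tw^N}\|_\alpha)$ from the proof of Lemma~\ref{local chart}~(5), so $\ep_N(w)$ must be inflated accordingly; neither affects $\ep_N(w)\to 0$.
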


\begin{proof}
 (1) This corresponds to the case where $x=a$ and $a=a_n$ and $\eta=0$ in 
the equation (\ref{x-a}) and (\ref{Mxeta}).
Since $\eta=0$,
to reduce the equation (\ref{x-a}) to (\ref{Mxeta}), we need only to consider
the case $a-a_n$ is sufficiently small and small $v$.
By the continuity of the solution in the topology of $\mathscr{C}^{\alpha}$,
this is possible for sufficiently large $N$.
This shows the contraction property of $M_{a,0}$ and
the unique existence of $v_N$
in $B_{\ep_0}(0)$.
Also note that there exists a $C>0$ which is independent of $N$ such that
$|v_N|\le C|a-a_N|$ holds.

\noindent
(2) 
First note that $\sup_N\|\overline{\tilde{w}^N}\|_{\alpha}<\infty$.
It suffices to show
\begin{align*}
 \|\overline{\tilde{\eta}^N}\|_{\alpha},\quad 
\|C(\tilde{\eta}^N,\tilde{w}^N)\|_{2\alpha},\quad
\|C(\tilde{w}^N, \tilde{\eta}^N)\|_{2\alpha}
\end{align*}
is sufficiently small by Lemma~\ref{local chart} (1).
We have
\begin{align*}
 \tilde{\eta}^N={w^N}^{\perp}-Q_{w^N}v_N+N(\tilde{w}^N)
({w^N}^{\perp}-Q_{w^N}v_N).
\end{align*}
It suffices to prove that
\begin{align*}
 \lim_{N\to\infty}\int_0^1Ad(Y(u,e,\tilde{w}^N))d{w^N}^{\perp}(u)=0.
\end{align*}
The proof of this is similar to the proof of 
Lemma~\ref{approximation of rough integral}.
We omit the proof.

By the proof in Lemma~\ref{local chart} and Remark~\ref{local coordinate},
we need to consider $\eta$ which satisfies
$
 (1+\ep_0)\|\bar{\eta}\|_{\alpha}+\|\bar{\eta}\|_{\alpha}^2+
\|C(\tw^N,\eta)\|_{2\alpha}
$
is small. 
We have
\begin{align*}
 C(\tw^N,\eta)_{s,t}=C(w^N,\eta)_{s,t}+\int_s^t\int_s^uAd(Y(r,e,w^N)^{-1})v_Ndr
\otimes d\eta_u.
\end{align*}
Since $|v^N|\le C|a-a_N|$, the $\|~\|_{2\alpha}$ norm of
the second term of the R.H.S. of the above equation
is small if $N$ is large.
\end{proof}

We now state our change of variable formula.
This kind of disintegration of Gaussian
measures appeared in 
Bismut's work~\cite{bismut} on short time asymptotics of heat kernels.

\begin{lem}[Change of variable formula]\label{change of variable}
Let $V_{\ep}(k)$ and $V_{\ep}^{S_x}(k)$
be the sets defined in Theorem~$\ref{local chart}$
and we assume $\ep$ there is sufficiently small.
\begin{enumerate}
 \item[$(1)$] For any smooth cylindrical functions $f$ on $W$ 
and smooth functions $g$
on $G$, we have
\begin{align}
& \int_{V_{\ep}(k)}f(w)g(Y(1,e,w))d\mu_{\la}(w)\nonumber\\
&\quad=
\int_{B_{\ep}(a)}g(x)\Biggl\{
\int_{B_{k,\ep}^{\Omega_0}(0)}f\left(\varPhi_k(x,\eta)\right)
\left(\frac{\la}{2\pi}\right)^{d/2}
\exp\left(-\frac{\la}{2}d(x,a)^2\right)
\nonumber\\
&\qquad\qquad\qquad\qquad\qquad\qquad |\det D_x\log(xa^{-1})|G(x,\eta,k)
 \exp\Bigl(-\la F(x,\eta,k)\Bigr)d\mu_{\la,W_0}(\eta)\Biggr\}
dx\label{change of variable0}
\end{align}
where $\mu_{\la}(\eta)$ denotes the Wiener measure on $W_0$,
$dx$ is the volume element on $G$.
Also $D_xv(x,\eta)$ and $D_0 v(x,\eta)$ denote the derivative
with respect to $x$ and $\eta$ respectively and
\begin{align}
G(x,\eta,k)&=\det\Bigg(I_H-P_{H^{\perp}_0}+
Q(k)\left(
(D_xv)(x,\eta)
[
(R_a)_{\ast}(D\exp)(\log(xa^{-1})Q(k)^{\ast}
]\right)\nonumber\\
&\qquad\qquad\qquad\qquad +
Q(k)(D_0 v)\left(x,\eta)\right)\Bigg)\\
 F(x,\eta,k)&=
(\eta,k)_H+(\varphi_x(\eta),k)_H+\frac{1}{2}|k|_H^2
+\frac{1}{2}|\varphi_x(\eta)|_H^2.\label{Fxetak}
\end{align}
\item[$(2)$] The image measure of 
$
 \exp\left(-\la F(a,\eta,k)\right)
G(a,\eta,k)
\left(\frac{\la}{2\pi}\right)^{d/2}
\mu_{\la}(d\eta)
$
by the mapping\\
 $\varPhi_{k,a} :B^{\Omega_0}_{k,\ep}(0)\to V_{\ep}^{S_a}(k)$
coincides with the measure $\delta_a(Y(1,e,w))\mu_{\la}(dw)$.
That is, for any $f\in \FC(W)$
\begin{align}
& \int_{V_{\ep}^{S_a}(k)}f(w)\delta_a(Y(1,e,w))\mu_{\la}(dw)\nonumber\\
& \qquad\qquad=\int_{B^{\Omega_0}_{k,\ep}(0)}
f(\varPhi_{k,a}(\eta))
 \exp\left(-\la F(a,\eta,k)\right)
G(a,\eta,k)
\left(\frac{\la}{2\pi}\right)^{d/2}
d\mu_{\la,W_0}(\eta).\label{change of variable1}
\end{align}
Here $G(a,\eta,k)$ can be calculated as
\begin{align*}
 G(a,\eta,k)&=\det\Bigg(I_H-P_{H^{\perp}_0}+
Q(k)\left(
(D_xv)(a,\eta)
[
(R_a)_{\ast}Q(k)^{\ast}
]\right)
+
Q(k)\left((D_0 v)(a,\eta)\right)\Bigg).
\end{align*}
Conversely, the image measure of $\delta_a(Y(1,e,w))\mu_{\la}(dw)$ by
the mapping $w\in (\in V_{\ep}^{S_a}(k)) \mapsto 
\varPsi_{k,a}(w)=
P(k)(w-k)\in B_{k,\ep}^{\Omega_0}(0)
$
is equal to 
$
 \exp\left(-\la F(a,\eta,k)\right)
G(a,\eta,k)
\left(\frac{\la}{2\pi}\right)^{d/2}1_{B_{k,\ep}^{\Omega_0}(0)}(\eta)
\mu_{\la}(d\eta).
$
\end{enumerate}
\end{lem}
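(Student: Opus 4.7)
The plan is to use the local bijection $\varPhi_k$ of Lemma~\ref{local chart} to pull back the Wiener integral on $V_\ep(k)$ to a product integral on $B_\ep(a)\times B_{k,\ep}^{\Omega_0}(0)$. Two ingredients drive the calculation: the Cameron--Martin theorem for the shift by $k\in H$, and the product decomposition $\mu_\la=\mu_{\la,W_0}\otimes\mu_{\la,W_0^\perp}$ corresponding to $W=W_0\oplus W_0^\perp$, where $\mu_{\la,W_0^\perp}$ is the finite-dimensional Gaussian that, under the isometry $Q(k):\g\to H_0^\perp$, pulls back to $(\la/2\pi)^{d/2}e^{-\la|v|^2/2}dv$ on $\g$. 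Parametrizing $w=k+\eta+Q(k)v$ with $(\eta,v)\in W_0\times\g$, these two steps rewrite $\mu_\la(dw)|_{V_\ep(k)}$ as
\[
\exp\Bigl(-\la(k,\eta)_H-\la(k,Q(k)v)_H-\tfrac{\la}{2}|k|_H^2-\tfrac{\la}{2}|v|^2\Bigr)\Bigl(\tfrac{\la}{2\pi}\Bigr)^{d/2}d\mu_{\la,W_0}(\eta)\,dv,
\]
where $(k,\eta)_H$ denotes the Paley--Wiener pairing for $\eta\in W_0$.

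The next step is the finite-dimensional substitution from $v$ to $x\in G$ via $v=v(x,\eta)$ from Lemma~\ref{local chart}(2). I would factor the Jacobian through the intermediate coordinate $u=\log(xa^{-1})$: writing $dv=|\det D_uv|\,du$ and $du=|\det D_x\log(xa^{-1})|\,dx$, the chain rule $D_uv(x,\eta)=D_xv(x,\eta)\cdot(R_a)_{\ast}D\exp(u)$ gives a $\g$-determinant which, after conjugation by $Q(k)$ to move from $\g$ to $H_0^\perp$ and assembly into the block structure on $H=H_0\oplus H_0^\perp$ (with $I_{H_0}$ in the top-left block and the off-diagonal cross-term $Q(k)(D_0v)P_{H_0}$, which is lower-triangular and does not affect the determinant), is identified with $G(x,\eta,k)$. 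To produce the leading Gaussian factor $e^{-\la d(x,a)^2/2}$, I would invoke the left-multiplication identity of Lemma~\ref{properties of Y}(1) applied with $\varphi_t=\exp(t\log(xa^{-1}))$: it yields $v(x,0)=\log(xa^{-1})$, so $|v(x,0)|^2=d(x,a)^2$, and the residual mixed and quadratic contributions in $\eta$ combine with the Cameron--Martin density to assemble into $e^{-\la F(x,\eta,k)}$.

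Part (2) then follows from (1) by formally specializing $g=\delta_a$: at $x=a$ we have $d(a,a)=0$, $\log(aa^{-1})=0$, and $|\det D_a\log(aa^{-1})|=|\det(R_{a^{-1}})_{\ast}|=1$ by right-invariance of Haar measure, so the integrand collapses to $(\la/2\pi)^{d/2}G(a,\eta,k)e^{-\la F(a,\eta,k)}\,d\mu_{\la,W_0}(\eta)$. The rigorous interpretation uses the Watanabe--Sugita framework: since the Malliavin covariance of $Y(1,e,w)$ is non-degenerate (see Remark~\ref{remark to definition of Omega}(3)), $\delta_a(Y(1,e,w))\,d\mu_\la(w)$ is a well-defined finite positive measure concentrated on $S_a$, and (1) identifies its pullback under $\varPhi_{k,a}$ explicitly on the chart $V_\ep(k)\cap S_a$. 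The main technical obstacle is the operator-theoretic identification of the finite-dimensional Jacobian $|\det D_uv|$ with the infinite-dimensional determinant $G(x,\eta,k)$ on $H$: one must verify that the block structure on $H_0\oplus H_0^\perp$ indeed reproduces the claimed formula for $G$, and that the Cameron--Martin decomposition $k=P_{H_0}(k)+Q(k)k^\perp$ recombines via $|P_{H_0}(k)|_H^2+|k^\perp|_\g^2=|k|_H^2$ to produce the single term $\tfrac{1}{2}|k|_H^2$ in $F$, with the remaining mixed terms accounting exactly for $(k,\eta)_H+(\varphi_x(\eta),k)_H$.
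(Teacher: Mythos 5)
Your overall route is the same as the paper's: decompose $\mu_\la=\mu_{\la,W_0}\otimes\mu_{\la,W_0^\perp}$, shift by $k$ via Cameron--Martin, and perform the finite-dimensional substitution $v\mapsto x$ through $u=\log(xa^{-1})$. The paper packages the first two steps as a single application of Kusuoka's change-of-variable theorem to the map $S(\eta^{\perp},\eta)=k+\eta+Q(k)v(\exp(Q(k)^{\ast}\eta^{\perp})a,\eta)$, and your block-triangularity observation is exactly how $\det DS$ collapses to a $\g$-determinant. Your treatment of the Jacobian, of the recombination $|P_{H_0}k|_H^2+|k^\perp|_\g^2=|k|_H^2$, and of part (2) by disintegration at $x=a$ are sound.

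The gap is in the assembly of the exponential factors. The density you display,
\[
\exp\Bigl(-\la(k,\eta)_H-\la(k,Q(k)v)_H-\tfrac{\la}{2}|k|_H^2-\tfrac{\la}{2}|v|^2\Bigr)\Bigl(\tfrac{\la}{2\pi}\Bigr)^{d/2}d\mu_{\la,W_0}(\eta)\,dv,
\]
is, after substituting $v=v(x,\eta)$, already equal to $e^{-\la F(x,\eta,k)}(\la/2\pi)^{d/2}d\mu_{\la,W_0}(\eta)\,dv$ on the nose: the term $\tfrac{\la}{2}|v|^2$ \emph{is} the term $\tfrac{\la}{2}|\varphi_x(\eta)|_H^2$ of $F$ in (\ref{Fxetak}). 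There is therefore nothing left over from which to manufacture the additional prefactor $e^{-\frac{\la}{2}d(x,a)^2}$. Your proposal to obtain it from $|v(x,0)|^2=d(x,a)^2$ counts the quadratic part of $v$ twice (once at $\eta=0$ for the prefactor and once inside $F$); moreover $\frac12|v(x,\eta)|^2$ does not split as $\frac12 d(x,a)^2$ plus terms absorbable into $F$ (note $|v(a,\eta)|=O(\|\bar\eta\|_\alpha^2)$ by Lemma~\ref{local chart}). Carried out consistently, your computation produces the right-hand side of (\ref{change of variable0}) \emph{without} the factor $e^{-\frac{\la}{2}d(x,a)^2}$, so as written you have not derived the formula as stated. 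In the paper that factor arises from a different bookkeeping: the source measure for Kusuoka's formula is written as $d\mu_{0,\la}(\eta)\,(\la/2\pi)^{d/2}e^{-\frac{\la}{2}|\eta^{\perp}|^2}d\eta^{\perp}$, where $\eta^{\perp}=Q(k)\log(xa^{-1})$ is an auxiliary Gaussian coordinate (not the $W_0^{\perp}$-component of $w-k$) with $|\eta^{\perp}|_H=d(x,a)$, and the Radon--Nikodym density $\det(DS)e^{-\la F}$ is taken relative to that Gaussian rather than to Lebesgue measure $d\eta^\perp$. You must either reproduce this cancellation pattern explicitly or reconcile your Lebesgue-based bookkeeping with it; the two disagree precisely by the factor $e^{-\frac{\la}{2}d(x,a)^2}$, and you should flag this rather than paper over it. Since the disputed factor equals $1$ at $x=a$, part (2) --- the only statement used in the sequel --- comes out the same under either bookkeeping.
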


\begin{rem}
(1) 
In (\ref{change of variable0}), two determinants appear.
The first determinant for the linear operator on $H$ is well-defined because 
the image of $P_{H_0^{\perp}}$ and $Q(k)$ is finite dimensional operator,
and hence the perturbed operator is of trace class.
The second one, $\det D_x\log(xa^{-1})$ is usual determinant of the linear
mapping from $T_xG$ to $\g=T_eG$.

\noindent
(2) In the following proof, 
we use the notation 
$B_{\ep}^{H_0^{\perp}}(0)=\{h\in H_0^{\perp}~|~\|h\|_H<\ep\}$
and set $Q(k)^{\ast}h=\int_0^1Ad(Y(s,e,k))\dot{h}(s)ds$ $(h\in H)$.
Note that $Q(k)^{\ast}Q(k)v=v$ and $Q(k)^{\ast}$ is the adjoint operator
of $Q(k) :\g\to H_0^{\perp}\subset H$ 
and $P_{H_0^{\perp}}=Q(k)Q(k)^{\ast}$ holds.
\end{rem}

\begin{proof}
(1) We consider the following mapping;
\[
 \begin{array}{ccccccc}
 B_{\ep}^{H_0^{\perp}}(0)\times B_{k,\ep}^{\Omega_0}(0)&
\stackrel{Q(k)^{\ast}\times I}{\longrightarrow} & 
\log B_{\ep}(e)\times  B_{k,\ep}^{\Omega_0}(0) &
\stackrel{R_a(\exp(\cdot))\times I}\longrightarrow & 
B_{\ep}(a)\times B_{k,\ep}^{\Omega_0}(0) &
\stackrel{\varPhi_k}{\longrightarrow} &
V_{\ep}(k)
\\
 \rotatebox{90}{$\in$} & & \rotatebox{90}{$\in$} & & \rotatebox{90}{$\in$}
& & \rotatebox{90}{$\in$}
\\
 (\eta^{\perp},\eta)& \longmapsto & (Q(k)^{\ast}\eta^{\perp},\eta)=(v,\eta)
&&  \longmapsto (e^va,\eta)=(x,\eta)  & \longmapsto
& \varPhi_k(x,\eta)
\end{array}
\]
Let 
\begin{align*}
& T=(Q(k)^{\ast}\times I)^{-1}
\circ (R_a(\exp(\cdot))\times I)^{-1}\, : \,
B_{\ep}(a)\times B_{k,\ep}^{\Omega_0}(0)\to 
B_{\ep}^{H_0^{\perp}}(0)\times B_{k,\ep}^{\Omega_0}(0),\\
& S=\varPhi_k
\circ (R_a(\exp(\cdot))\times I)\times (Q(k)^{\ast}\times I)\, :\,
B_{\ep}^{H_0^{\perp}}(0)\times B_{k,\ep}^{\Omega_0}(0)\to V_{\ep}(k).
\end{align*}
Then
\begin{align*}
 (\eta^\perp,\eta)&=T(x,\eta)
=(Q(k)(\log(xa^{-1}),\eta),\\
S(\eta^{\perp},\eta)&=k+\eta+Q(k)
v\Bigl(\exp(Q(k)^{\ast}\eta^{\perp})a,\eta\Bigr),\\
(S\circ T)(x,\eta)&=\varPhi_k(x,\eta).
\end{align*}

$S$ is a mapping from $B_{\ep}^{H_0^{\perp}}(0)\times B_{k,\ep}^{\Omega_0}(0)$
to $V_{\ep}(k)$
and 
\begin{align*}
 (DS)(\eta^{\perp},\eta)&=
I_H-P_{H^{\perp}_0}\nonumber\\
&\quad+
Q(k)\left(
(D_x v)(\exp(Q(k)^{\ast}\eta^{\perp})a,\eta)
[
(R_a)_{\ast}(D\exp)(Q(k)^{\ast}\eta^{\perp})Q(k)^{\ast}
]\right)\nonumber\\
&\quad+
Q(k)(D_0 v)\left(\exp(Q(k)^{\ast}\eta^{\perp})a,\eta)\right).
\end{align*}
Since the image of the linear operator $P_{H_0^{\perp}}$, 
$Q(k)$ is finite dimensional 
vector spaces,
the determinant $\det (DS)(\eta^{\perp},\eta)$ can be defined
and $DS(\eta^{\perp},\eta)$ is bijective linear map
if $\ep$ is sufficiently small.
Hence, we can apply the change of variable formula (\cite{kusuoka1982})
to the map $S$ and we obtain
 \begin{align}
& \int_{V_{\ep}(k)}f(w)g(Y(1,e,w))d\mu_{\la}(w)\nonumber\\
&=
\iint_{B_{\ep}^{H_0^{\perp}}(0)\times B_{k,\ep}^{\Omega_0}(0)}
f\left(S(\eta^{\perp},\eta)\right)
g(\exp(Q(k)^{\ast}\eta^{\perp})a)\nonumber\\
&\qquad    
\det((DS)(\eta^{\perp},\eta))
\exp\left(-\la F(\exp(Q(k)^{\ast}\eta^{\perp})a,\eta,k)
\right)
d\mu_{0,\la}(\eta)\left(\frac{\la}{2\pi}\right)^{d/2}
\exp\Bigl(-\frac{\la}{2}|\eta^{\perp}|_{H}^2\Bigr)d\eta^{\perp},
\label{Vk to eta etaperp}
\end{align}
Note that we do not need Carleman-Fredholm's determinant in this case.
We next use the finite dimensional change of variable formula
with respect to $\eta^{\perp}=Q(k)(\log(xa^{-1}))$.
We obtain,
\begin{align}
& \text{RHS of (\ref{Vk to eta etaperp})}\nonumber\\
&\quad=
\int_{B_{\ep}(a)}g(x)\Biggl\{
\int_{B_{k,\ep}^{\Omega_0}(0)}f\left(\varPhi_k(x,\eta)\right)
\left(\frac{\la}{2\pi}\right)^{d/2}
\exp\Bigl(-\frac{\la}{2}d(x,a)^2\Bigr)
\nonumber\\
&\qquad \det\Bigg(I_H-P_{H^{\perp}_0}+
Q(k)\left(
(D_x v)(x,\eta)
[
(R_a)_{\ast}(D\exp)(\log(xa^{-1})Q(k)^{\ast}
]\right)
+
Q(k)(D_0 v)\left(x,\eta)\right)\Bigg)\nonumber\\
&\qquad \quad |\det D_x\log(xa^{-1})|
 \exp\left(-\la F(x,\eta,k)\right)d\mu_{\la,W_0}(\eta)\Biggr\}
dx\label{change of variable2}
\end{align}
which completes the proof of (1).

\noindent
(2)
On the other hand, we have
\begin{align}
 \int_{V_{\ep}(k)}f(w)g(Y(1,e,w))d\mu_{\la}(w)&=
\int_{B_{\ep}(a)}g(x)\left(\int_{V_{\ep}^{S_x}(k)}
f(w)\delta_x(Y(1,e,w))d\mu_{\la}(w)\right)dx.\label{change of variable3}
\end{align}
By (\ref{change of variable2}) and (\ref{change of variable3}), for any 
$x\in B_{\ep}(a)$, we obtain
\begin{align*}
&\int_{V_{\ep}^{S_x}(k)}
f(w)\delta_x(Y(1,e,w))d\mu_{\la}(w)\nonumber\\
&=
\int_{B_{k,\ep}^{\Omega_0}(0)}f\left(\varPhi_{k,x}(\eta)\right)
\left(\frac{\la}{2\pi}\right)^{d/2}
\exp\Bigl(-\frac{\la}{2}d(x,a)^2\Bigr)
\nonumber\\
&\qquad \det\Bigg(I_H-P_{H^{\perp}_0}+
Q(k)\left(
(D_x v)(x,\eta)
[
(R_a)_{\ast}(D\exp)(\log(xa^{-1})Q(k)^{\ast}
]\right)
+
Q(k)(D_0 v)\left(x,\eta)\right)\Bigg)\nonumber\\
&\qquad \quad |\det D_x\log(xa^{-1})|
 \exp\left(-\la F(x,\eta,k)\right)d\mu_{\la,W_0}(\eta).
\end{align*}
Considering the case $x=a$ and noting that
$(D\exp)(0)=I_{T_eG}$ and
$\left(D_x\log (xa^{-1})\right)(R_x)_{\ast}|_{x=a}=I_{T_e(G)}$, we get
\begin{align*}
&\int_{V_{\ep}^{S_a}(k)}f(w)\delta_a(Y(1,e,w))d\mu_{\la}(w)\nonumber\\
&=
\int_{B_{k,\ep}^{\Omega_0}(0)}f\left(\varPhi_{k,a}(\eta)\right)
\det\Bigg(I_H-P_{H^{\perp}_0}+
Q(k)\left(
(D_x v)(a,\eta)
[
(R_a)_{\ast}Q(k)^{\ast}
]\right)
+
Q(k)(D_0 v)\left(a,\eta)\right)\Bigg)\nonumber\\
&\qquad 
 \exp\left(-\la F(a,\eta,k)\right)\left(\frac{\la}{2\pi}\right)^{d/2}
d\mu_{\la,W_0}(\eta)
\end{align*}
which implies the desired result.
\end{proof}

We have obtained two bijective measurable maps which are inverse to each other,
$\varPhi_{k,a} : B_{\ep}^{\Omega_0}(0)\to V_{\ep}^{S_a}(k)$ and
$\varPsi_{k,a} : V_{\ep}^{S_a}(k)\to B_{k,\ep}^{\Omega_0}(0)$ as in 
Remark~\ref{local coordinate}(2).
Moreover the image measures of these maps, 
$(\varPhi_{k,a})_{\ast}\mu_{\la,W_0}$ and
$(\varPsi_{k,a})_{\ast}\mu_{\la, a}$ are equivalent to 
$\mu_{\la, a}$ and 
$\mu_{\la,W_0}$
respectively.
Therefore the pull back of functions
$(\varPhi_{k,a}^{\ast}f)(\eta)=f(\varPhi_{k,a}(\eta))$ and
$(\varPsi_{k,a}^{\ast}g)(w)=g(\varPsi_{k,a}(w))$
can be defined for $f$ on $V_{\ep}^{S_a}(k)$
which is a $\mu_{\la,a}$-a.s.\,defined function and
$g$ on $B_{k,\ep}^{\Omega_0}(0)$ which is a
$\mu_{\la,W_0}$-a.s.\,defined function. 
We see that the following chain rule holds.

\begin{lem}
 Let $f\in \FC(W)$ and $g\in \FC(W_0)$.
Then we have,
\begin{align*}
 (D_0(\varPhi_{k,a}^{\ast}f)(\eta))\left[\cdot\right]
&=(D_{S_a}f)(\varPhi_a(\eta))[\left(I_{H_0}+D\varphi_a(\eta)\right)\cdot]
\in L^2(B_{k,\ep}^{\Omega_0}\to H_0^{\ast},\mu_{\la,W_0}),\\
 (D_{S_a}(\varPsi_{k,a}^{\ast}g))(w)\left[\,\cdot\,\right]
&=(D_0g)(\varPsi_{k,a}(w))\left[P(k)P(w)\cdot\right]
\in L^2(V_{\ep}^{S_a}(k)\to (T_{w}S_a)^{\ast}, \mu_{\la,a}).
\end{align*}
\end{lem}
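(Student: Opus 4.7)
The plan is to establish both identities by direct application of the chain rule, exploiting the $H_0$-smoothness of $\varphi_a$ from Lemma~\ref{local chart}~(1) together with the fact that, for $f\in\FC(W)$ and $g\in\FC(W_0)$, the compositions with $\varPhi_{k,a}$ and $\varPsi_{k,a}$ are ordinary smooth functions in the relevant Hilbert directions, so one can differentiate pathwise and then identify the limits with the Malliavin derivatives of $\varPhi_{k,a}^{\ast}f$ and $\varPsi_{k,a}^{\ast}g$.

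For the first identity I would fix $\eta\in B_{k,\ep}^{\Omega_0}(0)$ and $h\in H_0$, and expand
\[
\varPhi_{k,a}(\eta+th)=\varPhi_{k,a}(\eta)+t\bigl(h+D_0\varphi_a(\eta)[h]\bigr)+o(t)
\]
in $H$-norm, where the Fréchet differentiability of $\varphi_a$ on the $H_0$-open neighbourhood was established in Lemma~\ref{local chart}~(1). Since $f\in\FC(W)$ is smooth along Cameron–Martin directions, the classical chain rule yields
\[
D_0(\varPhi_{k,a}^{\ast}f)(\eta)[h]=Df(\varPhi_{k,a}(\eta))\bigl[h+D_0\varphi_a(\eta)[h]\bigr].
\]
The crucial observation, already noted at the end of the proof of Lemma~\ref{local chart}~(1), is that $h+D_0\varphi_a(\eta)[h]$ lies in $T_{\varPhi_{k,a}(\eta)}S_a$. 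Because $D_{S_a}f(w)=P(w)Df(w)$ and $P(w)$ is the orthogonal projection onto $T_wS_a$, pairing either $Df$ or $D_{S_a}f$ with a tangent vector produces the same number, which gives the claimed formula.

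For the second identity the map $\varPsi_{k,a}(w)=P(k)(w-k)$ is affine with constant derivative equal to $P(k)\colon H\to H_0$. Taking $g\in\FC(W_0)$ and $v\in T_wS_a$, the chain rule gives
\[
D_{S_a}(\varPsi_{k,a}^{\ast}g)(w)[v]=D_0g\bigl(\varPsi_{k,a}(w)\bigr)[P(k)v],
\]
which is the stated formula. The only verification needed is that $\varPsi_{k,a}^{\ast}g$, a priori defined only $\mu_{\la,a}$-a.s.\ on $V_{\ep}(k)\cap S_a$, actually belongs to $\rD(\E_{\la,S_a})$; this follows from the smoothness of $g$, the boundedness of $P(k)$, and the fact that $\varPsi_{k,a}$ is a bounded measurable map with image equal to the $H_0$-open set $B_{k,\ep}^{\Omega_0}(0)$.

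I do not foresee a substantial obstacle: both identities are essentially exercises in the chain rule, and the only mildly delicate point is checking that the pathwise derivatives we compute really are versions of the respective Malliavin derivatives. This is handled by approximating $f$ and $g$ by their cylindrical projections, using the $C^{1,\infty}_{b,H_0}$-regularity of $\varphi_a$ to pass the chain rule identities to the limit in the Sobolev norms on $S_a$ and $W_0$, and invoking the equivalences of image measures established in Lemma~\ref{change of variable}~(2) to transfer $L^p$ estimates between the two sides.
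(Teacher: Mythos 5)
Your proposal is correct and follows essentially the same route as the paper: apply the chain rule to $f\circ\varPhi_{k,a}$ using the Fr\'echet differentiability of $\varphi_a$ from Lemma~\ref{local chart}, note that $h+D_0\varphi_a(\eta)[h]$ lies in $T_{\varPhi_a(\eta)}S_a$ so that pairing with $Df$ or with $D_{S_a}f=P(w)Df$ gives the same value, and observe that $\varPsi_{k,a}$ is affine with derivative $P(k)$. The additional verifications you mention (that the pathwise derivatives are versions of the Malliavin derivatives, and that $\varPsi_{k,a}^{\ast}g$ is in the form domain) are reasonable but the paper treats them as implicit.
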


\begin{proof}
Recall that
\begin{align*}
&\varPhi_{k,a} : B_{k,\ep}^{\Omega_0}\ni \eta
\mapsto w=k+\eta+\varphi_a(\eta)\in V_{\ep}^{S_a}(k),\\
& \varPsi_{k,a} : V_{\ep}^{S_a}(k)\ni w\mapsto \eta=P(k)(w-k)\in B_{k,\ep}^{\Omega_0}.
\end{align*}
For $h\in H_0$
\begin{align*}
 \left(D_0\left(f(\varPhi_{k,a}(\eta))\right),h\right)_{H_0}&=
(Df)(\varPhi_{k,a}a(\eta))\left[h+D_{0}\varphi_a(\eta)[h]\right]\\
&=(Df)(\varPhi_{k,a}(\eta))\left[P(\varPhi_a(\eta))D_{0}\varPhi_a(\eta)h\right].
\end{align*}
For $h\in H$, we have
\begin{align*}
 \left(D\left(g(P(k)(w-k))\right),h\right)_H&=(D_0g)(P(k)(w-k))[P(k)h]\\
&=(D_0g)(\varPsi_{k,a}(w))[P(k)h],
\end{align*}
which completes the proof.
\end{proof}

The following lemma gives the explicit form of the Dirichlet form on $S_a$
in the local coordinate $(\varPsi_{k,a}, V_{\ep}^{S_a}(k))$.

\begin{lem}\label{change of variable dirichlet form} 
Let $k\in S_a^H$ and $f,g \in \FC(W)$.
Set $\tilde{f}(\eta)=(\varPhi_{k,a}^{\ast}f)(\eta)$ and 
$\tilde{g}(\eta)=(\varPhi^{\ast}_{k,a}g)(\eta))$.
Then
\begin{align}
&  \int_{V_{\ep}^{S_a}(k)}\left((\DS f)(w),(\DS g)(w)\right)
d\mu_{\la,a}(w)\nonumber\\
&=
\int_{B_{k,\ep}^{\Omega_0}(0)}\left((I_{H_0}+\tilde{K}_{k}(\eta))
D_0\tilde{f}(\eta),
D_0\tilde{g}(\eta)\right)
\exp\left(-\la F(a,\eta,k)\right)G(a,\eta,k)
c_{\la,a}d\mu_{\la,W_0}(\eta),\label{change of variable4}
 \end{align}
where
$c_{\la,a}=\left(\frac{\la}{2\pi}\right)^{d/2}p(\la^{-1},e,a)^{-1}$
and
$
\tilde{K}_k(\eta)=
\left(D_0\varPhi_{k,a}(\eta)^{\ast}D_0\varPhi_{k,a}(\eta)\right)^{-1}-I_{H_0}
\in \LL(H_0,H_0).$
Explicitly, we have
\begin{align}
 \tilde{K}_k(\eta)&=
\sum_{n=1}^{\infty}\Bigl(-\left(
(D_0v_a)^{\ast}(\eta)(D_0v_a)(\eta)\right)\Bigr)^n
\label{Neumann series}
\end{align}
and we have an estimate
\begin{align}
 |\tilde{K}_k(\eta)|_{H.S.}=O(\|\bar{\eta}\|_{\alpha}^2),
\label{estimate of tildeK}
\end{align}
where $|\cdot|_{H.S.}$ denotes the Hilbert-Schmidt norm.
\end{lem}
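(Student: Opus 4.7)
The plan is to combine the change-of-variable formula for the measure from Lemma~\ref{change of variable}~(2) with a chain-rule computation that expresses $(\DS f,\DS g)_H$ at $w=\varPhi_{k,a}(\eta)$ in terms of $D_0\tilde f(\eta),D_0\tilde g(\eta)$ and then to identify the quadratic form $D_0\varPhi_{k,a}^{\ast}D_0\varPhi_{k,a}$ explicitly.

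Step 1 (transport the measure). By Lemma~\ref{change of variable}~(2), the image of $\mu_{\la,a}$ restricted to $V_{\ep}(k)^{S_a}$ under $\varPsi_{k,a}$ is
\[
 c_{\la,a}\,\exp(-\la F(a,\eta,k))\,G(a,\eta,k)\,\ch_{B_{k,\ep}^{\Omega_0}(0)}(\eta)\,\mu_{\la,W_0}(d\eta),
\]
with $c_{\la,a}=(\la/(2\pi))^{d/2}p(\la^{-1},e,a)^{-1}$. Hence integrating any integrand of the form $\Phi(w)$ against $d\mu_{\la,a}$ on $V_{\ep}(k)^{S_a}$ reduces to integrating $\Phi(\varPhi_{k,a}(\eta))$ against the above weighted Wiener measure on $B_{k,\ep}^{\Omega_0}(0)$. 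It remains to identify the correct integrand for $\Phi(w)=(\DS f,\DS g)_H(w)$.

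Step 2 (chain rule and the inverse metric). Put $A(\eta):=D_0\varPhi_{k,a}(\eta):H_0\to T_{\varPhi_{k,a}(\eta)}S_a$; by Lemma~\ref{local chart}~(1), $A(\eta)h=h+Q(k)(D_0v)(a,\eta)[h]$ and the image sits in $T_wS_a$. Since $f\in\FC(W)$, the chain rule gives $D_0\tilde f(\eta)=A(\eta)^{\ast}\DS f(w)$ (and likewise for $g$); note that, because the image of $A$ lies in $T_wS_a$, only the $T_wS_a$-component of $Df$, namely $\DS f$, contributes. As $A$ is a linear isomorphism, $\DS f(w)=A(A^{\ast}A)^{-1}D_0\tilde f(\eta)$, and a one-line computation yields
\[
 (\DS f,\DS g)_H\big(\varPhi_{k,a}(\eta)\big)=\bigl((A^{\ast}A)^{-1}D_0\tilde f(\eta),\,D_0\tilde g(\eta)\bigr)_{H_0}.
\]
Combining this with Step~1 gives exactly (\ref{change of variable4}) with $\tilde K_k(\eta)=(A(\eta)^{\ast}A(\eta))^{-1}-I_{H_0}$.

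Step 3 (explicit form and size estimate). I will next identify $A^{\ast}A$. The crucial observations are: (a)~the first summand $h$ in $A(\eta)h$ lies in $H_0$, while $Q(k)(D_0v)(a,\eta)[h]$ lies in $H_0^{\perp}$, so the cross terms in $(A(\eta)h_1,A(\eta)h_2)_H$ vanish; (b)~$Q(k)\!:\!\g\!\to\!H_0^{\perp}$ is an isometric embedding, so $Q(k)^{\ast}Q(k)=I_{\g}$. Therefore
\[
 A(\eta)^{\ast}A(\eta)=I_{H_0}+(D_0v)(a,\eta)^{\ast}(D_0v)(a,\eta).
\]
By Lemma~\ref{local chart}~(6), $\|(D_0v)(a,\eta)\|_{H_0\otimes\g}\le C\|\bar\eta\|_{\alpha}$, hence for $\eta$ in $B_{k,\ep}^{\Omega_0}(0)$ with $\ep$ small the operator $(D_0v)^{\ast}(D_0v)$ has norm strictly less than $1$. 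A Neumann series inversion then yields (\ref{Neumann series}) together with the estimate $|\tilde K_k(\eta)|=O(\|\bar\eta\|_{\alpha}^2)$ in (\ref{estimate of tildeK}).

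The one point requiring mild care is the use of $\FC(W)$-functions, for which $Df$ takes values in $H$: one must verify that in the chain-rule identity only the $T_wS_a$-projection of $Df$ enters, which is what lets $\DS f$ replace $Df$. No deep obstacle is expected; the computation is essentially linear algebra in $H=H_0\oplus H_0^{\perp}$ once Lemma~\ref{change of variable} and the derivative identity of Lemma~\ref{local chart} are in hand.
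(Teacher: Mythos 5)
Your proposal is correct and follows essentially the same route as the paper's proof: transport the measure via Lemma~\ref{change of variable}~(2), use the chain rule $D_0\tilde f=(D_0\varPhi_a)^{\ast}\DS f$ to get the inverse metric $(A^{\ast}A)^{-1}$, and then invert $I_{H_0}+(D_0v_a)^{\ast}(D_0v_a)$ by a Neumann series using $P_{H_0}D_0\varphi_a\equiv0$ and the bound (\ref{D_0v}). The only difference is that you spell out the orthogonality/isometry bookkeeping slightly more explicitly than the paper does.
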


\begin{proof}
Using (\ref{change of variable1}), we have
 \begin{align*}
&\int_{V_{\ep}^{S_a}(k)}
\left((\DS f)(w),(\DS g)(w)\right)d\mu_{\la,a}(w)
\nonumber\\
&=\int_{V_{\ep}^{S_a}(k)}
\left((\DS f)(w),(\DS g)(w)\right)\delta_a(Y(1,e,w))
p(\la^{-1},e,a)^{-1}
d\mu_{\la}(w)\nonumber\\
&=\int_{B_{k,\ep}^{\Omega_0}(0)}
\Bigl((\DS f)(\varPhi_{k,a}(\eta)),(\DS g)(\varPhi_{k,a}(\eta))\Bigr)
\exp\left(-\la F(a,\eta,k)\right)
G(a,\eta,k)c_{\la,a}d\mu_{\la,W_0}(\eta).
 \end{align*}
For any $h\in H_0$, we have
\begin{align*}
 \left(D_0(f(\varPhi_{k,a}(\eta)),h\right)=(\DS f)(\varPhi_{k,a}(\eta))
\left[D_0\varPhi_{k,a}(\eta)[h]\right]
=\Bigl((D_0\varPhi_{k,a}(\eta))^{\ast}(\DS f)(\varPhi_{k,a}(\eta)),h\Bigr).
\end{align*}
This shows 
\[
  (\DS f)(\varPhi_{k,a}a(\eta))=
\left((D_0\varPhi_{k,a}(\eta))^{\ast}\right)^{-1}(D_0\tilde{f})(\eta)
=\left((D_0\varPhi_{k,a}(\eta))^{-1}\right)^{\ast}(D_0\tilde{f})(\eta),
\]
which proves (\ref{change of variable4}).
Note that
\begin{align*}
 \left(D_0\varPhi_{k,a}(\eta)\right)^{\ast}D_0\varPhi_{k,a}(\eta)
&=I_{H_0}+D_0v_a(\eta)^{\ast}Q_k^{\ast}Q_kD_0v_a(\eta)+
D_0v_a(\eta)^{\ast}Q_k^{\ast}P_{H_0}+
P_{H_0}Q_kD_0v_a(\eta)\\
&=I_{H_0}+D_0v_a(\eta)^{\ast}D_0v_a(\eta).
\end{align*}
Here we have used that $Q_k^{\ast}Q_k=id_{\g}$ and
$Q_k^{\ast}P_{H_0}$ and $P_{H_0}Q_k$ are 0.
Hence (\ref{Neumann series}) follows from Neumann series expansion
since
$|(D_0\varphi_a)(\eta)|<1$.
Finally, (\ref{estimate of tildeK}) follows from
(\ref{D_0v}) and 
$\|C(k,\eta)\|_{2\alpha}\le C\|k\|_H\|\eta\|_{\alpha}$.
This completes the proof.
\end{proof}

Here, we summarize important notion in the analysis in the local coordinate.
Recall that $U_k : T_kS_a\to H_{0,0}(=\{h\in H~|~h(1)=0\})$ 
is the unitary operator
defined by 
$(U_kh)(t)=\int_0^tAd(Y(s,e,k))dh_s$ as in 
Example~\ref{example of f} 
and we denote $c_{\la,a}=\left(\frac{\la}{2\pi}\right)^{d/2}p(\la^{-1},e,a)^{-1}$.

\begin{dfi}
Suppose $Y(\cdot,e,k)$ is a geodesic on $G$ between $e$ and $a$.
That is, $k$ can be written $k_t=t \xi$ $(\xi\in \g)$ and $Y(1,e,k)=a$.

\noindent
(1) We consider the following measures on $W_0$.
\begin{align*}
d\mu_{\la,T,W_0}(\eta)&=\exp
\left(
-\frac{\la}{2}
:\left(U_k^{-1}T_{\xi} U_k\eta,\eta\right):_{\mu_{\la,W_0}}
\right)
d\mu_{\la,W_0}(\eta),\\
d\hat{\mu}_{\la,T,W_0}(\eta)&=
c_{\la,a}\exp\left(-\frac{\|k\|_H^2}{2}\right)
d\mu_{\la,T,W_0}(\eta),\\
d\tilde{\mu}_{\la,a,W_0}(\eta)&=
\exp\big(-\la F(a,\eta,k)\big)G(a,\eta, k)1_{B_{k,\ep}^{\Omega_0}(0)}(\eta)
c_{\la,a}d\mu_{\la,W_0}(\eta),
\end{align*}
where $T_{\xi}$ is the Hilbert-Schmidt operator on $H_{0,0}$
defined in Lemma~$\ref{hessian of E}$.
That is, we consider the case $T=U_k^{-1}T_{\xi}U_k$ in Theorem~$\ref{cons}$
and the quadratic functional 
$:\left(U_k^{-1}T_{\xi} U_k\eta,\eta\right):_{\mu_{\la,W_0}}$
is defined therein.

\noindent
(2)
Let $-L_{\la,W_0}, -L_{\la,T,W_0}$, $-\hat{L}_{\la,T,W_0}$ be
the generators of the following Dirichlet forms respectively,
\begin{align*}
& \E_{\la,W_0}(f,g)=\int_{W_0}(D_0f(\eta),D_0g(\eta))_{H_0}d\mu_{\la,W_0}(\eta) \quad
\text{in $L^2(W_0,d\mu_{\la})$},\\
&
\E_{\la,T, W_0}(f,g)=\int_{W_0}(D_0f(\eta),D_0g(\eta))_{H_0}d\mu_{\la,T,W_0}(\eta)
\quad \text{in $L^2(W_0,d\mu_{\la,T,W_0})$},
\\
&
\hat{\E}_{\la, T, W_0}(f,g)=\int_{W_0}(D_0f(\eta),D_0g(\eta))_{H_0}
d\hat{\mu}_{\la,T,W_0}(\eta)
\quad \text{in $L^2(W_0,d\hat{\mu}_{\la,T,W_0})$}.
\end{align*}
The above Dirichlet forms are defined as the closure of
the closable forms defined in $\FC(W_0)$.
Also, we consider a bilinear form 
\begin{align*}
\tilde{\E}_{\la,a,B_{k,\ep}^{\Omega_0}(0)}(f,g)=
\int_{B_{k,\ep}^{\Omega_0}(0)}\left((I_{H_0}+\tilde{K}_{k}(\eta))D_0f(\eta),
D_0g(\eta)\right)d\tilde{\mu}_{\la,a,W_0}(\eta), 
\quad f,g\in \DD^{\infty}(W_0,\mu_{\la,W_0}).
\end{align*}
\end{dfi}

\begin{rem}\label{remark on dirichlet form}
(1) For $f\in \FC(W)$, let $\tilde{f}(\eta)=(\varPhi^{\ast}_{k,a}f)(\eta)$.
Then
\begin{align*}
 \int_{B_{\ep,k}^{\Omega_0}(0)}\tilde{f}(\eta)d\tilde{\mu}_{\la,a,W_0}(\eta)
=\int_{V_{\ep}^{S_a}(k)}f(w)d\mu_{\la,a}(w).
\end{align*}
That is, 
$\tilde{\mu}_{\la,a,W_0}=(\varPsi_{k,a})_{\ast}\mu_{\la,a}|_{V_{\ep}^{S_a}(k)}$
and we will see that the measure 
$\hat{\mu}_{\la,T,W_0}$ approximates 
$\tilde{\mu}_{\la,a,W_0}$ very well when $\ep$ is small in
Lemma~\ref{expansion of F and G}.
Since $\hat{\mu}_{\la,T,W_0}$ is the constant multiple of $\mu_{\la,T,W_0}$,
$-\hat{L}_{\la,T,W_0}$ is essentially the same operator as
$-L_{\la,T,W_0}$.

\noindent
(2) For $f,g \in \FC(W)$, let 
$\tilde{f}(\eta)=(\varPhi_{k,a}^{\ast}f)(\eta)$ and
$\tilde{g}(\eta)=(\varPhi_{k,a}^{\ast}g)(\eta)$.
As we shown in Lemma~\ref{change of variable dirichlet form},
it holds that
\begin{align*}
\tilde{\E}_{\la,a,B_{\ep,k}^{\Omega_0}(0)}(\tilde{f},\tilde{g})
=\int_{V_{\ep}^{S_a}(k)}(D_{S_a}f(w),D_{S_a}g(w)) d\mu_{\la,a}(w)
\end{align*}
\end{rem}

We now give the more explicit form of the Wick product of
$:\left(U_k^{-1}T_{\xi} U_k\eta,\eta\right):_{\mu_{\la,W_0}}$.

\begin{lem}\label{quadratic functional}
 We have
\begin{align}
 :\left(U_k^{-1}T_{\xi} U_k\eta,\eta\right):_{\mu_{\la,W_0}}&=
\int_0^1\left([U_k\eta_s,\xi],dU_k\eta_s\right),\quad \text{$\mu_{\la,W_0}$
-a.s. $\eta$}.
\end{align}
and the integral on R.H.S. is a rough integral.
\end{lem}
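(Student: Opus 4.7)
The plan is to reduce to the smooth case via a change of variables, then identify the rough integral with a Wick square using the trace-free property of $\mathrm{ad}(\xi)$.

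First, I will treat the smooth case directly. For $h \in H_{0,0} \cap C^1$, the derivative of $T_\xi h$ is $\dot{(T_\xi h)}(t) = [h(t),\xi] - \int_0^1 [h(s),\xi]\,ds$, and since $h(0)=h(1)=0$ forces $\int_0^1 \dot h(t)\,dt = 0$, the constant term drops out of $(T_\xi h,h)_{H_{0,0}} = \int_0^1 (\dot{(T_\xi h)}(t),\dot h(t))\,dt$. Thus $(T_\xi h,h)_{H_{0,0}} = \int_0^1 ([h(t),\xi], dh(t))$ as a Stieltjes integral, which is the target identity for smooth paths.

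Next, I will leverage the unitarity of $U_k : H_0 \to H_{0,0}$: for $h \in H_0 = T_k S_a$, the defining relation $\int_0^1 \mathrm{Ad}(Y(s,e,k))\dot h(s)\,ds = 0$ precisely says $(U_k h)(1) = 0$, so $U_k h \in H_{0,0}$, and $U_k$ is an isometric isomorphism onto $H_{0,0}$. Consequently $U_k^{-1} T_\xi U_k$ on $H_0$ is unitarily equivalent to $T_\xi$ on $H_{0,0}$ (with the same eigenvalues $\{\zeta_i(\xi)\}$), so the Wick regularization intertwines: writing $h_s = (U_k \eta)_s$ for the rough integral defined in Example \ref{example of f} and Lemma \ref{w_f-roughpath} with $f(s) = \mathrm{Ad}(Y(s,e,k))$, we have $:(U_k^{-1} T_\xi U_k \eta,\eta):_{\mu_{\la,W_0}}$ equal to the Wick square $:(T_\xi h,h):$ relative to the pushforward Gaussian measure.

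The core step is to identify this Wick square with the rough integral $\int_0^1 ([h_s,\xi], dh_s)$. I will approximate $h$ by its dyadic polygonal $h^N = U_k \eta^N$. For each $N$, the smooth identity of the first paragraph gives $(T_\xi h^N, h^N)_{H_{0,0}} = \int_0^1 ([h^N_s,\xi], dh^N_s)$. Passing to the limit, the right-hand sides converge to the rough integral (using Lemma \ref{approximation of rough integral} and the convergence $\overline{w_f^N} \to \overline{w_f}$ from Lemma \ref{w_f-roughpath}(2)), while the left-hand sides converge in $L^2(\mu_{\la,W_0})$ to an element of the second Wiener chaos. To match this limit with $:(T_\xi h,h):$, I will compare with the Itô partial sums $\sum_i ([h_{\tau^N_{i-1}},\xi], h_{\tau^N_{i-1},\tau^N_i})$, whose $L^2$-limit is well known to be $:(T_\xi h,h):$ by the eigenbasis/chaos computation of Theorem \ref{cons}. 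The Itô–Stratonovich correction between these two is $\frac{1}{2\la}\sum_k \int_0^1 [\mathrm{ad}(\xi)]^k_{\,k}\,ds = \frac{1}{2\la}\mathrm{tr}(\mathrm{ad}(\xi))$. Because $G$ is compact, the fixed bi-invariant inner product on $\mathfrak{g}$ is $\mathrm{Ad}$-invariant, whence $\mathrm{ad}(\xi)$ is skew-adjoint and $\mathrm{tr}(\mathrm{ad}(\xi)) = 0$. So the correction vanishes and the two limits agree.

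The main obstacle will be the last identification: writing the rough/Stratonovich integral as Itô plus a vanishing correction requires care because $h = U_k\eta$ is not itself Brownian — its second-level iterated integral is the $\overline{w_f}^2$ constructed in Lemma \ref{w_f-roughpath} via rough integration. I will need to verify that this geometric second level is precisely the Stratonovich one, so that the bracket/trace computation above correctly produces the Itô–Stratonovich correction. Once this is confirmed, the trace-vanishing from compactness closes the argument and the almost sure identity follows from $L^2$ convergence along dyadic subsequences.
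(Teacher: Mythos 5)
Your proposal is correct and follows essentially the same route as the paper: dyadic polygonal approximation $U_k\tilde{P}_N\eta=P_NU_k\eta$, the smooth identity $(T_\xi h,h)_{H_{0,0}}=\int_0^1([h,\xi],dh)$, convergence of the Stieltjes integrals to the rough integral via Lemma~\ref{w_f-roughpath}, and vanishing of the renormalization term from the skew-adjointness of $\ad(\xi)$. The only cosmetic difference is that the paper subtracts the trace $\la^{-1}\tr\,\tilde{P}_NU_k^{-1}T_\xi U_k\tilde{P}_N$ directly (computing it in the Haar basis and showing it tends to $0$), whereas you reach the same vanishing trace by phrasing it as an It\^o--Stratonovich correction.
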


\begin{proof}
 Let $f_{1,0}(t)=1$ $(0\le t\le 1)$.
For $0\le l\le 2^{n-2}-1$ and $n\ge 2$,
\begin{align*}
f_{n,l}(t)=
\begin{cases}
\sqrt{2^{n-2}} & \frac{2l}{2^{n-1}}\le
t<\frac{2l+1}{2^{n-1}}\\
-\sqrt{2^{n-2}} & \frac{2l+1}{2^{n-1}}\le t<\frac{2l+2}{2^{n-1}}\\
0 & \text{otherwise}
\end{cases}
\end{align*}
Set $g_{n,l}(t)=\int_0^tf_{n,l}(t)$.
Then $\{e_{n,l,i}(t)~|~1\le i\le d, n\ge 1, 0\le l\le \max(2^{n-2}-1,0)\}$
and $\{e_{n,l,i}(t)~|~1\le i\le d, n\ge 2, 0\le l\le \max(2^{n-2}-1,0)\}$
are complete orthonormal systems of $H$ and $H_{0,0}$ respectively,
where $e_{n,l,i}(t)=g_{n,l}(t)\ep_i$.
Therefore, 
$\{\tilde{e}_{n,l,i}(t)~|~1\le i\le d, n\ge 2, 0\le l\le \max(2^{n-2}-1,0)\}$
are c.o.n.s. of $H_0=T_kS_a$, where
$\tilde{e}_{n,l,i}=U_k^{-1}e_{n,l,i}$.
Let 
$P_N : H_{0,0}\to \text{linear span of}\,
\{e_{n,l,i}(t)~|~1\le i\le d, 2\le n\le N, 0\le l\le 2^{n-2}-1\}$ 
and
$\tilde{P}_N : H_{0}\to \text{linear span of}\,
\{\tilde{e}_{n,l,i}(t)~|~1\le i\le d, 2\le n\le N, 0\le l\le 2^{n-2}-1\}$ 
be the projection operators.
Note that $P_Nh$ is a dyadic polygonal approximation of $h$.
Let
\begin{align*}
 F_{N,k}(\eta)=\left(U_k^{-1}T_{\xi}U_k\tilde{P}_N\eta,
\tilde{P}_N\eta\right)_{H_0}-
\frac{1}{\la}\tr \tilde{P}_NU_k^{-1}T_{\xi}U_k\tilde{P}_N,
\end{align*}
where $\tr$ denotes the trace of the operator defined on $H_0$.
Then by the definition, we have
\begin{align*}
 :\left(U_k^{-1}T_{\xi} U_k\eta,\eta\right):_{\mu_{\la,W_0}}=
\lim_{N\to\infty}F_{N,k}(\eta), \qquad \mu_{\la,0} a.s.
\end{align*}
We calculate $F_{N,k}$.
By Lemma~6.12 in \cite{a2007} and $\sum_i([\xi,\ep_i],\ep_i)=0$, we have
\begin{align*}
\lim_{N\to\infty} \tr \tilde{P}_NU_k^{-1}T_{\xi}U_k\tilde{P}_N&=
\lim_{N\to\infty} \sum_{1\le i\le d, 1\le n\le N, 0\le l\le (2^{N-2}-1)\vee 0}
\left(T_{\xi}e_{n,l,i},e_{n,l,i}\right)_{H_{0,0}}\\
&=-\sum_{i=1}^d\int_0^1\left([\ep_i s,\xi],\ep_i\right)ds=0.
\end{align*}
Noting $U_k\tilde{P}_N\eta=P_NU_k\eta$ and applying Lemma~\ref{w_f-roughpath},
we get
\begin{align*}
\lim_{N\to\infty}\left(U_k^{-1}T_{\xi}U_k\tilde{P}_N\eta,\tilde{P}_N\eta\right)_{H_0}
&=\lim_{N\to\infty}\left(T_{\xi}P_NU_k\eta,P_NU_k\eta\right)_{H_{0,0}}\\
&=\lim_{N\to\infty}\int_0^1\left([P_NU_k\eta(s),\xi], d(P_NU_k\eta)_s\right)\\
&=\int_0^1\left([U_k\eta_s,\xi],dU_k\eta_s\right),
\end{align*}
which completes the proof.
\end{proof}

\begin{rem}
For a bounded linear operator $T$ on $H_0$,
we define
\begin{align*}
 \tilde{\tr}\, T=\lim_{N\to\infty}\tilde{P}_NT\tilde{P}_N
\end{align*}
if the R.H.S. converges, where
$\tilde{P}_N$ is the projection operator defined in the above proof.
This kind of trace appeared in \cite{a2007}.
\end{rem}

In our problem, the calculation of $\tilde{\tr}$ is reduced to the following
calculation of the sum of the infinite series.

\begin{lem}\label{trace formula}
 Let $\Gamma : \RR^d\times \RR^d\to \RR^m$ be a bilinear mapping
and let $f=(f(s)), g=(g(s))\in C([0,1],\LL(\RR^d,\RR^d))$.
Consider a continuous bilinear form
$T(h_1,h_2)=\int_0^1\Gamma(\int_0^sf(u)dh_1(u),g(s)dh_2(s))$
~$(h_1, h_2\in H_0)$.
We denote by $\{\tilde{e}_i\}_{i=1}^{\infty}$ the complete orthonormal system
of $H_0$ such that
$\{\tilde{e}_i\}_{i=1}^{a_N}$ spans the linear space of the image 
of $\tilde{P}_N$
which is defined in the proof of Lemma~$\ref{quadratic functional}$.
Note that $e_i=U_k\tilde{e}_i$ $(i\ge 1)$ is a complete orthonormal system of $H_{0,0}$.
We have
\begin{align*}
& \lim_{N\to\infty}
\sum_{i=1}^{a_N}
T(\tilde{e}_{i},\tilde{e}_{i})\nn\\
&=
\int_0^1\sum_{1\le p,q\le d}\Gamma(\ep_p,\ep_q)
\left(f(s)U_k^{-1}(s), g(s)U_k^{-1}(s)\right)
ds-\sum_{i=1}^d\int_0^1
\Gamma(\int_0^sf(u)U_k^{-1}(u)\ep_idu,g(s)U_k^{-1}(s)\ep_i)ds.
\end{align*}
\end{lem}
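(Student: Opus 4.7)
The plan is to pull the sum back from $H_0$ to $H_{0,0}$ via the isometry $U_k$, realize $\sum_{i=1}^{a_N}T(\tilde e_i,\tilde e_i)$ as an integral of a smooth kernel $G$ against the reproducing kernel of the dyadic projection on $H_{0,0}$, and then compute the limit $N\to\infty$ on the triangle $\{0<u<s<1\}$. Setting $A(s)=U_k(s)=\mathrm{Ad}(Y(s,e,k))$, $\phi(u)=f(u)A(u)^{-1}$ and $\psi(s)=g(s)A(s)^{-1}$, the substitution $\tilde e_i=U_k^{-1}e_i$, $\dot{\tilde e}_i(u)=A(u)^{-1}\dot e_i(u)$ together with bilinearity of $\Gamma$ transforms the expression into
\[
\sum_{i=1}^{a_N}T(\tilde e_i,\tilde e_i)\;=\;\int_0^1\!\!\int_0^s G(u,s)\,K_N(u,s)\,du\,ds,
\]
where $G(u,s)=\sum_{a=1}^d\Gamma\bigl(\phi(u)\ep_a,\psi(s)\ep_a\bigr)$ and the scalar kernel $K_N(u,s)=\sum_{n=2}^N\sum_l f_{n,l}(u)f_{n,l}(s)$ comes from the orthonormal system in $H_{0,0}$. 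Since the $n=1$ mode is excluded (because $H_{0,0}$ demands $h(1)=0$), $K_N$ is the projection kernel onto \emph{mean-zero} piecewise-constant functions at dyadic level $N-1$, so by an elementary computation
\[
K_N(u,s)\;=\;2^{N-1}\mathbf{1}_{\{u,s\in I_k^N\}}-1,\qquad I_k^N=[(k-1)2^{-(N-1)},k2^{-(N-1)}].
\]

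This decomposition is the key: it splits the computation into a $2^{N-1}$-singular piece and a constant $-1$ piece. For the first piece, I would parametrize $u,s\in I_k^N$ by $u=t_{k-1}+x$, $s=t_{k-1}+y$, Taylor-expand $G$ around $(t_{k-1},t_{k-1})$, and use the elementary identity $\iint_{0\le x<y\le h}dx\,dy=h^2/2$; the leading term multiplied by $2^{N-1}$ and summed over $k$ converges to $\tfrac12\int_0^1 G(s,s)\,ds$, while all higher-order terms are $O(h)$ and vanish. For the second piece, $-\int_0^1\!\int_0^s G(u,s)\,du\,ds$ is independent of $N$, and Fubini rewrites it as
\[
-\sum_{a=1}^d\int_0^1\Gamma\Bigl(\int_0^s\phi(u)\ep_a\,du,\;\psi(s)\ep_a\Bigr)\,ds,
\]
which is exactly the subtracted term in the statement. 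Finally, the diagonal contribution $\tfrac12\int_0^1 G(s,s)\,ds=\tfrac12\int_0^1\sum_a\Gamma(f(s)U_k^{-1}(s)\ep_a,g(s)U_k^{-1}(s)\ep_a)\,ds$ is identified, after a standard reorganization, with the first term of the stated formula.

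The main obstacle, and the only genuinely technical step, is justifying the Taylor expansion and the interchange of limit and summation uniformly across the $2^{N-1}$ dyadic subsquares. I would use continuity of $f,g,U_k^{-1}$ on $[0,1]$ to obtain uniform continuity of $G$, which suffices to show that the error between the exact integral on each subsquare and its leading value is $o(h^2)$, so that the summed error is $o(1)$. A conceptual point to highlight is that the constant $-1$ correction in $K_N$ (arising from excluding the $n=1$ mode) does not vanish in the limit; it is precisely this correction that produces the second, non-trivial term in the formula and explains why the naive ``$\delta(u-s)I$'' heuristic on the triangle must be supplemented by a boundary contribution.
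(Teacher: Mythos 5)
Your argument is correct, and it supplies a computation that the paper itself does not give: the paper's proof of this lemma is omitted with a pointer to the analogous calculation in Lemma 6.12 of \cite{a2007}, so there is nothing to compare step by step. Your route is the natural one and it works: pulling the sum back through $U_k$ to the Haar basis of $H_{0,0}$ gives exactly $\sum_{i=1}^{a_N}T(\tilde e_i,\tilde e_i)=\iint_{0<u<s<1}G(u,s)K_N(u,s)\,du\,ds$ with $G(u,s)=\sum_{a=1}^{d}\Gamma\bigl(f(u)U_k^{-1}(u)\ep_a,\,g(s)U_k^{-1}(s)\ep_a\bigr)$, and since the functions $f_{n,l}$ with $2\le n\le N$ together with the constant form an orthonormal basis of the piecewise constants of mesh $2^{-(N-1)}$, your identity $K_N=2^{N-1}\mathbf{1}_{\{u,s\ \mathrm{in\ the\ same\ dyadic\ interval}\}}-1$ is right, and the only analytic point is the one you name: uniform continuity of $G$ (note that $f,g$ are merely continuous, so it is continuity, not a Taylor expansion, that you may and must use), which makes the singular piece converge to $\tfrac12\int_0^1G(s,s)\,ds$ with total error $o(1)$ after multiplying by $2^{N-1}$ and summing over the $2^{N-1}$ subsquares.

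The one place you should not wave your hands is the last sentence. What your computation produces is $\tfrac12\int_0^1\sum_{a=1}^{d}\Gamma\bigl(f(s)U_k^{-1}(s)\ep_a,\,g(s)U_k^{-1}(s)\ep_a\bigr)ds$ minus the triangle term, and this does not literally coincide with the first displayed term of the lemma, which as printed contains neither $g$ nor the factor $\tfrac12$; no ``standard reorganization'' can bridge that, since the true limit depends on $g$ through the diagonal term (take $g\equiv 0$: then $T\equiv 0$ and both of your terms vanish, while the printed first term need not), and a one-dimensional test with $f=g=\mathrm{id}$, $\Gamma$ the product, where $\sum_i T(\tilde e_i,\tilde e_i)=0$ identically, confirms the factor $\tfrac12$. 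So state your diagonal term explicitly and remark that the lemma's first displayed term must be read (or corrected) in that sense -- the second terms agree verbatim -- rather than asserting the identification; as it stands that assertion is the only gap, and it points to imprecision in the statement's notation, not to a flaw in your derivation.
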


\begin{proof}
 The proof of this is similar to the calculation of 
Lemma 6.12 in \cite{a2007}.
We omit the proof.
\end{proof}

\begin{lem}\label{expansion of F and G}
\begin{enumerate}
\item[$(1)$] In the case where $k\in S_a^H$, it holds that
\begin{align*}
 G(a,\eta,k)&=1+G_{R}(a,\eta,k),
\end{align*}
where $|G_R(a,\eta,k)|=O(\|\bar{\eta}\|_{\alpha})$ and
$|D_{0}G_R(a,\eta,k)|=O(1)$.
 \item[$(2)$]
We consider the case where
$Y(t,e,k)=e^{t\xi}$ is a geodesic on $G$ between $e$ and $a$.
\begin{itemize}
 \item[{\rm (i)}]  We have 
\[
 F(a,\eta,k)=(\varphi_a(\eta),k)+\frac{1}{2}\|k\|_H^2
+\frac{1}{2}\|\varphi_a(\eta)\|_H^2
\]
and 
\begin{align}
 F(a,\eta,k)&=\frac{1}{2}\|k\|_H^2+
\frac{1}{2}:\left(U_k^{-1}
T_{\xi} U_k\eta,\eta\right):_{\mu_{\la,W_0}}
+F_R(a,\eta,k).\label{expansion of F}
\end{align}
where $|F_R(a,\eta,k)|=O(\|\bar{\eta}\|_{\alpha}^3)$ and 
$|D_{0}F_R(a,\eta,k)|=O(\|\bar{\eta}\|_{\alpha}^2)$.
\item[{\rm (ii)}] We have
\begin{align}
& \tilde{\mu}_{\la,a,W_0}(d\eta)\nn\\
&=
c_{\la,a}e^{-\frac{\la}{2}\|k\|_H^2}
\exp\left(-\la F_{R}(a,\eta,k)\right)
(1+G_R(a,\eta,k))1_{B_{k,\ep}^{\Omega_0}}(\eta)
\mu_{\la,T,W_0}(d\eta),\label{tildemu and hatmu near 0}
\end{align}
where $c_{\la,a}=\left(\frac{\la}{2\pi}\right)^{d/2}p(\la^{-1},e,a)^{-1}$.
\end{itemize}
\end{enumerate}
\end{lem}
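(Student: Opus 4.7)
Plan. Both (1) and (2)(i) are Taylor expansions around $\eta=0$ of the explicit formulas for $G$ and $F$, using the pointwise bounds (\ref{Dxv})--(\ref{D_0v}) of Lemma~\ref{local chart}(6). Assertion (2)(ii) is then an immediate substitution into the definition of $\tilde{\mu}_{\la,a,W_0}$.

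For (1), note that $v(a,0)=0$ by the uniqueness in Lemma~\ref{local chart}(1), $D_0v(a,0)=0$ by $|D_0v(a,\eta)|\le C\|\bar\eta\|_{\alpha}$, and $D_xv(a,0)(R_a)_{\ast}=I_{T_eG}$ by (\ref{Dxv}). Plugging these into the formula for $G$ displayed after (\ref{Fxetak}), the argument of the determinant reduces to $I_H-P_{H_0^{\perp}}+Q(k)Q(k)^{\ast}$. Since $Q(k):\g\to H_0^{\perp}$ is a linear isometry onto $H_0^{\perp}$ with $Q(k)^{\ast}Q(k)=I_{\g}$, one has $Q(k)Q(k)^{\ast}=P_{H_0^{\perp}}$, so $G(a,0,k)=\det I_H=1$. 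For general $\eta\in B_{k,\ep}^{\Omega_0}(0)$ the deviation of the matrix inside $\det$ from $I_H$ is a finite-rank operator of norm $O(\|\bar\eta\|_{\alpha})$ by (\ref{Dxv})--(\ref{D_0v}); the expansion $\det(I+A)=1+\tr A+\cdots$ then yields $G_R=O(\|\bar\eta\|_{\alpha})$, and a further differentiation in $\eta$ (using the regularity $v\in C^{1,\infty}_{b,H_0}$) gives $|D_0G_R|_{H_0}=O(1)$.

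For (2)(i), the first identity is obtained by noting that $k$ being a geodesic gives $P(k)k=0$ (Remark~\ref{remark on tangent space}(3)), hence $k\in H_0^{\perp}=W_0^{\perp}$; by the definition of $W_0$, the $W$--$W^{\ast}$ pairing ${}_W\langle\eta,k\rangle_{W^{\ast}}$ vanishes for every $\eta\in W_0$, which is the rigorous meaning of the $(\eta,k)_H$ term in (\ref{Fxetak}) dropping out. For the quadratic expansion, I read off the leading order of $v(a,\eta)$ from the fixed-point equation (\ref{Mxeta}) at $x=a$, namely $v=-A(\eta,v)^{-1}B(\eta,v)$, together with $A(0,0)=\log'(a)$ and the bound $|v(a,\eta)|=O(\|\bar\eta\|_{\alpha}^2)$. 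The apparent linear-in-$\eta$ part of $B(\eta,0)$, i.e.\ $\int_0^1 Ad(Y(u,e,k))\,d\eta_u$, vanishes precisely because $\eta\in W_0=\overline{T_kS_a}^W$; the next contribution is obtained by differentiating $Ad(Y(u,e,k+\theta\eta))$ in $\theta$ via (\ref{derivative of Ad}). Writing $w_u=(U_k\eta)_u=\int_0^u Ad(e^{r\xi})\,d\eta_r$, this analysis yields
\begin{align*}
v(a,\eta)=-\tfrac12\log'(a)^{-1}\int_0^1[w_u,dw_u]+O(\|\bar\eta\|_{\alpha}^3),\qquad
|\varphi_a(\eta)|_H^2=|v(a,\eta)|^2=O(\|\bar\eta\|_{\alpha}^4).
\end{align*}
Pairing with $\xi$ and using the $Ad$-invariance identity $([X,Y],Z)=(X,[Y,Z])$ together with $Ad(e^{u\xi})\xi=\xi$ reduces $(v(a,\eta),\xi)$ to a constant multiple of the rough integral $\int_0^1([w_u,\xi],dw_u)$; Lemma~\ref{quadratic functional} identifies the latter with $:\!\left(U_k^{-1}T_{\xi}U_k\eta,\eta\right)\!:_{\mu_{\la,W_0}}$. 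Inserting this into (\ref{Fxetak}) produces the quadratic form in (\ref{expansion of F}) plus a cubic-order remainder $F_R$ whose size is controlled by (\ref{estimate of rough integral})--(\ref{estimate of iterated integral}) to give $|F_R|=O(\|\bar\eta\|_{\alpha}^3)$ and $|D_0F_R|_{H_0}=O(\|\bar\eta\|_{\alpha}^2)$. Assertion (ii) now follows by substituting (\ref{expansion of F}) and $G=1+G_R$ into the formula for $\tilde{\mu}_{\la,a,W_0}$ and recognizing $\exp\!\left(-\tfrac{\la}{2}:\!\left(U_k^{-1}T_{\xi}U_k\eta,\eta\right)\!:\right)d\mu_{\la,W_0}$ as $\mu_{\la,T,W_0}$.

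The main obstacle is the passage from a smooth Taylor expansion to a $\mu_{\la,W_0}$-almost sure identity: since a generic $\eta\in W_0$ is not in $H$, neither $\|\eta\|_H^2$ nor the formal quadratic $(U_k^{-1}T_{\xi}U_k\eta,\eta)$ exists pointwise. A purely Riemannian argument---identifying the intrinsic Hessian of $E|_{S_a^H}$ at $k$ with $I_{H_0}+U_k^{-1}T_{\xi}U_k$ via the isometry of Proposition~\ref{derivative smooth case} and Lemma~\ref{hessian of E}(2)---delivers the expansion only on the smooth submanifold $H_0\subset W_0$. Lemma~\ref{quadratic functional}, proved through the dyadic projections $\tilde P_N$, is precisely what is needed to push this through to all of $B_{k,\ep}^{\Omega_0}(0)$: it matches the rough-integral expression for the leading quadratic part of $v(a,\eta)$ with the Wick renormalization of the formal Hessian, and supplies both the vanishing of the expected trace correction and the $O(\|\bar\eta\|_{\alpha}^3)$ bound on $F_R$.
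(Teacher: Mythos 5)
Your proposal follows essentially the same route as the paper: part (1) is read off from Lemma~\ref{local chart}(6) together with $Q(k)Q(k)^{\ast}=P_{H_0^{\perp}}$, part (2)(i) comes from $(\eta,k)=0$ (since $k\in W_0^{\perp}$ for a geodesic) plus the leading-order solution of the fixed-point equation $v=-A(\eta,v)^{-1}B(\eta,v)$, and Lemma~\ref{quadratic functional} is invoked exactly where the paper invokes it, to identify the rough integral with the Wick-renormalized quadratic form $\mu_{\la,W_0}$-a.s. Your closing remarks on why the dyadic approximation is indispensable are correct and are the real content of the step.

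The one place you must be more careful is the constant in front of the quadratic term, because it determines which Gaussian measure appears in (ii). Since $g(\theta):=\int_0^1 Ad(Y^{\theta,v,\eta}(u))\,d\eta_u$ vanishes at $\theta=0$, one has $B(\eta,v)=\int_0^1\log'(Y^{\theta,v,\eta}(1))\,g(\theta)\,d\theta=\tfrac12\log'(a)\,g'(0)+O(\|\bar{\eta}\|_{\alpha}^3)$, so that $v=-A^{-1}B=-\tfrac12\int_0^1[(U_k\eta)_u,d(U_k\eta)_u]+O(\|\bar{\eta}\|_{\alpha}^3)$: the two factors of $\log'(a)$ cancel (your formula retains a spurious $\log'(a)^{-1}$, which happens to be harmless after pairing with $\xi$ because $\log'(a)^{\ast}\xi=\xi$, but you should not leave it there), while the factor $\tfrac12$ from $\int_0^1\theta\,d\theta$ survives. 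Writing only ``a constant multiple of $\int_0^1([w_u,\xi],dw_u)$'' is therefore not enough: carried through, your own computation gives $(\varphi_a(\eta),k)_H=\tfrac12:\!\left(U_k^{-1}T_{\xi}U_k\eta,\eta\right)\!:+O(\|\bar{\eta}\|_{\alpha}^3)$, which is the value consistent with (\ref{tildemu and hatmu near 0}) (recall $d\mu_{\la,T,W_0}=\exp(-\tfrac{\la}{2}:\!(U_k^{-1}T_{\xi}U_k\eta,\eta)\!:)d\mu_{\la,W_0}$) and with the term $I_5$ in Lemma~\ref{expansion of b}, whereas the displayed identity (\ref{expansion of F}) carries coefficient $1$. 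You should state the coefficient explicitly and note which of the two displayed formulas it matches; as written, the vagueness hides a factor of $2$ that matters for part (ii).
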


\begin{proof}
(1) This is a consequence
of Lemma~\ref{local chart} (6).

\noindent
 (2) First note that $(\eta,k)=0$ which follows from 
$N(k)k=k$ and $\eta\in T_kS_a^{\perp}$.
For this, see Remark~\ref{remark on tangent space} (3).
This implies the formula of $F(a,\eta,k)$.
The proof of (\ref{expansion of F}) is as follows.
Note that $\varphi_a(\eta)=\int_0^{\cdot}Ad(e^{-s\xi})v_a(\eta)ds$
and
\begin{align}
 v_a(\eta)&=-A(\eta,v_a(\eta))^{-1}B(\eta,v_{a}(\eta))\nn\\
&=-\left(\phi'(a)+O(\|\bar{\eta}\|_{\alpha})\right)^{-1}
\left(\phi'(a)+O(\|\bar{\eta}\|_{\alpha})\right)
\left(\frac{1}{2}\int_0^1\left[U_k\eta(s),U_kd\eta(s)\right]
+O(\|\bar{\eta}\|_{\alpha}^3)\right)\nn\\
&=-\frac{1}{2}\int_0^1\left[U_k\eta(s),U_kd\eta(s)\right]
+O(\|\bar{\eta}\|_{\alpha}^3),\label{expression of v}
\end{align}
and hence,
\begin{align*}
 (\varphi_a(\eta),k)_H=\frac{1}{2}\int_0^1
\left(\left[U_k\eta_s,\xi\right],dU_k\eta_s\right)+
O(\|\bar{\eta}\|_{\alpha}^3).
\end{align*}
In the above, the term $O(\|\bar{\eta}\|_{\alpha}^3)$
can be written by three times iterated rough integral against $\eta$
and this implies the order of $O(\|\bar{\eta}\|_{\alpha}^3)$.
Also $\|\varphi_a(\eta)\|_H^2=|v_a(\eta)|^2
=O(\|\bar{\eta}\|_{\alpha}^4)$ and hence this is a negligible term.
In the above, by the explicit calculation,
we see that the term $O(\|\bar{\eta}\|_{\alpha}^k)$
can be written by $k$-times iterated rough integral against $\eta$.
This implies such terms are order of $O(\|\bar{\eta}\|_{\alpha}^k)$
and the $H$-derivative with respect to $\eta$ is of
order $O(\|\bar{\eta}\|_{\alpha}^{k-1})$.
Combining Lemma~\ref{quadratic functional}, we complete the proof.
\end{proof}

We give the explicit form of
$L_{\la,T,W_0}f$ and $\widetilde{\LS f}$,
where $\widetilde{L_{\la,S_a}f}(\eta)=
\varPhi_{k,a}^{\ast}(L_{\la,S_a}f)(\eta)$.

\begin{lem}\label{expression of L in local chart}
\begin{enumerate}
 \item[$(1)$] Let $f\in \DD^{\infty}(W_0,\mu_{\la,W_0})$.
Then
 we have
\begin{align*}
 -L_{\la,T,W_0}f(\eta)&=-L_{\la,W_0}f
+\la\left(U_k^{-1}TU_k\eta,D_0f(\eta)\right)
\end{align*}
\item[$(2)$] 
For $f\in \DD^{\infty}(W_0,\mu_{\la,W_0})$, define
\begin{align*}
 -\tilde{L}_{\la,S_a}f(\eta)&=
-L_{\la,T,W_0}f-\tr\tilde{K}_k(D_0^2f(\eta))+
\left(B_{\la, k},D_0f(\eta)\right)\nonumber\\
&\quad+\la(D_0F_R(a,\eta,k),D_0f(\eta))
-\left(D_0\log G(a,\eta,k),D_0f(\eta)\right),
\end{align*}
on $B^{\Omega_0}_{k,\ep}(0)$.
Here
\begin{align*}
B_{\la, k}
:=
\la\tilde{K}_k(\eta)\Bigl(\eta+U_k^{-1}TU_k\eta\Bigr)
-\tilde{\tr}\, D_0\tilde{K}_k
-\tilde{K}_k\Bigl(\la D_0F_R(a,\eta,k)-D_0\log G(a,\eta,k)\Bigr),
\end{align*}
and 
\begin{align}
 \tilde{K}_k(\eta)\eta
&:=\lim_{N\to \infty}
\tilde{K}_k(\eta)\tilde{P}_N\eta,
\label{tildeK rough integral}\\
\tilde{\tr}\, D_0\tilde{K}_k&:=\lim_{N\to\infty}
\tr\,D_0\tilde{K}_k[\tilde{P}_N\cdot].
\label{tildetrace}
\end{align}
For $f\in \DD^{\infty}(W,\mu_{\la,W})$ 
satisfying $f=0$ $\mu_{\la,a}$-a.s. on 
$\left(V_{\ep'}(k)^{S_a}\right)^{\complement}$ for some $\ep'<\ep$,
let 
\[
 \tilde{f}(\eta)=
\begin{cases}
 (\varPhi_{k,a}^{\ast}f)(\eta) & \eta\in B_{k,\ep}^{\Omega_0}(0),\\
0 & \text{otherwise}
\end{cases}
\]
Then $\tilde{f}\in \DD^{\infty}(W_0,\mu_{\la,W_0})$ and
it holds that
\begin{align*}
\tilde{L}_{\la,S_a}\tilde{f}(\eta)&=
\widetilde{L_{\la,S_a}f}(\eta).
\end{align*}
Moreover, we have
\begin{align}
 \left|\tilde{\tr}\, D_0\tilde{K}_k\right|_{H_0}
&=O(\|\bar{\eta}\|_{\alpha}),
\label{estimate of tildeK2}\\
\left|\tilde{K}_k(\eta)\Bigl(\eta+U_k^{-1}TU_k\eta\Bigr)\right|
_{H_0}
&=O(\|\bar{\eta}\|_{\alpha}^3).\label{estimate of tildeK3}
\end{align}
Let
\begin{align*}
 \tilde{B}_{\la,k}=B_{\la,k}+\la D_0F_R(a,\eta,k)-
D_0\log G(a,\eta,k).
\end{align*}
Then, for $\delta$ satisfying $\frac{2}{3}<\delta<1$, we have
\begin{align}
 |\tilde{B}_{\la,k}|_{H_0}=O(\la^{1-\delta}), \quad 
\text{if $\|\bar{\eta}\|_{\alpha}=O(\la^{-\frac{\delta}{2}})$}.
\label{estimate of tildeB}
\end{align}
\end{enumerate}

\end{lem}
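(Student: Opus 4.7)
My plan for part (1) is a standard Cameron--Martin shift. Since $d\mu_{\la,T,W_0}/d\mu_{\la,W_0} = \exp\bigl(-\frac{\la}{2}:(U_k^{-1}T_\xi U_k\eta,\eta):_{\mu_{\la,W_0}}\bigr)$, integration by parts in $\int (D_0f,D_0g)\,d\mu_{\la,T,W_0}$ shows that $L_{\la,T,W_0}$ differs from $L_{\la,W_0}$ by the drift equal to the $H_0$-gradient of the log density. By Lemma~\ref{quadratic functional} the Wick quadratic equals the rough integral $\int_0^1([U_k\eta_s,\xi],dU_k\eta_s)$, and its gradient in direction $h\in H_0$ is $2\int_0^1([U_k\eta_s,\xi],dU_kh_s) = 2(U_k^{-1}T_\xi U_k\eta,h)_{H_0}$ (using the defining formula for $T_\xi$ and $(U_kh)(1)=0$). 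This immediately yields the identity $-L_{\la,T,W_0}f = -L_{\la,W_0}f + \la(U_k^{-1}T_\xi U_k\eta, D_0f)$.

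For part (2) I would combine Lemma~\ref{change of variable dirichlet form}, which gives $\tilde\E_{\la,a,B_{k,\ep}^{\Omega_0}(0)}(\tilde f,\tilde g) = \int((I+\tilde K_k)D_0\tilde f,D_0\tilde g)\,d\tilde\mu_{\la,a,W_0}$, with the decomposition (\ref{tildemu and hatmu near 0}) of $\tilde\mu_{\la,a,W_0}$ as a weighted version of $\mu_{\la,T,W_0}$ with density $e^{-\la F_R}(1+G_R)$. Writing $A=I+\tilde K_k$ and computing $D_{0,\tilde\mu}^*(AD_0f)$ splits the generator into four blocks: (i) the identity part of $A$ paired with $\mu_{\la,T,W_0}$ recovers $-L_{\la,T,W_0}f$ by part (1); (ii) the divergence of $\tilde K_k D_0f$ under $\mu_{\la,W_0}$ contributes $-\tr(\tilde K_k D_0^2f) - (\tilde\tr D_0\tilde K_k, D_0f)$; (iii) the OU drift $-\la(\eta+U_k^{-1}T_\xi U_k\eta)$ of $\mu_{\la,T,W_0}$ paired with $\tilde K_k D_0f$ produces $\la(\tilde K_k(\eta+U_k^{-1}T_\xi U_k\eta), D_0f)$; and (iv) the extra factor $e^{-\la F_R}(1+G_R)$ adds drift $\la(D_0F_R, AD_0f) - (D_0\log G, AD_0f)$, split into the displayed $\la(D_0F_R,D_0f) - (D_0\log G,D_0f)$ plus a $\tilde K_k$-coupling that enters $B_{\la,k}$. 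Collecting these matches the stated formula and establishes $\tilde L_{\la,S_a}\tilde f = \widetilde{L_{\la,S_a}f}$.

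The three estimates all rest on the Neumann expansion (\ref{Neumann series}), which gives $\tilde K_k = -(D_0v_a)^*(D_0v_a) + O(\|\bar\eta\|_\alpha^4)$, together with $|D_0v_a|_{H_0\otimes\g} = O(\|\bar\eta\|_\alpha)$ from (\ref{D_0v}). For (\ref{estimate of tildeK2}) I would differentiate the leading term to get $(D_0^2v_a)^*(D_0v_a) + (D_0v_a)^*(D_0^2v_a)$, noting that $D_0^2v_a=O(1)$, then evaluate $\tilde\tr$ through Lemma~\ref{trace formula} to obtain the overall order $\|\bar\eta\|_\alpha$. For (\ref{estimate of tildeK3}), $|\tilde K_k|_{\mathrm{op}} = O(\|\bar\eta\|_\alpha^2)$, and the rough-integral interpretations (\ref{tildeK rough integral}) of $\tilde K_k\eta$ and $\tilde K_k(U_k^{-1}T_\xi U_k\eta)$ each contribute an additional factor of $\|\bar\eta\|_\alpha$ in $H_0$-norm (the operator $U_k^{-1}T_\xi U_k$ being bounded on $H_0$ and acting pathwise via rough integration). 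For (\ref{estimate of tildeB}) I assemble $\la\tilde K_k(\eta+T\eta) = O(\la^{1-3\delta/2})$ (bounded for $\delta\ge 2/3$) and $\tilde\tr D_0\tilde K_k = O(\la^{-\delta/2})$ (bounded), leaving the dominant contribution $(I-\tilde K_k)(\la D_0F_R - D_0\log G)$.

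The chief technical obstacle is that the stated pointwise bound $|D_0F_R| = O(\|\bar\eta\|_\alpha^2)$ only yields $\la|D_0F_R|_{H_0} = O(\la^{1-\delta})$, which exceeds $O(1)$ when $\delta<1$. Obtaining the claimed $O(1)$ estimate must come from finer structure of $F_R = (\varphi_a(\eta),k)_H - :(U_k^{-1}T_\xi U_k\eta,\eta):_{\mu_{\la,W_0}} + \frac{1}{2}|\varphi_a(\eta)|_H^2$: specifically, pushing the fixed-point expansion of $v_a(\eta)$ in the proof of Lemma~\ref{local chart} past quadratic order and integrating by parts in the time variable (using $(U_kh)(1)=0$ for $h\in H_0$) to cancel the quadratic contributions to $D_0(\varphi_a,k)_H$ against $D_0:(U_k^{-1}T_\xi U_k\eta,\eta):$. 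A refined gradient estimate $|D_0F_R|_{H_0} = O(\|\bar\eta\|_\alpha^3)$ would then give $\la|D_0F_R|_{H_0} = O(\la^{1-3\delta/2}) = O(1)$ on exactly the range $\delta>2/3$ asserted in the lemma, with $\delta<1$ natural because a typical $\eta$ under $\mu_{\la,W_0}$ has $\|\bar\eta\|_\alpha$ of order $\la^{-1/2}$.
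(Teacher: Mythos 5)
Your treatment of (1) and of the main identity in (2) follows essentially the same route as the paper: integration by parts against a test function, the change-of-variable formula of Lemma~\ref{change of variable dirichlet form}, the factorization (\ref{tildemu and hatmu near 0}) of $\tilde\mu_{\la,a,W_0}$ into the Gaussian-type measure $\mu_{\la,T,W_0}$ times the density $e^{-\la F_R}(1+G_R)$, and a finite-rank approximation by $\tilde P_N$ to give meaning to $\tr(\tilde K_kD_0^2f)$, $\tilde{\tr}\,D_0\tilde K_k$ and $\tilde K_k(\eta)\eta$. The estimates (\ref{estimate of tildeK2}) and (\ref{estimate of tildeK3}) via the Neumann series (\ref{Neumann series}), Lemma~\ref{trace formula} and Lemma~\ref{approximation of rough integral} are likewise the paper's argument.

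The gap is in your proposed proof of (\ref{estimate of tildeB}). You correctly observe that the bound $|D_0F_R(a,\eta,k)|=O(\|\bar\eta\|_\alpha^2)$ from Lemma~\ref{expansion of F and G} only yields $\la|D_0F_R|_{H_0}=O(\la^{1-\delta})$, but the refinement you then rely on, $|D_0F_R|_{H_0}=O(\|\bar\eta\|_\alpha^3)$, cannot hold: $F_R$ is a genuinely cubic remainder (the quadratic parts of $(\varphi_a(\eta),k)_H$ and $:(U_k^{-1}T_\xi U_k\eta,\eta):_{\mu_{\la,W_0}}$ cancel, leaving a nonvanishing trilinear leading term), and the $H_0$-gradient of a bounded trilinear functional of $\eta$ is quadratic in $\|\bar\eta\|_\alpha$, not cubic. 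A cancellation of the quadratic part of the gradient would force $F_R$ itself to be $O(\|\bar\eta\|_\alpha^4)$, contradicting the expansion in Lemma~\ref{expansion of F and G}. So this step fails as stated. Note, however, that the weaker bound $|\tilde B_{\la,k}|_{H_0}=O(\la^{1-\delta})$ which your argument does deliver is all that is needed downstream: wherever this vector field enters (the term $(\tilde B_{\la,i},D_0\tilde f)$ in the proof of Lemma~\ref{projection on F}, and the separate estimate of $\|(D_0F_R(a,\eta,k),D_0f)\|$ in Lemma~\ref{approximate eigenfunctions}), it is paired with a gradient of size $O(\la^{1/2})$ and divided by $\la$, producing $O(\la^{\frac{1}{2}-\delta})\to 0$ for $\delta>\frac{1}{2}$. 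The correct repair is therefore to exclude the $\la D_0F_R$ contribution from the $O(1)$ claim and estimate it separately, rather than to seek a cubic gradient bound.
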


\begin{proof}
 (1) This is a immediate consequence of the integration by parts formula.

\noindent
(2) 
Let $\{\tilde{e}_i\}_{i=1}^{\infty}$ be the complete orthonormal system
of $H_0$ which is defined in Lemma~\ref{trace formula}.
Let $g\in \FC(W)$.
By Lemma~\ref{change of variable dirichlet form},
We have
\begin{align*}
& \int_{V_{\ep}^{S_a}(k)}-(\LS f)(w)g(w)d\mu_{\la,a}(w)\\
&=
\int_{V_{\ep}^{S_a}(k)}\left(D_{S_a}f,D_{S_a}g\right)d\mu_{\la,a}(w)\\
&=\int_{B_{k,\ep}^{\Omega_0}(0)}\left((I_{H_0}+\tilde{K}_k(\eta))D_0\tilde{f},
D_0\tilde{g}\right)\exp\left(-\la F_R(a,\eta,k)\right)(1+G_R(a,\eta,k))
e^{-\frac{\la}{2}\|k\|_H^2}c_{\la,a}
d\mu_{\la,T,W_0}\\
&=
\int_{B_{k,\ep}^{\Omega_0}(0)}\left(D_0\tilde{f},
D_0\tilde{g}\right)\exp\left(-\la F_R(a,\eta,k)\right)(1+G_R(a,\eta,k))
e^{-\frac{\la}{2}\|k\|_H^2}c_{\la,a}
d\mu_{\la,T,W_0}\\
&\quad +
\int_{B_{k,\ep}^{\Omega_0}(0)}\left(\tilde{K}_k(\eta))D_0\tilde{f},
D_0\tilde{g}\right)\exp\left(-\la F_R(a,\eta,k)\right)(1+G_R(a,\eta,k))
e^{-\frac{\la}{2}\|k\|_H^2}c_{\la,a}
d\mu_{\la,T,W_0}\\
&=:I_1+I_2.
\end{align*}
For $I_1$, using integration by parts formula, we have
\begin{align*}
 I_1&=\int_{B_{k,\ep}^{\Omega_0}(0)}\left(D_0\tilde{f},
\la D_0F_R(a,\eta,k)-(1+G_{R}(a,\eta,k))^{-1}D_0G_R(a,\eta,k)\right)\tilde{g}(\eta)
d\tilde{\mu}_{\la,a,W_0}(w)\\
&\quad 
-\int_{B_{k,\ep}^{\Omega_0}(0)}L_{\la,T,W_0}\tilde{f}(\eta)
\tilde{g}(\eta)d\tilde{\mu}_{\la,a,W_0}(w).
\end{align*}
Let
\begin{align*}
 I_{2,N}&=
\sum_{i=1}^{a_N}
\int_{B_{k,\ep}^{\Omega_0}(0)}
(D_0\tilde{f}(\eta),\tilde{K}_k(\eta)\tilde{e}_i)(D_0\tilde{g}(\eta),\tilde{e}_i)
\exp\left(-\la F_R(a,\eta,k)\right)(1+G_R(a,\eta,k))\\
&\qquad\qquad\qquad \times e^{-\frac{\la}{2}\|k\|_H^2}c_{\la,a}
d\mu_{\la,T,W_0}.
\end{align*}
Then $\lim_{N\to\infty}I_{2,N}=I_2$.
Using the integration by parts formula, we have
$I_{2,N}=I_{2,1,N}+I_{2,2,N}+I_{2,3,N}+I_{2,4,N}$,
where
\begin{align*}
 I_{2,1,N}&=-\sum_{i=1}^{a_N}
\int_{B_{k,\ep}^{\Omega_0}(0)}
D_{\tilde{e}_i}
\left(
(D_0\tilde{f}(\eta),\tilde{K}_k\tilde{e}_i)
\right)
\tilde{g}(\eta)
d\tilde{\mu}_{\la,a,W_0},\\
I_{2,2,N}&=-\sum_{i=1}^{a_N}
\int_{B_{k,\ep}^{\Omega_0}(0)}
\left(
(D_0\tilde{f}(\eta),\tilde{K}_k\tilde{e}_i)\right)\\
&\qquad\qquad\quad
\times
\left(
\tilde{e}_i,\la D_0F_R(a,\eta,k)-
D_0\log G(a,\eta,k)
\right)
\tilde{g}(\eta)
d\tilde{\mu}_{\la,a,W_0},\\
I_{2,3,N}&=\sum_{i=1}^{a_N}
\int_{B_{k,\ep}^{\Omega_0}(0)}
\left(
(D_0\tilde{f}(\eta),\tilde{K}_k\tilde{e}_i)\right)
\left(\tilde{e}_i,\la U_k^{-1}TU_k\eta\right)
\tilde{g}(\eta)d\tilde{\mu}_{\la,a,W_0},\\
I_{2,4,N}&=\sum_{i=1}^{a_N}\int_{B_{k,\ep}^{\Omega_0}(0)}
\la\left((D_0\tilde{f})(\eta),\tilde{K}_k\tilde{e}_i\right)(\tilde{e}_i,\eta)
\tilde{g}(\eta)d\tilde{\mu}_{\la,a,W_0}
\end{align*}
Note that
\begin{align*}
& \lim_{N\to\infty}I_{2,2,N}\\
&=
-\int_{B_{k,\ep}^{\Omega_0}(0)}
\left(
D_0\tilde{f}(\eta),\tilde{K}_k
\Bigl(
\la D_0F_R(a,\eta,k)
-D_0\log G(a,\eta,k)
\Bigr)
\right)\tilde{g}(\eta)
d\tilde{\mu}_{\la,a,W_0},\\
&\lim_{N\to\infty}I_{2,3,N}=
\int_{B_{k,\ep}^{\Omega_0}(0)}
\left(D_0\tilde{f}(\eta),\tilde{K}_kU_k^{-1}TU_k\eta\right)\tilde{g}(\eta)
d\tilde{\mu}_{\la,a,W_0}.
\end{align*}
Hence $\lim_{N\to\infty}(I_{2,1,N}+I_{2,4,N})$ also converges.
We calculate the limit.
We have
\begin{align*}
 I_{2,1,N}&=-\sum_{i=1}^{a_N}\int_{B_{k,\ep}^{\Omega_0}(0)}
(D_0^2\tilde{f})(\eta)
[\tilde{K}_k \tilde{e}_i,\tilde{e}_i]\tilde{g}(\eta)d\tilde{\mu}_{\la,a,W_0}\\
&\qquad -\int_{B_{k,\ep}^{\Omega_0}(0)}\left(D_0\tilde{f}(\eta),
\sum_{i=1}^{a_N}
D_0\tilde{K}_k[\tilde{e}_i]\tilde{e}_i\right)
\tilde{g}(\eta)d\tilde{\mu}_{\la,a,W_0}\\
&=:I_{2,1,1,N}+I_{2,1,2,N}.
\end{align*}
Since $D_0^2\tilde{f}$ and $\tilde{K}_k$ are Hilbert-Schmidt operators, we have
\begin{align*}
 \lim_{N\to\infty}I_{2,1,1,N}=
-\int_{B_{k,\ep}^{\Omega_0}(0)}
\tr\left((D_0^2\tilde{f})(\eta)\tilde{K}_k(\eta)\right)\tilde{g}(\eta)
d\tilde{\mu}_{\la,a,W_0}.
\end{align*}
Also by Lemma~\ref{trace formula}, we have
\begin{align*}
 \lim_{N\to\infty}I_{2,1,2,N}=-\int_{B_{k,\ep}^{\Omega_0}(0)}
\left(D_0\tilde{f}(\eta),\tilde{\tr} D_0\tilde{K}_k(\eta)\right)
\tilde{g}(\eta)d\tilde{\mu}_{\la,a,W_0}.
\end{align*}
Concerning $I_{2,4,N}$,
we have
\begin{align*}
 \sum_{i=1}^{a_N}\tilde{K}_k\tilde{e}_i(\tilde{e_i},\eta)&=
\sum_{i=1}^{a_N}\tilde{K}_kU_{k}^{-1}e_i(U_k^{-1}e_i,\eta)
=\tilde{K}_kU_{k}^{-1}P_NU_k\eta\to \tilde{K}_k\eta
\quad \text{in $H_0$ as $N\to\infty$},
\end{align*}
where we have used Lemma~\ref{rough integral wf}
and Lemma~\ref{approximation of rough integral}.
Hence,
\begin{align*}
 \lim_{N\to\infty}I_{2,4,N}=\int_{B_{k,\ep}^{\Omega_0}(0)}
\la((D_0\tilde{f})(\eta),\tilde{K}_k\eta)
\tilde{g}(\eta)d\tilde{\mu}_{\la,a,W_0}.
\end{align*}
Consequently, the definitions in (\ref{tildeK rough integral}) 
and (\ref{tildetrace})
are well-defined and we obtain
 \begin{align*}
\int_{V_{\ep}^{S_a}(k)}(\LS f)(w)g(w)d\mu_{\la,a}(w)
&=\int_{B_{k,\ep}^{\Omega_0}(0)}\tilde{L}_{\la,S_a}\tilde{f}(\eta)\tilde{g}(\eta)
d\tilde{\mu}_{\la,a,W_0}(\eta).
\end{align*}
By the denseness of $\FC(W)$ in $L^2(S_a,\mu_{\la,a})$, we obtain
$\widetilde{\LS f}(\eta)=\tilde{L}_{\la,S_a}\tilde{f}(\eta)$.

(\ref{estimate of tildeK2}) follows from the definition of
$\tilde{K}_k$ and Lemma~\ref{trace formula} and
(\ref{estimate of tildeK3}) follows from Lemma~\ref{approximation of rough integral}
and estimate of rough integrals.
(\ref{estimate of tildeB}) follows from the results obtained in the above.
\end{proof}

\section{Cut-off functions and approximate eigenfunctions}
\label{cut-off and eigenfunction}

\begin{lem}[Cut-off function]\label{cut-off function}
For $w\in\Omega$, let
\begin{align*}
& \Gamma_{m,\theta,1}(w)=
\iint_{0<s<t<1}\frac{|w_{s,t}|^{4m}}{|t-s|^{2+2m\theta}}dsdt,\\
&\Gamma^{i,j}_{m,\theta,2}(w)=\iint_{0<s<t<1}
\frac{|w^{i,j}_{s,t}|^{2m}}{|t-s|^{2+2m\theta}}
dsdt,\\
& \bar{\Gamma}_{m,\theta}(w)=\left\{\Gamma_{m,\theta,1}(w)
+\sum_{1\le i\ne j\le d}\Gamma^{i,j}_{m,\theta,2}(w)\right\}^{1/(4m)},\\
&\chi_{m,\theta,\la,\delta}(w)=\chi\Bigl(\la^{2m\delta}
\bar{\Gamma}_{m,\theta}(w)^{4m}\Bigr),
\end{align*}
where $\chi$ is a $C^{\infty}$ function such that 
$\chi(u)=1$ for $u\in [-1,1]$ and $\chi(u)=0$ for $|u|\ge 2$
and $m$ is a positive integer satisfying that $m(1-\theta)>1$.
\begin{enumerate}
\item[$(1)$] For any $0<\alpha<1/2$, 
there exists $(m,\theta)$ with $m(1-\theta)>1$
and a constant $C_{\alpha,m,\theta}>0$ such that
\begin{align*}
 \|\bw\|_{\alpha}\le C_{\alpha,m,\theta}\bar{\Gamma}_{m,\theta}(w).
\end{align*}
Converse estimate holds for any $(m,\theta)$ by taking $\alpha$ to be 
sufficiently close to $1/2$.
 \item[$(2)$] $\Gamma_{m,\theta,1}, \Gamma^{i,j}_{m,\theta,2}\in 
C^{\infty}(\Omega)\cap \DD^{\infty}(\RR)$ and
it holds that
\begin{align}
 \|D\chi_{m,\theta,\la,\delta}(w)\|_H\le C\la^{\delta/2},
\qquad
\|D^2\chi_{m,\theta,\la,\delta}(w)\|_{H\otimes H}\le C\la^\delta,
\label{desirable cut-off}
\end{align}
where $C$ is a positive constant independent of $w$ and $\la$.
\item[$(3)$] We consider the function $\Gamma_{m,\theta,1}(\eta)$, 
$\Gamma^{i,j}_{m,\theta,2}(\eta)$
and $\chi_{m,\theta,\la,\delta}(\eta)$ 
in the case where $\eta\in \Omega_0\subset W_0=\overline{T_kS_a}^W$.
Then similar estimates to $(\ref{desirable cut-off})$ hold and
$
 \left|(L_{\la,W_0}\,\chi_{m,\theta,\la,\delta})(\eta)\right|
\le C \la,
$
where $C$ is a positive constant independent of $\eta$ 
and $\la$.
\item[$(4)$] For sufficiently small $\kappa>0$, it holds that
$e^{\kappa \bar{\Gamma}_{m,\theta}(\eta)^2}\in L^1(\Omega_0,\mu_{1,W_0})$.
\end{enumerate}
\end{lem}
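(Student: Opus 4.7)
The plan is to tackle the four parts in sequence, relying on the Garsia--Rodemich--Rumsey inequality, a chain-rule computation in Malliavin calculus, and a Fernique-type estimate on $\Omega_0$.

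For (1), apply GRR to the first- and second-level paths of $\bar{w}$ separately. Choosing $m,\theta$ with $m(1-\theta)>1$ and $\alpha = \theta/2 - 1/(4m)$ forces the weighted Besov integrals to control the $\alpha$-H\"older semi-norms of $\overline{w}^1$ and $\overline{w}^2$; this is exactly the content of Remark~\ref{remark on Besov norm}. The reverse inequality, valid for any $(m,\theta)$ when $\alpha$ is close enough to $1/2$, follows by pulling the $\alpha$-H\"older norm out of the integral against the integrable weight $|t-s|^{4m\alpha-2-2m\theta}$.

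For (2), first differentiate under the integral sign to see that each $\Gamma_{m,\theta,1}$ and $\Gamma^{i,j}_{m,\theta,2}$ lies in $C^\infty(\Omega)\cap \DD^\infty$; the integrands are polynomial in the Wiener integrals $w_{s,t}$ and the second-chaos elements $w^{i,j}_{s,t}$, and their moments paired with the weight are finite under the constraint $m(1-\theta)>1$. The chain rule gives
\begin{equation*}
D\chi_{m,\theta,\la,\delta}(w) = \chi'(\la^{2m\delta}\bar{\Gamma}^{4m})\,\la^{2m\delta}\,D\bar{\Gamma}^{4m}(w),
\end{equation*}
and analogously for $D^2\chi$. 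The key observation is that on $\supp\chi'$ one has $\bar{\Gamma}^{4m}\le 2\la^{-2m\delta}$, so $\bar{\Gamma}\le C\la^{-\delta/2}$. Cauchy--Schwarz inside the defining integrals of $\Gamma_{m,\theta,1}$ and $\Gamma^{i,j}_{m,\theta,2}$ yields $\|D\bar{\Gamma}^{4m}\|_H \le C\bar{\Gamma}^{4m-1}$, whence $\|D\chi\|_H\le C\la^{2m\delta}\la^{-\delta(4m-1)/2}=C\la^{\delta/2}$. A second application, combined with a H\"older bound on $\tr D^2\bar{\Gamma}^{4m}$ against the singular weight (once again using $m(1-\theta)>1$), gives $\|D^2\chi\|_{H\otimes H}\le C\la^\delta$.

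For (3), the $D_0$ bounds follow immediately from (2) since orthogonal projection $D_0=P_{H_0}\circ D$ is a contraction. For the generator estimate, apply the Meyer--It\^o composition identity
\begin{equation*}
-L_{\la,W_0}\chi = \chi'(\la^{2m\delta}\bar{\Gamma}^{4m})\,\la^{2m\delta}\,(-L_{\la,W_0}\bar{\Gamma}^{4m}) - \chi''(\cdot)\,\la^{4m\delta}\|D_0\bar{\Gamma}^{4m}\|_{H_0}^2.
\end{equation*}
Using $-L_{\la,W_0}=-\Delta_{H_0}+\la(\eta,D_0\,\cdot\,)$, a direct computation on $|\eta_{s,t}|^{4m}$ and $(\eta^{i,j}_{s,t})^{2m}$ produces a first-/second-chaos drift term of order $\la\bar{\Gamma}^{4m}$ plus a Hessian-trace remainder controlled, after integrating against $|t-s|^{-2-2m\theta}\,dsdt$, by $C\bar{\Gamma}^{4m-2}$; the constraint $m(1-\theta)>1$ is essential to close this bound. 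Inserting the supports estimates on $\bar{\Gamma}$ and $\|D_0\bar{\Gamma}^{4m}\|^2_{H_0}$ obtained in (2) yields the claimed bound $|L_{\la,W_0}\chi|\le C\la^{2m\delta+\delta/2}$. For (4), part (1) dominates $\bar{\Gamma}_{m,\theta}(\eta)$ by $\|\bar{\eta}\|_\alpha$, and the latter satisfies the classical rough-path Fernique inequality (Gaussian tails for both levels of Brownian rough path), giving $\exp(\kappa\bar{\Gamma}_{m,\theta}^2)\in L^1(\mu_{1,W_0})$ for $\kappa$ small.

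The main obstacle is (3): the second-level functional $\Gamma_{m,\theta,2}^{i,j}$ involves iterated rough integrals, so computing $L_{\la,W_0}\Gamma_{m,\theta,2}^{i,j}$ requires combining a Skorohod-style calculation with careful bookkeeping of the singular weight at the exact edge of integrability permitted by $m(1-\theta)>1$; tracking the chaos decomposition through $-\Delta_{H_0}+\la(\eta,D_0\,\cdot\,)$ and matching the $\la$- and $\delta$-exponents is where the argument is most delicate.
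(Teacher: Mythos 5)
Your proposal is correct and follows essentially the same route as the paper: explicit differentiation under the integral, H\"older/Cauchy--Schwarz against the singular weight using $m(1-\theta)>1$, the support bound $\bar{\Gamma}^{4m}\le 2\la^{-2m\delta}$ on $\supp\chi'$, the chain/product rule for $L_{\la,W_0}$ applied to the first- and second-chaos building blocks $\eta^i_{s,t}$, $\eta^{i,j}_{s,t}$, and Besov--H\"older embedding plus Fernique-type tail bounds for (1) and (4) (which the paper simply delegates to \cite{a2007}). The only detail worth flagging is that the bound on $D\Gamma^{i,j}_{m,\theta,2}$ necessarily picks up a first-level factor $\Gamma_{m,\theta,1}^{1/(4m)}$ (since $\|\int_s^t w^i_{s,u}d\psi^j_u\|_H$ is controlled by $\|w\|_{\theta/2}|t-s|^{(\theta+1)/2}$, not by the second-level increment alone), but this still yields your claimed $\|D\bar{\Gamma}^{4m}\|_H\le C\bar{\Gamma}^{4m-1}$ and the exponents close as you state.
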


\begin{rem}
Similar kind of cut-off function appeared in \cite{a2007}.
In it, we used the notations
\begin{align*}
 \|\overline{w}^1\|_{B,4m,\theta/2}^{4m}=\Gamma_{m,\theta,1}(w),\quad
\|\overline{w}^2\|_{B,2m,\theta}^{2m}
=\iint_{0<s<t<1}\frac{|\overline{w}^2_{s,t}|^{2m}}{(t-s)^{2+2m\theta}}dsdt.
\end{align*}
The latter function is comparable to $\sum_{i,j}\Gamma^{i,j}_{m,\theta,2}(w)$.
\end{rem}

\begin{proof}
We refer the readers to \cite{a2007} for (1) and (4).

\noindent
(2)
First note that $\Gamma_{m,\theta,1}(w)+\sum_{i,j}\Gamma^{i,j}_{m,\theta,2}(w)<\infty$ 
for all $w\in \Omega$.
This follows from that $|w_{s,t}|\le C_{\alpha,w}|t-s|^{\alpha}$ and
$|w^{i,j}_{s,t}|\le C_{\alpha,w}|t-s|^{2\alpha}$ for any
$0<\alpha<1/2$ and the assumption
$m(1-\theta)>1$.
By the definition, 
 we have, for $h,k\in H$
\begin{align*}
 D_h\Gamma_{m,\theta,1}(w)&=\iint_{0<s<t<1}
\frac{4m(w_{s,t},w_{s,t})^{2m-1}(w_{s,t},h_{s,t})}{(t-s)^{2+2m\theta}}dsdt,\\
D_{k}D_{h}\Gamma_{m,\theta,1}(w)&=
\iint_{0<s<t<1}\frac{4m(4m-2)(w_{s,t},w_{s,t})^{2m-2}(w_{s,t},h_{s,t})
(w_{s,t},k_{s,t})}{(t-s)^{2+2m\theta}}dsdt\\
&\quad
+\iint_{0<s<t<1}\frac{4m(w_{s,t},w_{s,t})^{2m-1}(k_{s,t},h_{s,t})}
{(t-s)^{2+2m\theta}}dsdt.
\end{align*}
Note that we need to consider the double integrals
in the domain $\{(s,t)~|~0<\ep<s<t<1\}$ first and take the limit
$\ep\downarrow 0$ in the calculation above.
The same applies to the following.
By the above calculation, we have
\begin{align*}
 D\Gamma_{m,\theta,1}&=\iint_{0<s<t<1}
\frac{4m(w_{s,t},w_{s,t})^{2m-1}\sum_{i=1}^d\psi^i_{s,t}w^i_{s,t}}
{(t-s)^{2+2m\theta}}dsdt,\\
D^2\Gamma_{m,\theta,1}&=
\iint_{0<s<t<1}\frac{4m(4m-2)(w_{s,t},w_{s,t})^{2m-2}
\sum_{i=1}^d\psi^i_{s,t}w^i_{s,t}\odot
\sum_{i=1}^d\psi^i_{s,t}w^i_{s,t}}
{(t-s)^{2+2m\theta}}dsdt\\
&\quad+
\iint_{0<s<t<1}\frac{4m(w_{s,t},w_{s,t})^{2m-1}
\sum_{i=1}^d\psi^i_{s,t}\odot \psi^i_{s,t}}
{(t-s)^{2+2m\theta}}dsdt,
\end{align*}
where $\psi^i_{s,t}(\tau)=\int_0^\tau 1_{[s,t]}(\sigma)d\sigma \ep_i\in H$
and $\psi^i_{s,t}\odot\psi^j_{s',t'}=2^{-1}(\psi^i_{s,t}\otimes\psi^j_{s',t'}
+\psi^j_{s',t'}\otimes\psi^i_{s,t})$ is the symmetric tensor product.
Although $u\mapsto \psi^i_u(=\psi^i_{0,u})\in H$ is $1/2$-H\"older continuous map,
by the property $(\psi^i_{s,t},\psi^j_{s',t'})=0$ if $(s,t)\cap (s',t')=\emptyset$,
the integral $\int_s^tf_ud\psi^i_u$ can be defined by the
Riemann sum limit,
$\lim_{|\Delta|\to 0}\sum_{k}f_{u_{k-1}}\psi^i_{u_{k-1},u_k}$
for continuous function $f$ and
$\|\int_s^tf_ud\psi^i_u\|_H=\|f\|_{L^2([s,t])}$ holds.
Note that
\begin{align}
\|\sum_{i=1}^d\psi^i_{s,t}w^i_{s,t}\|_H&\le |w_{s,t}||t-s|^{1/2},\qquad
\left\|
\sum_{i=1}^d\psi^i_{s,t}\otimes \psi^i_{s,t}
\right\|_{H\otimes H}\le d|t-s|.\label{DwD^2w}
\end{align}
If $m(1-\theta)>1$,
\begin{align}
 \|D\Gamma_{m,\theta,1}(w)\|_H&\le 4m\Gamma_{m,\theta,1}(w)^{(4m-1)/4m}
\left(\iint_{0<s<t<1}\frac{(t-s)^{2m}}{(t-s)^{2+2m\theta}}dsdt\right)^{1/(4m)}
\nonumber\\
&\le
C_{m,\theta}\Gamma_{m,\theta,1}(w)^{(4m-1)/4m}\label{DPhi_m1}
\end{align}
and
\begin{align}
 \|D^2\Gamma_{m,\theta,1}(w)\|_{H\otimes H}&\le
4m(4m-2)d\Gamma_{m,\theta,1}^{(2m-1)/(2m)}
\left(\iint_{0<s<t<1}\frac{(t-s)^{2m}}{(t-s)^{2+2m\theta}}dsdt\right)^{1/(2m)}
\nonumber\\
&\le C_{m,\theta}\Gamma_{m,\theta,1}(w)^{(2m-1)/(2m)}.\label{D^2Phi_m2}
\end{align}
We next consider the term $\Gamma^{i,j}_{m,\theta,2}$.
\begin{align*}
& D_{h}\Gamma^{i,j}_{m,\theta,2}(w)=\iint_{0<s<t<1}
\frac{2m\left(w^{i,j}_{s,t}\right)^{2m-1}
\left(\int_s^{t}w^{i}_{s,u}dh^j_u+
\int_s^th^i_{s,u}dw^j_u\right)}{(t-s)^{2+2m\theta}}dsdt,\\
& D_{k}D_{h}\Gamma^{i,j}_{m,\theta,2}(w)=\iint_{0<s<t<1}
\frac{2m\left(w^{i,j}_{s,t}\right)^{2m-1}
\left(\int_s^{t}k^{i}_{s,u}dh^j_u+
\int_s^th^i_{s,u}dk^j_u\right)}{(t-s)^{2+2m\theta}}dsdt\\
&\quad+\iint_{0<s<t<1}
\frac{2m(2m-1)\left(w^{i,j}_{s,t}\right)^{2m-2}
\left(\int_s^{t}w^{i}_{s,u}dh^j_u+
\int_s^th^i_{s,u}dw^j_u\right)
\left(\int_s^{t}w^{i}_{s,u}dk^j_u+
\int_s^tk^i_{s,u}dw^j_u\right)
}{(t-s)^{2+2m\theta}}dsdt
\end{align*}
Hence
\begin{align*}
& D\Gamma^{i,j}_{m,\theta,2}(w)=\iint_{0<s<t<1}
\frac{2m\left(w^{i,j}_{s,t}\right)^{2m-1}
\left(\int_s^{t}w^{i}_{s,u}d\psi^j_u+
\int_s^tw^j_{u,t}d\psi^i_u\right)}{(t-s)^{2+2m\theta}}dsdt,\\
& D^2\Gamma^{i,j}_{m,\theta,2}(w)\nonumber\\
&=\iint_{0<s<t<1}
\frac{2m\left(w^{i,j}_{s,t}\right)^{2m-1}
\left(
\int_s^t\psi^i_{s,u}\odot d\psi^j_u+
\int_s^t\psi^j_{s,u}\odot d\psi^i_u
\right)}
{(t-s)^{2+2m\theta}}dsdt\nonumber\\
&\quad +
\iint_{0<s<t<1}
\frac{2m(2m-1)\left(w^{i,j}_{s,t}\right)^{2m-2}
\left(\int_s^{t}w^{i}_{s,u}d\psi^j_u+
\int_s^tw^j_{u,t}d\psi^i_u\right)^{\odot 2}}{(t-s)^{2+2m\theta}}dsdt.
\end{align*}
Note that
\begin{align}
 \left\|\int_s^{t}w^{i}_{s,u}d\psi^j_u+
\int_s^tw^j_{u,t}d\psi^i_u\right\|_{H}&\le
\left(\int_s^t(w^i_{s,u})^2du\right)^{1/2}+
\left(\int_s^t(w^j_{u,t})^2du\right)^{1/2}
\le C\|w\|_{\theta/2}|t-s|^{(\theta+1)/2}\nn\\
&\le C \Gamma_{m,\theta,1}(w)^{1/(4m)}|t-s|^{(\theta+1)/2},\label{Dw^{i,j}}
\end{align}
where $\|w\|_{\theta/2}$ is the $\theta/2$-H\"older norm of $w$
and we have used the estimate 
$\|w\|_{\theta/2}\le C\Gamma_{m,\theta,1}(w)^{1/(4m)}$.
Hence we obtain
\begin{align}
 \|D\Gamma^{i,j}_{m,\theta,2}(w)\|_H&\le
C \left(\Gamma^{i,j}_{m,\theta,2}(w)\right)^{(2m-1)/(2m)}
\Gamma_{m,\theta,1}(w)^{1/(4m)}.\label{DPhi^ij_m2}
\end{align}
Next we estimate $D^2\Gamma^{i,j}_{m,\theta,2}$.
Noting
\begin{align}
\left\|\int_s^t\psi^i_{s,u}\odot d\psi^j_u\right\|_{H\otimes H}&\le
C|t-s|,\label{psi^ipsi^j}
\end{align}
we get
\begin{align}
 \|D^2\Gamma^{i,j}_{m,\theta,2}(w)\|_{H\otimes H}&\le
C\left((\Gamma^{i,j}_{m,\theta,2}(w))^{(2m-1)/(2m)}+
(\Gamma^{i,j}_{m,\theta,2})^{(m-1)/m}\Gamma_{m,\theta,1}(w)^{1/(2m)}
\right).\label{D^2Phi^ij_m2}
\end{align}
We now prove (\ref{desirable cut-off}).
By (\ref{DPhi_m1}) and (\ref{DPhi^ij_m2}), we have
\begin{align*}
& \la^{2m\delta}\left(
\|D\Gamma_{m,\theta,1}(w)\|_H+\|D\Gamma^{i,j}_{m,\theta,2}(w)\|_{H}
\right)\nonumber\\
&\le C \la^{2m\delta}
\left(\Gamma_{m,\theta,1}(w)^{(4m-1)/4m}
+\left(\Gamma^{i,j}_{m,\theta,2}(w)\right)^{(2m-1)/(2m)}
\Gamma_{m,\theta,1}(w)^{1/(4m)}\right)\nonumber\\
&\le C \la^{2m\delta}\la^{-2m\delta+\delta/2}
\nonumber\\
&\qquad
\times \left(\left(\la^{2m\delta}\Gamma_{m,\theta,1}(w)\right)^{(4m-1)/4m}
+\left(\la^{2m\delta}\Gamma^{i,j}_{m,\theta,2}(w)\right)^{(2m-1)/(2m)}
\left(\la^{2m\delta}\Gamma_{m,\theta,1}(w)\right)^{1/(4m)}\right)\nonumber\\
&\le C\la^{\delta/2}\qquad 
\text{if $\chi'(\la^{2m\delta}\bar{\Gamma}_{m,\theta}(w))\ne 0$.}
\end{align*}
Similarly, by (\ref{D^2Phi_m2}) and (\ref{D^2Phi^ij_m2}),
we obtain
\begin{align*}
 & \la^{2m\delta}\left(
\|D^2\Gamma_{m,\theta,1}(w)\|_{H\otimes H}
+\|D^2\Gamma^{i,j}_{m,\theta,2}(w)|_{H\otimes H}
\right)\nonumber\\
&\le C\la^{2m\delta}\left(
\Gamma_{m,\theta,1}^{(2m-1)/(2m)}+
(\Gamma^{i,j}_{m,\theta,2}(w))^{(2m-1)/(2m)}+
(\Gamma^{i,j}_{m,\theta,2})^{(m-1)/m}\Gamma_{m,\theta,1}(w)^{1/(2m)}
\right)\nonumber\\
&\le C\la^{2m\delta}
\la^{-2m\delta}\la^{\delta}
\Big\{
\left(\la^{2m\delta}\Gamma_{m,\theta,1}\right)^{(2m-1)/(2m)}+
(\la^{2m\delta}\Gamma^{i,j}_{m,\theta,2}(w))^{(2m-1)/(2m)}\nn\\
&\qquad +
(\la^{2m\delta}\Gamma^{i,j}_{m,\theta,2})^{(m-1)/m}
\left(\la^{2m\delta}\Gamma_{m,\theta,1}(w)\right)^{1/(2m)}
\Big\}\nonumber\\
&\le C\la^{\delta}\qquad 
\text{if 
$\chi'(\la^{2m\delta}\bar{\Gamma}_{m,\theta}(w))\ne 0$},
\end{align*}
which completes the proof of (2).

\noindent
(3)
The counterpart of (\ref{desirable cut-off}) is trivial because
the direction of the derivative for $\eta$ is a subspace of $H$.
Or we may proceed as follows.
The Cameron-Martin space of $(W_0,\mu_{\la,0})$ is
$H_0=T_kS_a=P(k)H$.
Hence, $\eta^i_{s,t}=(\eta,P(k)\psi^i_{s,t})_{H_0}$ holds,
where $\psi^i_{s,t}(u)=\int_0^u1_{[s,t]}(u)du\ep_i$ and
$(\cdot,\cdot)_{H_0}$ is an Wiener integral.
Also $P(k)\psi^i_{s,t}=\psi^i_{s,t}-N(k)\psi^i_{s,t}$
and $\|N(k)\psi^i_{s,t}\|_H\le C|t-s|$.
By using this result, we can prove similar estimates to
(\ref{desirable cut-off}).
Finally, we prove the estimate of 
$(L_{\la,W_0}\chi_{m,\theta,\la,\delta})(\eta)$.
First, we note the following.
Let $f\in D(L_{\la,W_0})$ and suppose 
$f, |D_0f|_{H_0}, L_{\la,W_0}f\in \cap_{p\ge 1}L^p(W_0,\mu_{\la,W_0})$.
Then for any $n\in \mathbb{N}$, it holds that $f^n\in D(L_{\la,W_0})$ and
$L_{\la,W_0}(f^n)=nf^{n-1}L_{\la,W_0}f+
2(\sum_{k=1}^{n-1}k)|D_0f|_{H_0}^2f^{n-2}$.
We prove this by induction on $n$.
Let $n=2$.
Let $\varphi\in \FC(W_0)$.
Also let $g$ be another function such that $g\in D(L_{\la,W_0})$ and
$g,|D_0g|_H, L_{\la,W_0}g\in \cap_{p\ge 1}L^p(W_0,d\mu_{\la,W_0})$.
Then 
\begin{align*}
& \left(L_{\la,W_0}\varphi, fg\right)_{L^2(W_0,d\mu_{\la,W_0})}\\
&=
-\int_{W_0}(D_0\varphi,D_0f)gd\mu_{\la,W_0}-\int_{W_0}(D_0\varphi,D_0g)f
d\mu_{\la,W_0}\\
&=-\int_{W_0}\left(D_0(\varphi g)-\varphi D_0g,D_0f\right)d\mu_{\la,W_0}
-\int_{W_0}\left(D_0(\varphi f)-\varphi D_0f,D_0g\right)d\mu_{\la,W_0}\\
&=-\int_{W_0}\left\{\left(D_0(\varphi g),D_0f\right)+\left(D_0(\varphi f), 
D_0g\right)\right\}
d\mu_{\la,W_0}+2\int_{W_0}\varphi\left(D_0f,D_0g\right)_{H_0}d\mu_{\la,W_0}\\
&=\int_{W_0}\varphi \left(gL_{\la,W_0}f
+f L_{\la,W_0}g+2(D_0f,D_0g)_{H_0}\right)
d\mu_{\la,W_0}.
\end{align*}
Here we have used that $fg\in \rD(\E_{\la,W_0})$ and
$D_0(f g)=(D_0f) g+f D_0g$.
Noting 
$gL_{\la,W_0}f+fL_{\la,W_0}g+2(D_0f,D_0g)_{H_0}\in L^2(W_0,d\mu_{\la,W_0})$
and the essential selfadjointness of 
$L_{\la,W_0}$ with the domain $\FC(W_0))$ 
in $L^2(W_0,d\mu_{\la,W_0})$,
we obtain $fg\in \rD(L_{\la,W_0})$ and
$L_{\la,W_0}(fg)=gL_{\la,W_0}f+fL_{\la,W_0}g+2(D_0f,D_0g)_{H_0}$.
This result include the case where $n=2$ as a special case.
Next, suppose the case $n\ge 2$.
Then using the induction assumption and the result we have proved, we have
\begin{align*}
 L_{\la,W_0}(f^{n+1})&=L_{\la,W_0}(f^n\cdot f)\\
&=f L_{\la,W_0}(f^n)+f^nL_{\la,W_0}f+2nf^{n-1}|D_0f|_{H_0}^2\\
&=f \Bigl(nf^{n-1}L_{\la,W_0}f+2(\sum_{k=1}^{n-1}k)|D_0f|_{H_0}^2f^{n-2}\Bigr)+
f^nL_{\la,W_0}f+2nf^{n-1}|D_0f|_{H_0}^2\\
&=(n+1)f^nL_{\la,W_0}f+2(\sum_{k=1}^nk)f^{n-1}|D_0f|_{H_0}^2,
\end{align*}
which completes the proof of the induction.
We return to the proof of (3).
First note that $\eta^i_{s,t}$ and 
$\eta^{i,j}_{s,t}-E[\eta^{i,j}_{s,t}]$ 
belong to
the 1st Wiener chaos and the 2nd Wiener chaos respectively
in $L^2(W_0,d\mu_{\la,W_0})$.
By a simple calculation, we have
$
 f^{i,j}_{s,t}=\la E[\eta^{i,j}_{s,t}]
$
is independent of $\la$ and $|f^{i,j}_{s,t}|\le C|t-s|^2$.
Hence $L_{\la,W_0}\eta^i_{s,t}=-\la \eta^i_{s,t}$ and
$L_{\la,W_0}\eta^{i,j}_{s,t}=-2\la\eta^{i,j}_{s,t}-2f^{i,j}_{s,t}$ hold.
Also we have
\begin{align*}
 D_0\eta^i_{s,t}=P(k)\psi^i_{s,t},\quad
D_0\eta^{i,j}_{s,t}=\int_s^t\eta^i_{s,u}d(P(k)\psi^j)_u
+\int_s^t\eta^j_{u,t}d(P(k)\psi^i)_u
\end{align*}
and
\begin{align*}
 \|D_0\eta^i_{s,t}\|_{H_0}\le C|t-s|^{1/2},\quad
\|D_0\eta^{i,j}_{s,t}\|_{H_0}\le C
\Gamma_{m,\theta,1}(\eta)^{1/(4m)}|t-s|^{(\theta+1)/2}.
\end{align*}
Thus, we get
\begin{align*}
 L_{\la,W_0}(\eta^i_{s,t})^{4m}&=-4m\la (\eta^i_{s,t})^{4m}
+(\eta^{i}_{s,t})^{4m-2}O(|t-s|),\\
L_{\la,W_0}(\eta^{i,j}_{s,t})^{2m}&=-4m\la (\eta^{i,j}_{s,t})^{2m}
-4m(\eta^{i,j}_{s,t})^{2m-1} f^{i,j}_{s,t}+
\Gamma_{m,\theta,1}(\eta)^{1/2m}(\eta^{i,j}_{s,t})^{2m-2}
O(|t-s|^{(\theta+1)/2}).
\end{align*}
Noting
\begin{align*}
& L_{\la,W_0}\chi_{m,\theta,\la,\delta}(\eta)
=\chi'\Bigl(\la^{2m\delta}\Bigl(\Gamma_{m,\theta,1}(\eta)
+\sum_{i,j=1}^d\Gamma^{i,j}_{m,\theta,2}(\eta)\Bigr)\Bigr)\la^{2m\delta}
L_{\la,W_0}\Bigl(\Gamma_{m,\theta,1}(\eta)
+\sum_{i,j=1}^d\Gamma^{i,j}_{m,\theta,2}(\eta)\Bigr)\nonumber\\
&\quad+\chi''\Bigl(\la^{2m\delta}\Bigl(\Gamma_{m,\theta,1}(\eta)
+\sum_{i,j=1}^d\Gamma^{i,j}_{m,\theta,2}(\eta)\Bigr)\Bigr)\la^{4m\delta}
\Bigl\|D_{0}\Gamma_{m,\theta,1}(\eta)
+\sum_{i,j=1}^dD_{0}\Gamma^{i,j}_{m,\theta,2}(\eta)\Bigr\|_{H_0}^2.
\end{align*}
by a similar calculation to the estimate (\ref{desirable cut-off}),
we arrive at the estimate of $(L_{\la,W_0}\chi_{m,\theta,\la,\delta})(\eta)$.
\end{proof}

We now introduce approximate eigenfunctions of $-L_{\la,\Pea}$ localized in
a neighborhood of geodesic path $e^{t\xi}$ in $\Pea$ $(\xi\in \g)$
when $a$ is not a conjugate point of $e$ along any geodesics.
To this end, we use the development path $(w_t)$ of $(\gamma_t)\in \Pea$.

\begin{lem}[Approximate eigenfunctions]\label{approximate eigenfunctions}
Suppose $a$ is not a conjugate point of $e$ along any geodesics.
Let $Y(t,e,k)=e^{t\xi}$ be a geodesic between $e$ and $a$.
That is, there exists $\xi\in \g$ such that $k_t=t \xi$ $(0\le t\le 1)$.
We apply Theorem~$\ref{cons}$ to the case
$W_0=\overline{T_kS_a}^W$, $T=U_k^{-1}T_{\xi}U_k$ and 
$\{\zeta_i\}=\{\zeta_i(\xi)\}$,
where
$\{\zeta_i(\xi)\}$ are eigenvalues of $T_{\xi}$ as in 
Lemma~$\ref{hessian of E}$ and 
Remark~$\ref{remark hessian of E}$ $(1)$.
Let $\mathbf{e}_{\mathbf{n},T,\la}(\eta)$ 
be an eigenfunction defined in Theorem~$\ref{cons}$.
For $\frac{2}{3}<\delta<1$ and a positive integer $m$ and 
$\alpha<\frac{\theta}{2}<1$ satisfying
$m(1-\theta)>1$, 
using $\chi_{m,\theta,\la,\delta}$ which satisfies
Lemma~$\ref{cut-off function}$ $(1)$,
we define
$\displaystyle{
  \mathbf{\hat{e}}_{\mathbf{n},T,\la}(\eta)
=\mathbf{e}_{\mathbf{n},T,\la}(\eta)
\chi_{m,\theta,\la,\delta}(\eta)c^{-1/2}_{\la,a}e^{\frac{\la}{4}\|k\|_H^2}}
$~$(\eta\in \Omega_0)$, where
we need to take sufficiently large $\la$ such that
$\{\chi_{m,\theta,\la,\delta}\ne 0\}\subset B_{k,\ep(k)}^{\Omega_0}(0)$,
where $B_{k,\ep(k)}^{\Omega_0}(0)$ is defined in Lemma~$\ref{local chart}$.
We set
\begin{align*}
\mathbf{\tilde{e}}_{\mathbf{n},T,\la}(\gamma)&=
(Y^{-1})^{\ast}\left(\varPsi_{k,a}^{\ast}
(\mathbf{\hat{e}}_{\mathbf{n},T,\la})\right)
(\gamma), \quad \gamma\in \Peaomega.
\end{align*}
Then for any $\mathbf{n}, \mathbf{m}$, 
$\mathbf{\tilde{e}}_{\mathbf{n},T,\la}\in \rD(L_{\la,\Pea})$ and
we have
\begin{align}
& \|\mathbf{\tilde{e}}_{\mathbf{n},T,\la}\|_{L^2(\Pea,\nu_{\la,a})}=
1+O(\la^{1-(3\delta/2)})+O(\la^{-\delta/2}),\label{n}\\
& \left(\mathbf{\tilde{e}}_{\mathbf{n},T,\la},
\mathbf{\tilde{e}}_{\mathbf{m},T,\la}\right)_{L^2(\Pea,\nu_{\la,a})}=
O(\la^{1-(3\delta/2)})+O(\la^{-\delta/2}),\label{nm}\\
&\|\la^{-1}L_{\la,\Pea}\mathbf{\tilde{e}}_{\mathbf{n},T,\la}
-E_{\mathbf{n}}(\xi)\mathbf{\tilde{e}}
_{\mathbf{n},T,\la}\|_{L^2(\Pea,\nu_{\la,a})}
=O(\la^{\frac{1}{2}-\delta}),\label{approximate eigenvalue}
\end{align}
where
$E_{\mathbf{n}}(\xi)=E_0(\xi)+\sum_{i=1}^{\infty}n_i|\zeta_i(\xi)|$.
\end{lem}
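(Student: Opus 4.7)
My plan is to pull the problem back to the local coordinate chart $B^{\Omega_0}_{k,\ep}(0) \subset W_0 = \overline{T_kS_a}$, where by Theorem~\ref{cons}(1) the function $\mathbf{e}_{\mathbf{n},T,\la}$ is an honest eigenfunction of $-L_{\la,T,W_0}$ with eigenvalue $\la E_{\mathbf{n}}(\xi)$, and then to estimate the error incurred in comparing the pulled-back measure $\tilde\mu_{\la,a,W_0}$ with $\mu_{\la,T,W_0}$ and the pulled-back operator $\tilde L_{\la,S_a}$ with $L_{\la,T,W_0}$. For large $\la$, the cut-off $\chi_{m,\theta,\la,\delta}$ confines $\hat{\mathbf{e}}_{\mathbf{n},T,\la}$ to $\{\|\bar\eta\|_\alpha \le C\la^{-\delta/2}\} \subset B^{\Omega_0}_{k,\ep}(0)$ by Lemma~\ref{cut-off function}(1), so the measure-preserving isomorphism $Y$ (Proposition~\ref{isomorphism}) together with Lemma~\ref{change of variable}(2) and the cancellation of the normalization $C_\la := c_{\la,a}^{-1/2}e^{\la\|k\|_H^2/4}$ against the prefactors in Lemma~\ref{expansion of F and G}(2)(ii) yields
\begin{align*}
\int_{\Pea}\tilde{\mathbf{e}}_{\mathbf{n},T,\la}\,\tilde{\mathbf{e}}_{\mathbf{m},T,\la}\,d\nu_{\la,a} = \int_{W_0}\mathbf{e}_{\mathbf{n},T,\la}\,\mathbf{e}_{\mathbf{m},T,\la}\,\chi_{m,\theta,\la,\delta}^{2}\,e^{-\la F_R(a,\eta,k)}\bigl(1+G_R(a,\eta,k)\bigr)\,d\mu_{\la,T,W_0}(\eta).
\end{align*}
By Lemma~\ref{expansion of F and G}, on the cut-off support $|\la F_R| = O(\la^{1-3\delta/2})$ and $|G_R| = O(\la^{-\delta/2})$, so $e^{-\la F_R}(1+G_R) - 1 = O(\la^{1-3\delta/2}) + O(\la^{-\delta/2})$. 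Combined with the orthonormality of $\{\mathbf{e}_{\mathbf{n},T,\la}\}$ in $L^2(\mu_{\la,T,W_0})$ and the fact that under the Gaussian measure $|\mathbf{e}_{\mathbf{n},T,\la}|^2\,d\mu_{\la,T,W_0}$ of covariance $(\la|I_{H_0}+T|)^{-1}$ (Theorem~\ref{cons}(2)) the event $\{\bar\Gamma_{m,\theta}(\eta) \ge c\la^{-\delta/2}\}$ has probability at most $\exp(-c'\la^{1-\delta})$, this gives both (\ref{n}) and (\ref{nm}).

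For (\ref{approximate eigenvalue}), Lemma~\ref{expression of L in local chart}(2) identifies the iso-pullback of $L_{\la,\Pea}\tilde{\mathbf{e}}_{\mathbf{n},T,\la}$ with $\tilde L_{\la,S_a}\hat{\mathbf{e}}_{\mathbf{n},T,\la}$. Applying the product rule to $\hat{\mathbf{e}}_{\mathbf{n},T,\la} = C_\la\mathbf{e}_{\mathbf{n},T,\la}\chi_{m,\theta,\la,\delta}$ and using the eigenvalue identity $-L_{\la,T,W_0}\mathbf{e}_{\mathbf{n},T,\la} = \la E_{\mathbf{n}}(\xi)\mathbf{e}_{\mathbf{n},T,\la}$ together with the decomposition $\tilde L_{\la,S_a} = L_{\la,T,W_0} + \tr\,\tilde K_k D_0^2 \,\cdot\, - (\tilde B_{\la,k}, D_0\,\cdot\,)_{H_0}$, we obtain
\begin{align*}
-\la^{-1}\tilde L_{\la,S_a}\hat{\mathbf{e}}_{\mathbf{n},T,\la} - E_{\mathbf{n}}(\xi)\hat{\mathbf{e}}_{\mathbf{n},T,\la} = R_{\mathrm{cut}} + R_{\mathrm{geom}},
\end{align*}
where $R_{\mathrm{cut}} = -\la^{-1}C_\la\bigl[\mathbf{e}_{\mathbf{n},T,\la}L_{\la,T,W_0}\chi_{m,\theta,\la,\delta} + 2(D_0\mathbf{e}_{\mathbf{n},T,\la}, D_0\chi_{m,\theta,\la,\delta})_{H_0}\bigr]$ is the commutator with the cut-off, and $R_{\mathrm{geom}} = -\la^{-1}\tr\,\tilde K_k\bigl(D_0^2\hat{\mathbf{e}}_{\mathbf{n},T,\la}\bigr) + \la^{-1}(\tilde B_{\la,k}, D_0\hat{\mathbf{e}}_{\mathbf{n},T,\la})_{H_0}$ is the geometric correction accounting for the curvature of $S_a$. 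The term $R_{\mathrm{cut}}$ is supported on the transition region of $\chi$; by the polynomial-in-$\la$ bounds of Lemma~\ref{cut-off function}(2),(3) combined with the Gaussian tail estimate above, its $L^2(\tilde\mu_{\la,a,W_0})$-norm is exponentially small. For $R_{\mathrm{geom}}$, using $|\tilde K_k| = O(\la^{-\delta})$ and $|\tilde B_{\la,k}|_{H_0} = O(1)$ on the cut-off support (Lemma~\ref{change of variable dirichlet form} and Lemma~\ref{expression of L in local chart}) together with $\|D_0^j\mathbf{e}_{\mathbf{n},T,\la}\|_{L^2(\mu_{\la,T,W_0})} \le \la^{j/2}C_{\mathbf{n},T}$ (Theorem~\ref{cons}(3)), the first and second terms contribute $O(\la^{-\delta})$ and $O(\la^{-1/2})$ respectively, both majorized by $O(\la^{1/2-\delta})$ since $\delta < 1$.

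The main obstacle I anticipate is the bookkeeping of the cut-off commutator $R_{\mathrm{cut}}$. A naive pointwise bound is unusable because $L_{\la,T,W_0}\chi_{m,\theta,\la,\delta}$ grows like $\la^{2m\delta+\delta/2}$ and $\mathbf{e}_{\mathbf{n},T,\la}$ itself can have polynomial size on the transition annulus; the only rescue is to exploit the identity of measures in Theorem~\ref{cons}(2), which recasts $|\mathbf{e}_{\mathbf{n},T,\la}|^2\,d\mu_{\la,T,W_0}$ as a Gaussian of covariance $(\la|I_{H_0}+T|)^{-1}$ and thereby beats any polynomial by $\exp(-c\la^{1-\delta})$ on $\{\bar\Gamma_{m,\theta}(\eta)\gtrsim\la^{-\delta/2}\}$. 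A secondary technical point is to verify that $\hat{\mathbf{e}}_{\mathbf{n},T,\la}$ lies in $\rD(\tilde L_{\la,S_a})$, and thereby that $\tilde{\mathbf{e}}_{\mathbf{n},T,\la} \in \rD(L_{\la,\Pea})$; this should follow from the compact support in the chart where $\varPhi_{k,a}$ is a smooth diffeomorphism, together with the Malliavin smoothness of $\chi_{m,\theta,\la,\delta}$ and $\mathbf{e}_{\mathbf{n},T,\la}$.
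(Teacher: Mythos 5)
Your proposal follows the paper's own proof essentially step for step: pull everything back to the chart via Lemma~\ref{change of variable}(2), cancel $C_\la=c_{\la,a}^{-1/2}e^{\la\|k\|_H^2/4}$ against the $c_{\la,a}e^{-\la\|k\|_H^2/2}$ prefactor from Lemma~\ref{expansion of F and G}(2)(ii), Taylor the density $e^{-\la F_R}(1+G_R)$ on the cut-off support, use the Gaussian identity of measures from Theorem~\ref{cons}(2) to beat the polynomial blowup of $L_{\la,T,W_0}\chi_{m,\theta,\la,\delta}$ on the transition annulus, and then estimate the geometric remainder from Lemma~\ref{expression of L in local chart} term by term with $\|D_0^j\mathbf{e}\|\le\la^{j/2}C$ from Theorem~\ref{cons}(3). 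This is the same decomposition and the same key ingredients the paper uses; there is no substantive difference in approach.

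One small remark: you invoke the paper's estimate (\ref{estimate of tildeB}), $|\tilde B_{\la,k}|_{H_0}=O(1)$, and conclude a rate $O(\la^{-1/2})$ for the $\tilde B$-term, which would be stronger than the claimed $O(\la^{1/2-\delta})$. The paper's own proof of this lemma avoids relying on (\ref{estimate of tildeB}) and instead estimates $B_{\la,k}$, $\la D_0 F_R$, and $D_0\log G$ separately, with the $\la D_0 F_R$ contribution giving the genuine dominant term $O(\la^{1/2-\delta})$ (note $\la D_0 F_R=O(\la^{1-\delta})$ on the cut-off support, so grouping it into $\tilde B_{\la,k}$ and calling it $O(1)$ hides the actual dominant contribution). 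Either way your final bound is correct since you correctly observe that both terms are majorized by $O(\la^{1/2-\delta})$, but the sharper term-by-term estimate in the paper makes the origin of the exponent more transparent.
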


\begin{rem}\label{e on S_a}
(1) Since $\tilde{\mu}_{\la,a,W_0}\approx \hat{\mu}_{\la,T,W_0}=
c_{\la,a}e^{-\frac{\la}{2}\|k\|_H^2}\mu_{\la,T,W_0}$ on the neighborhood of
$0$ in $W_0$,
we need to multiply the eigenfunctions on $L^2(W_0,d\mu_{\la,T,W_0})$ by
the constant 
$c_{\la,a}^{-1/2}e^{\frac{\la}{4}\|k\|_H^2}$ for the normalization.

\noindent
(2) Write $\bce_{\mathbf{n},T,\la}(w)=
\varPsi^{\ast}_{k,a}(\mathbf{\hat{e}}_{\mathbf{n},T,\la})(w)$.
Then these functions are approximate eigenfunctions localized near $k$
in $S_a$.
Also $\mathbf{\tilde{e}}_{\mathbf{n},T,\la}\in \rD(L_{\la,\Pea})$,
(\ref{n}), (\ref{nm}) and (\ref{approximate eigenvalue})
are equivalent to 
$\mathbf{\check{e}}_{\mathbf{n},T,\la}\in \rD(\LS)$
and the following estimates respectively
by Theorem~$\ref{D and Y^-1(D)}$.
\begin{align}
& \|\bce_{\mathbf{n},T,\la}\|_{L^2(S_a,\mu_{\la,a})}=
1+O(\la^{1-(3\delta/2)})+O(\la^{-\delta/2}),\label{S_an}\\
& \left(\bce_{\mathbf{n},T,\la},
\bce_{\mathbf{m},T,\la}\right)_{L^2(S_a,\mu_{\la,a})}=
O(\la^{1-(3\delta/2)})+O(\la^{-\delta/2}),\label{S_anm}\\
&\|\la^{-1}\LS \bce_{\mathbf{n},T,\la}
-E_{\mathbf{n}}(\xi)\bce_{\mathbf{n},T,\la}\|_{L^2(S_a,\mu_{\la,a})}
=O(\la^{\frac{1}{2}-\delta}),\label{S_aapproximate eigenvalue}
\end{align}

\noindent

$(3)$ We considered domains $\D\subset \Pea$ in Theorem~\ref{main theorem}.
Suppose $l_{\xi}=Y(\cdot,e,k)\in \D$.
Then for sufficiently large $\la$,
by Remark~\ref{remark on the generator} (1), we see that
$\mathbf{\tilde{e}}_{\mathbf{n},T,\la}\in \rD(L_{\la,\D})$,
$\mathbf{\check{e}}_{\mathbf{n},T,\la}\in \rD(L_{\la,\YD})$
and
\begin{align*}
 L_{\la,\D}\,\mathbf{\tilde{e}}_{\mathbf{n},T,\la}=
\left(L_{\la,\Pea}\,\mathbf{\tilde{e}}_{\mathbf{n},T,\la}\right)|_{\D},\quad
L_{\la,\YD}\,\mathbf{\check{e}}_{\mathbf{n},T,\la}
=\left(\LS\, \mathbf{\check{e}}_{\mathbf{n},T,\la}\right)|_{\YD}
\end{align*}
 hold.
Note that we need normalize them to obtain unit vectors
in $L^2(\D,\nu_{\la,\D})$, $L^2(\YD,\mu_{\la,\YD})$.
\end{rem}

\begin{proof}
We apply Lemma~\ref{change of variable} (2) to estimate these quantities.
First note that $w\in V_{\ep}^{S_a}(k)$ is equivalent to
$w\in S_a$ and $\|\bar{\eta}\|_{\alpha}<\ep$, 
where $\eta=\varPsi_{k,a}w=P(k)(w-k)$.
Hence if $\la$ is sufficiently large, then for $w\in S_a$ satisfying that
$\chi_{m,\theta,\la,\delta}(P(k)(w-k))\ne 0$, 
$\|\bar{\eta}\|_{\alpha}=O(\la^{-\delta/2})$ 
and $\|\bar{\eta}\|_{\alpha}<\ep$ 
holds and hence
$w\in V_{\ep}(k)$. Therefore by Lemma~\ref{change of variable} (2),
and Lemma~\ref{expansion of F and G},
we have
 \begin{align*}
  \|\mathbf{\tilde{e}}_{\mathbf{n},T,\la}\|_{L^2(\Pea,\nu_{\la,a})}^2&=
\int_{S_a} 
\mathbf{e}_{\mathbf{n},T,\la}(P(k)(w-k))^2\chi_{m,\theta,\la,\delta}
(P(k)(w-k))^2c_{\la,a}^{-1}
e^{\frac{\la}{2}\|k\|_H^2}
d\mu_{\la,a}(w)
\nonumber\\
&=\int_{S_a\cap V_{\ep}(k)} 
\mathbf{e}_{\mathbf{n},T,\la}(P(k)(w-k))^2
\chi_{m,\theta,\la,\delta}(P(k)(w-k))^2c_{\la,a}^{-1}
e^{\frac{\la}{2}\|k\|_H^2}d\mu_{\la,a}(w)
\nonumber\\
&=\int_{B_{k,\ep}^{\Omega_0}(0)}
\mathbf{e}_{\mathbf{n},T,\la}(\eta)^2\chi_{m,\theta,\la,\delta}(\eta)^2
\exp\left(-\la F(a,\eta,k)\right)G(a,\eta,k)e^{\frac{\la}{2}\|k\|_H^2}
d\mu_{\la,W_0}(\eta)\nonumber\\
&=\int_{B_{k,\ep}^{\Omega_0}(0)}
\mathbf{e}_{\mathbf{n},T,\la}(\eta)^2\chi_{m,\theta,\la,\delta}(\eta)^2
\exp\left(-\la F_R(a,\eta,k)\right)
(1+G_R(a,\eta,k))
d\mu_{\la,T,W_0}(\eta)\nonumber\\
&=\int_{W_0}\mathbf{e}_{\mathbf{n},T,\la}(\eta)^2
\chi_{m,\theta,\la,\delta}(\eta)^2
\exp\left(\la O(\|\bar{\eta}\|_{\alpha}^3)\right)(1+O(\|\bar{\eta}\|_{\alpha})
d\mu_{\la,T,W_0}(\eta)\nonumber\\
&=\int_{\Omega_0}\mathbf{e}_{\mathbf{n},T,\la}(\eta)^2
\chi_{m,\theta,\la,\delta}(\eta)^2
(1+O(\la^{1-(3\delta/2)}))
(1+O(\la^{-\delta/2}))
d\mu_{\la,T,W_0}(\eta)\nonumber\\
&=1+O(\la^{1-(3\delta/2)})+O(\la^{-\delta/2})+
\int_{\Omega_0}\mathbf{e}_{\mathbf{n},T,\la}
(\eta)^2\left(\chi_{m,\theta,\la,\delta}(\eta)^2-1\right)
d\mu_{\la,T,W_0}(\eta).
 \end{align*}
Also
\begin{align*}
& \int_{\Omega_0}\mathbf{e}_{\mathbf{n},T,\la}(\eta)^2
\left(\chi_{m,\theta,\la,\delta}(\eta)^2-1\right)
d\mu_{\la,T,W_0}(\eta)\nonumber\\
&\qquad=
 \int_{\{\|\bar{\eta}\|_{\alpha}\ge C \la^{-\delta/2}\}}
\mathbf{e}_{\mathbf{n},T,\la}(\eta)^2
d\mu_{\la,T,W_0}(\eta)\nonumber\\
&\qquad=\int_{\{\|\bar{\eta}\|_{\alpha}\ge C\la^{(1-\delta)/2}\}} 
\mathbf{e}_{\mathbf{n},|I+T|-I,1}(\eta)^2
d\mu_{1,|I+T|-I,W_0}(\eta)\le C\exp(-C'\la^{1-\delta}).
\end{align*}
Here in the final inequality, 
we have used the square exponential decay estimate of the Gaussian measure
$\mu_{1,|I+T|-I,W_0}$
and $L^p$ integrability of the Hermite functions with respect to
the Gaussian measure $\mu_{1,|I+T|-I,W_0}$ for any $p\ge 1$.
The estimate (\ref{nm}) is similar to (\ref{n}).
We prove (\ref{approximate eigenvalue}).
First, note that
$\varPhi_{k,a}^{\ast}(\varPsi_{k,a}^{\ast}\mathbf{\hat{e}}_{\mathbf{n},T,\la})(\eta)
=\mathbf{\hat{e}}_{\mathbf{n},T,\la}(\eta)$.
Using this, 
we have
\begin{align*}
 -L_{\la,\Pea}\mathbf{\tilde{e}}_{\mathbf{n},T,\la}&=-(Y^{-1})^{\ast}
\left(L_{\la, S_a}\varPsi_{k,a}^{\ast}\mathbf{\hat{e}_{\mathbf{n},T,\la}}\right)
(\gamma)=-(Y^{-1})^{\ast}\left(\varPsi_{k,a}^{\ast}
\left(\tLS\mathbf{\hat{e}}_{\mathbf{n},T,\la}\right)\right)(\gamma).
\end{align*}
Using Lemma~\ref{expression of L in local chart}, we estimate the above.
First, noting 
$L_{\la,T,W_0}\mathbf{e}_{\mathbf{n},T,\la}
=-\la E_{\mathbf{n}}\mathbf{e}_{\mathbf{n},T,\la}$ 
which follows from Theorem~\ref{cons},
we have
\begin{align*}
&\la^{-1}L_{\la,T,W_0} \mathbf{\hat{e}}_{\mathbf{n},T,\la}
-E_{\mathbf{n}}\mathbf{\hat{e}}_{\mathbf{n},T,\la}\nn\\
&=
\la^{-1}c_{\la,a}^{-1/2}e^{\frac{\la}{4}\|k\|_H^2}
\Bigl\{(L_{\la,T,W_0}\chi_{m,\theta,\la,\delta})\,\mathbf{e}_{\mathbf{n},\la,T}
-2\left(D_0\mathbf{e}_{\mathbf{n},\la,T}, 
D_0\chi_{m,\theta,\la,\delta}\right)\Bigr\}=:I_{1,\la}(\eta).
\end{align*}
Hence,
\begin{align*}
& \int_{B_{k,\ep}^{\Omega_0}(0)}
I_{1,\la}(\eta)^2
d\tilde{\mu}_{\la,a,W_0}(\eta)\nn\\
&\,\le
2\la^{-2}\int_{\|\bar{\eta}\|_{\alpha}\ge C\la^{-\delta/2}}
e^{\la O(\|\bar{\eta}\|_{\alpha}^3)}(1+O(\|\bar{\eta}\|_{\alpha}))
(L_{\la,T,W_0}\chi_{m,\theta,\la,\delta})(\eta)^2\,
\mathbf{e}_{\mathbf{n},\la,T}(\eta)^2d\mu_{\la,T,W_0}(\eta)\nn\\
&\,+8\la^{-2}
\int_{\|\bar{\eta}\|_{\alpha}\ge C\la^{-\delta/2}}
e^{\la O(\|\bar{\eta}\|_{\alpha}^3)}(1+O(\|\bar{\eta}\|_{\alpha}))
|D_0\mathbf{e}_{\mathbf{n},\la,T}|^2
|D_0\chi_{m,\theta,\la,\delta}|^2
d\mu_{\la,T,W_0}(\eta)\nn\\
&=O(e^{-C\la^{1-\delta}}).
\end{align*}
Here, we have used Theorem~\ref{cons} (3) and 
the exponential decay estimate of the tail of the 
Gaussian measure $\mu_{1,|I+T|-I,W_0}$ similarly to the proof of (1)
in the final step.

By Theorem~\ref{cons} (3) and the exponential decay estimate of 
the Gaussian measures, we obtain
\begin{align*}
& \la^{-1}\|\tr(\tilde{K}_kD_0^2\mathbf{\hat{e}}_{\mathbf{n},\la,T})
\|_{L^2(\tilde{\mu}_{\la,a,W_0})}=O(\la^{-\delta/2}),\\
&\la^{-1}\|\left(B_{\la, k},
D_0\mathbf{\hat{e}}_{\mathbf{n},\la,T}(\eta)\right)
\|_{L^2(\tilde{\mu}_{\la,a,W_0})}
=O\left(\la^{-\frac{1}{2}}\left(\la^{1-\frac{3}{2}\delta}+
\la^{-\frac{\delta}{2}}\right)\right)=O\left(\la^{-(3\delta-1)/2}\right),\\
&\quad
\|(D_{0}F_R(a,\eta,k),
D_0\mathbf{\hat{e}}_{\mathbf{n},\la,T}(\eta))\|_{L^2(\tilde{\mu}_{\la,a,W_0})}
=O(\la^{\frac{1}{2}-\delta}),\phantom{llllllllllllllllllllllll}\\
&\la^{-1}
\|\left(D_{0}\log G(a,\eta,k),
D_0\mathbf{\hat{e}}_{\mathbf{n},\la,T}(\eta)
\right)\|_{L^2(\tilde{\mu}_{\la,a,W_0})}
=O(\la^{-1/2}),
\end{align*}
which completes the proof.
\end{proof}

\section{Estimate of integral of certain exponential functional}

\begin{lem}\label{expansion of b}
 Let $k\in S_a^H$ and consider the 
local coordinate defined in
Lemma~$\ref{local chart}$.
Also recall that $(U_kh)(t)=\int_0^tAd(Y(s,e,k))dh_s$.
\begin{enumerate}
\item[$(1)$] 
We have the following expansion.
\begin{align*}
& \frac{1}{2}|b(1)|^2=\frac{1}{2}|U_k(k)(1)|^2
-\frac{1}{2}\int_0^1\left([(U_k\eta)(s), 
(U_kk)(1)],d(U_k\eta)_s\right)\nonumber\\
&\qquad-\frac{1}{2}\int_0^1\left([(U_k\eta)(s), d(U_kk)_s],
[(U_k\eta)(s), (U_kk)(1)\right)
+\frac{1}{2}\left|\int_0^1[(U_k\eta)(s), d(U_kk)(s)]\right|^2\nonumber\\
&\qquad+\left(\int_0^1[v_a(\eta),sd(U_kk)(s)], (U_kk)(1)\right)\nonumber\\
&\qquad
+\frac{1}{2}\left(\int_0^1\left[\left[\int_0^s(U_k\eta)(u), 
d(U_k\eta)_u\right],
d(U_kk)(s)\right], U_k(k)(1)\right)\nonumber\\
&\qquad
+\left(\int_0^1\left[(U_k\eta)(s),d(U_kk)_s\right], (U_kk)(1)\right)
+O(\|\bar{\eta}\|_{\alpha}^3).
\end{align*}
\item[$(2)$]
Further, under the assumption that there exists $\xi\in \g$ such that
$k_t=t\xi$, it holds that
\begin{align}
 & \frac{1}{2}|b(1)|^2=\frac{1}{2}\|k\|_H^2
-\frac{1}{2}\int_0^1\left([(U_k\eta)(s), \xi],d(U_k\eta)_s\right)\nonumber\\
&\qquad\qquad-\frac{1}{2}\left(\int_0^1\left|[(U_k\eta)_s,\xi]\right|^2ds-
\left|\int_0^1[(U_k\eta)(s),\xi]ds\right|^2\right)+O(\|\bar{\eta}\|_{\alpha}^3).
\label{expansion of b 2}
\end{align}
\end{enumerate}
\end{lem}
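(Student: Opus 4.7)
The strategy is to Taylor expand the development $b(1,w)=\int_0^1 Ad(Y(s,e,w))\,dw_s$ (in the sense of Lemma~\ref{pathwise gamma}(2)) in the local coordinate $w=k+\eta+\varphi_a(\eta)$ up to error $O(\|\bar\eta\|_\alpha^3)$, and then take $\tfrac12|b(1)|^2$. The key reduction is to replace $b(1)$ by an object driven by the \emph{small} path $\beta:=U_k\tilde\eta$, where $\tilde\eta=\eta+\varphi_a(\eta)$. Setting $g_s=Y(s,e,w)Y(s,e,k)^{-1}$ and applying the chain rule to the two RDEs $dY(s,e,w)=Y(s,e,w)\,dw_s$ and $dY(s,e,k)=Y(s,e,k)\,dk_s$ gives $dg_s=g_s\,d(U_k\tilde\eta)_s$ with $g_0=I$, so $g_s=Y(s,e,\beta)$ and
\begin{equation*}
b(1)=\int_0^1 Ad(g_s)\,d(U_kk)_s+\int_0^1 Ad(g_s)\,d\beta_s.
\end{equation*}
Because $\eta\in T_kS_a$, $U_k\eta(1)=0$; combined with $\varphi_a(\eta)=Q(k)v_a(\eta)$ this gives $\beta(1)=v_a(\eta)\in\mathfrak g$, which is $O(\|\bar\eta\|_\alpha^2)$ by Lemma~\ref{local chart}(6).

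Iterating the RDE for $g_s$ and using the geometric integration-by-parts identity $\int_0^s\beta(u)\,d\beta_u+\int_0^s d\beta_u\,\beta(u)=\beta(s)^2$ yields $g_s=\exp(X_s)+O(\|\bar\eta\|_\alpha^3)$ with $X_s=\beta(s)+\tfrac12\int_0^s[\beta(u),d\beta_u]+O(\|\bar\eta\|_\alpha^3)\in\mathfrak g$. The condition $w\in S_a$ forces $g_1=I$, hence $X_1=0$, and since $\beta(1)^2=O(\|\bar\eta\|_\alpha^4)$ this implies
\begin{equation*}
v_a(\eta)=-\tfrac12\int_0^1[U_k\eta(u),d(U_k\eta)_u]+O(\|\bar\eta\|_\alpha^3),
\end{equation*}
in agreement with the computation indicated in the proof of Lemma~\ref{expansion of F and G}. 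Substituting $Ad(g_s)v=v+[X_s,v]+\tfrac12[X_s,[X_s,v]]+O(\|X_s\|^3)$ into the two integrals and using $\beta(s)=U_k\eta(s)+s\,v_a(\eta)$ produces, to order $\|\bar\eta\|_\alpha^2$,
\begin{align*}
b(1)&=U_k(k)(1)+v_a(\eta)+\int_0^1[U_k\eta(s),d(U_kk)_s]+\int_0^1[v_a(\eta),s\,d(U_kk)_s]\\
&\quad+\int_0^1[U_k\eta(s),d(U_k\eta)_s]+\tfrac12\int_0^1\Bigl[\int_0^s[U_k\eta(u),d(U_k\eta)_u],d(U_kk)_s\Bigr]\\
&\quad+\tfrac12\int_0^1[U_k\eta(s),[U_k\eta(s),d(U_kk)_s]]+O(\|\bar\eta\|_\alpha^3).
\end{align*}

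Writing $K_1:=U_k(k)(1)$ and $\delta_1:=\int_0^1[U_k\eta(s),d(U_kk)_s]$, we then expand $\tfrac12|b(1)|^2=\tfrac12|K_1|^2+(K_1,\delta_1)+\tfrac12|\delta_1|^2+(K_1,\delta_2)+O(\|\bar\eta\|_\alpha^3)$. Using bi-invariance in the form $(A,[B,C])=([A,B],C)$, the combination $(K_1,v_a(\eta))+(K_1,\int_0^1[U_k\eta,d(U_k\eta)])$ simplifies to $-\tfrac12\int_0^1([U_k\eta(s),K_1],d(U_k\eta)_s)$, giving the first commutator integral in (1); the mixed $[X_s,[X_s,\cdot]]$ term rewrites as $-\tfrac12\int_0^1([U_k\eta(s),d(U_kk)_s],[U_k\eta(s),K_1])$; the remaining contributions match terms 4, 5, 6 directly. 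For part~(2) one substitutes $k_t=t\xi$, so $Y(s,e,k)=e^{s\xi}$, $Ad(e^{s\xi})\xi=\xi$, $U_kk(s)=s\xi$, $d(U_kk)_s=\xi\,ds$, and $\|k\|_H^2=|\xi|^2$. Bi-invariance kills $(\xi,[A,\xi])$, which collapses terms 4, 5, 6 and half of the term-2 contribution; skew-adjointness of $\mathrm{ad}_\xi$ turns the remaining double commutator into $-\tfrac12\int_0^1|[U_k\eta(s),\xi]|^2\,ds$, and $\tfrac12|\delta_1|^2=\tfrac12|\int_0^1[U_k\eta(s),\xi]\,ds|^2$, reproducing (\ref{expansion of b 2}).

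\textbf{Main obstacle.} The delicate point is justifying the matrix-valued rough-path expansions to the required order and tracking the algebraic cancellations: $g_s$ lives in $G\subset M(n,\mathbb C)$, so one must write $g_s=\exp(X_s)$ with $X_s\in\mathfrak g$ and use the geometric-rough-path IBP identity to convert the matrix product $\int_0^s\beta(u)\,d\beta_u$ into the Lie-algebra-valued $\tfrac12\int_0^s[\beta(u),d\beta_u]$ (modulo the negligible $\tfrac12\beta(s)^2$). This is precisely what yields the coefficient $-\tfrac12$ in $v_a(\eta)$; without it, the crucial term $-\tfrac12\int_0^1([U_k\eta(s),\xi],d(U_k\eta)_s)$ in (\ref{expansion of b 2}) would not appear. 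A secondary bookkeeping issue is that the second-order piece $s\,v_a(\eta)$ inside $\beta(s)$ produces the isolated term $\int_0^1[v_a(\eta),s\,d(U_kk)_s]$ in (1) (and the $\tfrac12[v_a(\eta),\xi]$ contribution in the geodesic case that is annihilated by bi-invariance), and must not be conflated with the leading order contribution of $\beta$.
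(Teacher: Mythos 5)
Your proposal is correct and reaches the same expansion, but it is organized quite differently from the paper's argument. The paper works with the interpolation $Y^{\theta,v,\eta}(s)=Y(s,e,k+\theta(\eta+Q(k)v))$ and Taylor-expands in $\theta$ using the derivative formulas (\ref{derivative of Y}) and (\ref{derivative of Ad}); it then squares $b(1)$ and tracks the contributions through an explicit bookkeeping $I_1,\dots,I_5$ (with sub-terms $I_{2,1,1},I_{2,1,2},I_{2,1,3},I_{2,2}$), importing the second-order formula for $v_a(\eta)$ from the fixed-point construction of Lemma~\ref{local chart}. You instead factor $Y(\cdot,e,w)=g\,Y(\cdot,e,k)$ with $dg_s=g_s\,d(U_k\tilde\eta)_s$, pass to exponential coordinates $g_s=\exp(X_s)$ via a Magnus/BCH expansion, and read off $v_a(\eta)=-\tfrac12\int_0^1[U_k\eta,d(U_k\eta)]+O(\|\bar\eta\|_\alpha^3)$ directly from the constraint $g_1=I$ — a nice self-contained derivation that makes the origin of the coefficient $-\tfrac12$ transparent (and which, incidentally, is the version consistent with the Hessian $\tfrac12((I+T_\xi)\cdot,\cdot)$; it agrees with the value the paper actually uses for $I_5$ in this proof). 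I verified that your expansion of $b(1)$ and the subsequent use of $\mathrm{ad}$-invariance reproduce every term of (1), and that the vanishing pattern in (2) ($([A,\xi],\xi)=(A,[\xi,\xi])=0$ killing the last three terms and the cross term $(\delta_1,K_1)$) matches the paper's $I_3=I_{2,1,1}=I_{2,1,3}=0$. What your route buys is algebraic clarity; what it costs is that the rough-path justification is shifted to two places you correctly flag: (i) that $g$ solves the RDE driven by the lift $\overline{U_k\tilde\eta}$ of the controlled path $U_k\tilde\eta$ (this is exactly Lemma~\ref{w_f-roughpath} and Lemma~\ref{rough integral wf}), and (ii) that the Magnus remainder is $O(\|\overline{U_k\tilde\eta}\|_\alpha^3)=O(\|\bar\eta\|_\alpha^3)$, which uses the smallness of $\|C(k,\eta)\|_{2\alpha}$, $\|C(\eta,k)\|_{2\alpha}$ built into $B_{k,\ep}^{\Omega_0}(0)$ together with (\ref{estimate of rough integral}). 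Two cosmetic points: $U_k\eta(1)=0$ holds because $\eta\in W_0=\oTkSa$ (so $\int_0^1 Ad(Y(s,e,k))d\eta_s=0$ as a rough integral), not because $\eta\in H_0$; and your final tally of which terms of (1) are annihilated in the geodesic case should read the last three terms (those of the form $(\,[\cdot\,,d(U_kk)]\,,(U_kk)(1))$) rather than "terms 4, 5, 6" — the term $\tfrac12|\int_0^1[U_k\eta,d(U_kk)]|^2$ survives as $\tfrac12|\int_0^1[U_k\eta(s),\xi]\,ds|^2$.
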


\begin{proof}
(1) 
Below, for simplicity, we write $v_a(\eta)=v(\eta)=v$ and 
$\varphi_a(\eta)=\varphi(\eta)=\varphi$.
Using (\ref{derivative of Ad}) and $v=O(\|\bar{\eta}\|_{\alpha}^2)$,
$\varphi=O(\|\bar{\eta}\|_{\alpha}^2)$, we have
\begin{align*}
& \frac{1}{2}|b(1)|^2
=\frac{1}{2}\left|\int_0^1Ad\left(Y(s,e,w)\right)dw_s\right|^2
\nonumber\\
&\,=\frac{1}{2}
\left|\int_0^1Ad\left(Y(s,e,k+\eta+\varphi)\right)d(k+\eta+\varphi)_s\right|^2
\nonumber\\
&\,=\frac{1}{2}
\Biggl|\int_0^1
\Bigl(Ad\left(Y(s,e,k+\eta+\varphi)\right)-
Ad\left(Y(s,e,k)\right)
\Bigr)
d(k+\eta+\varphi)_s
+(U_kk)(1)+v\Biggr|^2\nonumber\\
&\,=\frac{1}{2}\Biggl|
\int_0^1\int_0^1
\Bigl[
\int_0^s
Ad\left(Y^{\tau,v,\eta}(u)\right)d\eta_u
+\int_0^sAd\left(Y^{\tau,v,\eta}(u)Y(u,e,k)^{-1}\right)vdu,\nonumber\\
&\quad\qquad\qquad\qquad Ad\left(Y^{\tau,v,\eta}(s)\right)d(k+\eta+\varphi)_s
\Bigr]d\tau+(U_kk)(1)+v
\Biggr|^2\nonumber\\
&\,=\frac{1}{2}\Biggl|
\int_0^1\int_0^1
\Bigl[
\int_0^s
Ad\left(Y^{\tau,v,\eta}(u)\right)d\eta_u
+\int_0^sAd\left(Y^{\tau,v,\eta}(u)Y(u,e,k)^{-1}\right)vdu,\nonumber\\
&\quad\qquad\qquad\qquad
Ad\left(Y^{\tau,v,\eta}(s)\right)d(k+\eta+\varphi)_s
\Bigr]d\tau\Biggr|^2\nonumber\\
&\,+\left(\int_0^1\int_0^1
\Bigl[
\int_0^sAd\left(Y^{\tau,v,\eta}(u)\right)d\eta_u,
Ad\left(Y^{\tau,v,\eta}(s)\right)d(k+\eta)_sd\tau
\Bigr], (U_kk)(1)\right)\nonumber\\
&\,+\left(\int_0^1\int_0^1
\Bigl[
\int_0^sAd\left(Y^{\tau,v,\eta}(u)Y(u,e,k)^{-1}\right)vdu,
Ad\left(Y^{\tau,v,\eta}(s)\right)d(k+\eta)_sd\tau
\Bigr], (U_kk)(1)\right)\nonumber\\
&\,+\frac{1}{2}|(U_kk)(1)|^2+
\left((U_kk)(1),v\right)+O(\|\bar{\eta}\|_{\alpha}^3)\nonumber\\
&=:I_1+I_2+I_3+I_4+I_5+O(\|\bar{\eta}\|_{\alpha}^3).
\end{align*}
We estimate each terms.
\begin{align*}
 I_1&=\frac{1}{2}\left|
\int_0^1\int_0^1\left[\int_0^sAd\left(Y^{\tau,v,\eta}(u)d\eta_u,
Ad\left(Y^{\tau,v,\eta}(s)dk_s\right)\right)\right]d\tau
\right|^2+O(\|\bar{\eta}\|_{\alpha}^3)\nonumber\\
&=\frac{1}{2}\left|
\int_0^1\int_0^1\left[\int_0^sAd\left(Y(u,e,k)d\eta_u,
Ad\left(Y(s,e,k)dk_s\right)\right)\right]d\tau\right|^2
+O(\|\bar{\eta}\|_{\alpha}^3)
\nonumber\\
&=\frac{1}{2}\left|\int_0^1
\left[
(U_k\eta)(s),d(U_kk)_s
\right]\right|^2+O(\|\bar{\eta}\|_{\alpha}^3).
\end{align*}
As for $I_2$, we have
\begin{align*}
 I_2&=\left(\int_0^1\int_0^1
\Bigl[
\int_0^sAd\left(Y^{\tau,v,\eta}(u)\right)d\eta_u,
Ad\left(Y^{\tau,v,\eta}(s)\right)dk_sd\tau
\Bigr], (U_kk)(1)\right)\\
&\qquad+
\left(\int_0^1\int_0^1
\Bigl[
\int_0^sAd\left(Y^{\tau,v,\eta}(u)\right)d\eta_u,
Ad\left(Y^{\tau,v,\eta}(s)\right)d\eta_sd\tau
\Bigr], (U_kk)(1)\right)\\
&=:I_{2,1}+I_{2,2}.
\end{align*}
We estimate $I_{2,1}$.
\begin{align*}
 I_{2,1}&=
\Biggl(\int_0^1\int_0^1
\Bigl[
\int_0^s\left(Ad\left(Y^{\tau,v,\eta}(u)\right)
-Ad\left(Y(u,e,k)\right)\right)d\eta_u
+\int_0^sAd(Y(u,e,k))d\eta_u,\\
&\quad\left(
\Big(Ad\left(Y^{\tau,v,\eta}(s)\right)-Ad(Y(s,e,k))\Big)+
Ad\left(Y(s,e,k)\right)
\right)
dk_sd\tau
\Bigr], (U_kk)(1)\Biggr)\\
&=\Biggl(\int_0^1\int_0^1
\Bigl[
\int_0^s\left(Ad\left(Y^{\tau,v,\eta}(u)\right)
-Ad\left(Y(u,e,k)\right)\right)d\eta_u,
Ad(Y(s,e,k))dk_s\Bigl]d\tau,(U_kk)(1)\Biggl)\\
&\quad+
\Biggl(\int_0^1\int_0^1
\Bigl[
\int_0^sAd\left(Y(u,e,k)\right)d\eta_u,
\left(Ad\left(Y^{\tau,v,\eta}(s)\right)-Ad(Y(s,e,k))\right)dk_s\Bigl]
d\tau,(U_kk)(1)\Biggl)\\
&\quad+\left(\int_0^1\left[\int_0^sAd(Y(u,e,k))d\eta_u, Ad(Y(s,e,k))dk_s\right],
(U_kk)(1)\right)+O(\|\bar{\eta}\|_{\alpha}^3)\\
&\quad=:I_{2,1,1}+I_{2,1,2}+I_{2,1,3}+O(\|\bar{\eta}\|_{\alpha}^3).
\end{align*}
For $I_{2,1,1}$ and $I_{2,1,2}$, using (\ref{derivative of Ad}) again,
we have
\begin{align*}
& I_{2,1,1}\\
&=\Biggl(
\int_0^1\int_0^1\Biggl[
\int_0^s\left(\int_0^\tau
\Bigl[\int_0^uAd(Y(r,e,k))d\eta_r, Ad(Y(u,e,k))d\eta_u\Bigr]d\sigma\right),\\
&\qquad\qquad Ad(Y(s,e,k))dk_s
\Biggr]d\tau,
(U_kk)(1)
\Biggr)+O(\|\bar{\eta}\|_{\alpha}^3)\\
&=\frac{1}{2}\Biggl(
\int_0^1\left[\int_0^s\left[\int_0^uAd(Y(r,e,k))d\eta_r,
Ad(Y(u,e,k))d\eta_u\right], Ad(Y(s,e,k))dk_s\right],\\
&\quad\quad (U_kk)(1)
\Biggr)+O(\|\bar{\eta}\|_{\alpha}^3),
\end{align*}
\begin{align*}
& I_{2,1,2}\\
&=
\Biggl(
\int_0^1\int_0^1\Biggl[\int_0^sAd(Y(u,e,k))d\eta_u,\\
&\, \int_0^{\tau}\left[\int_0^sAd(Y(u,e,k))d\eta_u,
Ad(Y(s,e,k))dk_s
\right]d\sigma\Biggr] d\tau, 
(U_kk)(1)
\Biggr)+O(\|\bar{\eta}\|_{\alpha}^3)\\
&=\frac{1}{2}\Biggl(
\int_0^1\left[\int_0^sAd(Y(u,e,k))d\eta_u,
\left[\int_0^sAd(Y(u,e,k))d\eta_u, Ad(Y(s,e,k))dk_s\right]\right],\\
&\qquad\qquad
(U_kk)(1)
\Biggr)+O(\|\bar{\eta}\|_{\alpha}^3).
\end{align*}
For $I_{2,2}$,
we have
\begin{align*}
 I_{2,2}&=\left(\int_0^1\left[\int_0^sAd(Y(u,e,k))d\eta_u, 
Ad(Y(s,e,k))d\eta_s\right]
, (U_kk)(1)\right)+O(\|\bar{\eta}\|_{\alpha}^3).
\end{align*}
We next consider $I_3$.
\begin{align*}
 I_3&=\left(\int_0^1\int_0^1\left[sv,Ad(Y(s,e,k))dk_s\right]d\tau, 
(U_kk)(1)\right)
+O(\|\bar{\eta}\|_{\alpha}^3)\\
&=\left(\int_0^1\left[v,sd(U_kk)_s\right],(U_kk)(1)\right)
+O(\|\bar{\eta}\|_{\alpha}^3).
\end{align*}
Finally, by the expression of $v$ 
as in (\ref{expression of v}), we have
\begin{align*}
 I_5&=-\frac{1}{2}\left(\int_0^1\left[U_k\eta_s,d(U_k\eta)_s\right],
(U_kk)(1)\right)+O(\|\bar{\eta}\|_{\alpha}^3).
\end{align*}
Consequently, we obtain the following identities, in which we mean
the difference of the R.H.S. and the L.H.S. is of order 
$O(\|\bar{\eta}\|_{\alpha}^3)$.
\begin{align*}
 I_4&=\frac{1}{2}|U_kk(1)|^2,\\
I_5&=\frac{1}{2}\int_0^1\left([(U_k\eta)(s),(U_kk)(1)],d(U_k\eta)_s\right),\\
I_{2,2}&=-\int_0^1\left([(U_k\eta)(s),(U_kk)(1)],d(U_k\eta)_s\right),\\
I_{2,1,2}&=-\frac{1}{2}\int_0^1\Bigl([(U_k\eta)(s), d(U_kk)_s],
[(U_k\eta)(s), (U_kk)(1)]\Bigr)\\
I_1&=\frac{1}{2}\left|\int_0^1[(U_k\eta)(s), d(U_kk)(s)]\right|^2,\\
I_3&=\left(\int_0^1\left[v,sd(U_kk)_s\right],(U_kk)(1)\right),\\
I_{2,1,1}&=\frac{1}{2}\left(\int_0^1\left[\left[\int_0^s(U_k\eta)(u), d(U_k\eta)_u\right],
d(U_kk)(s)\right], U_k(k)(1)\right),\\
I_{2,1,3}&=\left(\int_0^1\left[(U_k\eta)(s),d(U_kk)_s\right], (U_kk)(1)\right).
\end{align*}
This completes the proof.

\noindent
(2) When $k_t=t\xi$, $I_4=\frac{1}{2}\|k\|_H^2$ holds.
Also, noting $[\xi,\xi]=0$ for $\xi\in \g$, we see that
$I_3=I_{2,1,1}=I_{2,1,3}=0$.
Finally, noting that $I_4+I_5+I_{2,2}+I_{2,1,2}+I_1$ coincides with the main term of
the R.H.S of (\ref{expansion of b 2}), the proof is finished.
\end{proof}

\begin{lem}\label{exponential estimate}
Let $\D$ be a domain considered in Theorem~$\ref{main theorem}$.
 Let $\{l_{\xi_i}\}_{i=1}^N$ be all geodesics in $\D$.
For simplicity, we write $k_i=l_{\xi_i}$.
We take $\ep(k_i)$ which is defined in Lemma~$\ref{local chart}$ to
be smaller than $1$ and 
$V_{\ep(k_l)}^{S_a}(k_l)\cap V_{\ep(k_j)}^{S_a}(k_j)=\emptyset$
if $l\ne j$.
Let $\kappa>0$ and define a function on $\D$ by
\begin{align*}
 \rho_\kappa(\gamma)=
\begin{cases}
 \kappa \left[\bar{\Gamma}_{m,\theta}
\left(\varPsi_{k_i,a}(Y^{-1}(\gamma))\right)\right]^2 &
\text{if $Y^{-1}(\gamma)\in V_{\ep(k_i)}^{S_a}(k_i)$\,, 
$1\le i\le N$,}\\
\kappa & \text{if $Y^{-1}(\gamma)\notin 
\cup_{i=1}^{N}V_{\ep(k_i)}^{S_a}(k_i)$,}
\end{cases}
\end{align*}
Then for sufficiently small $\kappa$, 
$
 \limsup_{\la\to\infty}\int_{\D}
\exp\Bigl(C_{\la}\la^2\left(V_{\la,a}+\rho_{\kappa}\right)\Bigr)
d\nu_{\la,\D}<\infty,
$
where 
$C_{\la}=\frac{2}{\la}\left(1+\frac{C_1}{\la}\right)$ and
$V_{\la,a}$ are given 
in Theorem~$\ref{refined version of gross lsi}$.
\end{lem}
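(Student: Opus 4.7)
The plan is to pull the integral back to $S_a$ via the isomorphism $Y$, decompose $Y^{-1}(\D)$ into disjoint coordinate neighborhoods of the geodesics and a complement, and to treat these two pieces by different arguments. Choose the radii $\ep_i := \ep(k_i)$ from Lemma~\ref{local chart} small enough that the sets $V_{\ep_i}(k_i)^{S_a}$ are pairwise disjoint, and write $Y^{-1}(\D) = \bigsqcup_{i=1}^N (Y^{-1}(\D)\cap V_{\ep_i}(k_i)^{S_a})\sqcup \mathcal{O}$ where $\mathcal{O}$ is the complementary piece. Expand
$$C_\la\la^2 V_{\la,a} = \tfrac{\la}{2}|b(1)|^2 + (1+O(\la^{-1}))\log(\la^{-d/2}p(\la^{-1},e,a)) + O(1+|b(1)|^2+(\textstyle\int_0^1|b(s)|ds)^2),$$
so the dominant terms are the quadratic $|b(1)|^2$ contribution and the heat kernel normalization; the remaining bounded functionals of $b$ will be absorbed via Cauchy--Schwarz using Lemma~\ref{exponential estimate0}.

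On $\mathcal{O}$: the region is separated from every geodesic, so the Wentzell--Freidlin large-deviation principle for the pinned measure yields $\mu_{\la,a}(\mathcal{O})\le e^{-\la c_0'}$ for some $c_0'>0$ depending on the $\ep_i$. Combining this with the estimate $\int e^{p\la|b(1)|^2}d\nu_{\la,\D}\le C_p'e^{C_p\la}$ from Lemma~\ref{exponential estimate0} through Cauchy--Schwarz, the heat kernel asymptotics $\la^{-d/2}p(\la^{-1},e,a)\sim Ce^{-\la d(e,a)^2/2}$, and the fact that $\rho_\kappa=\kappa$ is constant on $\mathcal{O}$, one bounds the contribution of $\mathcal{O}$ by a constant provided $\kappa$ is small and $\ep_i$ is chosen so that $c_0'$ dominates the constant coming from Lemma~\ref{exponential estimate0}.

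On each $V_{\ep_i}(k_i)^{S_a}$: apply Lemma~\ref{change of variable}(2) to pass to the local coordinate $\eta\in B_{k_i,\ep_i}^{\Omega_0}(0)$ with measure $d\tilde{\mu}_{\la,a,W_0}$, then use Lemma~\ref{expansion of b}(2) for $\tfrac{\la}{2}|b(1)|^2$ and Lemma~\ref{expansion of F and G}(2) for $e^{-\la F}(1+G_R)$. Several crucial cancellations occur: the factor $e^{\la\|k_i\|^2/2}$ arising from the expansion of $|b(1)|^2$ cancels $e^{-\la\|k_i\|^2/2}$ inside $\tilde{\mu}_{\la,a,W_0}$; the normalization $c_{\la,a}$ combined with $\exp\left((1+O(\la^{-1}))\log(\la^{-d/2}p(\la^{-1},e,a))\right)$ produces a bounded prefactor (times $e^{-\la(\|k_i\|^2-d(e,a)^2)/2}$ for non-minimum geodesics, which is harmless); and the two Wick-type quadratic contributions $-\tfrac{\la}{2}\int_0^1([U_{k_i}\eta,\xi_i],dU_{k_i}\eta)$ coming from $|b(1)|^2$ and from the density of $\mu_{\la,T,W_0}$ combine, via Lemma~\ref{quadratic functional}, with $-\tfrac{\la}{2}Q(\eta)=-\tfrac{\la}{2}(T^2\eta,\eta)$ into the single renormalized form $-\tfrac{\la}{2}:\!\left(((I_{H_0}+T)^2-I_{H_0})\eta,\eta\right)\!:_{\mu_{\la,W_0}}$, where $T=U_{k_i}^{-1}T_{\xi_i}U_{k_i}$.

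The remaining integral on each neighborhood then reads
\[
\int_{B_{k_i,\ep_i}^{\Omega_0}(0)}\exp\bigl(-\tfrac{\la}{2}:\!\left(((I_{H_0}+T)^2-I_{H_0})\eta,\eta\right)\!:_{\mu_{\la,W_0}}+\la\cdot O(\|\bar\eta\|_\alpha^3)+2\la\kappa\bar{\Gamma}_{m,\theta}(\eta)^2+O(1)\bigr)(1+G_R)\,d\mu_{\la,W_0}(\eta).
\]
Because $a$ is not in the cut locus, Lemma~\ref{hessian of E} guarantees that $I_{H_0}+T$ is invertible with bounded inverse, hence $(I_{H_0}+T)^2>0$ with bounded inverse. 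Theorem~\ref{cons}(2), applied with $T$ replaced by $(I_{H_0}+T)^2-I_{H_0}$ (whose spectrum is bounded away from $-1$), then identifies $\exp\bigl(-\tfrac{\la}{2}:\!\left(((I_{H_0}+T)^2-I_{H_0})\eta,\eta\right)\!:\bigr)d\mu_{\la,W_0}$, up to the finite Carleman--Fredholm factor $\dettwo(I_{H_0}+T)^{-1}$, with the centered Gaussian measure of covariance $(\la(I_{H_0}+T)^2)^{-1}$. Under this Gaussian, Fernique's theorem together with the cut-off $\|\bar\eta\|_\alpha<\ep_i$ gives $\int e^{c\la\|\bar\eta\|_\alpha^3}\mathbf{1}_{\{\|\bar\eta\|_\alpha<\ep_i\}}\,d\mu\le C$ for $\ep_i$ small, and rescaling Lemma~\ref{cut-off function}(4) yields $\int e^{2\la\kappa\bar{\Gamma}_{m,\theta}^2}\,d\mu\le C$ for $\kappa$ small. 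The main obstacle is the bookkeeping of paragraph three: carefully identifying, via the rough-integral interpretation of Lemma~\ref{quadratic functional}, the two unrenormalized quadratic pieces with the single Wick form $:\!\left(((I_{H_0}+T)^2-I_{H_0})\eta,\eta\right)\!:_{\mu_{\la,W_0}}$, confirming the positivity of $(I_{H_0}+T)^2$ and finiteness of $\dettwo(I_{H_0}+T)$, and then simultaneously calibrating $\kappa$ and the $\ep_i$ so that the cubic error and the $\rho_\kappa$ penalty are both swallowed by the Gaussian decay inherited from the nondegeneracy of the Hessian.
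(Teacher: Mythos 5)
Your overall strategy --- push the integral to $S_a$, split into geodesic patches and a complement, treat the patches by the change-of-variable formula and the expansion lemmas, and the complement by large deviations with H\"older against Lemma~\ref{exponential estimate0} --- is the right shape, and the second half (the handling of the geodesic patches, identification of the quadratic form with $(I_{H_0}+T)^2$, and the rescaling/Fernique step) is essentially sound modulo bookkeeping. But there is a real gap in the treatment of $\mathcal{O}$, and it cannot be repaired within your decomposition.

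You set $\mathcal{O}$ to be $Y^{-1}(\D)$ minus the $N$ \emph{geodesic} coordinate balls $V_{\ep_i}(k_i)^{S_a}$, and you claim $\mu_{\la,a}(\mathcal{O})\le e^{-\la c_0'}$ with $c_0'$ adjustable, ``provided $\ep_i$ is chosen so that $c_0'$ dominates'' the constant $C_p$ from Lemma~\ref{exponential estimate0}. This cannot work: the Wentzell--Freidlin rate on $\mathcal{O}$ is $\inf\{\tfrac{1}{2}\|h\|_H^2 - \tfrac{1}{2}d(e,a)^2\,:\,h\in S_a^H\cap\mathcal{O}\}$, and since $\mathcal{O}$ contains $H^1$-paths just outside the small rough-path balls about the minimum geodesic, this infimum tends to $0$ as $\ep_i\to 0$ (take $\eta$ along the least eigendirection of the Hessian and scale to $\|\bar\eta\|_\alpha=\ep_i$; then $\|\eta\|_H=O(\ep_i)$ and the energy excess is $O(\ep_i^2)$). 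Shrinking $\ep_i$ therefore makes $c_0'$ \emph{smaller}, while the local-coordinate analysis forces $\ep_i$ to be small. So $c_0'$ cannot be made to dominate a fixed constant $C_p$; the Cauchy--Schwarz estimate on $\mathcal{O}$ gives $Ce^{(C_p-c_0')\la}$ with $C_p-c_0'>0$.

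The paper's proof closes this gap by introducing a second family of coordinate balls. Fix a large $R$ and observe that the sublevel set $S_a^{H,R}=S_a^H\cap\{\|h\|_H\le R\}$ is compact in the rough-path topology; cover it by finitely many $V_{\ep(k_i)}(k_i)^{S_a}$ with $k_i\in S_a^{H,R}$, so that beyond the first $N$ (geodesic) centers there are additional non-geodesic centers $k_{N+1},\dots,k_M$. The complement of the \emph{full} $M$-ball covering then has $\inf\|h\|_H^2\ge R$, giving a large-deviation rate $\approx R/2$ that \emph{can} be chosen to dominate $C_p$ by taking $R$ large. Each extra ball around a non-geodesic $k_i$ ($i>N$) is handled by a separate mechanism you do not invoke: since $Y(\cdot,e,k_i)$ is not a geodesic, Cauchy--Schwarz gives the strict inequality
\[
\left|\int_0^1 Ad(Y(s,e,k_i))\,dk_i(s)\right|^2 \;<\; \|k_i\|_H^2,
\]
so the leading exponent $G_2(0)=:\delta_{k_i}$ at the center is strictly negative and the entire contribution of that ball decays like $e^{-\la|\delta_{k_i}|/2}$. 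Without this two-tier decomposition --- (geodesic balls, handled by the Hessian positivity) $\cup$ (non-geodesic balls, handled by strict Cauchy--Schwarz) $\cup$ (far region, handled by LDP with $R$ large) --- the estimate on the remainder does not close.

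A secondary point: your route through Theorem~\ref{cons}(2), trading the unrenormalized quadratic pieces for a Wick form $:\!((I_{H_0}+T)^2-I_{H_0})\eta,\eta)\!:$ and the Gaussian of covariance $(\la(I_{H_0}+T)^2)^{-1}$, is more elaborate than the paper's, which simply collects the bound $-\tfrac{\la}{2}G_\la(\eta)$ in the exponent, rescales $\eta=\la^{-1/2}\phi$, and applies H\"older together with Fernique. Both should close once the nondegeneracy $\inf\sigma((I_{H_0}+T_{\xi_i})^2)>0$ is in hand, but the Carleman--Fredholm normalization requires some care with the Wick constants and does not obviously buy anything; the rescaling argument is cleaner and avoids having to verify that the product of densities reproduces exactly a Gaussian.
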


\begin{proof}
Since $a$ is not a point of the cut-locus of $e$,
the short time aymptotics formula of the heat kernel implies that
\begin{align*}
\left|\frac{2}{\la}\log\left(\la^{-d/2}p(\la^{-1},e,a)\right)
+d(e,a)^2\right|=O\left(\frac{1}{\la}\right).
\end{align*}
Hence, denoting
$W(\gamma)=C_2\left(1+|b(1,\gamma)|^2+\int_0^1|b(s,\gamma)|^2ds\right)$,
there exists $C'>0$ such that for large $la>0$,
\begin{align}
 \int_{\D}
\exp\Bigl(C_{\la}\la^2\left(V_{\la,a}+\rho_{\kappa}\right)\Bigr)
d\nu_{\la,\D}&=\int_{Y^{-1}(\D)}
\exp\Bigl\{C_{\la}\la^2\Bigl(V_{\la,a}(Y(w))+\rho_{\kappa}(Y(w))\Bigr)\Bigr\}
d\mu_{\la,a}(w)\nn\\
&\le C'\int_{Y^{-1}(\D)}\exp
\left(F_{\la}(w)\right)
d\mu_{\la,a}(w)=:C'I(\la),\label{exponential integral}
\end{align}
where
$F_{\la}(w)=\frac{\la}{2}\left(|b(1)|^2-d(e,a)^2\right)
+2\la\left(1+\frac{C_1}{\la}\right)\rho_{\kappa}(Y(w))+C'W(Y(w))$.
We estimate $I(\la)$ for large $\la$ by 
decomposing the set $\{w\in S_a~|~Y(w)\in \D\}$ into three types of sets.
Let us choose a positive number $R$ such that
$\max_{1\le i\le N} \|k_i\|_H+1<R$.
Actually, $R$ should be taken to be large enough according to
$\D$ which we will point out.
Consider the set $S_a^{H,R}=S_a^H\cap \{h\in H~|~\|h\|_H\le R\}$
and we choose the neighborhood $V_{\ep(k)}^{S_a}(k)$ of each $k\in S_a^{H,R}$
and $U_{\delta(k)}^{\Omega}(k)$ such that
$U_{\delta(k)}^{\Omega}(k)\cap S_a\subset V_{\ep(k)}(k)^{S_a}$.
Note that $U_{\delta(k)}^{\Omega}(k)$ is a neighborhood of $k$ in $\Omega$.
These choices are possible because of Lemma~\ref{local chart} (5).
Actually we need to choose $\delta(k)$ and $\ep(k)$ to be sufficiently small
according to $k$ as in the following proof.
We have the trivial covering $S_a^{H,R}\subset 
\cup_{k\in S_a^{H,R}}U_{\delta(k)}^{\Omega}(k)\cap H$.
$U_{\delta(k)}^{\Omega}(k)\cap H$ is an open set in the topology of rough path
and $S_a^{H,R}$ is a compact set in the topology of $\mathscr{C}^{\alpha}$
(precisely, we need to embed the set in $\mathscr{C}^{\alpha}$).
Hence there exists finite $\{k_i\}_{i=1}^M\subset S_a^{H,R}$ such that
$S_a^{H,R}\subset \cup_{i=1}^M
\Bigl(U_{\delta(k_i)}^{\Omega}(k_i)\cap H\Bigr)$.
We may assume that the first $N$ elements, $\{k_i\}_{i=1}^N$,  is the set of the
geodesics and $U_{\delta(k_i)}^{\Omega}(k_i)$ $(i\ge N+1)$ dose not contain
any $k_i$ $(1\le N)$.
By the property $U_{\delta(k)}^{\Omega}(k)\cap S_a\subset V_{\ep(k)}(k)^{S_a}$,
we obtain 
\begin{align*}
 S_a^{H,R}\subset \cup_{i=1}^M
\Bigl(V_{\ep(k_i)}^{S_a}(k_i)\cap H\Bigr).
\end{align*}
This implies
\begin{align*}
\inf\left\{\|h\|_H^2~\Big |~ h\in S_a^H\cap \left(\cup_{i=1}^M
\Bigl(V_{\ep(k_i)}^{S_a}(k_i)\Bigr)\right)^{\complement}
\right\}\ge R.
\end{align*}
Note that 
$S_a\cap \Bigl(V_{\ep(k_i)}^{S_a}(k_i)\Bigr)^{\complement}$
is a closed set in $\mathscr{C}^{\alpha}$.
Hence by the large deviation estimate for $\mu_{\la,a}$ 
(see Theorem~2.1 in \cite{inahama}),
for all large $\la$, we have for any $R'<R$,
\begin{align*}
 \mu_{\la,a}\left(\Bigl(\cup_{i=1}^M
V_{\ep(k_i)}^{S_a}(k_i)\Bigr)^{\complement}\right)
\le \exp\left(-\la R'/2\right).
\end{align*}
We have
\begin{align*}
 I(\la)&\le\sum_{i=1}^N\int_{V_{\ep(k_i)}^{S_a}(k_i)}
\exp\left(F_{\la}\right)d\mu_{\la,a}
+\sum_{i=N+1}^M\int_{V_{\ep(k_i)}^{S_a}(k_i)\cap 
\left(\cup_{j=1}^NV_{\ep(k_j)}^{S_a}(k_j)\right)^\complement}\,
\exp\left(F_{\la}\right)d\mu_{\la,a}(w)\nn\\
&\qquad+\int_{Y^{-1}(\D)\cap\bigl(\cup_{i=1}^M
V_{\ep(k_i)}^{S_a}(k_i)\bigr)^{\complement}
}\, \exp\left(F_{\la}\right)d\mu_{\la,a}(w),\nn\\
&=:\sum_{i=1}^MI_i(\la)+J(\la).
\end{align*}
We estimate each terms.
First, we estimate $J(\la)$.
As stated in the proof of
Lemma~\ref{exponential estimate0}, there exists $p>1$ such that
$\int_{\D}\exp\left(\frac{p\la}{2}|b(1)|^2\right)
d\nu_{\la,\D}\le Ce^{C_p\la}$.
Let $q>1$ be the positive number such that $\frac{1}{p}+\frac{1}{q}=1$.
By using the H\"older inequality, we have
\begin{align*}
 J(\la)&\le Ce^{C'_p\la+C'\kappa\la-\la R'/4q}
\left(\int_{\Pea}e^{2qC'W(\gamma)}d\nu_{\la,a}\right)^{1/(2q)}.
\end{align*}
Also
\begin{align}
& \int_{\Pea}e^{2qC'W(\gamma)}d\nu_{\la,a}\nn\\
&=p(\la^{-1},e,a)^{-1}\int_W
\exp\Biggl\{2qC'C_2\la^{-1}\Biggl(
\left|\int_0^1Ad(Y(s,e,\la^{-1/2}w))dw_s\right|^2\nn\\
&\quad+
\left|\int_0^1\left|\int_0^tAd(Y(s,e,\la^{-1/2}w))dw_s\right|^2ds\right|
\Biggr)
\Biggr\}\delta_a(Y(1,e,\la^{-1/2}w))d\mu(w)\nn\\
&\le Ce^{C''\la}.\label{estimate of W}
\end{align}
In the above, we have used the exponential functional and
$\delta_a(Y(1,e,\la^{-1/2}w))$ belong to 
$\DD^{k,r}(\RR)$ and $\DD^{-k,s}(\RR)$ 
($r,s>1$ and $\frac{1}{r}+\frac{1}{s}=1$) respectively
and the heat kernel estimate.
Also, we have used that the norm of the exponential functional is
bounded and the latter norm of $\delta_a(Y(1,e,\la^{-1/2}w))$
is of at most polynomial growth of $\la$
under the limit $\la\to\infty$.
Hence, for sufficiently large $R$, we see that
$\lim_{\la\to\infty}J(\la)=0$.
We calculate the integral on $V^{S_a}_{\ep(k_i)}(k_i)$
by using the change of variable formula 
$w=\varPhi_a(\eta)$ $(\eta\in B_{k_i,\ep}^{\Omega_0})$.
Of course, the set $\Omega_0$ depends on $k_i$, but,
we use the same notation $W_0$ for simplicity.
Also, we write $\tilde{\rho}_{\kappa}(\eta)=\rho(Y(\varPhi_{k_i,a}(\eta)))$
and $\tilde{W}(\eta)=W(Y(\varPhi_{k_i,a}(\eta)))$.
Consider the case $1\le i\le N$.
Then, using the change of the variable formula,
Lemma~\ref{expansion of F and G},
 Lemma~\ref{expansion of b} and 
$\sup_{\eta\in B_{k_i,\ep(k_i)}^{\Omega_0}(0)}\tilde{W}(\eta)<\infty$, 
there exists $C'', C'''>0$ which depend on $k_i$ such that
\begin{align*}
I_i(\la)
\le
2e^{C''}\int_{B_{k_i,\ep(k_i)}^{\Omega_0}(0)}
\exp\left(-\frac{\la}{2}G(\eta)\right)
d\mu_{\la,W_0}(d\eta)=:2C'''\tilde{I}_i(\la),
\end{align*}
where
\begin{align*}
& G(\eta)=
2\int_0^1\left(\bigl[(U_{k_i}\eta)(s), \xi_i\bigr],d(U_{k_i}\eta)_s\right)
\nonumber\\
&+\int_0^1\left|\bigl[(U_{k_i}\eta)_s,\xi_i\bigr]\right|^2ds-
\left|\int_0^1\bigl[(U_{k_i}\eta)(s),\xi_i\bigr]ds\right|^2-
C_{\alpha,m,\theta}'\left(\kappa+
\ep(k_i)\right)\bar{\Gamma}_{m,\theta}(\eta)^2,
\end{align*}
and we have used Lemma~\ref{cut-off function} (1) and
$C'_{\alpha,m,\theta}$ depends on $C_{\alpha,m,\theta}$
which is defined there.
Note that
\begin{align*}
& \int_0^1\left(\bigl[(U_{k_i}\eta)(s), \xi_i\bigr],d(U_{k_i}\eta)_s\right)=\,
:\left(U_{k_i}^{-1}T_{\xi_i}U_{k_i}\eta, \eta\right):,\\
& \int_0^1\left|\bigl[(U_{k_i}\eta)_s,\xi_i\bigr]\right|^2ds-
\left|\int_0^1\bigl[(U_{k_i}\eta)(s),\xi_i\bigr]ds\right|^2=
\|U_{k_i}^{-1}T_{\xi}U_{k_i}\eta\|_{H_0}^2,
\end{align*}
and
\begin{align}
& \inf\left\{\|h\|_{H_0}^2+2\left(U_{k_i}^{-1}T_{\xi_i}U_{k_i}h,h\right)_{H_0}
+\|U_{k_i}^{-1}T_{\xi}U_{k_i}h\|_{H_0}^2~\Big |~\|h\|_{H_0}=1\right\}\nn\\
&\qquad =\inf\sigma\Big(\left(I_{H_0}+T_{\xi_i}\right)^2\Big)>0.
\label{nondegenerate}
\end{align}
(\ref{nondegenerate}) follows from the assumption that
$a$ is not a conjugate point of $e$.
We now estimate $I_{i}(\la)$.
By using the change of variable $\eta=\la^{-1/2}\phi$, We have
\begin{align*}
 \tilde{I}_i(\la)&=
\int_{\la B_{k_i,\ep(k_i)}^{\Omega_0}(0)}
\exp\left(-\frac{1}{2}\tilde{G}(\phi)\right)
d\mu_{1, W_0}(d\phi),
\end{align*}
where 
\begin{align*}
 \tilde{G}(\phi)&=
2\int_0^1\left(\bigl[(U_{k_i}\phi)(s), \xi_i\bigr],d(U_{k_i}\phi)_s\right)
\nonumber\\
&+\int_0^1\left|\bigl[(U_{k_i}\phi)_s,\xi_i\bigr]\right|^2ds-
\left|\int_0^1\bigl[(U_{k_i}\phi)(s),\xi_i\bigr]ds\right|^2
-C_{\alpha,m,\theta}'\left(\kappa+
\ep(k_i)\right)\bar{\Gamma}_{m,\theta}(\phi)^2.
\end{align*}
For any $p>1$, if we choose sufficiently small $\kappa$ and $\ep(k_i)$, then
$\int_{W_0}e^{p C_{\alpha,m,\theta}'\left(\kappa+
\ep(k_i)\right)\bar{\Gamma}_{m,\theta}(\phi)^2}d\mu_{1, W_0}<\infty$
by Lemma~\ref{cut-off function} (4).
By this and (\ref{nondegenerate}), using the H\"older inequality,
we obtain $\limsup_{\la\to\infty}I_i(\la)<\infty$.
We next consider the case $i\ge N+1$.
In these cases, 
\begin{align}
I_i(\la)
&\le
\int_{B_{k_i,\ep(k_i)}^{\Omega_0}}
\exp
 \left(
\frac{\la}{2}\Bigl(|b(1,I(\varPhi_{k_i,a}(\eta)))|^2-d(e,a)^2\Bigr)
+2\la\kappa\left(1+\frac{C_1}{\la}\right)
+C'\tilde{W}(\eta)
\right)\nn\\
&\quad\times
\exp\left\{-\la\left((\eta,k_i)+(\varphi_a(\eta),k_i)+\frac{1}{2}\|k_i\|_H^2
+\frac{1}{2}\|\varphi_a(\eta)\|_H^2\right)\right\}\nn\\
&\qquad \times (1+O(\|\bar{\eta}\|_{\alpha}))
\left(\frac{\la}{2\pi}\right)^{d/2}p(\la^{-1},e,a)^{-1}
d\mu_{\la,W_0}(\eta)\nn\\
&\le C_{k_i}
e^{4\kappa\la}
\int_{B_{k_i,\ep(k_i)}^{\Omega_0}}\exp\left(\la G_{2}(\eta)\right)
d\mu_{\la,W_0}(\eta),\label{i is larger than N+1}
\end{align}
where the final inequality (\ref{i is larger than N+1}) holds for $\la$ with
$\frac{C_1}{\la}\le 1$ and
\begin{align*}
& G_{2}(\eta)=\frac{1}{2}\left(\left|
\int_0^1Ad\left(Y(s,e,k_i+\eta+\varphi_a(\eta))\right)
d\left(k_i+\eta+\varphi_a(\eta)\right)_s\right|^2
-\|k_i\|_H^2\right)
\nn\\
&\qquad\qquad\qquad-
\left(
(\eta,k_i)+(\varphi_a(\eta),k_i)+\frac{1}{2}\|\varphi_a(\eta)\|_H^2
\right).
\end{align*}
Since $Y(\cdot,e,k_i)$ is not a geodesics,
we have
\begin{align*}
 G_2(0)=\frac{1}{2}\left(\left|\int_0^1Ad(Y(s,e,k_i))dk_i(s)\right|^2
-\left\|\int_0^{\cdot}Ad(Y(s,e,k_i))dk_i(s)\right\|_H^2\right)=:\delta_{k_i}<0.
\end{align*}
By this and the fact that $\eta\mapsto G_2(\eta)$ is
a continuous function in the topology of $\mathscr{C}^{\alpha}$,
by choosing sufficiently small $\ep(k_i)$,
we see that
$\sup_{\{\eta\in B_{k_i,\ep(k_i)}^{\Omega_0}(0)\}}G_2(\eta)\le 
\delta_{k_i}/2$.
Thus, it holds that for $\kappa<|\delta_{k_i}|/8$,
\begin{align*}
 \limsup_{\la\to\infty}I_i(\la)\le \lim_{\la\to\infty}
C_{k_i}e^{-\la\delta_{k_i}/2}=0.
\end{align*}
Consequently, by taking sufficiently large $R>0$ and
sufficiently small $\ep(k)$ for $k\in S_a^{H,R}$
and considering
the covering above
and arguing in the above way, we can conclude that
the conclusion of the lemma holds true for sufficiently small $\kappa$.
\end{proof}

\section{Proof of the main theorem}

We prove our main theorem.
Recall that $k_i$ $(1\le i\le N)$
denotes the geodesic $l_{\xi_i}$ which is included in
$\YD$.
First, we introduce a cut-off function
$\chi_{i,\ep}$ which vanishes outside neighborhood of
$k_i$.

\begin{dfi}\label{cut-off function near k_i}
(1) Here, we further suppose that $\ep(k_i)$ which are defined in
Lemma~\ref{local chart} satisfy
$\varPhi_{k_i,a}(B_{k_i,\ep(k_i)}^{\Omega_0})\cap \YD^c=\emptyset$.
This choice is possible because $k_i$ is not included in the 
boundary of $\YD$.
For $\ep<\frac{1}{4}\min(\ep(k_i), 1)$, define
 \begin{align*}
 \chi_{i,\ep}(w)=
 \chi\left(
2
\Bigl\{\ep^{-1}C_{\alpha,m,\theta}\left(\varPsi_{k_i,a}^{\ast}
\bar{\Gamma}_{m,\theta}\right)(w)\Bigr\}^{4m}
\right),
\quad w\in \Omega
 \end{align*}
where 
$\varPsi^{\ast}_{k_i,a}$ denotes the pull back by the mapping
$\varPsi_{k_i,a} : w(\in \Omega)\mapsto 
\eta=P(k_i)(w-k_i)\in 
\Omega_0=\overline{T_{k_i}S_a}^W$ and
$\chi$, $C_{\alpha,m,\theta}$ and $\bar{\Gamma}_{m,\theta}$ 
is the function and the
number appeared in 
Lemma~$\ref{cut-off function}$.
Note that $\Omega_0$ and $\eta$ depend on $k_i$, but, 
we use the same notation
$\Omega_0$ for simplicity as in the proof of 
Lemma~\ref{exponential estimate}.

\noindent
(2) In Lemma~\ref{approximate eigenfunctions}, we introduce
a function $\chi_{m,\theta,\la,\delta}(\eta)$ which is a cut-off function
near $0$.
Here, we introduce similar functions for later use.
Let $\frac{2}{3}<\delta<1$.
Let $\chi$ be the same function as in Lemma~\ref{cut-off function}
and set
\begin{align}
 \chi_{\frac{1}{3},m,\theta,\la,\delta}(\eta)&=
\chi\left(\frac{1}{3}\la^{2m\delta}
\bar{\Gamma}_{m,\theta}(\eta)^{4m}\right),\\
\chi_{\frac{1}{3},i,m,\theta,\la,\delta}(w)&=
\left(\varPsi^{\ast}_{k_i,a}\chi_{\frac{1}{3},m,\theta,\la,\delta}\right)(w).
\label{chi1/3}
\end{align}
\end{dfi}

\begin{rem}
$(1)$
Note that 
$\chi_{i,\ep}\in \DD^{\infty}(W,\RR)\subset \rD(\LS)$ and
\begin{align*}
& \{w\in S_a~|~\chi_{i,\ep}(w)\ne 0\}\subset 
\{w\in S_a~|~(\varPsi_{k_i,a}^{\ast}\bar{\Gamma}_{m,\theta})(w)
< \ep C_{\alpha,m,\theta}^{-1}\}\\
& \qquad\qquad\qquad\qquad\qquad\subset 
\{w\in S_a~|~\|\overline{\varPsi_{k_i,a}(w)}\|_{\alpha}< \ep\}
=V_{\ep}^{S_a}(k_i),\\
&\{w\in S_a ~|~
(\varPsi_{k_i,a}^{\ast}\bar{\Gamma}_{m,\theta})(w)<
2^{-\frac{1}{4m}}\ep C_{\alpha,m,\theta}\}
\subset\{w\in S_a~|~\chi_{i,\ep}(w)=1\}
\subset V_{\ep}^{S_a}(k_i),
\end{align*}
where we have used the identification by the mapping
$\varPsi_{k_i,a} : V_{\ep(k_i)}^{S_a}\to B_{\ep(k_i),k_i}^{\Omega_0}(0)$.

\noindent
$(2)$
Considering the local coordinate $(B_{k_i,\ep}^{\Omega_0}(0),\varPhi_{k_i,a})$
we again use the notation 
$\tilde{f}(\eta)=(\varPhi_{k_i,a}^{\ast}f)(\eta)$ for functions $f$
defined on $S_a$.
Note that 
for any $\eta\in B_{k_i,\ep}^{\Omega_0}(0)$,
\begin{align*}
 \tilde{\chi}_{i,\ep}(\eta)&:=(\varPhi_{k_i,a}^{\ast}\chi_{i,\ep})(\eta)=
\chi(\ep^{-1}C_{\alpha,m,\theta}\bar{\Gamma}_{m,\theta}(\eta)),\\
\tilde{\chi}_{\frac{1}{3},i,m,\theta,\la,\delta}(\eta)
&:=(\varPhi_{k_i,a}^{\ast}\chi_{\frac{1}{3},i,m,\theta,\la,\delta})(\eta)=
\chi_{\frac{1}{3},m,\theta,\la,\delta}(\eta),
\end{align*}
holds.

\noindent
$(3)$
 Let $\bce_{\mathbf{n},T_i,\la}(w)$ be the function on $S_a$ defined in
Remark~\ref{e on S_a}.
Then it holds that
\[
 \bce_{\mathbf{n},T_i,\la}(w)\chi_{i,\frac{1}{3},m,\theta,\la,\delta}(w)
=\bce_{\mathbf{n},T_i,\la}(w) (w\in S_a).
\]
\end{rem}

\begin{lem}\label{lemma for f on D}
 Let $f\in \rD(L_{\la,\YD})$.
Let $\phi\in \rD(\E_{\la,\YD})\cap 
\rD(\LS)$
and further suppose that
$\phi, D_{S_a}\phi, \LS\phi\in L^{\infty}(S_a,\mu_{\la,a})$.
Then $f\phi\in \rD(L_{\la,\YD})\cap \rD(\LS)$ and it holds that
\[
L_{\la,\YD}(f\phi)=\LS (f\phi)=L_{\la,\YD}f\cdot \phi
+2\left(D_{S_a}f,D_{S_a}\phi\right)+f\LS \phi.
\]
\end{lem}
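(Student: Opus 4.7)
The plan is to establish the Leibniz formula by testing against arbitrary cylindrical functions $g\in \FC(W)|_{S_a}$ and invoking the essential self-adjointness of $(\LS,\FC(W)|_{S_a})$ in $L^2(S_a,\mu_{\la,a})$ recorded earlier. Throughout, $\phi$ is viewed as an element of $\rD(\E_{\la,\YD})\cap \rD(\LS)$ via the $Y^\ast$-isomorphism, so that $\phi$ vanishes $\mu_{\la,a}$-a.s.\,on $\YD^c$; likewise $f\in \rD(L_{\la,\YD})$ vanishes on $\YD^c$, and so the product $f\phi$ is an unambiguous element of $L^2(S_a,\mu_{\la,a})$.

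First I would note that the Leibniz rule for the $H$-derivative together with $\phi,|D_{S_a}\phi|\in L^\infty$ and $f,|D_{S_a}f|\in L^2$ yields $f\phi\in \rD(\E_{\la,S_a})$ with
\begin{align*}
D_{S_a}(f\phi)=\phi\, D_{S_a}f+f\, D_{S_a}\phi.
\end{align*}
For a fixed $g\in \FC(W)$ I then split $(D_{S_a}(f\phi),D_{S_a}g)$ using the two pointwise identities
\begin{align*}
\phi(D_{S_a}f,D_{S_a}g)&=(D_{S_a}f,D_{S_a}(\phi g))-g(D_{S_a}f,D_{S_a}\phi),\\
f(D_{S_a}\phi,D_{S_a}g)&=(D_{S_a}\phi,D_{S_a}(fg))-g(D_{S_a}\phi,D_{S_a}f).
\end{align*}
The function $\phi g$ lies in $\rD(\E_{\la,\YD})$, since the $L^\infty$-bounds on $\phi,D_{S_a}\phi,g,D_{S_a}g$ keep the product in $\rD(\E_{\la,S_a})$ while $\phi=0$ on $\YD^c$ forces $\phi g=0$ there as well; hence $f\in \rD(L_{\la,\YD})$ gives $\int (D_{S_a}f,D_{S_a}(\phi g))d\mu_{\la,a}=-\int \phi g\,L_{\la,\YD}f\,d\mu_{\la,a}$. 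The function $fg$ lies in $\rD(\E_{\la,S_a})$ by the same product-rule bound (no boundary condition is required on this factor), so $\phi\in \rD(\LS)$ yields $\int (D_{S_a}\phi,D_{S_a}(fg))d\mu_{\la,a}=-\int fg\,\LS\phi\,d\mu_{\la,a}$.

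Summing the two pieces produces
\begin{align*}
\int (D_{S_a}(f\phi),D_{S_a}g)\,d\mu_{\la,a}=-\int g\,H\,d\mu_{\la,a},\qquad g\in \FC(W),
\end{align*}
where $H:=(L_{\la,\YD}f)\,\phi+2(D_{S_a}f,D_{S_a}\phi)+f\,\LS\phi\in L^2(S_a,\mu_{\la,a})$ thanks to $\phi,D_{S_a}\phi,\LS\phi\in L^\infty$. Rewriting the left-hand side as $(f\phi,-\LS g)_{L^2}$ gives $(f\phi,\LS g)_{L^2}=(H,g)_{L^2}$ for all $g\in \FC(W)|_{S_a}$, and the essential self-adjointness of $(\LS,\FC(W)|_{S_a})$ identifies $f\phi$ as lying in $\rD(\LS)$ with $\LS(f\phi)=H$, which is the stated formula.

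The main delicate point, and the only one where the hypotheses are actually used nontrivially, is the verification that $\phi g\in \rD(\E_{\la,\YD})$ so that $f\in \rD(L_{\la,\YD})$ may be paired with it: this is where the Dirichlet boundary condition on $\phi$ (inherited from $\phi\in \rD(\E_{\la,\D})$ through $Y^\ast$) and the $L^\infty$-bound on $D_{S_a}\phi$ enter. After this, the remainder is purely algebraic Leibniz manipulation and an appeal to the previously established essential self-adjointness.
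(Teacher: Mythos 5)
Your proof is correct and follows essentially the same route as the paper: test against cylindrical functions $g\in\FC(W)|_{S_a}$, apply the Leibniz/product rule, integrate by parts using the Dirichlet generator $L_{\la,\YD}$ on the factor carrying the boundary condition and $\LS$ on $\phi$, and conclude by the essential self-adjointness of $(\LS,\FC(W)|_{S_a})$. The only (cosmetic) difference is that you work at the Dirichlet-form level, decomposing $\bigl(D_{S_a}(f\phi),D_{S_a}g\bigr)$ and integrating by parts twice, whereas the paper expands $(\LS\varphi)\phi$ via the generator-level Leibniz rule and handles the cross term with $D_{S_a}^{\ast}$ — the two computations are dual readings of the same identity.
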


\begin{proof}
Let $\varphi\in \FC(W)$.
Note that $\varphi\phi\in \rD(\LS)$ and
\begin{align}
 L_{\la, S_a}(\varphi\phi)
=L_{\la,S_a}\varphi\cdot\phi+2(D_{S_a}\varphi,D_{S_a}\phi)
+\varphi L_{\la,S_a}\phi.\label{product formula}
\end{align}
This follows from the following argument.
Let $\psi\in \FC(W)$ and define
$I=\int_{S_a}\LS\psi \cdot \varphi\phi d\mu_{\la,a}$.
By explicit computation, we have
$\LS(\psi\varphi)=\LS\psi\cdot\varphi+2(D_{S_a}\psi,D_{S_a}\varphi)+\psi\LS\varphi$.
Hence,
\begin{align*}
 I&=\int_{S_a}\left(\LS(\psi\varphi)-2D_{S_a}\psi\cdot D_{S_a}\varphi-
\psi\LS\varphi\right)\phi d\mu_{\la,a}\\
&=\int_{S_a}\psi\left(\varphi\LS\phi-\phi\LS\varphi\right)d\mu_{\la,a}
-2\int_{S_a}\left(D_{S_a}\psi,\phi D_{S_a}\varphi\right)d\mu_{\la,a}\\
&=:I_1+I_2.
\end{align*}
To calculate $I_2$, we note the following identity.
Let $g\in \rD(\E_{\la,S_a})$ and $h\in \rD(\LS)$.
Then we have
\begin{align}
 -\int_{S_a}(D_{S_a}\psi, gD_{S_a}h)d\mu_{\la,a}=
\int_{S_a}\psi (D_{S_a}g,D_{S_a}h)d\mu_{\la,a}
+\int_{S_a}\psi g \LS h d\mu_{\la,a}.\label{ibp identity}
\end{align}
This follows from the following argument.
By the definition and the essential self-adjointness of
$(\LS,\FC(W)|_{S_{a}})$ in $L^2(S_a,\mu_{\la,a})$, there exist
$\{g_n\}, \{h_n\}\subset \FC(W)$ such that
\begin{align*}
& \lim_{n\to\infty}\left\{\|g_n-g\|_{L^2(S_a,\mu_{\la,a})}
+\|D_{S_a}g_n-D_{S_a}g\|_{L^2(S_a,\mu_{\la,a})}
\right\}=0\\
& \lim_{n\to\infty}
\left\{\|h_n-h\|_{L^2(S_a,\mu_{\la,a})}
+\|D_{S_a}h_n-D_{S_a}h\|_{L^2(S_a,\mu_{\la,a})}
+\|\LS h_n-\LS h\|_{L^2(S_a,\mu_{\la,a})}\right\}=0.
\end{align*}
Hence noting $\||D_{S_a}\psi|_H\|_{L^{\infty}}<\infty$
and taking the limit $n\to\infty$, in the identity
\[
 -\int_{S_a}(D_{S_a}\psi, g_nD_{S_a}h_n)d\mu_{\la,a}=
\int_{S_a}\psi(D_{S_a}g_n,D_{S_a}h_n)d\mu_{\la,a}
+\int_{S_a}\psi g_n \LS h_n d\mu_{\la,a}
\]
which itself follows from the integration by parts formula,
we get (\ref{ibp identity}).
Consequently,
by the essential self-adjointness of $(\LS,\FC(W)|_{S_a})$ in
$L^2(S_a,\mu_{\la,a})$, we obtain (\ref{product formula}).
By (\ref{product formula}), we have
\begin{align*}
 \left(\LS\varphi, f\phi\right)_{L^2(S_a,\mu_{\la,a})}&=
\left((\LS\varphi)\phi, f\right)_{L^2(S_a,\mu_{\la,a})}\\
&=\left(\LS(\varphi\phi)-2(D_{S_a}\varphi,D_{S_a}\phi)-
\varphi\LS\phi,f\right)_{L^2(S_a,\mu_{\la,a})}.
\end{align*}
Using Remark~\ref{remark on the generator}, we obtain
\[
  \left(\LS(\varphi\phi),f\right)_{L^2(S_a,\mu_{\la,a})}
=\left(L_{\la,\YD}(\varphi\phi),f\right)_{L^2(S_a,\mu_{\la,a})}
=\left(\varphi\phi, L_{\la,\YD}f\right)_{L^2(S_a,\mu_{\la,a})}.
\]
Also, using (\ref{ibp identity}), we have
\begin{align*}
 -2\left((D_{S_a}\varphi,D_{S_a}\phi), f\right)_{L^2(S_a,\mu_{\la,a})}
&=-2\int_{S_a}(D_{S_a}\varphi, fD_{S_a}\phi)d\mu_{\la,a}\\
&=2\int_{S_a}\varphi(D_{S_a}f,D_{S_a}\phi)d\mu_{\la,a}
+2\int_{S_a}\varphi f \LS\phi d\mu_{\la,a}.
\end{align*}

Consequently, we obtain
\begin{align*}
 \left(\LS\varphi, f\phi\right)_{L^2(S_a,\mu_{\la,a})}&=
\left(\varphi, \Bigl(L_{\la,\YD}f\cdot \phi+2
\left(D_{S_a}f,D_{S_a}\phi\right)+f\LS \phi\Bigr)\right)
_{L^2(S_a,\mu_{\la,a})}.
\end{align*}
Since $L_{\la,\YD}f\cdot \phi+2
\left(D_{S_a}f,D_{S_a}\phi\right)+f\LS \phi\in L^2(S_a,\mu_{\la,a})$,
by the essential self-adjintness of $(L_{\la,S_a},\FC(W)|_{S_a})$
in $L^2(S_a,\mu_{\la,a})$, we complete the proof.
\end{proof}

\begin{rem}
Let
 $\chi_{i,\ep}$
be the function defined in Definition~$\ref{cut-off function near k_i}$.
Then we can prove that $\chi_{i,\ep}$ satisfies the property of $\phi$ imposed
in Lemma~$\ref{lemma for f on D}$ by using Lemmas \ref{local chart}, 
\ref{expansion of F and G}, \ref{expression of L in local chart}, 
\ref{cut-off function}.
Actually, $\chi_{i,\ep}\in \DD^{\infty}(W,\RR)$ holds.
\end{rem}

Let $R, r$ be the number defined in Theorem~\ref{main theorem}.
Then there exists $\ep(r)>0$ such that $r-\ep(r)>0$ and
$N_R(r-\ep(r))=N_R(r)=N_R(r+\ep(r))$.
We choose $r_-$ and $r_+$ such that $r-\ep(r)<r_-<r<r_+<r+\ep(r)$.
 Let $F=[0,R]\cap
\left(\cup_{i=1}^{N}(\Lambda(\xi_i)^a+[-r_-,r_-])\right)$.
For $\beta\ge 0$, define
$F_{\beta,+}=F\cap [\beta,\infty)$ and
$F_{\beta,-}=F\cap (-\infty,\beta]$.
For each sets $A=F, F_{\beta,+}, F_{\beta, -}$,
we consider the projection operators on
$L^2(W_0,\hat{\mu}_{\la,T_i,W_0})$ defined by 
$P_{\la,i,A}f=1_{A}(-\la \hat{L}_{\la,T_i,W_0})f$.
Also,
we define $P_{\la,A}=1_{A}(-\la^{-1}L_{\la,\YD})$ for Borel measurable sets
$A\subset \RR$.

\begin{lem}\label{projection on F}
Let
$
 \beta\in 
[0,R]\setminus
\left(\cup_{i=1}^N(\Lambda(\xi_i)^a+(-r,r))\right).
$
Suppose 
$f_{\la,\kappa}\in L^2(\YD,\mu_{\la,\YD})$ satisfies
$P_{\la,[\beta-\kappa,\beta+\kappa]}f_{\la,\kappa}=f_{\la,\kappa}$
and $\|f_{\la,\kappa}\|_{L^2(\YD,\mu_{\la,\YD})}=1$.
Then the following hold.
\begin{enumerate}
 \item[$(1)$] Let us define $u_{\la,\kappa}$ by 
\begin{align*}
 -\la^{-1}L_{\la,\YD}f_{\la,\kappa}=\beta f_{\la,\kappa}+u_{\la,\kappa}.
\end{align*}
Then $\|u_{\la,\kappa}\|_{L^2(\YD,\nu_{\la,\YD})}\le \kappa$ holds.
\item[$(2)$] It holds that
\begin{align}
 \{\eta\in \Omega_0~|~
(\widetilde{
\chi_{\frac{1}{3},i,m,\theta,\la,\delta}})(\eta)\ne 0\}
\subset
\{\eta~|~\bar{\Gamma}_{m,\theta}(\eta)<6\la^{-\delta/2}\}
\subset \{\eta~|~
\|\bar{\eta}\|_{\alpha}< 6C_{\alpha,m,\theta}\la^{-\delta/2}\}.
\label{inclusion property}
\end{align}
Hence, for any $0<\ep'<\ep$, $f\in L^2(\YD,\mu_{\la,\YD})$,
$\widetilde{f\chi_{\frac{1}{3},i,m,\theta,\la,\delta}}(\eta)$
is $0$ on $B_{k_i,\ep'}^{\Omega_0}(0)^{\complement}$
for large $\la$.
Although this function
is defined only on $B_{k_i,\ep}^{\Omega_0}(0)$,
we can extend the function 
to a function in
$L^2(W_0,\hat{\mu}_{\la,T_i,W_0})$ by setting
$0$ on $B_{k_i,\ep}^{\Omega_0}(0)^\complement$.
\item[$(3)$] 
Let 
$\widetilde{f_{\la,\kappa}\chi_{\frac{1}{3},i,m,\theta,\la,\delta}}
\in L^2(W_0,\hat{\mu}_{\la,T_i,W_0})$
be the function defined in $(2)$.
We have
\begin{align*}
\|P_{\la,i,F}
\widetilde{(f_{\la,\kappa}\chi_{\frac{1}{3},i,m,\theta,\la,\delta})}\|^2
_{L^2(W_0,\hat{\mu}_{\la,T_i,W_0})}\le  2(r-r_-)^{-1}
C_{i,\beta,R}\, (\kappa+\la^{-\frac{1-\delta}{2}}).
\end{align*}
\end{enumerate}
\end{lem}

\begin{proof}
(1) This is an immediate consequence of the definition of $f_{\la,\kappa}$.

\noindent
(2) (\ref{inclusion property}) follows from Lemma~\ref{cut-off function}.
Another claim is trivial.

\noindent
(3)
 By Lemma~\ref{lemma for f on D}, we see that
$f_{\la,\kappa}\chi_{i,\ep}\in \rD(L_{\la,S_a})$.
Since $(L_{\la,S_a}, \FC(W)|_{S_a})$ in
$L^2(S_a,\mu_{\la,a})$ is essentially self-adjoint, for any $n\in \NN$, 
there exists $f_{\la,i,n,\kappa}\in \FC(W)$ such that
\begin{align}
 \|f_{\la,i,n,\kappa}-f_{\la,\kappa}\chi_{i,\ep}\|_{L^2(S_a,\mu_{\la,a})}
&\le \frac{1}{n},
\qquad
\left\|\la^{-1}L_{\la,S_a}f_{\la,i,n,\kappa}-
\la^{-1}L_{\la,S_a}(f_{\la,\kappa}\chi_{i,\ep})
\right\|_{L^2(S_a,\mu_{\la,a})}\le \frac{1}{n}.
\label{approximation of f near i 1}
\end{align}
To prove the desired estimate, we prepare necessary three estimates for
$f_{\la,i,n,\kappa}$.
First, let $u_{\la,i,n,\kappa}=-\la^{-1}\LS f_{\la,i,n,\kappa}-\beta
f_{\la,i,n,\kappa}$.
Define
$
\tilde{V}_{\ep}^{S_a}(k_i)=\{w\in S_a ~|~
(\varPsi_{k_i,a}^{\ast}\bar{\Gamma}_{m,\theta})(w)<
2^{-\frac{1}{4m}}\ep C_{\alpha,m,\theta}\}.
$
Then using $\chi_{i,\ep}=1$ on $\tilde{V}_{\ep}^{S_a}(k_i)$ and 
Lemma~\ref{lemma for f on D},
we have
\begin{align*}
 -\la^{-1}\LS(f_{\la,\kappa}\chi_{i,\ep})
=-\la^{-1}L_{\la,\YD}f_{\la,\kappa}=\beta f_{\la,\kappa}+u_{\la,\kappa}
\quad \text{on $\tilde{V}_{\ep}^{S_a}(k_i)$}.
\end{align*}
Hence, together with (\ref{approximation of f near i 1}), we get
\begin{align}
 &\|u_{\la,i,n,\kappa}\|_{L^2(\tilde{V}_{\ep}^{S_a}(k_i),\mu_{\la,a})}
\nn\\
&=
\|\la^{-1}\LS f_{\la,i,n,\kappa}+\beta
f_{\la,i,n,\kappa}\|_{L^2(\tilde{V}_{\ep}^{S_a}(k_i),\mu_{\la,a})}
\nn\\
&\le \|\la^{-1}\LS (f_{\la,i,n,\kappa}-f_{\la,\kappa}\chi_{i,\ep})\|
_{L^2(\tilde{V}_{\ep}^{S_a}(k_i),\mu_{\la,a})}
+\|\la^{-1}\LS(f_{\la,\kappa}\chi_{i,\ep})+\beta f_{\la,\kappa}\|
_{L^2(\tilde{V}_{\ep}^{S_a}(k_i),\mu_{\la,a})}\nn\\
&\quad 
+\beta\cdot\|
f_{\la,i,n,\kappa}-f_{\la,\kappa}
\|_{L^2(\tilde{V}_{\ep}^{S_a}(k_i),\mu_{\la,a})}\nn\\
&\le \frac{1+\beta}{n}+C\kappa,\label{estimate of u}
\end{align}
where we have used that 
there exists $C_\la>0$ satisfying $\lim_{\la\to \infty}C_{\la}=1$
such that $C_\la \mu_{\la,\YD}=\mu_{\la,a}$
on $\YD$.
We next prove 
the following estimate of the Dirichlet norm of $f_{\la,i,n,\kappa}$.
\begin{align}
\la^{-1}\|D_{S_a}f_{\la,i,n,\kappa}\|_{L^2(S_a,\mu_{\la,a})}^2
&\le \frac{C}{n}\left(1+\la^{-1}
\|\LS(f_{\la,\kappa}\chi_{i,\ep})\|_{L^2(S_a,\mu_{\la,a})}^2\right)\nn\\
&\quad +C\left(\beta+\kappa+\frac{2C^2_m}{\ep^2\la}\right).
\label{dirichlet norm of f}
\end{align}
The proof is as follows.
We have
\begin{align*}
& \la^{-1}\|D_{S_a}f_{\la,i,n,\kappa}\|_{L^2(S_a,\mu_{\la,a})}^2\nn\\
&=
-\la^{-1}\left(L_{\la,S_a}f_{\la,i,n,\kappa},f_{\la,i,n,\kappa}\right)_{L^2(S_a,\mu_{\la,a})}
\nn\\
&=-\la^{-1}
\left(\LS(f_{\la,i,n,\kappa}-f_{\la,\kappa}\chi_{i,\ep}),
f_{\la,\kappa}\chi_{i,\ep}\right)_{L^2(S_a,\mu_{\la,a})}\nn\\
&\quad -\la^{-1}\left(\LS(f_{\la,i,n,\kappa}-f_{\la,\kappa}\chi_{i,\ep}),
f_{\la,\kappa}\chi_{i,\ep}-f_{\la,i,n,\kappa}\right)_{L^2(S_a,\mu_{\la,a})}\nn\\
&\quad
-\la^{-1}\Bigl(
L_{\la,S_a}(f_{\la,\kappa}\chi_{i,\ep}),
f_{\la,\kappa}\chi_{i,\ep}-f_{\la,i,n,\kappa}\Bigr)_{L^2(S_a,\mu_{\la,a})}\nn\\
&\quad-\la^{-1}
\left(\LS (f_{\la,\kappa}\chi_{i,\ep}),f_{\la,\kappa}\chi_{i,\ep}\right)
_{L^2(S_a,\mu_{\la,a})}\\
&=:I_1+I_2+I_3+I_4
\end{align*}
and
\begin{align*}
 |I_1|+|I_2|\le \frac{C}{n}\left(1+\frac{1}{n}\right),\quad
|I_3|\le \frac{C}{n\la}\|\LS (f_{\la,\kappa}\chi_{i,\ep})\|^2.
\end{align*}
For $I_4$, by using the integration by parts formula, we have
\begin{align*}
 |I_4|&=\frac{1}{\la}\int_{S_a}
\left|D_{S_a}f_{\la,\kappa}\cdot\chi_{i,\ep}+
f_{\la,\kappa}D_{S_a}\chi_{i,\ep}\right|_{T_wS_a}^2
d\mu_{\la,a}\\
&\le 
\frac{2}{\la}\int_{S_a}|D_{S_a}f_{\la,\kappa}|^2d\mu_{\la,a}+
\frac{2}{\la}\int_{S_a}|f_{\la,\kappa}|^2|D_{S_a}\chi_{i,\ep}|^2d\mu_{\la,a}\\
&\le
2(\beta+\kappa)+\frac{2C^2_{m}}{\ep^2\la},
\end{align*}
where we have used 
$P_{\la,[\beta-\kappa,\beta+\kappa]}f_{\la,\kappa}=f_{\la,\kappa}$,
$\|D_{S_a}\chi_{i,\ep}\|_{\infty}\le \ep^{-1}C_m$,
and $D_{S_a}f_{\la,\kappa}=0$ on $\YD^{\complement}$
which follows from that the boundary measure of
$\D$ is 0.
Using (\ref{dirichlet norm of f}), we obtain
\begin{align*}
&\|D_0\tilde{f}_{\la,i,n,\kappa}\|
_{L^2(B_{k_i,\ep}^{\Omega_0},\hat{\mu}_{\la,T_i,W_0})}\nn\\
 &\le C\left(\int_{B_{k_i,\ep}^{\Omega_0}}
\left((I_{H_0}+\tilde{K}_{k_i})D_0\tilde{f}_{\la,i,n,\kappa},
D_0\tilde{f}_{\la,i,n,\kappa}\right)_{H_0}d\tilde{\mu}_{\la,a,W_0}\right)^{1/2}
\nn\\
&\le C'\la^{\frac{1}{2}}
\left(\la^{-1}\int_{S_a}|D_{S_a}f_{\la,i,n,\kappa}(w)|^2
d\mu_{\la,a}(w)\right)^{1/2}\nn\\
&\le 
C'_{\beta}
\left(\frac{\la}{n}\right)^{1/2}\left(1+\la^{-1}
\|\LS(f_{\la,\kappa}\chi_{i,\ep})\|_{L^2(S_a,\mu_{\la,a})}^2\right)^{1/2}
+C\la^{1/2}\left(\beta+\kappa+\frac{2C_m^2}{\ep^2\la}\right)^{1/2}
\end{align*}
which implies
\begin{align}
\limsup_{n\to\infty} \|D_0\tilde{f}_{\la,i,n,\kappa}\|
_{L^2(B_{k_i,\ep}^{\Omega_0},\hat{\mu}_{\la,T_i,W_0})}
&\le
C\la^{1/2}\left(\beta+\kappa+\frac{2C_m^2}{\ep^2\la}\right)^{1/2}.
\label{D_0 dirichlet norm of f}
\end{align}
We now estimate
$\|P_{\la,i,F}\widetilde{f_{\la,\kappa}
\chi_{\frac{1}{3},i,m,\theta,\la,\delta}}\|
_{L^2(W_0,\hat{\mu}_{\la,T_i,W_0})}$.
From now on, for simplicity, we write
\begin{align*}
 \chi_{\frac{1}{3},i,m,\theta,\la,\delta}=\chi_{\frac{1}{3},i,\la,\delta},
\,\,
B_{k_i,\ep}^{\Omega_0}(0)=B_{i,\ep},\,\,
\tilde{K}_{k_i}=\tilde{K}_i,\,\,
\tilde{B}_{\la,k_i}=\tilde{B}_{\la,i},\,\,
L_{\la,T_i,W_0}=L_{\la,i},\,\,
P_{\la,i, F_{\beta,\pm}}=P_{\la,i,F_{\pm}}.
\end{align*}
First, we note a trivial estimate 
\begin{align*}
 \|P_{\la,i,F_{+}}
\widetilde{(f_{\la,\kappa}\chi_{\frac{1}{3},i,\la,\delta})}\|
_{L^2(W_0,\hat{\mu}_{\la,T_i,W_0})}\le C.
\end{align*}
This follows from
$\|\widetilde{(f_{\la,\kappa}\chi_{\frac{1}{3},i,\la,\delta})}
\|_{L^2(\hat{\mu}_{\la,T_i,W_0})}\le 1$ which itself follows from
$\|f_{\la,\kappa}\|_{L^2(\YD,\mu_{\la,\YD})}=1$.
Also, note that
\begin{align}
& \lim_{n\to\infty}
\int_{B_{i,\ep}}
\widetilde{(f_{\la,i,n,\kappa}\chi_{\frac{1}{3},i,\la,\delta})}(\eta)
\cdot (-\la^{-1}L_{\la,i})
P_{\la,i,F_{+}}
\widetilde{(f_{\la,\kappa}\chi_{\frac{1}{3},i,\la,\delta})}(\eta)
d\hat{\mu}_{\la,T_i,W_0}(\eta)
\nn\\
&\quad=\int_{B_{i,\ep}}
\widetilde{(f_{\la,\kappa}\chi_{\frac{1}{3},i,\la,\delta})}(\eta)
\cdot (-\la^{-1}L_{\la,i})
P_{\la,i,F_{+}}\widetilde{(f_{\la,\kappa}
\chi_{\frac{1}{3},i,\la,\delta})}(\eta)d\hat{\mu}_{\la,T_i,W_0}(\eta)
\nn\\
&\quad=\int_{W_0}
\widetilde{(f_{\la,\kappa}\chi_{\frac{1}{3},i,\la,\delta})}(\eta)
\cdot (-\la^{-1}L_{\la,i})
P_{\la,i,F_{+}}\widetilde{(f_{\la,\kappa}
\chi_{\frac{1}{3},i,\la,\delta})}(\eta)d\hat{\mu}_{\la,T_i,W_0}(\eta)\nn\\
&\quad \ge (\beta+r-r_-)\|P_{\la,i,F_{+}}
\widetilde{(f_{\la,\kappa}\chi_{\frac{1}{3},i,\la,\delta})}\|^2
_{L^2(W_0,\hat{\mu}_{\la,T_i,W_0})},\label{fn approximation}
\end{align}
where in the final step,
we have used 
$\inf (F_{+}\cap\sigma(-\la^{-1}L_{\la,i}))\ge \beta+r-r_{-}$
which follows from the assumption on $\beta$.
To prove the desired estimate, we are going to estimate 
$\limsup_{n\to\infty}I(n)$, where
\[
 I(n)=\int_{B_{i,\ep}}
\widetilde{(f_{\la,i,n,\kappa}\chi_{\frac{1}{3},i,\la,\delta})}(\eta)
\cdot (-\la^{-1}L_{\la,i})
P_{\la,i,F_{+}}
\widetilde{(f_{\la,\kappa}\chi_{\frac{1}{3},i,\la,\delta})}(\eta)
d\hat{\mu}_{\la,T_i,W_0}(\eta).
\]
By Lemma~\ref{expression of L in local chart}, we have
\begin{align*}
I(n)
&=-\la^{-1}\int_{B_{i,\ep}}L_{\la,i}
(\tilde{f}_{\la,i,n,\kappa}\tilde{\chi}_{\frac{1}{3},i,\la,\delta})\cdot 
P_{\la,i,F_{+}}
\widetilde{(f_{\la,\kappa}\chi_{\frac{1}{3},i,\la,\delta})}
d\hat{\mu}_{\la,T_i,W_0}\\
&=-\la^{-1}\int_{B_{i,\ep}}
\left(\tilde{\chi}_{\frac{1}{3},i,\la,\delta} 
L_{\la,i}\tilde{f}_{\la,i,n,\kappa}+\tilde{f}_{\la,i,n,\kappa}
L_{\la,i}\tilde{\chi}_{\frac{1}{3},i,\la,\delta}+
2 D_0\tilde{f}_{\la,i,n,\kappa}\cdot 
D_0\tilde{\chi}_{\frac{1}{3},i,\la,\delta}\right)
\nn\\
&\qquad\qquad\qquad \times
P_{\la,i,F_{+}}\widetilde{(f_{\la,\kappa}\chi_{\frac{1}{3},i,\la,\delta})}
d\hat{\mu}_{\la,T_i,W_0}\\
&=-\la^{-1}\int_{B_{i,\ep}}
\left(
\tLS \tilde{f}_{\la,i,n,\kappa}-
\tr\left(\tilde{K}_iD_0^2\tilde{f}_{\la,i,n,\kappa}\right)
+(\tilde{B}_{\la,i},D_0\tilde{f}_{\la,i,n,\kappa})
\right)\nn\\
&\qquad\qquad\qquad\qquad \times \tilde{\chi}_{\frac{1}{3},i,\la,\delta}
P_{\la,i,F_{+}}\widetilde{(f_{\la,\kappa}\chi_{\frac{1}{3},i,\la,\delta})}
d\hat{\mu}_{\la,T_i,W_0}\nn\\
&\quad-\la^{-1}\int_{B_{i,\ep}}
\tilde{f}_{\la,i,n,\kappa}
L_{\la,i}\tilde{\chi}_{\frac{1}{3},i,\la,\delta}P_{\la,i,F_{+}}
\widetilde{(f_{\la,\kappa}\chi_{\frac{1}{3},i,\la,\delta})}
\tilde{\chi}_{\frac{1}{3},i,\la,\delta}d\hat{\mu}_{\la,T_i,W_0}\nn
\\
&\quad-\la^{-1}
\int_{B_{i,\ep}}
2 D_0\tilde{f}_{\la,i,n,\kappa}\cdot 
D_0\tilde{\chi}_{\frac{1}{3},i,\la,\delta}
P_{\la,i,F_{+}}\widetilde{(f_{\la,\kappa}\chi_{\frac{1}{3},i,\la,\delta})}
d\hat{\mu}_{\la,T_i,W_0}\nn\\
&=:I_1+I_2+I_3.
\end{align*}
Let $I_1=:I_{1,1}+I_{1,2}+I_{1,3}$.
We have
\begin{align*}
 I_{1,1}&=
\beta\int_{B_{i,\ep}}\tilde{f}_{\la,i,n,\kappa}
\tilde{\chi}_{\frac{1}{3},i,\la,\delta}
P_{\la,i,F_{+}}
\widetilde{(f_{\la,\kappa}
\chi_{\frac{1}{3},i,\la,\delta})}d\hat{\mu}_{\la,T_i,W_0}\\
&\quad +
\int_{B_{i,\ep}}\tilde{u}_{\la,i,n,\kappa}
\tilde{\chi}_{\frac{1}{3},i,\la,\delta}
P_{\la,i,F_{+}}
\widetilde{(f_{\la,\kappa}\chi_{\frac{1}{3},i,\la,\delta})}
d\hat{\mu}_{\la,T_i,W_0}.
\end{align*}
By $\lim_{n\to\infty}\|f_{\la,i,n,\kappa}
-f_{\la,\kappa}\chi_{i,\ep}\|_{L^2(S_a,\mu_{\la})}=0$,
and the fact that $\chi_{\frac{1}{3},i,\la,\delta}\ne 0$
implies $\chi_{i,\ep}=1$ for large $\la$,
we have
\begin{align*}
 \lim_{n\to\infty}\int_{B_{i,\ep}}\tilde{f}_{\la,i,n,\kappa}
\tilde{\chi}_{\frac{1}{3},i,\la,\delta}
P_{\la,i,F_{+}}\widetilde{(f_{\la,\kappa}\chi_{\frac{1}{3},i,\la,\delta})}
d\hat{\mu}_{\la,T_i,W_0}
&=\int_{W_0}\widetilde{f_{\la,\kappa}\chi_{\frac{1}{3},i,\la,\delta}}\cdot
P_{\la,i,F_{+}}\widetilde{(f_{\la,\kappa}\chi_{\frac{1}{3},i,\la,\delta})}
d\hat{\mu}_{\la,T_i,W_0}\nn\\
&=\|P_{\la,i,F_{+}}
\widetilde{(f_{\la,\kappa}\chi_{\frac{1}{3},i,\la,\delta})}\|^2
_{L^2(W_0,\hat{\mu}_{\la,T_i,W_0})}
\end{align*}
Using (\ref{estimate of u}),
and Lemma~\ref{expansion of F and G},
we get
\begin{align*}
&\limsup_{n\to\infty} 
\left|\int_{B_{i,\ep}}\tilde{u}_{\la,i,n,\kappa}
\tilde{\chi}_{\frac{1}{3},i,\la,\delta}
P_{\la,i,F_{+}}\widetilde{(f_{\la,\kappa}\chi_{\frac{1}{3},i,\la,\delta})}
d\hat{\mu}_{\la,T_i,W_0}\right|\nn\\
&\quad \le C \limsup_{n\to\infty}
\|u_{\la, i,n,\kappa}\|
_{L^2(\tilde{V}^{S_a}_{\ep}(k_i),\mu_{\la,a})}
\|P_{\la,i,F_{+}}
\widetilde{(f_{\la,\kappa}\chi_{\frac{1}{3},i,\la,\delta})}\|
_{L^2(B_{i,\ep},\hat{\mu}_{\la,T_i,W_0})}
\le C\kappa.
\end{align*}
Hence 
\begin{align*}
  \limsup_{n\to\infty}I_{1,1}\le 
\beta \|P_{\la,i,F_{+}}
\widetilde{(f_{\la,\kappa}\chi_{\frac{1}{3},i,\la,\delta})}\|^2
_{L^2(W_0,\hat{\mu}_{\la,T_i,W_0})}+C\kappa.
\end{align*}
Using the integration by parts formula.
\begin{align*}
& I_{1,2}=\la^{-1}\int_{B_{i,\ep}}
\Bigl(
\sum_{k,l}(\tilde{K}_ie_k,e_l)
(D_0^2\tilde{f}_{\la,i,n,\kappa})[e_k,e_l]
\Bigr)
\tilde{\chi}_{\frac{1}{3},i,\la,\delta}
P_{\la,i,F_{+}}\widetilde{(f_{\la,\kappa}\chi_{\frac{1}{3},i,\la,\delta})}
d\hat{\mu}_{\la,T_i,W_0}\nn\\
&\,=\sum_{k=1}^4I_{1,2,k},
\end{align*}
where
\begin{align*}
 I_{1,2,1}&=-\la^{-1}
\int_{B_{i,\ep}}
\Bigl(\sum_{k,l}\left(D_{e_k}\tilde{K}_ie_k,e_l\right)
D_0\tilde{f}_{\la,i,n,\kappa}[e_l]\Bigr)
\tilde{\chi}_{\frac{1}{3},i,\la,\delta}
P_{\la,i,F_{+}}
\widetilde{(f_{\la,\kappa}\chi_{\frac{1}{3},i,\la,\delta})}
d\hat{\mu}_{\la,T_i,W_0}\nn\\
I_{1,2,2}&=-\la^{-1}
\int_{B_{i,\ep}}
\Bigl(\sum_{k,l}
\left(\tilde{K}_ie_k,e_l\right)
D_0\tilde{f}_{\la,i,n,\kappa}[e_l]
D_0\tilde{\chi}_{\frac{1}{3},i,\la,\delta}[e_k]
\Bigr)
P_{\la,i,F_{+}}
\widetilde{(f_{\la,\kappa}\chi_{\frac{1}{3},i,\la,\delta})}
d\hat{\mu}_{\la,T_i,W_0},\nn\\
I_{1,2,3}&=
-\la^{-1}
\int_{B_{i,\ep}}
\Bigl(\sum_{k,l}
\left(\tilde{K}_ie_k,e_l\right)
D_0\tilde{f}_{\la,i,n,\kappa}[e_l]
D_0(P_{\la,i,F_{+}}
\widetilde{(f_{\la,\kappa}\chi_{\frac{1}{3},i,\la,\delta})})[e_k]
\Bigr)\tilde{\chi}_{\frac{1}{3},i,\la,\delta}d\hat{\mu}_{\la,T_i,W_0},\nn\\
I_{1,2,4}&=
\int_{B_{i,\ep}}
\Bigl(\sum_{k,l}
\left(\tilde{K}_ie_k,e_l\right)
D_0\tilde{f}_{\la,i,n,\kappa}[e_l]
\left((\eta,e_k)_{H_0}+(T_i\eta,e_k)\right)
\Bigr)\tilde{\chi}_{\frac{1}{3},i,\la,\delta}
P_{\la,i,F_{+}}\widetilde{(f_{\la,\kappa}\chi_{\frac{1}{3},i,\la,\delta})}
d\hat{\mu}_{\la,T_i,W_0}\nn,
\end{align*}
and $\{e_k\}$ is a complete orthonormal system of $H_0$

We estimate each terms.
As for $I_{1,2,1}$, using the estimate of $\tilde{\tr}$ in 
Lemma~\ref{expression of L in local chart},
Remark~\ref{remark on dirichlet form} (2), (\ref{dirichlet norm of f})
and the Schwarz inequality, we have
\begin{align*}
 |I_{1,2,1}|&\le \la^{-1}
\int_{B_{i,\ep}}|\tr D_0\tilde{K}_i|_{H_0}|\,
|D_0\tilde{f}_{\la,i,n,\kappa}|_{H_0}\,
|P_{\la,i,F_{+}}\widetilde{(f_{\la,\kappa}\chi_{\frac{1}{3},i,\la,\delta})}|
|\tilde{\chi}_{\frac{1}{3},i,\la,\delta}| d\hat{\mu}_{\la,T_i,W_0}\nn\\
&\le C \la^{-1-\delta/2}
\|\tilde{\chi}_{\frac{1}{3},i,\la,\delta}D_0\tilde{f}_{\la,i,n,\kappa}\|
_{L^2(B_{i,\ep},\hat{\mu}_{\la,T_i,W_0})}
\|P_{\la,i,F_{+}}
\widetilde{(f_{\la,\kappa}\chi_{\frac{1}{3},i,\la,\delta})}\|
_{L^2(\hat{\mu}_{\la,T_i,W_0})}.
\end{align*}
Hence using (\ref{D_0 dirichlet norm of f}),
we obtain
\begin{align*}
 \limsup_{n\to\infty} |I_{1,2,1}|&\le C'_{\beta}\la^{-\frac{1+\delta}{2}}.
\end{align*}
For $I_{1,2,2}$, using the estimate $|\tilde{K}_i|_{\LL(H_0,H_0)}
=O(\|\bar{\eta}\|_{\alpha}^2)$ 
(Lemma~\ref{change of variable dirichlet form}), 
similarly, we have
\begin{align*}
\limsup_{n\to\infty} |I_{1,2,2}|&\le 
\limsup_{n\to\infty}\la^{-1}\int_{B_{i,\ep}}
|\tilde{K}_iD_0\tilde{\chi}_{\frac{1}{3},i,\la,\delta}|_{H_0} 
|D_0\tilde{f}_{\la,i,n,\kappa}|_{H_0}
|P_{\la,i,F_{+}}
\widetilde{(f_{\la,\kappa}\chi_{\frac{1}{3},i,\la,\delta})}|_{H_0}
d\hat{\mu}_{\la,T_i,W_0}\nn\\
&\le C_{\beta}\la^{-\frac{1+\delta}{2}}.
\end{align*}
Also,
\begin{align*}
& \limsup_{n\to\infty}
|I_{1,2,3}|\nn\\
&\le \la^{-1}\limsup_{n\to\infty}
\int_{B_{i,\ep}}|\tilde{K}_i|_{\LL(H_0,H_0)}
|D_0P_{\la,i,F_{+}}
\widetilde{(f_{\la,\kappa}\chi_{\frac{1}{3},i,\la,\delta})}|_{H_0}
|D_0\tilde{f}_{\la,i,n,\kappa}|_{H_0}
\tilde{\chi}_{\frac{1}{3},i,\la,\delta}
d\hat{\mu}_{\la,T_i,W_0}\nn\\
&\le \la^{-1-\delta}
\||D_0P_{\la,i,F_{+}}
\widetilde{(f_{\la,\kappa}\chi_{\frac{1}{3},i,\la,\delta})}|_{H_0}\|
_{L^2(W_0,\hat{\mu}_{\la,T_i,W_0})}
\limsup_{n\to\infty}\||D_0\tilde{f}_{\la,i,n,\kappa}|_{H_0}\|_
{L^2(B_{i,\ep},\hat{\mu}_{\la,T_i,W_0})}
\nn\\
&\le C_{\beta}\la^{-\delta}\left(
\|\la^{-1}\hat{L}_{\la,T_i,W_0}
P_{\la,i,F_{+}}\widetilde{(f_{\la,\kappa}\chi_{\frac{1}{3},i,\la,\delta})}
\|_{L^2(W_0,\hat{\mu}_{\la,T_i,W_0})}
\|P_{\la,i,F_{+}}\widetilde{(f_{\la,\kappa}\chi_{\frac{1}{3},i,\la,\delta})}\|
_{L^2(W_0,\hat{\mu}_{\la,T_i,W_0})}
\right)^{1/2}\nn\\
&\le C_{\beta}R^{1/2}\la^{-\delta}\|P_{\la,i,F_{+}}\widetilde{(f_{\la,\kappa}\chi_{\frac{1}{3},i,\la,\delta})}\|
_{L^2(W_0,\hat{\mu}_{\la,T_i,W_0})}\nn\\
&\le C_{\beta}R^{1/2}\la^{-\delta},
\end{align*}
where we have used that $F_+\subset [0,R]$.
For $I_{1,2,4}$, we have
\begin{align*}
& \limsup_{n\to\infty}|I_{1,2,4}|\nn\\
&\le
\la^{-1}\limsup_{n\to\infty}
\int_{B_{i,\ep}}
\left|\left(\tilde{K}_i(I_{H_0}+T_i)\eta, 
D_0\tilde{f}_{\la,i,n,\kappa}(\eta)\right)_{H_0}
\right|
|P_{\la,i,F_{+}}
\widetilde{(f_{\la,\kappa}\chi_{\frac{1}{3},i,\la,\delta})}|
|\tilde{\chi}_{\frac{1}{3},i,\la,\delta}|
d\hat{\mu}_{\la,T_i,W_0}(\eta)\nn\\
&\le C_{\beta} \la^{-\frac{1+3\delta}{2}}.
\end{align*}
We next consider $I_{1,3}$.
\begin{align*}
 \left|I_{1,3}\right|&\le
\int_{B_{i,\ep}}|\la^{-1}\tilde{B}_{\la,i}|_{H_0}
|D_0\tilde{f}_{\la,i,n,\kappa}|_{H_0}
|P_{\la,i,F_{+}}
\widetilde{(f_{\la,\kappa}\chi_{\frac{1}{3},i,\la,\delta})}|
\tilde{\chi}_{\frac{1}{3},i,\la,\delta}
d\hat{\mu}_{\la,T_i,W_0}\nn\\
&\le \la^{-\delta}\||D_0\tilde{f}_{\la,i,n,\kappa}|_{H_0}\|
_{L^2(B_{i,\ep},\hat{\mu}_{\la,T_i,W_0})}
\|P_{\la,i,F_{+}}
\widetilde{(f_{\la,\kappa}\chi_{\frac{1}{3},i,\la,\delta})}\|
_{L^2(W_0,\hat{\mu}_{\la,T_i,W_0})}.
\end{align*}
Hence, using (\ref{D_0 dirichlet norm of f}), we obtain
\begin{align*}
 \limsup_{n\to\infty}|I_{1,3}|\le C\la^{\frac{1}{2}-\delta}.
\end{align*}
We estimate $I_2$.
By the integration by parts formula, we have
\begin{align*}
 I_2
&=\la^{-1}\int_{B_{i,\ep}}
\left(D_0\tilde{f}_{\la,i,n,\kappa}, 
D_0\tilde{\chi}_{\frac{1}{3},i,\la,\delta}\right)
P_{\la,i,F_{+}}\widetilde{(f_{\la,\kappa} 
\chi_{\frac{1}{3},i,\la,\delta})}
\tilde{\chi}_{\frac{1}{3},i,\la,\delta}d\hat{\mu}_{\la,T_i,W_0}\nn\\
&\quad +
\la^{-1}\int_{B_{i,\ep}}
\left(D_0{P_{\la,i,F_{+}}
\widetilde{(f_{\la,\kappa} \chi_{\frac{1}{3},i,\la,\delta})}}, 
D_0\tilde{\chi}_{\frac{1}{3},i,\la,\delta}\right)
\tilde{f}_{\la,i,n,\kappa}\tilde{\chi}_{\frac{1}{3},i,\la,\delta}
d\hat{\mu}_{\la,T_i,W_0}\nn\\
&\quad +
\la^{-1}\int_{B_{i,\ep}}
|D_0\tilde{\chi}_{\frac{1}{3},i,\la,\delta}|_{H_0}^2
P_{\la,i,F_{+}}(\widetilde{f_{\la,\kappa} \chi_{\frac{1}{3},i,\la,\delta}})
\tilde{f}_{\la,i,n,\kappa} d\hat{\mu}_{\la,T_i,W_0}\nn\\
&=I_{2,1}+I_{2,2}+I_{2,3}.
\end{align*}
Using Lemma~\ref{cut-off function} and (\ref{D_0 dirichlet norm of f}), 
we have
\begin{align*}
& \limsup_{n\to\infty}|I_{2,1}|\nn\\
&\,\le \la^{-1}
\limsup_{n\to\infty}\||D_0\tilde{f}_{\la,i,n,\kappa}|_{H_0}\|
_{L^2(B_{i,\ep},\hat{\mu}_{\la,T_i,W_0})}\nn\\
&\qquad \times \|P_{\la,i,F_{+}}(\widetilde{f_{\la,\kappa} 
\chi_{\frac{1}{3},i,\la,\delta}})\|
_{L^2(B_{i,\ep},\hat{\mu}_{\la,T_i,W_0})}
\||D_0\tilde{\chi}_{\frac{1}{3},i,\la,\delta}|_{H_0}\|_{L^{\infty}}\nn\\
&\le C\la^{-\frac{1-\delta}{2}}.
\end{align*}
For the estimate of $I_{2,2}$, using
\begin{align*}
& \left\|\left|D_0\left\{P_{\la,i,F_{+}}(\widetilde{f_{\la,\kappa} 
\chi_{\frac{1}{3},i,\la,\delta}})\right\}\right|_{H_0}\right\|^2
_{L^2(B_{i,\ep},\hat{\mu}_{\la,T_i,W_0})}\nn\\
&\qquad\le
 \left\|\left|D_0\left\{P_{\la,i,F_{+}}(\widetilde{f_{\la,\kappa} 
\chi_{\frac{1}{3},i,\la,\delta}})\right\}\right|_{H_0}\right\|^2
_{L^2(W_0,\hat{\mu}_{\la,T_i,W_0})}\nn\\
&\qquad=\int_{W_0}
-L_{\la,i}P_{\la,i,F_{+}}(\widetilde{f_{\la,\kappa} 
\chi_{\frac{1}{3},i,\la,\delta}})\cdot
P_{\la,i,F_{+}}(\widetilde{f_{\la,\kappa} 
\chi_{\frac{1}{3},i,\la,\delta}})d\hat{\mu}_{\la,T_i,W_0}\nn\\
&\qquad  \le C R\la,
\end{align*}
we have
\begin{align*}
 |I_{2,2}|&\le \la^{-1}
\left\|\left|D_0\left\{P_{\la,i,F_{+}}(\widetilde{f_{\la,\kappa} 
\chi_{\frac{1}{3},i,\la,\delta}})\right\}\right|_{H_0}\right\|
_{L^2(B_{i,\ep},\hat{\mu}_{\la,T_i,W_0})}\nn\\
&\qquad \times 
\|\tilde{f}_{\la,i,n,\kappa}\|_{L^2(B_{i,\ep},\hat{\mu}_{\la,T_i,W_0})}\,
\||D_0\tilde{\chi}_{\frac{1}{3},i,\la,\delta}|_{H_0}\|_{L^\infty}\nn\\
& \le C R^{1/2}\la^{-1/2}\la^{\delta/2}=C R^{1/2}\la^{-\frac{1-\delta}{2}}.
\end{align*}
Also we have
\begin{align*}
 |I_{2,3}|&\le \la^{-1} \|\tilde{f}_{\la,i,n,\kappa}\|
_{L^2(B_{i,\ep},\hat{\mu}_{\la,T_i,W_0})}
\|P_{\la,i,F_{+}}(\widetilde{f_{\la,\kappa} 
\chi_{\frac{1}{3},i,\la,\delta}})\|
_{L^2(B_{i,\ep},\hat{\mu}_{\la,T_i,W_0})}
\||D_0\tilde{\chi}_{\frac{1}{3},i,\la,\delta}|_{H_0}\|_{L^{\infty}}^2\nn\\
&\le C \la^{\delta-1}.
\end{align*}
Finally, we have
\begin{align*}
 |I_3|\le C\la^{-1}
\||D_0\tilde{f}_{\la,i,n,\kappa}|_{H_0}\|
_{L^2(B_{i,\ep},\hat{\mu}_{\la,T_i,W_0})}
\|P_{\la,i,F_{+}}(\widetilde{f_{\la,\kappa} 
\chi_{\frac{1}{3},i,\la,\delta}})\|
_{L^2(B_{i,\ep},\hat{\mu}_{\la,T_i,W_0})}
\||D_0\tilde{\chi}_{\frac{1}{3},i,\la,\delta}|_{H_0}\|_{L^{\infty}},
\end{align*}
which implies
\begin{align}
 \limsup_{n\to\infty}|I_3|\le
C\la^{-\frac{1-\delta}{2}}.
\end{align}
Consequently, using (\ref{fn approximation}) and combining the above, we get
\begin{align}
&\Biggl|\int_{W_0}
\widetilde{(f_{\la,\kappa}\chi_{\frac{1}{3},i,\la,\delta})}(\eta)
\cdot (-\la^{-1}L_{\la,i})
P_{\la,i,F_{+}}\widetilde{(f_{\la,\kappa}\chi_{\frac{1}{3},i,\la,\delta})}(\eta)d\hat{\mu}
_{\la,T_i,W_0}(\eta)\nn\\
&\qquad
-\beta
\|P_{\la,i,F_{+}}\widetilde{(f_{\la,\kappa}\chi_{\frac{1}{3},i,\la,\delta})}\|^2
_{L^2(W_0,\hat{\mu}_{\la,T_i,W_0})}
\Biggr|\nn\\
&\quad \le C_{\beta,R}\,(\kappa+\la^{-\frac{1-\delta}{2}}).
\label{P_F estimate}
\end{align}
Again using (\ref{fn approximation}), we have
\begin{align*}
& (\beta+r-r_-)\|P_{\la,i,F_{+}}
\widetilde{(f_{\la,\kappa}\chi_{\frac{1}{3},i,\la,\delta})}\|^2
_{L^2(W_0,\hat{\mu}_{\la,T_i,W_0})}\\
&\qquad\le
\beta\|P_{\la,i,F_{+}}\widetilde{(f_{\la,\kappa}
\chi_{\frac{1}{3},i,\la,\delta})}\|^2
_{L^2(W_0,\hat{\mu}_{\la,T_i,W_0})}+
C_{\beta,R}\,(\kappa+\la^{-\frac{1-\delta}{2}}).
\end{align*}
and
\begin{align*}
\|P_{\la,i,F_{+}}\widetilde{(f_{\la,\kappa}
\chi_{\frac{1}{3},i,\la,\delta})}\|^2
_{L^2(W_0,\hat{\mu}_{\la,T_i,W_0})}\le (r-r_-)^{-1}
C_{\beta,R}\, (\kappa+\la^{-\frac{1-\delta}{2}}).
\end{align*}

For the estimate
$\|P_{\la,i,F_{-}}\widetilde{(f_{\la,\kappa}
\chi_{\frac{1}{3},i,\la,\delta})}\|
_{L^2(W_0,\hat{\mu}_{\la,T_i,W_0})}$,
note that
\begin{align*}
&\int_{W_0}
\widetilde{(f_{\la,\kappa}\chi_{\frac{1}{3},i,\la,\delta})}(\eta)
\cdot (-\la^{-1}L_{\la,i})
P_{\la,i,F_{-}}
\widetilde{(f_{\la,\kappa}\chi_{\frac{1}{3},i,\la,\delta})}(\eta)
d\hat{\mu}_{\la,T_i,W_0}(\eta)\nn\\
&\qquad
\le (\beta-(r-r_-))\|P_{\la,i,F_{-}}
\widetilde{(f_{\la,\kappa}\chi_{\frac{1}{3},i,\la,\delta})}\|^2
_{L^2(W_0,\hat{\mu}_{\la,T_i,W_0})}.
\end{align*}
By a similar argument to the case of the projection $P_{\la,i,F_{+}}$,
we see that the same inequality as (\ref{P_F estimate}) which is obtained 
by replacing $P_{\la,i,F_{+}}$ by $P_{\la,i,F_{-}}$ holds.
Hence we get
\begin{align*}
\|P_{\la,i,F_{-}}
\widetilde{(f_{\la,\kappa}\chi_{\frac{1}{3},i,\la,\delta})}\|^2
_{L^2(W_0,\hat{\mu}_{\la,T_i,W_0})}\le (r-r_-)^{-1}
C_{\beta,R}\, (\kappa+\la^{-\frac{1-\delta}{2}}).
\end{align*}
This completes the proof.
\end{proof}

\begin{lem}\label{projection lemma}
 Let $A$ be a self-adjoint operator on a Hilbert space $H$.
Let $\{f_i\}_{i=1}^N\in \rD(A)$, $\{\alpha_i\}_{i=1}^N\subset \RR$
and $0<\ep<\frac{1}{2}$.
Suppose for all $i,j$
\begin{align*}
 \|f_i\|_H=1,\quad |(f_i,f_j)|\le \ep,\quad 
\|Af_i-\alpha_i f_i\|_H\le \ep
\end{align*}
hold.
Let $P_B=1_B(A)$ $(B\subset \RR)$.
\begin{enumerate}
 \item[$(1)$] We have $\|P_{[\alpha_i-\sqrt{\ep},\alpha_i+\sqrt{\ep}]}f_i\|^2
\ge 1-\ep$ hold.
In particular, $\sigma(A)\cap [\alpha_i-\ep,\alpha_i+\ep]\ne \emptyset$.
\item[$(2)$] Let $\tilde{f}_i=P_{[\alpha_i-\sqrt{\ep},\alpha_i+\sqrt{\ep}]}f_i
/\|P_{[\alpha_i-\sqrt{\ep},\alpha_i+\sqrt{\ep}]}f_i\|$.
Then we have
\begin{align*}
\|A\tilde{f}_i-\alpha_i\tilde{f}_i\|\le \sqrt{\ep},\quad
\|\tilde{f}_i-f_i\|_H\le 2\sqrt{2\ep},\quad
|(\tilde{f}_i,\tilde{f}_j)|\le 17\sqrt{\ep}.
\end{align*}
\item[$(3)$] If $\ep$ is sufficiently small, 
then $\{f_i\}_{i=1}^N$ are linearly independent and
so is $\{\tilde{f}_i\}_{i=1}^N$.
\end{enumerate}
\end{lem}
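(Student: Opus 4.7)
The proof is essentially a direct application of the spectral theorem for the self-adjoint operator $A$, combined with elementary bilinear algebra for part (3). The plan is to let $\{E_\lambda\}$ denote the spectral resolution of $A$, and to write $\mu_i(d\lambda) = d\|E_\lambda f_i\|^2$ for the spectral measure associated with $f_i$. The hypothesis $\|Af_i-\alpha_i f_i\|^2\le \ep^2$ then translates via functional calculus into the variance estimate $\int(\lambda-\alpha_i)^2\,d\mu_i(\lambda)\le \ep^2$, and $\|f_i\|=1$ gives $\mu_i(\RR)=1$.

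For (1), I would apply Chebyshev-type reasoning: on the complement $J_i^{\complement}$ of $J_i=[\alpha_i-\sqrt{\ep},\alpha_i+\sqrt{\ep}]$ the integrand $(\lambda-\alpha_i)^2$ is at least $\ep$, so $\ep\cdot \mu_i(J_i^{\complement})\le \ep^2$, giving $\|P_{J_i}f_i\|^2=\mu_i(J_i)\ge 1-\ep$. The nonemptiness of $\sigma(A)\cap[\alpha_i-\ep,\alpha_i+\ep]$ then follows by noting that otherwise $(A-\alpha_i I)$ would be invertible with bounded inverse of norm at most $1/\ep$, contradicting $\|Af_i-\alpha_i f_i\|\le \ep=\ep\|f_i\|$.

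For (2), the first estimate follows immediately by applying functional calculus on the support of $E_{\cdot}\tilde f_i$, which is contained in $J_i$: $\|A\tilde f_i-\alpha_i\tilde f_i\|^2=\int_{J_i}(\lambda-\alpha_i)^2\,d\|E_\lambda\tilde f_i\|^2\le \ep\cdot\|\tilde f_i\|^2=\ep$. For the second, writing $\tilde f_i-f_i=(\|P_{J_i}f_i\|^{-1}-1)P_{J_i}f_i-(I-P_{J_i})f_i$ and using orthogonality of the two summands together with $\|P_{J_i}f_i\|\ge\sqrt{1-\ep}$ and $\|(I-P_{J_i})f_i\|^2\le \ep$ gives a bound of order $\sqrt{\ep}$; the constant $2\sqrt{2\ep}$ is generous. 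Finally, expanding
\begin{align*}
(\tilde f_i,\tilde f_j)=(f_i,f_j)+(\tilde f_i-f_i,f_j)+(f_i,\tilde f_j-f_j)+(\tilde f_i-f_i,\tilde f_j-f_j)
\end{align*}
and applying the almost-orthogonality hypothesis together with the second estimate yields the $17\sqrt{\ep}$ bound by Cauchy--Schwarz.

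For (3), I would analyze the Gram matrix $G=\bigl((f_i,f_j)\bigr)_{i,j=1}^N=I+E$, where $|E_{ii}|=0$ and $|E_{ij}|\le\ep$ for $i\ne j$. Then $\|E\|_{\mathrm{op}}\le (N-1)\ep$, so $G$ is strictly positive definite once $(N-1)\ep<1$, forcing linear independence of $\{f_i\}$. The same argument applied to $\tilde G=\bigl((\tilde f_i,\tilde f_j)\bigr)$ using the $17\sqrt{\ep}$ bound gives linear independence of $\{\tilde f_i\}$ whenever $(N-1)\cdot 17\sqrt{\ep}<1$. There is no real obstacle here: the only mild subtlety is choosing the constants so that the claimed numerical bounds $2\sqrt{2\ep}$ and $17\sqrt{\ep}$ come out, which is a matter of bookkeeping rather than substance, and no deep input beyond the spectral theorem is required.
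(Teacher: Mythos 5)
Your proposal is correct and follows essentially the same route as the paper: a Chebyshev-type spectral estimate for (1), the same decomposition of $\tilde f_i-f_i$ and the same four-term expansion of $(\tilde f_i,\tilde f_j)$ for (2), and an elementary Gram-matrix/linear-algebra argument for (3). The only addition is your explicit resolvent argument for $\sigma(A)\cap[\alpha_i-\ep,\alpha_i+\ep]\ne\emptyset$, which the paper leaves implicit and which is valid.
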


\begin{proof}
(1) We have 
\begin{align*}
 \|Af_i-\alpha_if_i\|_H^2&=
\Bigl\|(A-\alpha_i)P_{[\alpha_i-\sqrt{\ep},\alpha+\sqrt{\ep}]}f_i
+(A-\alpha_i)P_{(\alpha_i+\sqrt{\ep},\infty)}f_i
+(A-\alpha_i)P_{(-\infty,\alpha_i+\sqrt{\ep})}f_i
\Bigr\|_H^2\\
&\ge \|(A-\alpha_i)P_{(\alpha_i+\sqrt{\ep},\infty)}f_i\|^2+
\|(A-\alpha_i)P_{(-\infty,\alpha_i-\sqrt{\ep})}f_i\|^2\\
&\ge \ep\left(\|P_{(\alpha_i+\sqrt{\ep},\infty)}f_i\|^2+
\|P_{(-\infty,\alpha_i-\sqrt{\ep})}f_i\|^2\right),
\end{align*}
which implies
$\|P_{[\alpha_i-\sqrt{\ep},\alpha_i+\sqrt{\ep}]}f_i\|_H^2\ge 1-\ep$.

\noindent
(2) 
The first inequality follows from
$A\tilde{f}_i-\alpha_i\tilde{f}_i
=(A-\alpha_i)P_{[\alpha_i-\sqrt{\ep},\alpha_i+\sqrt{\ep}]}\tilde{f}_i$.
We prove the remaining inequalities.
We have
\begin{align*}
 \|\tilde{f}_i-f_i\|_H&
=\left\|
\frac{P_{[\alpha_i-\sqrt{\ep},\alpha_i+\sqrt{\ep}]}f_i}
{\|P_{[\alpha_i-\sqrt{\ep},\alpha_i+\sqrt{\ep}]}f_i\|_H}
-f_i\right\|_H\\
&\le \left\|\frac{P_{[\alpha_i-\sqrt{\ep},\alpha_i+\sqrt{\ep}]}f_i-f_i}
{\|P_{[\alpha_i-\sqrt{\ep},\alpha_i+\sqrt{\ep}]}f_i\|_H}\right\|
+\left\|\frac{1-\|P_{[\alpha_i-\sqrt{\ep},\alpha_i+\sqrt{\ep}]}f_i\|_H}
{\|P_{[\alpha_i-\sqrt{\ep},\alpha_i+\sqrt{\ep}]}f_i\|_H}f_i\right\|\\
&\le 
2\frac{\|P_{[\alpha_i-\sqrt{\ep},\alpha_i+\sqrt{\ep}]^{\complement}}f_i\|_H}
{\|P_{[\alpha_i-\sqrt{\ep},\alpha_i+\sqrt{\ep}]}f_i\|_H}\\
&\le 2\frac{\sqrt{\ep}}{\sqrt{1-\ep}}\le 2\sqrt{2\ep},
\end{align*}
which implies
\begin{align*}
 |(\tilde{f}_i,\tilde{f}_j)|&\le
|(\tilde{f}_i-f_i,\tilde{f}_j-f_j)|+
|(f_i,\tilde{f}_j-f_j)|+|(\tilde{f}_i-f_i,f_j)|+
|(f_i,f_j)|\\
&\le 8\ep+4\sqrt{2\ep}+\ep\le 17\sqrt{\ep}.
\end{align*}

\noindent
(3) This is an elementary fact in linear algebra.
\end{proof}

\begin{proof}[Proof of Theorem~$\ref{main theorem}$]
For each $1\le i\le N$, counting multiplicity, let
\begin{align*}
\{e^i_{l}\}_{l=1}^{l(i)}:=\left\{\Lambda(\xi_i)
\cap [0,R]\right\}\setminus
\left(\cup_{i=1}^N(\Lambda(\xi_i)^a+(-r,r))\right).
\end{align*}
Of course
$\{e_j\}_{j=1}^L=\cup_{i=1}^N\{e^i_l\}_{l=1}^{l(i)}$ holds.
Also, by the assumption
$R\notin \cup_{i=1}^N\Lambda(\xi_i)$,
it holds that $e_j<R$ $(1\le j\le L)$.

We constructed approximate eigenfunctions
$\bce_{\mathbf{n},T_{\xi_i},\la}$
 localized in the neighborhood
of $k_i\in S_a$ whose associated eigenvalues
are $E_{\mathbf{n}}(\xi_i)$.
So for each $e^i_l$, there exists $\mathbf{n}$ such that
$e^i_l=E_{\mathbf{n}}(\xi_i)$ hold and
$\bce_{\mathbf{n},T_{\xi_i},\la}
=\varPsi^{\ast}_{k_i}(\mathbf{\hat{e}}_{\mathbf{n},T_{\xi_i},\la})
$ 
is the associated approximate eigenfunction.
For simplicity, in this proof, we denote the normalized function in
$L^2(\YD,\mu_{\la,\YD})$ by $\bce^i_{l,\la}$ corresponding to
$e^{i}_l$.
Note that this function is constant multiple of 
$\bce_{\mathbf{n},T_{\xi_i},\la}$ and the constant is of $1+O(e^{-c\la})$.
Also we denote $\mathbf{e}_{\mathbf{n},T_{\xi_i},\la}$ 
which is an eigenfunction
in $L^2(W_0,\hat{\mu}_{\la,T_i,W_0})$ by $\mathbf{e}^i_{l,\la}$.
Then we have, for large $\la$,
\begin{align}
& \left(\bce^i_{l,\la},
\bce^i_{l,\la}\right)_{L^2(\YD,\mu_{\la,\YD})}=C^{i,l}_1(\la),\\
&\|\la^{-1}L_{\la,\D} \bce^i_{l,\la}
+e^i_l\bce^i_{l,\la}\|_{L^2(\YD,\mu_{\la,\YD})}=C^{i,l}_2(\la),
\label{on spectrum}
\end{align}
where we have the estimate
\begin{align*}
 C(\la):=\max_{i,l}(|C^{i,l}_1(\la)|, |C^{i,l}_2(\la)|)
=O(\la^{1-(3\delta/2)})
+O(\la^{-\delta/2})
+O(\la^{\frac{1}{2}-\delta}).
\end{align*}

Let $\mathbf{f}^i_{l,\la}$ be the normalized vector
of $P_{\la,\left[e^i_l-C(\la)^{1/2},
e^i_l+C(\la)^{1/2}\right]} 
\bce^i_{l,\la}$.
Then, if $\la$ is sufficiently large, then $\mathbf{f}^i_{l,\la}$ 
is linearly independent by Lemma~\ref{projection lemma}.
Note that if $e^i_l\ne e^{i'}_{l'}$, then 
$\mathbf{f}^{i}_l$ and $\mathbf{f}^{i'}_{l'}$ are orthogonal for large
$\la$ because $\left[e^i_l-C(\la)^{1/2},
e^i_l+C(\la)^{1/2}\right]\cap
\left[e^{i'}_{l'}-C(\la)^{1/2},
e^{i'}_{l'}+C(\la)^{1/2}\right]=\emptyset$.
We prove main theorem in two steps.

\begin{itemize}
 \item[(i)] Let $\{e_{j_k}\}_{k=1}^K$ be distinct values of $\{e_j\}_{j=1}^L$
and $N(e_{j_k})$ denotes the multiplicity.
Suppose $\la$ is large enough so that
\begin{align}
& \left[e_{j_k}-2C(\la)^{1/2},e_{j_k}+2C(\la)^{1/2}\right]\cap 
\left[e_{j_l}-2C(\la)^{1/2},e_{j_l}+2C(\la)^{1/2}\right]
=\emptyset\quad \text{if $k\ne l$}\\
& \left[e_{j_k}-2C(\la)^{1/2},e_{j_k}+2C(\la)^{1/2}\right]\cap 
\left(\cup_{i=1}^N(\Lambda(\xi_i)^a+(-r,r))\right)=\emptyset
\quad \text{for all $k$.}\label{r+}
\end{align}
Then for large $\la$, it holds that
$\dim \rIm P_{\la,\left[e_{j_k}-2C(\la)^{1/2},
e_{j_k}+2C(\la)^{1/2}\right]}=N(e_{j_k})$.
\item[(ii)] The set $[0,R]\cap \left(\cup_{k=1}^K
\left(e_{j_k}-\frac{3}{2}C(\la)^{1/2},e_{j_k}+
\frac{3}{2}C(\la)^{1/2}\right)\right)
^\complement
\cap\left(\cup_{i=1}^N(\Lambda(\xi_i)^a+(-r,r))\right)^\complement$
does not intersect the spectral set of
$-\la^{-1}L_{\la,\YD}$.
\end{itemize}
In the above (i), we can choose $\la$ for which (\ref{r+})
holds because of the assumption on $r$.
Also, we already see the existence of linearly independent
$\mathbf{f}^i_{l,\la}\ne 0$
such that 
$P_{\la,\left[e^i_l-C(\la)^{1/2},
e^i_l+C(\la)^{1/2}\right]}\mathbf{f}^i_{l,\la}=\mathbf{f}^i_{l,\la}$.
Hence, actually,
$\dim \rIm P_{\la,\left[e_{j_k}-C(\la)^{1/2},
e_{j_k}+C(\la)^{1/2}\right]}=N(e_{j_k})$ holds.
This shows $|e_j(\la)-e_j|=O(2C(\la)^{1/2})$ $(1\le j\le L)$ and
combining (ii), we see that
$-\la^{-1}L_{\la,\D}$ has only discrete spectrum
$\{e_j(\la)\}_{j=1}^L$ counting multiplicity in
$[0,R]\cap \left(\cup_{i=1}^N(\Lambda(\xi_i)^a+(-r,r))\right)^\complement$
which completes the proof of the theorem.

We now prove (i).
We already showed 
$\dim \rIm P_{\la,[e_{j_k}-2C(\la)^{1/2},
e_{j_k}+2C(\la)^{1/2}]}\ge N(e_{j_k})$.
Suppose there exists $k$ such that 
$\dim \rIm P_{\la,[e_{j_k}-2C(\la)^{1/2}
,e_{j_k}+
2C(\la)^{1/2}]}>N(e_{j_k})$.\\
Then, there exists $f\in L^2(\YD,\mu_{\la,\YD})$ with 
$\|f\|_{L^2(\YD,\mu_{\la,\YD})}$=1 such that
\[
 P_{\la,[e_{j_k}-2C(\la)^{1/2},e_{j_k}+2C(\la)^{1/2}]}f=f,\quad
(f,\mathbf{f}^i_{l,\la})_{L^2(\YD,\mu_{\la,\YD})}=0\quad \text{for all $i,l$.}
\]
This implies
\begin{align}
\la^{-1}\E_{\la,\YD}(f,f)\le e_{j_k}+2C(\la)^{1/2}.\label{upper bound of Ef}
\end{align}
By Lemma~\ref{projection lemma},
$(f,\bce^i_{l,\la})_{L^2(\YD,\mu_{\la,\YD})}=O(C(\la)^{1/4})$ and hence
\begin{align}
 (\widetilde{f\chi_{\frac{1}{3},i',m,\theta,\la,\delta}},
\be^i_{l,\la})_{L^2(W_0,\hat{\mu}_{\la,T_i,W_0})}=O(C(\la)^{1/4}).
\label{norm i'il}
\end{align}
Also, applying Lemma~\ref{projection on F} to the case $f_{\la,\kappa}=f$
and $\kappa=2C(\la)^{1/2}$, we obtain
\begin{align}
 \|P_{\la,i,F}\widetilde{(f\chi_{\frac{1}{3},i',m,\theta,\la,\delta})}\|
_{L^2(W_0,\hat{\mu}_{\la,T_{i'},W_0})}\le 2
(r-r_-)^{-1}C_{i',e_{j_k},R}\left(2C(\la)^{1/2}
+\la^{-\frac{1-\delta}{2}}\right).
\label{norm F}
\end{align}
(\ref{norm i'il}) and (\ref{norm F}) implies
\begin{align*}
 \|P_{\la,i',F\cup\{e_{j_k}\}_{k=1}^K}
\widetilde{(f\chi_{\frac{1}{3},{i'},m,\theta,\la,\delta})}\|
_{L^2(W_0,\hat{\mu}_{\la,T_{i'},W_0})}=O(C(\la)^{1/4})
+O(\la^{-\frac{1-\delta}{2}}).
\end{align*}
This shows
\begin{align*}
& \|P_{\la,i',(F\cup \{e_{j_k}\}_{k=1}^K)^\complement}
\widetilde{(f\chi_{\frac{1}{3},i',m,\theta,\la,\delta})}\|^2
_{L^2(W_0,\hat{\mu}_{\la,T_{i'},W_0})}\nn\\
&\ge
\|\widetilde{(f\chi_{\frac{1}{3},i',m,\theta,\la,\delta})}\|^2
_{L^2(W_0,\hat{\mu}_{\la,T_i',W_0})}
-\|P_{\la,i,(F\cup\{e_{j_k}\}_{k=1}^K)}
\widetilde{(f\chi_{\frac{1}{3},i',m,\theta,\la,\delta})}\|^2
_{L^2(W_0,\hat{\mu}_{\la,T_i',W_0})}
\nn\\
&\ge
\|\widetilde{(f\chi_{\frac{1}{3},i',m,\theta,\la,\delta})}\|
_{L^2(W_0,\hat{\mu}_{\la,T_i',W_0})}^2
-\left(O(C(\la)^{1/4})+O(\la^{-\frac{1-\delta}{2}})\right).
\end{align*}
On the other hand,
\begin{align*}
 \E_{\la,\YD}(f\chi_{\frac{1}{3},i',m,\theta,\la,\delta},
f\chi_{\frac{1}{3},i',m,\theta,\la,\delta})
&=\tilde{\E}_{\la,a,
B_{k_i,\ep}^{\Omega_0}(0)}
(\widetilde{f\chi_{\frac{1}{3},i',m,\theta,\la,\delta}},
\widetilde{f\chi_{\frac{1}{3},i',m,\theta,\la,\delta}}
)\nn\\
&\ge (1-O(\la^{-\delta/2}))
\int_{W_0}\|D_0\widetilde{f\chi_{\frac{1}{3},i',m,\theta,\la,\delta}}\|_{H_0}^2
d\hat{\mu}_{\la,T_i',W_0}
\end{align*}
and
\begin{align*}
&\la^{-1} 
\int_{W_0}\|D_0\widetilde{f\chi_{\frac{1}{3},i',m,\theta,\la,\delta}}\|_{H_0}^2
d\hat{\mu}_{\la,T_i',W_0}\nn\\
&\quad
\ge
\la^{-1}\int_{W_0}\|D_0
P_{\la,i',(F\cup\{e_{j_k}\}_{k=1}^K)^\complement}
\left(
\widetilde{f\chi_{\frac{1}{3},i',m,\theta,\la,\delta}}
\right)
\|_{H_0}^2
d\hat{\mu}_{\la,T_{i'},W_0}\nn\\
&\quad \ge R
\left(
\|\widetilde{(f\chi_{\frac{1}{3},i',m,\theta,\la,\delta})}\|
_{L^2(W_0,\hat{\mu}_{\la,T_i',W_0})}^2
-\left(O(C(\la)^{1/4})+O(\la^{-\frac{1-\delta}{2}})\right)
\right).
\end{align*}
where we have used there exists $\ep>0$ such that
\begin{align*}
 \inf\left\{\sigma(-\la^{-1}L_{\la,T_{i'},W_0})
\cap (F\cup \{e_{j_k}\}_{k=1}^K)^\complement\right\}\ge R.
\end{align*}
Hence
\begin{align}
&\la^{-1}
\E_{\la,\YD}(f\chi_{\frac{1}{3},i',m,\theta,\la,\delta},
f\chi_{\frac{1}{3},i',m,\theta,\la,\delta})\nn\\
&\ge 
R
(1-O(\la^{-\delta/2}))
\|f\chi_{\frac{1}{3},i',m,\theta,\la,\delta}\|^2
_{L^2(\YD,\mu_{\la,\YD})}
-R\left(O(C(\la)^{1/4})+O(\la^{-\frac{1-\delta}{2}})\right).
\label{lower bound of Ei'}
\end{align}

We now apply the IMS localization formula to finish the proof of
(i).
Let 
\begin{align*}
 \chi_{\frac{1}{3},o, m,\theta,\la,\delta}(w)=
\left(1-\sum_{i=1}^N
\chi_{\frac{1}{3},i,m,\theta,\la,\delta}(w)^2\right)^{1/2}.
\end{align*}
This function takes the value 1 outside neighborhood of
$\{l_{\xi_i}\}_{i=1}^N$ and
\begin{align*}
 \|D\chi_{\frac{1}{3},o,m,\theta,\la,\delta}(w)\|_H\le
C\la^{\delta/2}
\end{align*}
which follows Lemma~\ref{cut-off function} and
explicit computation.
By applying Corollary~\ref{lower bound of an energy of f} 
to the case $f=(f\chi_{\frac{1}{3},o, m,\theta,\la,\delta})\circ Y^{-1}$
and $\rho=\rho_{\kappa}$ which is defined in Lemma~\ref{exponential estimate},
we have
\begin{align*}
& \la^{-1}\E_{\la,\YD}(f\chi_{\frac{1}{3},o, m,\theta,\la,\delta},
f\chi_{\frac{1}{3},o, m,\theta,\la,\delta})\nn\\
&=\la^{-1}\E_{\la,\D}\left(
(f\chi_{\frac{1}{3},o, m,\theta,\la,\delta})\circ Y^{-1},
(f\chi_{\frac{1}{3},o, m,\theta,\la,\delta})\circ Y^{-1}
\right)\nn\\
&\ge -C\log\left(
\int_{\D}
\exp\Bigl(C_{\la}\la^2\left(V_{\la,a}+\rho_{\kappa}\right)\Bigr)
d\nu_{\la,\D}\right) 
\|f\chi_{\frac{1}{3},o, m,\theta,\la,\delta}\|_{L^2(\YD,\mu_{\la,\YD})}^2\nn\\
&\quad+\la
\int_{\YD}(\rho_{\kappa}(Y(w))-e^{-C'\la})
(f\chi_{\frac{1}{3},o, m,\theta,\la,\delta})^2(w)
d\mu_{\la,\D}(w)\nn\\
&\ge C\left(\kappa \la^{1-\delta}-1\right)
\|f\chi_{\frac{1}{3},o, m,\theta,\la,\delta}\|_{L^2(\YD,\mu_{\la,\YD})}^2,
\end{align*}
where we have used $\rho_{\kappa}(Y(w))\ge C\kappa\la^{-\delta}$ on
$\{\chi_{\frac{1}{3},o,m,\theta,\la,\delta}\ne 0\}$ and
Lemma~\ref{exponential estimate}.
Taking the above estimates into account,
we now apply the IMS localization formula to obtain
\begin{align*}
&\la^{-1}\E_{\la,\YD}(f,f)\nn\\
&=\la^{-1}\sum_{i=1}^N
\E_{\la,\YD}(f\chi_{\frac{1}{3},i,m,\theta,\la,\delta},
f\chi_{\frac{1}{3},i,m,\theta,\la,\delta})
-\la^{-1}\sum_{i=1}^N\int_{\YD}|D_{S_a}
\chi_{\frac{1}{3},i,m,\theta,\la,\delta}|_H^2
f(w)^2d\mu_{\la,\D}\nn\\
&\, 
+\la^{-1}\E_{\la,\YD}(f\chi_{\frac{1}{3},o,m,\theta,\la,\delta},
f\chi_{\frac{1}{3},o,m,\theta,\la,\delta})
-\la^{-1}\int_{\YD}|D_{S_a}\chi_{\frac{1}{3},i,m,\theta,\la,\delta}|_H^2
f(w)^2d\mu_{\la,\D}\nn\\
&\ge \left(\min\left[\left\{
R(1-O(\la^{-\delta/2})-N O(C(\la)^{1/4})-N O(\la^{-\frac{1-\delta}{2}}))
\right\},
C(\kappa\la^{1-\delta}-1)\right]
-CN\la^{\delta-1}
\right)\nn\\
&\quad \times\|f\|_{L^2(\YD,\mu_{\la,\YD})}^2,
\end{align*}
which contradicts with (\ref{upper bound of Ef}).

We now prove (ii).
Suppose there exists $\beta$ which belongs to the both set.
Then for any $\kappa>0$, there exists a unit vector $f_{\la,\kappa}$ such that
$P_{\la,[\beta-\kappa,\beta+\kappa]}f_{\la,\kappa}=f_{\la,\kappa}$
which implies $\la^{-1}\E_{\la,\YD}(f_{\la,\kappa},f_{\la,\kappa})\le 
(R+\kappa)$.
We already see the existence of discrete eigenvalues $e_j(\la)$
which is very close to $e_j$ and $|\beta-e_j(\la)|\ge \frac{1}{2}C(\la)^{1/2}$.
By setting $\kappa=\frac{1}{3}C(\la)^{1/2}$, we see
that $f_{\la,\kappa}$ is orthogonal with the eigenfunctions
corresponding to any $e_j(\la)$.
Arguing similarly to (\ref{norm i'il}),
we have for any $i,i'$ and $l$
\begin{align*}
 (\widetilde{f_{\la,\kappa}\chi_{\frac{1}{3},i',m,\theta,\la,\delta}},
\be^i_{l,\la})_{L^2(W_0,\hat{\mu}_{\la,T_i,W_0})}=O(C(\la)^{1/4}).
\end{align*}
On the other hand, by Lemma~\ref{projection on F},
we have
 \begin{align*}
\|P_{\la,i,F}\widetilde{(f_{\la,\kappa}
\chi_{\frac{1}{3},i,m,\theta,\la,\delta})}\|^2
_{L^2(W_0,\hat{\mu}_{\la,T_i,W_0})}\le  2(r-r_-)^{-1}
C_{i,\beta,R}\, (C(\la)^{1/2}+\la^{-\frac{1-\delta}{2}}).
\end{align*}
Note that for all $1\le i\le N$, there exists $\ep>0$ such that
\begin{align*}
 \sigma(-\hat{L}_{\la,T_i,W_0})\cap 
\left(\left(\cup_{i=1}^N(\Lambda(\xi_i)^a+(-r_-,r_-))\right)
\cup \{e_j\}_{j=1}^L\right)^\complement
\ge R+\ep.
\end{align*}
Combining the above three estimates, we can get an
lower bound estimate for\\
$\E_{\la,\YD}(f\chi_{\frac{1}{3},i',m,\theta,\la,\delta},
f\chi_{\frac{1}{3},i',m,\theta,\la,\delta})$
similarly to
(\ref{lower bound of Ei'})
replacing $R$ by $R+\ep$.
However, similarly to the proof of (i), this contradicts with
the result $\la^{-1}\E_{\la,\YD}(f_{\la,\kappa},f_{\la,\kappa})\le (R+\kappa)$.
This completes the proof.
\end{proof}

\section{Appendix}

We prove Proposition~\ref{example of discrete spectrum}.

\begin{proof}
By Lemma~\ref{accumulation point} (1), it suffices to prove (\ref{criteria}).
First, we prove that (\ref{criteria}) holds if $k\ne l$.
We consider the case $k\ge 0$ and for $M\in \NN$ set
\begin{align*}
 E&:=E_0(\theta(k))+M\left(1+\frac{2(\theta+k)}{p}\right)\\
&=4\left(k\sum_{m=2}^{2k}\frac{1}{m}+\theta\sum_{m=1}^{2k}\frac{1}{m}\right)
+M\left(1+\frac{2(\theta+k)}{p}\right).
\end{align*}
Let $l\in \ZZ_{\ge 0}$.
For $n_m, n'_m, N'_m, N_m\in \ZZ_{\ge 0}$ and
$n\in \NN$, let
\begin{align*}
 E'&:=E_0(\theta(l))+\sum_{m=1}^{2l}\left(\frac{2(\theta+l)}{m}-1\right)n_m+
\sum_{m=1}^{2l}\left(\frac{2(\theta+l)}{m}+1\right)n_m'\\
&\quad+\sum_{m\ge 2l+1}\left(1+\frac{2(\theta+l)}{m}\right)N'_m
+\sum_{m\ge 2l+1}\left(1-\frac{2(\theta+l)}{m}\right)N_m+n.
\end{align*}
Suppose $E=E'$.
Then, comparing the coefficient of $2\theta$, we have
\begin{align*}
 2\sum_{m=1}^{2l}\frac{1}{m}+\sum_{m=1}^{2l}\frac{1}{m}(n'_m+n_m)+
\sum_{m\ge 2l+1}\frac{1}{m}(N'_m-N_m)=
2\sum_{m=1}^{2k}\frac{1}{m}+\frac{M}{p}
\end{align*}
and hence
\begin{align}
 \sum_{m=1}^{2l}\frac{1}{m}(n'_m+n_m+2)+
\sum_{m\ge 2l+1}\frac{1}{m}(N'_m-N_m)=
2\sum_{m=1}^{2k}\frac{1}{m}+\frac{M}{p}.\label{coefficient of alpha}
\end{align}
Comparing the other coefficient,
\begin{align*}
 &4l\sum_{m=2}^{2l}\frac{1}{m}+
\sum_{m=1}^{2l}\left(\frac{2l}{m}-1\right)n_m
+\sum_{m=1}^{2l}\left(\frac{2l}{m}+1\right)n_m'\\
&\quad
+\sum_{m\ge 2l+1}\left(\frac{2l}{m}+1\right)N'_m
+\sum_{m\ge 2l+1}\left(-\frac{2l}{m}+1\right)N_m+n
=4k\sum_{m=2}^{2k}\frac{1}{m}+M\left(1+\frac{2k}{p}\right)
\end{align*}
and so
\begin{align*}
& 4l\sum_{m=2}^{2l}\frac{1}{m}+2l\sum_{m=1}^{2l}\frac{1}{m}(n_m+n'_m)
+2l\sum_{m\ge 2l+1}\frac{1}{m}(N'_m-N_m)
+\sum_{m=1}^{2l}(n'_m-n_m)+\sum_{m\ge 2l-1}(N'_m+N_m)+n\\
&\qquad =4k\sum_{m=2}^{2k}\frac{1}{m}+M\left(1+\frac{2k}{p}\right).
\end{align*}
Hence
\begin{align}
& 2l\left\{
\sum_{m=1}^{2l}\frac{1}{m}(n_m+n'_m+2)
+\sum_{m\ge 2l+1}\frac{1}{m}(N'_m-N_m)
\right\}
+\sum_{m=1}^{2l}(n'_m-n_m)+\sum_{m\ge 2l-1}(N'_m+N_m)+n-4l\nonumber\\
&\qquad =2k\left(2\sum_{m=2}^{2k}\frac{1}{m}+\frac{M}{p}\right)+M.
\label{coefficient of the other}
\end{align}
Calculating 
$(\ref{coefficient of alpha})\times 2k-(\ref{coefficient of the other})$,
\begin{align}
& 2(k-l)\left\{
\sum_{m=1}^{2l}\frac{1}{m}(n_m+n'_m+2)
+\sum_{m\ge 2l+1}\frac{1}{m}(N'_m-N_m)
\right\}\nonumber\\
&\quad -\sum_{m=1}^{2l}(n'_m-n_m)
-\sum_{m\ge 2l-1}(N'_m+N_m)-n+4l\nonumber\\
&\qquad =4k-M.\label{pm-qm}
\end{align}
Consequently,
if $k\ne l$, by dividing the both sides of (\ref{pm-qm}) by
$2(k-l)$, we obtain
\begin{align*}
& \sum_{m\ge 2l+1}\frac{1}{m}(N'_m-N_m)\nonumber\\
&\quad=
-\sum_{m=1}^{2l}\frac{1}{m}(n_m+n'_m+2)+
\frac{1}{2(k-l)}\left\{
\sum_{m=1}^{2l}(n'_m-n_m)+\sum_{m\ge 2l-1}(N'_m+N_m)+n-4l
+4k-M
\right\}.
\end{align*}
Since $\sum_{m\ge 2l+1}\frac{1}{m}(N'_m-N_m)$ is a rational number, 
it can be written 
as a reduced fraction $\frac{A}{B}$ where $A$ and $B$ are integers.
By the assumption on $2k, 2l$, we can conclude that 
$B$ does not have $p$ as a divisor.
But, this contradicts with the expression (\ref{coefficient of alpha}).
Next we consider the case $l<0$.
We set
\begin{align*}
 E'&:=E_0(\theta(l))+\sum_{m=1}^{2|l|-1}
\left(\frac{2(|l|-\theta)}{m}-1\right)n_m+
\sum_{m=1}^{2|l|-1}\left(\frac{2(|l|-\theta)}{m}+1\right)n_m'\\
&\quad+\sum_{m\ge 2|l|}\left(1+\frac{2(|l|-\theta)}{m}\right)N'_m
+\sum_{m\ge 2|l|}\left(1-\frac{2(|l|-\theta)}{m}\right)N_m+n.
\end{align*}
Suppose $E'=E$ holds.
Then comparing the coefficient of $2\theta$,
\begin{align}
 -\sum_{m=1}^{2|l|-1}\frac{1}{m}(n_m+n_m'+2)
-\sum_{m\ge 2|l|}\frac{1}{m}(N'_m-N_m)
=2\sum_{m=1}^{2k}\frac{1}{m}+\frac{M}{p}.\label{coefficient of alpha2}
\end{align}
As to the other coefficient,
\begin{align*}
& 2\sum_{m=1}^{2|l|-1}\left(\frac{2|l|}{m}-1\right)+
 \sum_{m=1}^{2|l|-1}\left(\frac{2|l|}{m}-1\right)n_m
 +\sum_{m=1}^{2|l|-1}\left(\frac{2|l|}{m}+1\right)n_m'\nonumber\\
&\quad +\sum_{m\ge 2|l|}\left(1+\frac{2|l|}{m}\right)N'_m+
\sum_{m\ge 2|l|}\left(1-\frac{2|l|}{m}\right)N_m+n\nonumber\\
&\qquad=4k\sum_{m=2}^{2k}\frac{1}{m}+M\left(1+\frac{2k}{p}\right)
\end{align*}
and so
\begin{align}
 &2|l|\left\{\sum_{m=1}^{2|l|-1}\frac{1}{m}(n_m+n_m'+2)
+\sum_{m\ge 2|l|}\frac{1}{m}(N'_m-N_m)\right\}\nonumber\\
&\quad +\sum_{m=1}^{2|l|-1}(n'_m-n_m)+\sum_{m\ge 2|l|}(N'_m+N_m)-2(2|l|-1)+n\nonumber\\
&=4k\sum_{m=1}^{2k}\frac{1}{m}-4k+M\left(1+\frac{2k}{p}\right).
\label{coefficient of the other2}
\end{align}
Calculating 
$(\ref{coefficient of the other2})-(\ref{coefficient of alpha2})\times 2k$,
we obtain
\begin{align*}
& \sum_{m\ge 2|l|}\frac{1}{m}(N'_m-N_m)\nonumber\\
&=-\sum_{m=1}^{2|l|-1}\frac{1}{m}(n_m+n_m'+2)\nonumber\\
&\qquad -
\frac{1}{2(k+|l|)}\left(
\sum_{m=1}^{2|l|-1}(n'_m-n_m)+\sum_{m\ge 2|l|}(N'_m+N_m)-2(2|l|-1)+n
-M+4k\right).
\end{align*}
Again, by the proof of contradiction, we can conclude that
$E=E'$ dose not hold.
So far, we consider the case where $k\ge 0$.
The above proof works for the case $k\le -1$.
Consequently, we need only to consider the case $k=l$.

Assume there exists $k\in \ZZ$ and
$n_m, n'_m, N^+_m, N^-_m\in \ZZ_{\ge 0}$
such that
\begin{align}
& E_0(\theta(k))+M\left(1+\frac{2|\theta+k|}{p}\right)\nonumber\\
&=E_0(\theta(k))+\sum_{m=1}^{(|2k|\vee |2k+1|)-1}n_m
\left(\frac{2|\theta+k|}{m}-1\right)
+\sum_{m=1}^{(|2k|\vee |2k+1|)-1}n_{m}'
\left(\frac{2|\theta+k|}{m}+1\right)\nonumber\\
&\quad
+\sum_{|2k|\vee |2k+1|\le m\le K'}N'_m(1+\frac{2|\theta+k|}{m})
+\sum_{|2k|\vee |2k+1|\le m\le K}N_m(1-\frac{2|\theta+k|}{m})+n.
\label{identity of eigenvalue2}
\end{align}
Since $\theta$ is an irrational number and $p$ is a prime number, we get
\begin{align}
\sum_{j=1}^{J'}\frac{N'_{m'_{j}}}{m'_{j}}-\sum_{j=1}^{J}\frac{N_{m_j}}{m_j}
=
\frac{M}{p},\label{prime number}
\end{align}
where 
$\{m'_{j}\}_{j}$ and $\{m_j\}_j$ are all numbers 
of $\{|2k|\vee |2k+1|,\ldots,K'\}$
and $\{|2k|\vee |2k+1|,\ldots, K\}$ such that
the denominators of the reduced fractions of
$\frac{N'_{m'_{j}}}{m'_{j}}$ and $\frac{N_{m_j}}{m_j}$
have $p$ as a divisor
respectively.
This follows from the following.
Let $a$ be positive integer,
$\frac{A_i}{B_i}$ $(A_i, B_i\in \ZZ, 1\le i\le 3)$ are reduced fractions 
and suppose $B_1$ and $A_2$ do not have $p$ as a divisor.
Further suppose it holds that
\[
 \left(\frac{A_1}{B_1}+\frac{A_2}{p^aB_2}\right)\theta+\frac{A_3}{B_3}=0.
\]
Then $A_1=A_2=A_3=0$ holds.
We go back to the equation (\ref{prime number}).
This implies $\sum_{j=1}^{J'}N'_{m'_j}\ge M$.
Hence
\begin{align*}
 \text{The RHS of (\ref{identity of eigenvalue2})}
\ge E_0(\theta(k))+M+n.
\end{align*}
This implies $\frac{2M|\theta+k|}{p}\ge n$.
But this contradicts with the assumption $p>2M|\theta+k|$
and hence (\ref{identity of eigenvalue2}) does not hold.
\end{proof}


\begin{thebibliography}{99}
%
\bibitem{a1993}
S. Aida,
On the Ornstein-Uhlenbeck operators on Wiener-Riemannian manifolds,
J. Funct. Anal. 116 (1993), no. 1, 83--110.
%
\bibitem{a1995}
S. Aida,
Sobolev spaces over loop groups,
J. Funct. Anal. 127 (1995), no.1, 155--172.
%
\bibitem{a1996}
S.~Aida,
Logarithmic Sobolev inequalities on loop spaces 
over compact Riemannian manifolds,
World Scientific Publishing Co., Inc., River Edge, NJ, 1996, 1--19.
%
\bibitem{a2000}
S.~Aida,
Logarithmic derivatives of heat kernels and logarithmic Sobolev inequalities 
with unbounded diffusion coefficients on loop spaces,
J. Funct. Anal. 174 (2000), no. 2, 430--477.
%
\bibitem{a2003}
S.~Aida,
Witten Laplacian on pinned path group and its expected semiclassical
behavior, 
IDAQP, Vol.6, No.supp01,(September 2003)
103--114.
%
\bibitem{a2003-2}
S.~Aida,
Semiclassical limit of the lowest eigenvalue of a
Schr\"odinger operator on a Wiener space,
J. Funct. Anal. 203 (2003), no.2, 401--424.
%
\bibitem{a2007}
S.~Aida,
Semiclassical limit of the bottom of spectrum of a
Schr\"odinger operator on a path space over a
compact Riemannian manifold,
J. Funct. Anal. 251 (2007), no. 1, 59--121.
%
\bibitem{a2008}
S.~Aida,
 Log-Sobolev inequalities with potential functions on pinned path groups,
Commun. Stoch. Anal. 2 (2008), no. 1, 33--51.
%
\bibitem{a2009}
S.~Aida,
Semi-classical limit of the lowest eigenvalue of a 
Schr\"odinger operator on a Wiener space. II. $P(\phi)_2$-model 
on a finite volume
J. Funct. Anal. 256 (2009), no. 10, 3342--3367.
%
\bibitem{a2011}
S.~Aida,
Vanishing of one-dimensional $L^2$-cohomologies of loop groups,
J. Funct. Anal. 261 (2011), no. 8, 2164--2213.
%
\bibitem{a2012}
S.~Aida,
 Tunneling for spatially cut-off $P(\phi)_2$-Hamiltonians,
J. Funct. Anal. 263 (2012), no. 9, 2689--2753.
%
\bibitem{a2015}
S.~Aida,
Asymptotics of spectral gaps on loop spaces over a class of 
Riemannian manifolds,
J. Funct. Anal. 269 (2015), no. 12, 3714--3764.
%
\bibitem{a2016}
S.~Aida,
Semi-classical limit of Schr\"odinger 
operators in infinite dimensional spaces,
Sugaku Expositions 29 (2016), no. 2, 203--226.
%
\bibitem{ae}
S.~Aida and D.~Elworthy,
Differential calculus on path and loop spaces. I.
Logarithmic Sobolev inequalities on path spaces,
C.R.Acad.Sci.Paris Sr. I Math. 321 (1995) no.1, 97--102.
%
\bibitem{an}
S.~Aida and N.~Naganuma,
An interpolation of discrete rough differential equations 
and its applications to analysis of error distributions,
Electron. J. Probab. 30, Paper No. 150, (2025).
%
\bibitem{airault-malliavin}
H.~Airault and P.~Malliavin,
 Int\'egration g\'eom\'etrique sur l'espace de Wiener,
Bull. Sci. Math. (2) 112 (1988), no. 1, 3--52.
%
\bibitem{ad}
L.Andersson and B.K.~Driver,
Finite dimensional approximations to Wiener measure
and path integral formulas on manifolds,
J.~Funct.~Anal. {\bf 165} (1999), 430--498.
%
\bibitem{bismut}
J.\,M.\, Bismut,
Large deviations and the Malliavin calculus,
Progr. Math., 45
Birkh\"auser Boston, Inc., Boston, MA, 1984, 
%
\bibitem{begk}
A.~Bovier, V.~Gayrard and M.~Klein,
  Metastability in reversible diffusion processes. 
II. Precise asymptotics for small eigenvalues,
J. Eur. Math. Soc. (JEMS) 7 (2005), no. 1, 69--99.
%
\bibitem{brendle}
S. Brendle,
 The logarithmic Sobolev inequality for a submanifold in Euclidean space
Comm. Pure Appl. Math. 75 (2022), no. 3, 449--454.
%
\bibitem{b-digesu}
M.~Brooks and G.~Di Ges\'u,
Sharp tunneling estimates for a double-well model in infinite dimension. 
J. Funct. Anal. 281, 109029, 2021.
%
\bibitem{chl}
M.~Capitaine, E.~Hsu and M.~Ledoux,
Martingale representation and a simple proof of logarithmic 
Sobolev inequalities on path spaces, Elect.~Comm.~in
Probab.~{\bf 2}  (1997), 71--81.
%
\bibitem{chang-liu}
K.C. Chang and J. Liu,
 A cohomology complex for manifolds with boundary
Topol. Methods Nonlinear Anal. 5 (1995), no. 2, 325--340.
%
\bibitem{ds}
J.D.~Deuschel and D.W.~Stroock,
Large deviations, AMS/Chelsea, Vol.342, 1989.
%
\bibitem{driver1}
B.K.~Driver,
A Cameron-Martin type quasi-invariance theorem for
Brownian motion on a Riemannian manifold,
J.~Funct.~Anal. {\bf 110} (1992), 272--376.
%
\bibitem{driver1-2}
B.K.~Driver,
 A Cameron-Martin type quasi-invariance theorem for 
pinned Brownian motion on a compact Riemannian manifold,
Trans. Amer. Math. Soc. 342 (1994), no. 1, 375–395.
%
\bibitem{driver2}
B. K. Driver,
The non-equivalence of Dirichlet forms on path spaces,
Stochastic analysis on infinite-dimensional spaces (Baton Rouge, LA, 1994), 
75--87.
Pitman Res. Notes Math. Ser., 310
Longman Scientific \& Technical, Harlow, 1994
%
\bibitem{driver3}
B.K. Driver and T. Lohrenz,
Logarithmic Sobolev inequalities for pinned loop groups
J. Funct. Anal. 140 (1996), no. 2, 381--448.
%
\bibitem{eberle1}
A.~Eberle,
Absence of spectral gaps on a class of loop spaces,
J.~Math.~Pures Appl.~(9)~{\bf 81} (2002),
no.~10, 915--955.
%
\bibitem{eberle2}
A.~Eberle,
Local spectral gaps on loop spaces
J. Math. Pures Appl. (9) 82 (2003), no. 3, 313--365.
%
\bibitem{ecker}
K. Ecker, 
Logarithmic Sobolev inequalities on submanifolds of Euclidean space. 
J. Reine Angew. Math. 522 (2000), 105--118. 
%
\bibitem{federbush}
P.~Federbush,
A partially alternate derivation of a result of Nelson,
J. Math.Phys. vol. 10, no.1, (1969) 50--52.
%
\bibitem{friz-hairer}
  P.~Friz and M.~Hairer,
  A Course on Rough Paths\,
  With an Introduction to Regularity Structures,
Second edition,
  Universitext, Springer, Cham, (2020).
%
\bibitem{friz-shekhar}
P.~Friz and A.~Shekhar,
General rough integration, Lévy rough paths and a L\'evy-Kintchine-type formula,
Ann. Probab. 45 (2017), no. 4, 2707--2765.
%
\bibitem{friz-victoir}
P.~Friz and N.~Victoir,
Multidimensional stochastic processes as rough paths,
Cambridge Stud. Adv. Math., 120
Cambridge University Press, Cambridge, 2010
%
\bibitem{ghl}
S. Gallot, D. Hulin, J. Lafontaine,
Riemmanian geometry,
Universitext,
Springer-Verlag, Berlin, 2004,
%
\bibitem{getzler}
E.~Getzler,
An extension of Gross's log-Sobolev inequality
for the loop sapce of a compact Lie group,
Probability models in mathematical physics,
73--97, World Sci. Publishing, 1991.
%
\bibitem{gong-ma}
F-Z.Gong and Z-M.Ma,
The log-Sobolev inequality on loop space over a
compact Riemannian manifold,
J.~Funct.~Anal.~{\bf 157} (1998), 599--623.
%
\bibitem{g1}
L.~Gross,
Logarithmic Sobolev inequalities,
Amer. J. Math.~{\bf 97} (1975), 1061-1083.
%
\bibitem{g2}
L.~Gross,
Logarthmic Sobolev inequalities on loop groups,
J.~Funct.~Anal.~{\bf 102} (1991), no.2, 268--313.
%
\bibitem{g3}
L.~Gross,
Invariance of intrinsic hypercontractivity under perturbation
of Schr\"odinger operators,
Special Issue: Geometric and functional inequalities and applications 2025,
25 (4) : 951--1024.
%
\bibitem{gubinelli}
M.~Gubinelli,
 Controlling rough paths,
J. Funct. Anal. 216 (2004), no. 1, 86--140.
%
\bibitem{helffer-nier}
B.~Helffer and F.~Nier,
 Quantitative analysis of metastability in reversible diffusion 
processes via a Witten complex approach: the case with boundary,
M\'em. Soc. Math. Fr. (N.S.)(2006), no. 105.
%
\bibitem{hino}
M.~Hino,
 Spectral properties of Laplacians on an abstract Wiener space 
with a weighted Wiener measure,
J. Funct. Anal. 147 (1997), no. 2, 485--520.
%
\bibitem{hsu1}
E.~Hsu,
Logarithmic Sobolev inequalities on path spaces over 
Riemannian manifolds,
Comm. Math. Phys. {\bf 189} (1997), no.1~,9--16.
%
\bibitem{hsu2}
E.~Hsu,
Stochastic Analysis on Manifolds,
Graduate Studies in Mathematics, Vol.38,
A.M.S., (2002).
%
\bibitem{inahama}
Y.~Inahama,
 Large deviations for rough path lifts of Watanabe's pullbacks of delta functions,
Int. Math. Res. Not. IMRN 2016, no. 20, 6378--6414.
%
\bibitem{kusuoka1982}
S.~Kusuoka,
The nonlinear transformation of Gaussian measure on Banach space 
and absolute continuity. I,
J. Fac. Sci. Univ. Tokyo Sect. IA Math. 29 (1982), no. 3, 567--597.
%
\bibitem{ks1}
S.~Kusuoka and D.W.~Stroock,
Asymptotics of certain Wiener functionals with degenerate extrema,
Comm.~Pure Appl.~Math.~{\bf 47} (1994), no.4,
477--501.
%
\bibitem{lyons}
T. Lyons,
 Differential equations driven by rough signals
Lyons, 
Rev. Mat. Iberoamericana 14 (1998), no. 2, 215--310.
%
\bibitem{lq}
T. Lyons and Z. Qian,
 System control and rough paths,
Oxford Math. Monogr.
Oxford Sci. Publ.
Oxford University Press, Oxford, 2002
%
\bibitem{malliavin}
P.~Malliavin,
Stochastic analysis,
Grundlehren der Mathematischen Wissenschaften, 313,
Springer-Verlag, 1997.
%
\bibitem{milnor}
J.~Milnor,
Morse theory,
Ann. of Math. Stud., No. 51
Princeton University Press, Princeton, NJ, 1963, 
%
\bibitem{nualart}
D.~Nualart,
The Malliavin calculus and related topics
Probab. Appl. (N. Y.)
Springer-Verlag, Berlin, 2006
%
\bibitem{rtt}
A.~Richard, E.~Tanr\'e, S.~Torres,
Penalisation techniques for one-dimensional reflected 
rough differential equations,
Bernoulli 26 (2020), no. 4, 2949–2986.
%
\bibitem{shigekawa1}
I.~Shigekawa,
Stochastic analysis,
Transl. Math. Monogr., 224
Iwanami Ser. Mod. Math.
American Mathematical Society, Providence, RI, 2004.
%
\bibitem{simon}
B.~Simon,
 The $P(\phi)_2$ Euclidean (quantum) field theory,
Princeton Ser. Phys.
Princeton University Press, Princeton, NJ, 1974.
%
\bibitem{simon2}
B.~Simon,
 Semiclassical analysis of low lying eigenvalues. 
I. Nondegenerate minima: asymptotic expansions
Ann. Inst. H. Poincaré Sect. A (N.S.) 38 (1983), no. 3, 295--308.
%
\bibitem{sugita1}
H.~Sugita,
Positive generalized Wiener functions and potential theory over abstract Wiener spaces,
Osaka J. Math. 25 (1988), no. 3, 665--696.
%
\bibitem{watanabe1}
S.~Watanabe,
Analysis of Wiener functionals (Malliavin calculus)
and its applications to heat kernels,
Ann.~Probab. {\bf 15}~(1987), no.1, 1--39.
\bibitem{witten}
E.~Witten,
Supersymmetry and Morse theory,
J. Differential Geometry17 (1982), no.4, 661--692.
\end{thebibliography}
\end{document}